\documentclass[11pt, reqno]{amsart}
\usepackage{amssymb, amsthm, amsmath, amsfonts}
\usepackage{graphics}
\usepackage{hyperref}
\usepackage{bbm}
\usepackage[all]{xy}
\usepackage{enumerate}
\usepackage[mathscr]{eucal}
\usepackage{enumerate}
\usepackage{thmtools}
\usepackage{xcolor}
\usepackage{ytableau}
\usepackage{tikz}

\theoremstyle{plain}
\newtheorem{theorem}{Theorem}[section]
\newtheorem{lemma}[theorem]{Lemma}
\newtheorem{proposition}[theorem]{Proposition}

\newtheorem{corollary}[theorem]{Corollary}
\numberwithin{equation}{section}

\theoremstyle{definition}

\newtheorem{definition}[theorem]{Definition}
\newtheorem{example}[theorem]{Example}
\newtheorem{remark}[theorem]{Remark}

\newcommand{\Mod}{{\textrm{-}\mathrm{Mod}}}

\newcommand{\uc}{\mathrm{uc}}
\newcommand{\con}{\mathrm{con}}

\DeclareMathOperator{\Ob}{Ob}
\DeclareMathOperator{\Aut}{Aut}
\DeclareMathOperator{\Sym}{Sym}

\DeclareMathOperator{\Hom}{Hom}
\DeclareMathOperator{\End}{End}

\DeclareMathOperator{\Ext}{Ext}

\DeclareMathOperator{\Res}{Res}

\DeclareMathOperator{\id}{id}

\DeclareMathOperator{\sgn}{sgn}
\DeclareMathOperator{\Irr}{Irr}

\newcommand{\FA}{{\mathrm{FA}}}
\newcommand{\CA}{{\mathrm{CA}}}
\newcommand{\SA}{{\mathrm{SA}}}
\newcommand{\BA}{{\mathrm{BA}}}
\newcommand{\OA}{{\mathrm{OA}}}
\newcommand{\VA}{{\mathrm{VA}}}
\newcommand{\FI}{{\mathrm{FI}}}
\newcommand{\VI}{{\mathrm{VI}}}
\newcommand{\C}{{\mathscr{C}}}
\newcommand{\D}{{\mathscr{D}}}

\newcommand{\op}{{\mathrm{op}}}
\newcommand{\tor}{{\mathrm{tor}}}
\newcommand{\Sh}{{\mathrm{Sh}}}

\title[Representations of relational structures and endomorphism monoids]{Representations of categories of finite relational structures and associated endomorphism monoids}

\author{Liping Li}
\address{School of Mathematics and Statistics, Hunan Normal University, Changsha 410081, China.}
\email{lipingli@hunnu.edu.cn}
\thanks{L. Li was partly supported by NSFC Grant No. 12571037.}
\keywords{Representations of categories, Highly homogeneous structures, Dold–Kan correspondence, Generalized Reedy categories, Infinite endomorphism monoids, Continuous representations.}

\begin{document}

\begin{abstract}
We develop a unified representation theory for the categories of finite subsets and relation-preserving maps of highly homogeneous relational structures classified by Cameron. For any commutative coefficient ring $k$, we extend the classical Dold–Kan correspondence to this setting—with the sole exception of the category $\FA$—and prove that finitely generated representations are noetherian (resp., artinian) when $k$ is noetherian (resp., artinian).

When $k$ is a field, we obtain a precise structural description of these representation categories. We classify irreducible representations, showing that canonical quotients of indecomposable standard modules are either irreducible (the regular case) or has length 2 (the singular case). In the case that $k$ has characteristic 0, we establish a (direct sum or triangular) decomposition into a singular component governed by classical Dold–Kan theory and a regular component exhibiting semisimplicity or representation stability.

Finally, we establish a monoidal generalization of Artin’s reconstruction theorem for topological groups, proving an equivalence between uniformly continuous representations of infinite topological transformation monoids and sheaves on the associated categories of finite subsets. In the special case where the transformation monoid is a permutation group, our result recovers Artin’s theorem.
\end{abstract}

\maketitle

\section{Introduction}

\subsection{Motivation}

Representation categories of combinatorial and diagrammatic origin play a central role in modern algebra, topology, and geometry. Prominent examples include simplicial modules arising from the simplex category and representations of the category $\FI$ of finite sets and injections, which are foundational in representation stability theory. A recurring phenomenon in these settings is that remarkably rigid and structured representation-theoretic behavior—manifesting as homological stability, finiteness properties, or categorical decompositions—emerges from very elementary combinatorial input.

A classical instance of this phenomenon is the Dold--Kan correspondence, which identifies simplicial modules with nonnegatively graded chain complexes \cite{Dold, WeibelHA}. From a categorical perspective, this correspondence reflects a decomposition of the linearized simplex category into a \emph{singular} part, governing homological behavior (corresponding to normalized chains), and a complementary \emph{regular} part, which is homologically trivial. Extensions of this philosophy to other structures, most notably the cyclic category $\Lambda$, have been explored by Connes, Dwyer, Kan, Pirashvili, and many others, demonstrating that analogous normalization procedures recover mixed complexes and cyclic homology \cite{Connes94, DK85, Kassel, KaygunKaya, Pirashvili99}. Several systematic categorical frameworks have also been developed, including crossed simplicial groups introduced by Fiedorowicz--Loday \cite{FL91} and generalized Reedy categories by Berger--Moerdijk \cite{BergerMoerdijk2011}. Yet all of these theories are fundamentally homological: they are primarily designed to support homotopy theory, model structures, and derived constructions \cite{CisinskiMoerdijk, GoerssJardine, Helmstutler, Hirschhorn, Kassel, Loday92, LQ, Marker, Quillen}. A representation-theoretic counterpart—namely, a systematic theory of module categories and a structural classification of special representations—remains largely absent.

In a different direction, the representation theory of categories of finite sets with structured injective morphisms has developed rapidly in recent years, particularly through the theory of $\FI$-modules introduced by Church--Ellenberg--Farb \cite{CEF} and further developed by many authors; see for instances \cite{CEFN, Gadish, GL2015, GL2019, GLX, GS2023, LR, MPW, NagelRoemer, PutmanSam, RandalWilliamsWahl, SamSnowden2017} and the surveys \cite{RollandWilson, Sam2020}. These works establish deep structural results, including noetherianity, local self-injectivity, and connections to representation stability, polynomial functors and functor homology \cite{DV2010, DV2015}. By contrast, categories in which injective and surjective morphisms coexist symmetrically remain largely outside the scope of current structural theories. Notable exceptions include the category $\FA$ of finite sets and maps and the category $\VA_q$ of finite dimension vector spaces over a finite field $\mathbb{F}_q$ and linear maps. Their representation theory has been investigated by Kuhn, Powell, Vespa, and Wiltshire-Gordon \cite{Kuhn, PowellVespa2023, Powell, WGordon}. Moreover, representation theory of $\FA$ has been applied to study holomorphic forms on moduli spaces of stable curves by Canning--Larson--Payne--Willwacher \cite{CLPW}.

Our aim is to develop a representation theory for combinatorial categories in which injective and surjective morphisms are treated on equal footing. A natural source of such categories arises from categories of finite subsets and relation-preserving maps associated with oligomorphic group actions \cite{Ca90}. On one hand, these categories include many important examples previously studied in \cite{BergerMoerdijk2011, FL91}; on the other hand, this framework provides a structural bridge between representations of small categories and continuous representations of their associated automorphism groups or endomorphism monoids, in the sense of categorical reconstruction \cite{DLLX, LiPermutation}.

As a first step, we focus on the class of highly homogeneous relational structures, classified by Cameron \cite{Ca76, Ca90}. Cameron showed that there exist exactly five highly homogeneous relations on a countable set. Each such relation yields a skeletal category of finite subsets and relation-preserving maps, giving rise to the five categories
\[
\OA,\ \CA,\ \BA,\ \SA,\ \FA,
\]
where $\OA$ is the simplex category, $\CA$ is the cyclic category, $\BA$ is the category of finite subsets with maps preserving linear betweenness relations, and $\SA$ is the category of finite sets with maps preserving separation relations; for details, see \cite[Section 13]{BMMN}.

The purpose of this paper is to initiate a systematic study of representations of these five categories, addressing several conceptual gaps. First, existing results are largely fragmentary and rely on case-by-case combinatorial arguments, with no uniform framework treating all five categories simultaneously. Second, the appearance of simplicial phenomena—such as the Dold--Kan correspondence—has not been explained intrinsically from the ambient categorical structure. Third, the relationship between representations of these combinatorial categories and representations of associated infinite transformation monoids has not been clarified at a structural level.

\subsection{Notations}

Before stating the main results, we introduce the basic notation used throughout the paper. Unless otherwise specified, let $\C$ be the skeleton of one of the five categories described above, with objects $[n] = \{1, \ldots, n\}$ for $n \in \mathbb{Z}_+$. Denote by $\C^+$ (resp., $\C^-$) the wide subcategory of injective (resp., surjective) morphisms. For each $n \in \mathbb{Z}_+$, let $G_n$ be the automorphism group of the object $[n]$.

We verify that $\C$ is a generalized Reedy category in the sense of \cite{BergerMoerdijk2011}: every morphism $f: [m] \to [n]$ in $\C$ admits a decomposition
\[
f = f^+ \circ f^-,
\]
where $f^-: [m] \twoheadrightarrow [l]$ is a surjective morphism in $\C^-$ and $f^+: [l] \hookrightarrow [n]$ is an injective morphism in $\C^+$. This decomposition is unique up to the action of automorphisms in $G_l$. Moreover, $G_l$ acts freely on the set of surjective morphisms $\C^-(m, l)$ from the left and on the set of injective morphisms $\C^+(l, n)$ from the right.

Fix a commutative ring $k$. A \emph{representation} of $\C$ (or a \emph{$\C$-module}) $V$ is a covariant functor from $\C$ to $k \Mod$, the category of $k$-modules. Denote by $P_n$ the representable functor $k\C(n, -)$. We define the \emph{standard module} $\Delta_n$ as the quotient of $P_n$ by the submodule spanned by all non-injective morphisms with source $[n]$, an analogue of the classical notion in algebraic representation theory: Dalezois-{\v{S}}t'ov{\'\i}{\v{c}}ek \cite{DS2025} define $k$-linear Reedy categories with $k$ a field, which can be viewed infinite quasi-hereditary algebras; inspired by their work, Di-Li-Liang \cite{DLL} define generalized $k$-linear Reedy categories for arbitrary commutative rings $k$, which can be viewed as infinite standardly stratified algebras. The generalized Reedy structure ensures that $P_n$ admits a finite filtration with standard modules as its factors. In the case where $k$ is a field and $\lambda$ is an irreducible $kG_n$-module, we define the \emph{indecomposable standard module}
\[
\Delta_{n, \lambda} = \Delta_n \otimes_{kG_n} kG_n e_{\lambda},
\]
where $e_{\lambda} \in kG_n$ is the primitive idempotent such that $kG_n e_{\lambda}$ is the projective cover of $\lambda$. Correspondingly, we define
\[
\overline{\Delta}_{n, \lambda} = \Delta_n \otimes_{kG_n} \lambda,
\]
which is a quotient of $\Delta_{n, \lambda}$, and is isomorphic to $\Delta_{n, \lambda}$ when $k$ has characteristic 0.

Finally, note that the simplex category $\OA$ is a wide subcategory of $\C$. It possesses a normalization operator defined by the ordered product
\[
\Psi_n = (\id_n - d_{n-1}s_{n-1}) (\id_n - d_{n-2}s_{n-2}) \cdots (\id_n - d_1 s_1),
\]
which is an idempotent in the endomorphism algebra $k\OA(n, n)$, where $d_i$ and $s_i$ denote the $i$-th face and degeneracy maps, respectively; see \cite{DK85, Pirashvili99}. Since $k\OA(n, n)$ is a subalgebra of $k\C(n, n)$, $\Psi_n$ is also an idempotent in the category algebra $k\C$.

\subsection{Main results}

Our first main result establishes an extended Dold--Kan correspondence for the category $\C$ (excluding $\FA$), realized via a canonical normalization procedure. Specifically, we show that the set
\[
\{ k\C \Psi_n \mid n \in \mathbb{Z}_+ \}
\]
of normalized projective modules forms a system of projective generators of $\C\Mod$. Each normalized projective either coincides with the standard module $\Delta_n$ (for $\OA$ and $\BA$) or admits a filtration by two standard modules (for $\CA$ and $\SA$); see Proposition \ref{filtration of projectives}. Consequently, the representation category of $\C$ can be described in terms of a normalized differential-type structure, generalizing the classical Dold--Kan normalization from simplicial modules to representations of the four Cameron categories except $\FA$.

\begin{theorem}[Theorem \ref{generalized D-K correspondence}] \label{first main result}
Let $k$ be a commutative ring and let $\C \in \{\OA,\CA,\BA,\SA\}$. Then $\C \Mod$ is equivalent to the representation category of the $k$-linear category $\mathscr{K}$ generated by:
\begin{itemize}
\item automorphisms $\sigma \in G_n = \Aut_{\C}([n])$,
\item normalized cyclic face morphisms $\mathbf d_{n+1}: n \to n+1$,
\item and, in the cases $\C=\CA,\SA$, normalized degeneracy morphisms
$\mathbf s_{n+1}: n+1 \to n$.
\end{itemize}
The automorphism groups $G_n$ form a crossed simplicial group in the sense of Fiedorowicz--Loday \cite{FL91}: elements of $G_n$ permute the face and degeneracy maps; in $\mathscr K$ where only the top face and degeneracy survive, their images are stable under this induced action.

\medskip

For $\OA$ and $\BA$, $\mathscr{K}$ is the $k$-linear category associated to the quiver
\[
\xymatrix{
1 \ar@(ul,ur)[]^{G_1} \ar[r]^{\mathbf{d}_2} &
2 \ar@(ul,ur)[]^{G_2} \ar[r]^{\mathbf{d}_3} &
3 \ar@(ul,ur)[]^{G_3} \ar[r]^{\mathbf{d}_4} &
\cdots
}
\]
modulo the relations $\mathbf d_{n+1} \circ \mathbf d_n = 0$ for all $n \geqslant 1$, where $G_n$ is either trivial or cyclic of order $2$.

\medskip

For $\CA$ and $\SA$, $\mathscr{K}$ is the $k$-linear category associated to the quiver
\[
\xymatrix{
1 \ar@(ul,ur)[]^{G_1} \ar[r]^{\mathbf{d}_2} &
2 \ar@(ul,ur)[]^{G_2} \ar@/^1pc/[r]^{\mathbf{d}_3} &
3 \ar@(ul,ur)[]^{G_3} \ar@/^1pc/[r]^{\mathbf{d}_4} \ar@/^1pc/[l]^{\mathbf{s}_3} &
\cdots \ar@/^1pc/[l]^{\mathbf{s}_4}
}
\]
modulo the relations:
\begin{itemize}
\item \emph{Complex relations:} $\mathbf d_{n+1}\circ \mathbf d_n = 0$ and $\mathbf s_n \circ \mathbf s_{n+1} = 0$;
\item \emph{Normalization:} $\mathbf s_{n+1}\circ \mathbf d_{n+1} = \Psi_n$;
\item \emph{Cyclic idempotents:} $\mathbf d_n \circ \mathbf s_n = \delta_n$ and $\delta_n^2=\delta_n$;
\item \emph{Retraction laws:} $\delta_n \mathbf d_n = \mathbf d_n$ and $\mathbf s_n \delta_n = \mathbf s_n$;
\item \emph{Orthogonality:} $\mathbf d_n \delta_{n-1} = 0$ and $\delta_{n-1}\mathbf s_n=0$.
\end{itemize}
\end{theorem}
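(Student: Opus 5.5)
The plan is to apply Morita theory for $k$-linear categories. Let $\mathscr{K}$ be the full subcategory of the $k$-linearization $k\C$ whose objects are the idempotents $\Psi_n$, so that $\mathscr{K}(\Psi_m,\Psi_n)=\Psi_n\, k\C(m,n)\,\Psi_m \cong \Hom_{\C}(k\C\Psi_n, k\C\Psi_m)$. Once we know that $\{k\C\Psi_n\}$ is a family of projective generators of $\C\Mod$, the functor $V\mapsto (\Psi_n V([n]))_n$ is an equivalence between $\C\Mod$ and the category of $k$-linear functors on $\mathscr{K}$. This is the standard fact that a module category over a ring with several objects is equivalent to modules over the endomorphism category of any generating family of projectives.

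The proof then has two parts. \textbf{(a)} Show that the $k\C\Psi_n$ generate. \textbf{(b)} Compute $\mathscr{K}$ by generators and relations.

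For (a), it suffices to show that each $P_n$ lies in the additive closure of the $k\C\Psi_m$ under summands, or at least is a quotient of a sum of them. I would proceed by induction on $n$, using the filtration of $P_n$ by standard modules (Proposition \ref{filtration of projectives}), together with the decomposition $\id_n=\Psi_n+(\id_n-\Psi_n)$ in $k\OA(n,n)\subseteq k\C(n,n)$. The classical Dold--Kan argument shows that $\id_n-\Psi_n$ lies in the two-sided ideal generated by morphisms factoring through $[n-1]$ via degeneracies. Consequently $P_n(\id_n-\Psi_n)$ is a quotient of copies of $P_{n-1}$, and by induction $P_n$ is generated by $k\C\Psi_m$ for $m\le n$. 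This step uses only the fact that $\OA$ sits inside $\C$ as a wide subcategory with the same degeneracies, and it holds over any commutative ring $k$.

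For (b), I use the Reedy factorization $f=f^+f^-$ with free $G_l$-actions to write a $k$-basis of $k\C(m,n)$. Then I compute $\Psi_n f\Psi_m$. For $\OA$ and $\BA$, Proposition \ref{filtration of projectives} gives $k\C\Psi_n=\Delta_n$. Hence $\Psi_n k\C(m,n)\Psi_m$ is spanned by the images of injections. Normalization kills all faces except the top one, so only $m=n$ (giving $kG_n$) and $m=n-1$ (giving $\mathbf d_n$ composed with $G$) survive, and $\mathbf d_{n+1}\mathbf d_n=0$ because a composite of two injections $[n-1]\to[n+1]$ is killed by $\Psi_{n+1}$. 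For $\CA$ and $\SA$, the normalized projective has a two-step standard filtration $\Delta_n$ and $\Delta_{n-1}$. The extra piece produces the map $\mathbf s_{n+1}$. The relations $\mathbf s_{n+1}\mathbf d_{n+1}=\Psi_n$ and $\delta_n=\mathbf d_n\mathbf s_n$ idempotent follow from the cyclic identity $s_n d_n=\id$ twisted by the cyclic rotation $\tau$. Retraction and orthogonality come from $\Psi$-absorption and from the vanishing of lower face composites, as in the Dwyer--Kan treatment of $\Lambda$ \cite{DK85}. To see that these relations are complete, I would compare ranks: the dimension or rank of each $\mathscr{K}(\Psi_m,\Psi_n)$ computed from the standard filtration must match the rank of the quiver algebra modulo the relations.

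The main obstacle is the explicit computation for $\CA$ and $\SA$. One needs to identify precisely which cyclic face and degeneracy composites survive normalization, and to verify that $G_n$ (cyclic, or dihedral-type for $\SA$) stabilizes the surviving top face and degeneracy up to the stated action, which gives the crossed simplicial group structure. My first attempt would be to conjugate $\Psi_n$ by the cyclic generator, $\tau\Psi_n\tau^{-1}$, and use the Fiedorowicz--Loday relations $\tau d_i=d_{i-1}\tau$ to reduce everything to the top face.
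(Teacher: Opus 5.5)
\textbf{The gap: the normalization relation for $\CA$ and $\SA$.} You plan to get $\mathbf s_{n+1}\mathbf d_{n+1}=\Psi_n$ from $s_{n+1}d_{n+1}=\id_n$, and then derive $\delta_n^2=\delta_n$ and the retraction laws from it. That step fails, and the relation is in fact false.

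First, $\mathbf s_{n+1}\mathbf d_{n+1}=\Psi_n s_{n+1}\Psi_{n+1}d_{n+1}\Psi_n$, and the inner $\Psi_{n+1}$ does not drop out. Expanding it, $\Psi_{n+1}d_{n+1}$ equals $\sum_{j=1}^{n+1}(-1)^{n+1-j}d_j$ plus non-injective terms. Now $s_{n+1}d_1$ is the rotation $\tau_n\colon j\mapsto j+1 \pmod n$, while the other $s_{n+1}d_j$ with $2\le j\le n$ are non-injective. Hence the part of $\mathbf s_{n+1}\mathbf d_{n+1}$ supported on $G_n$ is $\id_n+(-1)^n\tau_n$, not $\id_n$.

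Second, for $n\geq 3$ no choice of $\mathbf s_{n+1},\mathbf d_{n+1}$ can satisfy the relation. Since $\mathscr K(n+1,n-1)=0$ (Proposition~\ref{thin category}), it would give $\mathbf s_n=\mathbf s_n\Psi_n=(\mathbf s_n\mathbf s_{n+1})\mathbf d_{n+1}=0$. But $\mathbf s_n$ has coefficient $1$ on the cyclic degeneracy $s_n$. In module terms, $k\C\Psi_n$ would be a retract of $k\C\Psi_{n+1}$, which is impossible because $\Hom(k\C\Psi_{n-1},k\C\Psi_n)\neq 0=\Hom(k\C\Psi_{n-1},k\C\Psi_{n+1})$.

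So the quiver with the listed relations is zero at every vertex $\geq 2$: the relations force $\Psi_{n-1}=\mathbf s_n\mathbf d_n=0$. The rank comparison you plan for completeness therefore cannot succeed. The paper's own proof of this item makes the same slip, passing directly from $s_{n+1}d_{n+1}=\id_n$ to $\mathbf s_{n+1}\mathbf d_{n+1}=\Psi_n$, so it offers no repair.

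What survives for $\CA,\SA$, and is within reach of your method:
\begin{itemize}
\item the equivalence $\C\Mod\simeq\mathscr K\Mod$;
\item the vanishing of $\mathscr K(m,n)$ for $|m-n|\geq 2$, and the spanning sets;
\item $\mathbf d_{n+1}\mathbf d_n=0=\mathbf s_n\mathbf s_{n+1}$ and $\mathbf d_{n+1}\mathbf s_{n+1}=\delta_{n+1}$;
\item orthogonality.
\end{itemize}
The composite $\mathbf s_{n+1}\mathbf d_{n+1}$ has to be computed honestly inside $\mathscr K(n,n)$, which is spanned by the $\boldsymbol\sigma$ and the $\mathbf d_n\boldsymbol\tau\mathbf s_n$.

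\textbf{The rest.} Your Morita set-up is correct, and your generation argument takes a different route from the paper's. The paper uses the standard filtration of $P_n$ together with the surjections $k\C\Psi_m\twoheadrightarrow\Delta_m$ from Proposition~\ref{filtration of projectives}. You instead split $P_n=k\C\Psi_n\oplus k\C(\id_n-\Psi_n)$, note that the second summand is exactly the image of $\bigoplus_{i<n}P_{n-1}\to P_n$, $x\mapsto xs_i$, and induct on $n$. Your route is more elementary: it needs neither the Reedy filtration nor Corollary~\ref{ideal description}, and it works verbatim for any $\C$ containing $\OA$ as a wide subcategory, $\FA$ included. Your $\OA/\BA$ computation is the paper's.

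One caveat applies throughout. With right-to-left composition, $s_j\Psi_n=0=\Psi_n d_j$ and $\Psi_n^2=\Psi_n$ hold for the order $(\id_n-d_1s_1)(\id_n-d_2s_2)\cdots(\id_n-d_{n-1}s_{n-1})$, not for the printed one. In the printed order, $s_1\Psi_3=c_2-s_2\neq 0$, where $c_2\colon[3]\to[2]$ is the constant map with value $2$. The computation of the rotation term above uses the corrected order.
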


\begin{remark}
While this result is classical for the simplex category and known for the cyclic category \cite[Corollary 6.6]{DK85}, to our knowledge this unified treatment is new for the categories $\BA$ and $\SA$. In contrast to the result by Kaygun--Kaya \cite{KaygunKaya}, which applies normalization procedures to crossed simplicial groups within a model-categorical context, our work focuses on the representation theory of these categories.
\end{remark}

As a consequence of this structural equivalence, we obtain the following finiteness results, which hold for all five categories arising from Cameron’s classification, including $\FA$.

\begin{corollary}[Corollary \ref{finite length 1}]
If $k$ is noetherian, then every finitely generated $\C$-module is noetherian. If $k$ is artinian, then every finitely generated $\C$-module has finite length.
\end{corollary}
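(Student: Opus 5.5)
For $\C \in \{\OA,\CA,\BA,\SA\}$ the plan is to use Theorem \ref{first main result}. It identifies $\C\Mod$ with $\mathscr K\Mod$, and the equivalence sends finitely generated modules to finitely generated modules, since $k\C\Psi_n$ corresponds to the representable $\mathscr K(n,-)$. It therefore suffices to treat $\mathscr K$.

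The key observation is that $\mathscr K$ is \emph{locally finite in a strong sense}. By the relations, every nonzero morphism $m\to n$ of $\mathscr K$ is a $k$-linear combination of composites of group elements with at most one face map, or (for $\CA,\SA$) at most one degeneracy, or a short product $\mathbf d\mathbf s$ or $\mathbf s\mathbf d$, which reduce to idempotents in $k\C(n,n)$. Longer paths vanish by $\mathbf d\mathbf d=0$, $\mathbf s\mathbf s=0$ and the orthogonality relations. Consequently each representable $\mathscr K(n,-)$ is supported on the objects $n-1,n,n+1$, and at each object its value is a finitely generated $k$-module, because $G_n$ is finite. Hence every finitely generated $\mathscr K$-module $V$ is supported on finitely many objects and is finitely generated as a $k$-module in total, i.e.\ $\bigoplus_n V(n)$ is a finitely generated $k$-module. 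Any submodule $W\subseteq V$ is in particular a $k$-submodule of this finitely generated $k$-module. So if $k$ is noetherian, ascending chains stabilize and $V$ is noetherian. If $k$ is artinian (hence noetherian), $\bigoplus_n V(n)$ has finite length over $k$, which bounds the length of $V$ as a $\mathscr K$-module.

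The remaining case is $\FA$, which is not covered by the theorem, and this is the main obstacle. One idea is to show directly that $k\FA(n,-)$ is supported in finitely many degrees after normalization, but that is false: $k\FA(n,m)$ is nonzero for all $m$. Instead I would use the generalized Reedy filtration of $P_n$ by standard modules $\Delta_l$ with $l\le n$. Then noetherianity of $P_n$ reduces to that of the finitely many $\Delta_l$. For $\Delta_l$, I would try to show that $\Delta_l(m)$ is killed by a suitable idempotent for $m$ large, or to compare $\Delta_l$ with the corresponding standard module of the wide subcategory $\OA$ via restriction. The restriction $\Res^{\FA}_{\OA}\Delta_l$ is a finite direct sum of standard $\OA$-modules, because $\FA$-injections $[l]\to[m]$ are $\OA$-injections twisted by $G_l$. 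Since $\OA$-modules are already known to be noetherian (resp.\ of finite length) by the argument above, and $\FA$-submodules are in particular $\OA$-submodules, ascending chains of $\FA$-submodules stabilize and lengths are bounded. This restriction trick works for all five categories uniformly. Its crux is verifying that $\Res_{\OA}\Delta_l$ is finitely generated, which follows from the factorization $f=f^+f^-$ and the fact that every injection of finite sets is an order-preserving injection precomposed with a permutation.
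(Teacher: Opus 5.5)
Your proposal is correct and follows essentially the paper's proof. For $\OA,\CA,\BA,\SA$ you pass to $\mathscr K$, whose representables are supported in degrees $n-1,n,n+1$ with finitely generated values over $k$; Theorem \ref{generalized D-K correspondence}(I)(1) is the cleaner citation for the support claim than your path-reduction heuristic, which does not by itself handle group elements interleaved between $\mathbf d$'s and $\mathbf s$'s. For $\FA$ you use the standard filtration of $P_n$ together with $\Res^{\FA}_{\OA}\Delta_l\cong\bigoplus_{\sigma\in G_l}\Delta_l^{\OA}$, as the paper does. Your observation that only finite generation of the restricted module is needed is a slight simplification of the paper's compatibility check for the decomposition, since finite generation is immediate from $f=f_0\circ\sigma$.
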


\begin{remark}
This result is well known for $\OA$ and can be deduced for $\CA$ from \cite[Corollary 6.6]{DK85}. For $\FA$, the noetherian fact is established in \cite[Corollary 7.3.5]{SamSnowden2017}, and the finite length result has been established in \cite[Corollary 4.33]{Powell} and \cite[Theorem 5.3]{WGordon} under the assumption that $k$ is a field.
\end{remark}

While the established equivalence with the normalized category $\mathscr{K}$ holds over any commutative ring and reveals the underlying structural framework of these representation categories, the intrinsic complexity of the resulting quiver and its relations can obscure finer representation-theoretic features. To overcome this limitation and access more delicate structural information, we turn to the intrinsic combinatorial architecture of the categories—specifically their generalized Reedy structure. This leads to the introduction of a new combinatorial invariant, the \emph{mutation graph}, which governs the interaction between injective and surjective morphisms.

\begin{definition}
Given an injective morphism $f: [m] \hookrightarrow [n]$ in $\C$, a \emph{mutation} of $f$ is another injective morphism $g: [m] \hookrightarrow [n]$ such that:
\begin{itemize}
\item $f(i) \neq g(i)$ for exactly one $i \in [m]$;
\item there exists a surjective morphism $h: [n] \twoheadrightarrow [n-1]$ with $h \circ f = h \circ g$.
\end{itemize}
The \emph{mutation graph} $\Gamma_{m, n}$ has vertices all injective morphisms $[m]\hookrightarrow[n]$, with an edge between $f$ and $g$ whenever $g$ is a mutation of $f$.
\end{definition}

Mutation graphs are connected for $\OA, \CA,$ and $\FA$, and have at most two connected components for $\BA$ and $\SA$. When $n = m+1$, each component is bipartite. These combinatorial results lead to an explicit computation of Hom-spaces between standard modules.

\begin{theorem}[Theorem \ref{hom spaces}]
Let $k$ be a commutative ring. For all $m,n \geqslant 1$, one has
\[
\Hom_{k\C}(\Delta_m,\Delta_n) \;\cong\;
\begin{cases}
kG_n, & m = n,\\[2mm]
k^d, & m = n+1,\\[1mm]
0, & \text{otherwise,}
\end{cases}
\]
where $d$ is the number of connected components of $\Gamma_{n, n+1}$. In particular, $d=1$ for $\FA,\, \OA,\, \CA$ and $d=2$ for $\BA,\, \SA$ when $n \geqslant 2$.
\end{theorem}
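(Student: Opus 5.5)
Since $\Delta_m$ is a quotient of $P_m = k\C(m,-)$, a homomorphism $\Delta_m \to \Delta_n$ amounts to a homomorphism $P_m \to \Delta_n$ killing the non-injective morphisms. By Yoneda, $\Hom(P_m,\Delta_n) \cong \Delta_n([m])$. Thus
\[
\Hom_{k\C}(\Delta_m,\Delta_n) \cong \{ v \in \Delta_n([m]) : \Delta_n(h)(v) = 0 \text{ for every non-injective } h\colon [m]\to[l] \}.
\]
Concretely, $\Delta_n([m])$ is free over $k$ with basis the injective morphisms $[n]\hookrightarrow[m]$. A morphism $h\colon[m]\to[l]$ acts by $f \mapsto h\circ f$ if $h\circ f$ is injective, and by $f \mapsto 0$ otherwise. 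The plan is to compute this space of "invariants" case by case.

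\textbf{Easy cases.} If $m<n$ there are no injections $[n]\hookrightarrow[m]$, so the Hom is $0$. If $m=n$, then $\Delta_n([n])=kG_n$ and every morphism out of $[n]$ is an automorphism or is non-injective. Non-injective ones necessarily collapse two points, so they kill every basis element $\sigma$, and the condition is vacuous. This gives $\Hom \cong kG_n$.

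\textbf{Case $m=n+1$.} Every non-injective $h$ out of $[n+1]$ factors, via the Reedy factorization, as a surjection $[n+1]\twoheadrightarrow[n]$ followed by an injection (or through a smaller image). Since injections act injectively on bases, it suffices to impose the condition for surjections $s\colon [n+1]\twoheadrightarrow[n]$. Write $v=\sum_f a_f f$ over injections $f\colon[n]\hookrightarrow[n+1]$. For fixed $s$, the composites $s\circ f$ that are injective land in $G_n$, and $s f = s g$ exactly when $f,g$ are related through $s$. The condition $\Delta_n(s)(v)=0$ therefore says that $\sum a_f$ over each fiber of $f\mapsto s\circ f$ vanishes.

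For a surjection $[n+1]\to[n]$ identifying two points $p,q$, a fiber should consist of exactly two injections: one missing $p$ and one missing $q$, which differ in exactly one value. That is, a fiber is a mutation edge. The equations then read $a_f + a_g = 0$ for each edge $\{f,g\}$ of $\Gamma_{n,n+1}$. Using the bipartiteness of each component of $\Gamma_{n,n+1}$ (stated above), the solutions are exactly the alternating-sign vectors constant up to sign on each component. This gives $k^d$ with $d$ the number of components, and the known values $d=1$ or $2$ then finish this case.

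The hard part is checking rigorously, for each of the five categories, that every fiber of $f\mapsto s\circ f$ (restricted to injective composites) has size exactly two and forms an edge, rather than possibly a singleton. A singleton fiber would force $a_f=0$ and could break the count. I would verify this using the explicit descriptions of surjections $[n+1]\to[n]$ in each category.

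\textbf{Case $m\ge n+2$.} The goal is to show the invariant space is $0$. For any injection $f\colon[n]\hookrightarrow[m]$, pick two points outside the image of $f$ and, where possible, adjacent (in the relevant cyclic or linear structure). Choose a surjection $s$ collapsing them that is compatible with the structure. Then $s\circ f$ is injective, and I expect the fiber of $s$ through $f$ to be $\{f\}$ alone, since no other injection with the same composite avoids both collapsed points in a way that changes only one value. This gives $a_f=0$.

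The delicate point is the existence of such an $s$ in $\BA$ and $\SA$. There the surjections must respect betweenness or separation, so I would handle these by a direct combinatorial choice of which pair of missing points to collapse, with adjacency along the circle or line serving as the guiding principle.
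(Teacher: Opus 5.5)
Your cases $m\leqslant n$ and $m=n+1$ follow the paper's argument. The fiber question you flag as the hard part is exactly Lemma~\ref{lifts}(3): an injection $[n]\hookrightarrow[n+1]$ misses one point, so injectivity of $s\circ f$ forces exactly one collapsed point into $f([n])$. Swapping it for the other gives the unique second preimage, and that preimage is a mutation of $f$.

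The gap is in the case $m\geqslant n+2$. You propose to get $a_f=0$ for \emph{every} injection $f$ by collapsing two points outside $f([n])$. In $\OA$, $\CA$, $\BA$, $\SA$, however, the surjections $[m]\twoheadrightarrow[m-1]$ only collapse linearly or cyclically adjacent pairs, and many injections have no two adjacent holes. The obstruction already appears in $\OA$, not only in $\BA$ and $\SA$ as you suggest.

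Take $n=1$, $m=3$, and write $(j)$ for the injection $1\mapsto j$. The injection $(2)$ misses $1$ and $3$. Both surjections $s_1,s_2\colon[3]\twoheadrightarrow[2]$ collapse $2$ with a hole. So no surjection of the kind you want exists, and no clever combinatorial choice will produce one. For such $f$ the constraints only give pair relations, here $a_{(2)}=-a_{(1)}$ and $a_{(2)}=-a_{(3)}$, never $a_f=0$ directly. Your direct argument does work for $\FA$, where any two holes can be collapsed.

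The missing idea is to propagate along the mutation graph $\Gamma_{n,m}$ for general $m>n$, not just $\Gamma_{n,n+1}$:
\begin{enumerate}
\item Your fiber analysis, via Lemma~\ref{lifts}(3), gives $a_f=-a_{f'}$ whenever $f'$ is a mutation of $f$, for every $m>n$. So the coefficients are constant up to sign on each connected component of $\Gamma_{n,m}$.
\item By (the proof of) Proposition~\ref{connectedness}, each component contains the standard inclusion or its reversal.
\item When $m\geqslant n+2$, the complement of that injection contains the adjacent pair $(m-1,m)$ or $(1,2)$.
\item Collapsing that pair gives a singleton fiber by Lemma~\ref{lifts}(2). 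Hence the coefficient vanishes there, and therefore on the whole component.
\end{enumerate}
This is how the paper concludes. Without connectivity of $\Gamma_{n,m}$, your plan does not establish vanishing for $m\geqslant n+2$.
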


This explicit computation has several consequences. For instance, when $k$ is a field, it implies that every standard module for $\OA$ and $\BA$ is projective, yielding an alternative proof of the Dold--Kan correspondence that does not rely on normalization techniques; see Corollary \ref{OA and BA}.

When $k$ is a field, the structure becomes more refined. In this case, each standard module admits a canonical decomposition
\[
\Delta_n \cong \bigoplus_{\lambda \in \Irr(G_n)} (\Delta_{n,\lambda})^{\oplus a_{\lambda}},
\]
where $\Irr(G_n)$ is a complete set of non-isomorphic irreducible $kG_n$-modules. We call $\lambda$ \emph{singular} if it appears in the classification of Proposition~\ref{classify singular}, that is, there exists a nonzero homomorphism from $\Delta_{n+1}$ to $\Delta_{n, \lambda}$. Otherwise, $\lambda$ is called \emph{regular}. More explicitly, $\lambda \in \Irr(G_n)$ is singular if and only if it is isomorphic to the one-dimensional space spanned by the alternating sum of the vertices in a connected component of the mutation graph $\Gamma_{n-1, n}$.

These simple combinatorial observations yield a sharp structural dichotomy.

\begin{theorem}[Propositions \ref{regular standard is simple} and \ref{length 2}]
Let $k$ be a field. Then $\overline{\Delta}_{n, \lambda}$ is irreducible if $\lambda$ is regular. If $\lambda$ is singular, then $\overline{\Delta}_{n, \lambda}$ has length $2$.
\end{theorem}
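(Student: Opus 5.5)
We write $\overline{\Delta}_{n,\lambda}=\Delta_n\otimes_{kG_n}\lambda$ and describe it degree by degree. In degree $j$ it is $k\C^+(n,j)\otimes_{kG_n}\lambda$, where $k\C^+(n,j)$ is spanned by the injections $[n]\hookrightarrow[j]$. A morphism of $\C$ acts on a class $f\otimes v$ by composition if the composite is injective, and by $0$ otherwise. Since $G_n$ acts freely on injections, the degree-$n$ part is $\lambda$ itself. This part generates the module, because every injection $f$ gives $f\otimes v = f\cdot(\id\otimes v)$.

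The plan is to study submodules through their lowest degree. Let $W\subseteq\overline{\Delta}_{n,\lambda}$ be a nonzero submodule and let $m\geq n$ be the least degree with $W_m\neq 0$.

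\textbf{Case $m=n$.} Then $W_n$ is a nonzero $kG_n$-submodule of the irreducible module $\lambda$, so $W_n=\lambda$. Since the degree-$n$ part generates, $W=\overline{\Delta}_{n,\lambda}$.

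\textbf{Case $m>n$.} Every surjection $h:[m]\twoheadrightarrow[m-1]$ must kill $W_m$. We show this forces $m=n+1$ and $\lambda$ singular.

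Take $x=\sum_f f\otimes v_f\in W_m$. For each $h$, the condition $hx=0$ is a linear equation: the terms $h f\otimes v_f$ with $hf$ injective must cancel. By the definition of mutation, $hf=hg$ with $f\neq g$ forces $g$ to be a mutation of $f$. So the conditions $hx=0$ say the coefficients are "alternating" along edges of the mutation graph $\Gamma_{n,m}$, modulo the $G_n$-twist.

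This is essentially the computation behind Theorem~\ref{hom spaces}. A nonzero $x$ annihilated by all degeneracies gives a nonzero map from $\Delta_m$ into $\overline{\Delta}_{n,\lambda}$, via $\id_m\mapsto x$. Indeed, non-injective maps out of $[m]$ factor through a surjection to $[m-1]$, so they kill $x$. I would redo the computation of Theorem~\ref{hom spaces} with $\lambda$ in place of $kG_n$; its right-exactness argument should carry over, giving $\Hom(\Delta_m,\overline{\Delta}_{n,\lambda})=0$ unless $m\in\{n,n+1\}$. For $m=n+1$, such $x$ exist exactly when $\lambda$ is the sign-type one-dimensional representation spanned by the alternating sum over a component of $\Gamma_{n-1,n}$. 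This matches Proposition~\ref{classify singular}.

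\textbf{Conclusion for regular $\lambda$.} No $x$ as above exists, so $W$ must be the whole module, and $\overline{\Delta}_{n,\lambda}$ is irreducible.

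\textbf{Conclusion for singular $\lambda$.} Let $N\subseteq\overline{\Delta}_{n,\lambda}$ be the sum of the images of all maps $\Delta_{n+1}\to\overline{\Delta}_{n,\lambda}$. This is a nonzero proper submodule, since $N_n=0$. We need two facts.
\begin{enumerate}
\item $\overline{\Delta}_{n,\lambda}/N$ is simple. A proper submodule containing $N$ has zero degree-$n$ part. Its lowest degree beyond $N$ would be $n+1$, by the argument above applied to the quotient, and it would again give elements killed by degeneracies, which lie in $N$.
\item $N$ is simple. It is a quotient of $\Delta_{n+1}$, generated in degree $n+1$ by the space of alternating vectors. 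That space is one-dimensional for the relevant component, since $\lambda$ is one-dimensional and the component is connected. Let $N'\subseteq N$ be nonzero with lowest degree $m'$. If $m'=n+1$, then $N'_{n+1}$ is the full one-dimensional generating space, so $N'=N$. If $m'>n+1$, its elements are killed by degeneracies, which contradicts the vanishing of $\Hom(\Delta_{m'},\overline{\Delta}_{n,\lambda})$ for $m'\geq n+2$.
\end{enumerate}
Together these give length exactly $2$.

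\textbf{Main obstacle.} The hardest step is the combinatorial one: showing that vectors killed by all degeneracies exist only in degree $n+1$ and only for singular $\lambda$. This means controlling the mutation graphs $\Gamma_{n,m}$ for $m\geq n+2$, namely showing that the alternation constraints force $x=0$ there. I would try to deduce it directly from Theorem~\ref{hom spaces}, using exactness of $\Delta_n\otimes_{kG_n}-$. Over a field this functor is exact because $\Delta_n$ is free over $kG_n$, so we get $\overline{\Delta}_{n,\lambda}$ as a direct summand or quotient of the tensor products with $kG_n$. Failing that, I would repeat the bipartite-component argument with $\lambda$-coefficients.
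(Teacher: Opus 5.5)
\textbf{Gap: step (1) of your singular case.} This step is exactly the upper bound on the length. To show that $\overline{\Delta}_{n,\lambda}/N$ is simple, you need $\Hom_{k\C}(\Delta_{m'},\overline{\Delta}_{n,\lambda}/N)=0$ for all $m'\geqslant n+2$. ``The argument above applied to the quotient'' does not give this.

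The mutation argument runs on two things: the basis of injections of $\overline{\Delta}_{n,\lambda}$, and the equations $hy=0$. For a lift $y$ of a lowest-degree element of $W/N$, you only know $hy\in N_{m'-1}$ for $h\in\C^-(m',m'-1)$. Once $m'\geqslant n+2$, the space $N_{m'-1}$ is nonzero. Your reduction (``$y$ is killed by degeneracies, hence lies in $N$'') is valid only for $m'=n+1$, where $N_n=0$.

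What your lowest-degree analysis actually establishes is that $\overline{\Delta}_{n,\lambda}$ has simple top and simple socle $N$. Nothing in it excludes composition factors strictly between $N$ and the radical. Excluding them needs genuinely new input. For $\C=\OA$ and $n\geqslant 2$, for instance, it amounts to the acyclicity statement that $N$ is the kernel of the nonzero map $\Delta_n\to\Delta_{n-1}$.

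\textbf{How the paper closes it.} The paper gets the bound by restricting to $\D=\OA$ (Lemma~\ref{at most 2}). Because a singular $\lambda$ is one-dimensional, $\Res^\C_\D\overline{\Delta}_{n,\lambda}\cong\Delta_n^\D$, which has length $2$ by the classical Dold--Kan correspondence. Restriction along a wide subcategory is exact and kills no nonzero module, so $\overline{\Delta}_{n,\lambda}$ has length at most $2$. With that bound, your $N$ (nonzero and proper) finishes the proof. Your direct construction of $N$ from the alternating vector is in fact simpler than the paper's Lemma~\ref{existence of morphism}.

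\textbf{Regular case.} This part is sound. It differs from the paper only in using the lowest nonzero degree of a submodule rather than a simple submodule of the socle, which lets you avoid Corollary~\ref{socle}.

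One caution about bounding maps into $\overline{\Delta}_{n,\lambda}$ in modular characteristic. There $\overline{\Delta}_{n,\lambda}$ need not be a direct summand of $\Delta_n$, and being a quotient of $\Delta_{n,\lambda}$ gives no control over maps into it. Two routes do work for the vanishing when $m\geqslant n+2$:
\begin{enumerate}
\item Your fallback: the mutation argument with $\lambda$-coefficients goes through. If the collapsed pair avoids $f([n])$, then $f\sigma$ is the only injective lift of $gf\sigma$.
\item The paper's embedding $\overline{\Delta}_{n,\lambda}\hookrightarrow\Delta_{n,\lambda}$, which comes from $\lambda\cong\mathrm{soc}(kG_ne_\lambda)$.
\end{enumerate}
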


As a consequence, we obtain a transparent classification of irreducible $\C$-modules.

\begin{corollary}[Theorem \ref{classification of irreducibles}]
Let $k$ be a field. Isomorphism classes of irreducible $\C$-modules are parameterized by pairs $(n,\lambda)$ with $n \in \mathbb{Z}_+$ and $\lambda \in \mathrm{Irr}(G_n)$. If $\lambda$ is regular, the corresponding irreducible $\C$-module is $\overline{\Delta}_{n,\lambda}$; if $\lambda$ is singular, it is the top of $\overline{\Delta}_{n,\lambda}$.
\end{corollary}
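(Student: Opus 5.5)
The plan is the standard highest-weight argument for stratified categories, carried out with the tools established earlier. The classification has three parts: construct a simple module $L_{n,\lambda}$ for each pair $(n,\lambda)$, show these are pairwise non-isomorphic, and show every irreducible $\C$-module is one of them.

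\emph{Construction.} For existence I will use the preceding theorem (Propositions \ref{regular standard is simple} and \ref{length 2}):
\begin{itemize}
\item if $\lambda$ is regular, $\overline{\Delta}_{n,\lambda}$ is already irreducible, so I set $L_{n,\lambda}=\overline{\Delta}_{n,\lambda}$;
\item if $\lambda$ is singular, $\overline{\Delta}_{n,\lambda}$ has length $2$, and I take $L_{n,\lambda}$ to be its top.
\end{itemize}
In both cases I need to know that $\overline{\Delta}_{n,\lambda}$ is generated in degree $n$ and satisfies $\overline{\Delta}_{n,\lambda}([m])=0$ for $m<n$. This holds because $\Delta_n$ is spanned by injective morphisms out of $[n]$. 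Moreover,
\[
\overline{\Delta}_{n,\lambda}([n])=kG_n\otimes_{kG_n}\lambda=\lambda.
\]
Hence $\overline{\Delta}_{n,\lambda}$ has a unique maximal submodule: any proper submodule must vanish at $[n]$, because $\lambda$ is simple over $kG_n$ and generates the whole module. So the top is simple, and it evaluates to $\lambda$ at $[n]$ and to $0$ below $[n]$.

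\emph{Injectivity of the parametrization.} Given an irreducible $L$, let $n$ be the minimal integer with $L([n])\neq 0$. On $L([n])$ every non-invertible endomorphism of $[n]$ acts as zero: such a map factors through some $[l]$ with $l<n$, and $L([l])=0$. Therefore $L([n])$ is a $kG_n$-module, and it must be simple, since any $kG_n$-submodule generates a $\C$-submodule whose degree-$n$ part it recovers. This gives a well-defined invariant $(n,\lambda)$ with $\lambda=L([n])$. Since $L_{n,\lambda}$ has invariant $(n,\lambda)$, different pairs give non-isomorphic modules.

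\emph{Surjectivity.} Take an irreducible $L$ with invariant $(n,\lambda)$. Choose a nonzero $v\in L([n])$. By Yoneda it gives a map $P_n\to L$, which is surjective because $L$ is simple. This map kills every non-injective morphism out of $[n]$: such a morphism factors through a surjection onto a smaller object, where $L$ vanishes. So the map factors through $\Delta_n$. Using the adjunction
\[
\Hom(\Delta_n\otimes_{kG_n}\lambda,\,L)\cong\Hom_{kG_n}(\lambda,\,L([n])),
\]
which holds because $\Delta_n$ is a $(\C,G_n)$-bimodule, I obtain a nonzero, hence surjective, map $\overline{\Delta}_{n,\lambda}\to L$. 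Since the top is unique, $L\cong L_{n,\lambda}$.

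\emph{Main obstacle.} The delicate step is the one just used, that non-injective morphisms out of $[n]$ act as zero into lower degrees. This relies on the generalized Reedy factorization $f=f^+\circ f^-$. Beyond that, the whole argument reduces to the length statements already proved, so I expect no further difficulty.
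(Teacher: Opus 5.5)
Your argument is correct and is essentially the paper's proof. The regular case rests on Proposition \ref{regular standard is simple}. The parametrization and simple-top statements, which you prove directly by the minimal-degree argument via the Reedy factorization, are what the paper imports from Di--Li--Liang as parts (2) and (3) of Proposition \ref{filtration}. Your unique-maximal-submodule argument also shows that $\overline{\Delta}_{n,\lambda}$ has top $L_{n,\lambda}$ for every $\lambda$, so you do not actually need Proposition \ref{length 2} for the stated classification.
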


\begin{remark}
For the category $\OA$, this classification of irreducible modules is classical. For $\FA$, the above classification has been established via different methods under the assumption that $k$ has characteristic 0; see \cite[Theorem 1.4]{Powell}, \cite[Theorem 4.1]{Rains}, or \cite[Theorem 5.5]{WGordon}. Our approach extends these classifications to fields of arbitrary characteristic.
\end{remark}

Using the regular--singular dichotomy, we obtain a refined form of the Dold--Kan correspondence in the case that $k$ is a field of characteristic 0. More precisely, we construct a $k$-linear category $\mathscr{K}$ whose representation category is equivalent to $\C\Mod$, but whose structure is substantially simpler than the category appearing in Theorem~\ref{first main result}. For $\BA$, the category $\mathscr{K}$ is essentially the direct sum of two copies of the $k$-linear category arising in the classical Dold--Kan correspondence for $\OA$. For the remaining categories, the structure exhibits new and genuinely different behavior. Specifically, one has:
\begin{itemize}
\item If $\C=\CA$ or $\SA$, then every regular indecomposable standard module is projective (this is true for any field; see Proposition \ref{projective cover})  and irreducible. As a consequence, we have a decomposition
\[
\mathscr{K} = \mathscr{K}^{\mathrm{reg}} \oplus \mathscr{K}^{\mathrm{sing}},
\]
reflecting the regular--singular dichotomy of indecomposable standard modules. The regular part $\mathscr{K}^{\mathrm{reg}}$ is semisimple, while the singular part $\mathscr{K}^{\mathrm{sing}}$ admits an explicit quiver description generalizing the classical Dold--Kan category. In particular, the homological structure of $\C\Mod$ is governed entirely by $\mathscr{K}^{\mathrm{sing}}$. For a precise statement, see Theorem \ref{DK for CA and SA}.

\item If $\C = \FA$, the situation is reversed: every singular indecomposable standard module is projective. In this case, the above direct sum decomposition is no longer true, but we still have a triangular decomposition: there are no nonzero morphisms from $\mathscr{K}^{\mathrm{sing}}$ to $\mathscr{K}^{\mathrm{reg}}$. The singular part $\mathscr{K}^{\mathrm{sing}}$ coincides with the classical Dold--Kan category for $\OA$, while $\mathscr{K}^{\mathrm{reg}}$ is an inverse category whose endomorphism algebras of objects are one-dimensional.  For a precise statement, see Theorem~\ref{DK for FA}.
\end{itemize}

Our final main result places the representation theory of $\C$ into a genuinely infinite-dimensional and structural framework. Explicitly, let $\mathscr{E} \leqslant \Omega^{\Omega}$ be a transformation monoid. We equip $\mathscr{E}$ with the topology of pointwise convergence. A $k\mathscr{E}$-module $M$ is said to be \emph{continuous} if the action of $\mathscr{E}$ on $M$ is continuous with respect to the discrete topology on $M$; it is \emph{uniformly continuous} if the action of $\mathscr{E}$ is uniformly continuous with respect to the natural uniform structure on $\mathscr{E}$ induced by pointwise convergence.

Let $\C$ be the category of finite subsets of $\Omega$ whose morphisms are restrictions of elements in $\mathscr{E}$. A $\C$-module $V$ is said to be \emph{torsion} if for every $A \in \Ob(\C)$ and every $v \in V(A)$, there is an inclusion $\iota_{A, B}: A \hookrightarrow B$ such that $\iota_{A, B} \cdot v = 0$. We also define the Grothendieck topology $J$ on $\C^{\op}$ as follows: for every $A \in \Ob(\C)$, a sieve is a covering sieve in $J(A)$ if it contains an inclusion morphism with source $A$.

\begin{theorem}[Theorem~\ref{equivalence 2} and Corollary \ref{equivalence 3}]
Let $k$ be a commutative ring, $\mathscr{E}$ and $\C$ be as defined above. Then one has the following equivalences of categories
\[
k\mathscr{E} \Mod^{\mathrm{uc}} \simeq \C \Mod / \C \Mod^{\tor} \simeq \Sh(\C^{\op}, \, J, \, \underline{k})
\]
where $k\mathscr{E} \Mod^{\mathrm{uc}}$ is the category of uniformly continuous $k\mathscr{E}$-modules, $\C \Mod^{\tor}$ is the category of torsion $\C$-modules, and $\Sh(\C^{\op}, \, J, \, \underline{k})$ is the sheaf category over the constant structure sheaf $\underline{k}$.

\medskip

In particular, if every inclusion morphism $\iota_{A,B}: A \hookrightarrow B$ in $\C$ admits a retraction given by a surjective morphism $p: B \twoheadrightarrow A$, then $J$ is the trivial topology, and hence
\[
\C\Mod \simeq k\mathscr{E} \Mod^{\uc}.
\]
\end{theorem}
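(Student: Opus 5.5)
The plan is to prove the two equivalences separately and then specialize. For the first equivalence I build a pair of adjoint functors between $\C\Mod$ and $k\mathscr{E}\Mod^{\uc}$. Given a $\C$-module $V$, set
\[
F(V) = \varinjlim_{A} V(A),
\]
the colimit over the directed poset of finite subsets $A \subseteq \Omega$ with transition maps given by the inclusions $\iota_{A,B}$. For $e \in \mathscr{E}$ and $v \in V(A)$, put $e\cdot[v] = [e|_A \cdot v]$, where $e|_A : A \to e(A)$ is the restricted morphism in $\C$.

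Well-definedness follows from compatibility of restrictions with inclusions. Uniform continuity holds because $e\cdot[v]$ depends only on $e|_A$, and $\{(e,e') : e|_A = e'|_A\}$ is a basic entourage of the pointwise uniformity.

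In the other direction, for a uniformly continuous module $M$ set
\[
G(M)(A) = \{\, m \in M : e\cdot m = e'\cdot m \text{ whenever } e|_A = e'|_A \,\}.
\]
Thus $G(M)(A)$ consists of the vectors whose action factors through restriction to $A$. A morphism $f : A \to B$, $f = e|_A$, then acts by $m \mapsto e\cdot m$. This lands in $G(M)(B)$ because if $e_1|_B = e_2|_B$ then $(e_1 e)|_A = (e_2 e)|_A$. It is independent of the choice of $e$ by the defining condition on $m$.

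Uniform continuity gives $\bigcup_A G(M)(A) = M$, so the counit $FG(M) \to M$ is an isomorphism. Next I check that $F \dashv G$ and that the unit $V \to GF(V)$ has kernel and cokernel torsion. The kernel is torsion by construction of the colimit. For the cokernel, an element $[v]$ with $v \in V(B)$ that lies in $G(F V)(A)$ becomes, after enlarging $B$, the image of something coming from $V(A)$. This uses that $\C$ consists precisely of restrictions, together with a density argument.

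Since $F$ is exact (a filtered colimit), kills exactly the torsion modules, and has a fully faithful right adjoint, the standard Gabriel localization criterion yields $k\mathscr{E}\Mod^{\uc} \simeq \C\Mod/\C\Mod^{\tor}$.

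For the second equivalence I use the general fact that for a Grothendieck topology $J$ on $\C^{\op}$, $k$-linear sheaves form the localization of presheaves at the hereditary torsion theory of $J$-locally-zero presheaves. I verify three points.
\begin{itemize}
\item $J$ is a Grothendieck topology. The maximal sieve contains $\id_A$. For stability under pullback, given a sieve containing $\iota_{A,B}$ and a morphism $f : A' \to A$, extend $f = e|_{A'}$ and note that $\iota_{A,B}\circ f$ factors as $e|_B \circ \iota_{A',\,\cdot}$, where the target is chosen so that an inclusion of $A'$ lies in the pulled-back sieve. Transitivity follows by composing inclusions.
\item A presheaf on $\C^{\op}$ (that is, a $\C$-module) is $J$-locally zero exactly when it is torsion.
\item The sheaf condition coincides with being torsion-free and closed, i.e. with lying in the image of the right adjoint to the quotient functor.
\end{itemize}
Together these identify $\Sh(\C^{\op},J,\underline{k})$ with $\C\Mod/\C\Mod^{\tor}$.

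For the special case, suppose every inclusion $\iota_{A,B}$ has a retraction $p$ with $p\circ\iota_{A,B} = \id_A$. Then $\iota_{A,B}\cdot v = 0$ implies $v = p\,\iota_{A,B}\, v = 0$, so there are no nonzero torsion modules. The quotient is then $\C\Mod$ itself, and $J$ is trivial because every covering sieve contains a split mono and hence is maximal.

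I expect the main obstacle to be the cokernel-torsion and fullness step, i.e. showing $GF(V)(A)$ is exactly the localization of $V$ at $A$. Concretely, one must prove that vectors fixed under all $e$ agreeing on $A$ come from $V(A)$ up to torsion. This needs a careful use of the uniform structure, where entourages are given by agreement on finite sets, rather than mere continuity. It also needs care because elements of $\mathscr{E}$ are not invertible, so the group-theoretic Artin argument via stabilizers must be replaced by the equalizer-type condition above.
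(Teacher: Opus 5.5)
Your proposal is correct and follows the paper's proof essentially step for step: the colimit realization functor, a right adjoint $\mathcal{L}$ defined by the same equalizer-type condition with $\mathcal{R}\circ\mathcal{L}\cong\id$, Gabriel's localization with $\ker\mathcal{R}=\C\Mod^{\tor}$, the identification of $J$-torsion with torsion for the sheaf description, and the retraction argument for the special case. The step you flag as the main obstacle is not one, and no density argument is needed. The Gabriel criterion you invoke uses only exactness of $F$, $\ker F=\C\Mod^{\tor}$ and the counit isomorphism. In any case the cokernel of the unit is trivially torsion: for $w=[v]\in GF(V)(A)$ with $v\in V(B)$ one has $\iota_{A,A\cup B}\cdot w=\eta_{A\cup B}(\iota_{B,A\cup B}\cdot v)$, although $w$ need not come from $V(A)$ itself.
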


This equivalence provides a systematic framework connecting three different perspectives: representations of combinatorial categories, uniformly continuous representations of infinite transformation monoids, and their interpretation via sheaf-theoretic localization. In this sense, it may be viewed as a monoidal analogue of Artin's classical reconstruction theorem for topological groups (see \cite[III.9]{MM}), extending the reconstruction principle from invertible symmetries to general transformation monoids. The equivalence therefore allows one to translate structural and representation-theoretic questions about infinite transformation monoids into problems about representations of categories $\C$ of finite subsets of $\Omega$, where powerful algebraic techniques are available. We discuss this relationship in more detail in Subsection~\ref{Artin's theorem}.

If the category $\C$ satisfies the additional assumption in the second statement of the theorem, the equivalence simplifies considerably. In this case it is realized directly by the functor
\[
\mathcal{R}(V)=\varinjlim_{A\in\Ob(\C)} V(A),
\]
where the colimit is taken over the poset of finite subsets ordered by inclusion. Under this assumption, no nonzero $\C$-module is torsion with respect to inclusions, so the localization step disappears and the correspondence becomes completely canonical. This condition is satisfied by the five combinatorial categories arising from highly homogeneous relations (when $\mathscr{E}$ is taken to be the monoid of maps preserving the underlying defining relations on $\Omega$). Consequently, the theorem allows one to apply the rich theory of representations of these combinatorial categories to the study of uniformly continuous representations of the corresponding infinite transformation monoids.

A striking feature of the theory is that very different monoids can have identical uniformly continuous representation theory. Explicitly, for any highly homogeneous relation on $\Omega$, the automorphism group shares the same uniformly continuous representation theory as the monoid of injective relation-preserving maps; in this case, every continuous representation of the monoid is automatically uniformly continuous. Similarly, the full transformation monoid consisting of all relation-preserving maps and the submonoid of surjective such maps still share the same uniformly continuous representations, although continuous representations of these monoids may fail to be uniformly continuous. These examples illustrate that, despite substantial combinatorial differences, their uniformly continuous representation theory can be indistinguishable. For details, see Subsection \ref{Application II}.

Beyond permutation structures, our framework also applies to infinite-dimensional algebraic settings. A natural example is the endomorphism monoid $\mathscr{E} = \End_{\mathbb{F}_q} (\Omega)$ of a countably infinite-dimensional vector space $\Omega$ over a finite field $\mathbb{F}_q$. We show that the category of uniformly continuous representations of $\mathscr{E}$ is equivalent to the well-studied category of $\VA_q$-modules, where $\VA_q$ denotes the category of finite-dimensional $\mathbb{F}_q$-vector spaces and linear maps. In particular, when $k$ is a field of characteristic 0, this equivalence as well as a result of Kuhn \cite{Kuhn} tells us that the category of uniformly continuous representations of $k\mathscr{E}$ is semisimple.

\subsection{Organization and conventions}

The paper is organized as follows. In Section 2, we study categories of finite subsets and relation-preserving maps and prove that they admit generalized Reedy structures in full generality. Section 3 introduces mutation graphs, a key combinatorial tool used to compute Hom- and Ext-spaces between standard modules. In Section 4, we establish the generalized Dold--Kan correspondence and the equivalence with the normalized category $\mathscr{K}$ for all Cameron categories except $\FA$. Section 5 is devoted to explicit computations of Hom-spaces between standard modules and a classification of irreducible modules. In Section 6, we present further representation-theoretic results for $\CA$, $\SA$, and $\FA$, including the refined Dold--Kan correspondence. Finally, in Section 7, we investigate the relationship between uniformly continuous representations of endomorphism monoids and representations of the associated categories of finite subsets, establishing a monoidal extension of Artin’s theorem.

\medskip

All functors (resp., modules) in this paper are covariant (resp., left modules) unless otherwise specified. Morphism composition is read from right to left. For $n \in \mathbb{Z}_+$, we set $[n] = \{1, \dots, n\}$, excluding $0$.

\section{Reedy Structure}

Throughout this paper, let $\Omega$ be an infinite set equipped with an \emph{underlying relation} $\mathcal{R}_{\Omega}$. Let $\C$ be a skeletal full subcategory of the category whose objects are finite subsets of $\Omega$ equipped with relations inherited from $\mathcal{R}_{\Omega}$, and whose morphisms are relation-preserving maps. Thus every finite subset of $\Omega$ is isomorphic to a unique object in $\C$. Denote by $\C^+$ the wide subcategory consisting of injective morphisms, and by $\C^-$ the wide subcategory consisting of surjective morphisms.

We will show that $\C$ is a generalized Reedy category in the sense of Berger--Moerdijk \cite{BergerMoerdijk2011}. For an object $A\in\C$, denote by $G_A$ its automorphism group; equivalently,
\[
G_A = \C^+(A, A) = \C^-(A, A).
\]

\begin{lemma}\label{free action}
For any objects $A, B \in \Ob(\C)$, one has:
\begin{enumerate}
\item the right action of $G_{B}$ on $\C^+(B,A)$ is free;
\item the left action of $G_{A}$ on $\C^-(B,A)$ is free.
\end{enumerate}
\end{lemma}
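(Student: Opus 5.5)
Both statements follow from elementary cancellation properties of injective and surjective maps, once we recall that morphisms of $\C$ are set maps and composition is ordinary composition. The plan is to unwind the definition of a free action and cancel on the appropriate side.

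For (1), the right action of $G_B$ on $\C^+(B,A)$ sends $(f,\sigma)$ to $f\circ\sigma$. This is well defined because $f\circ\sigma$ is again injective and relation-preserving. Freeness means that if $f\circ\sigma = f$ for some injective $f: B \hookrightarrow A$ and $\sigma \in G_B$, then $\sigma = \id_B$. We check this pointwise. For every $b \in B$, we have $f(\sigma(b)) = f(b)$, and injectivity of $f$ gives $\sigma(b) = b$. Hence $\sigma = \id_B$. In other words, injective maps are monomorphisms in the category of sets, and therefore in $\C$.

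For (2), the left action of $G_A$ on $\C^-(B,A)$ sends $(\tau,g)$ to $\tau\circ g$, which is again surjective and relation-preserving. Suppose $\tau\circ g = g$ for some surjective $g: B \twoheadrightarrow A$ and $\tau\in G_A$. Take any $a\in A$. By surjectivity there is $b\in B$ with $g(b)=a$, so $\tau(a) = \tau(g(b)) = g(b) = a$. Thus $\tau = \id_A$, since surjective maps are epimorphisms of sets.

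The only point requiring care is confirming that $G_A$ is exactly the set of bijective relation-preserving self-maps, so that the actions are by composition with genuine set maps. The identification $G_A = \C^+(A,A) = \C^-(A,A)$ holds because $A$ is finite, so an injective or surjective self-map is bijective. Here I would assume, as the paper's statement $G_A = \C^+(A,A)$ implicitly asserts, that the inverse of a bijective morphism is again relation-preserving. Beyond this, no real obstacle is expected; the argument is routine.
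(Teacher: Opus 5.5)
Your proof is correct and follows the paper's argument: for (1) you cancel the injective $f$ pointwise, and for (2) you write out the dual surjectivity cancellation that the paper leaves as ``dual''. Your closing caveat about inverses of bijective morphisms is not needed, because freeness only uses that elements of $G_A$ and $G_B$ are set maps composed as set maps.
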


\begin{proof}
We only prove (1) since (2) is a dual statement. Let $h: B \hookrightarrow A$ be an injective morphism and let $\sigma \in G_B$ satisfy $h \circ \sigma = h$. For any $x \in B$ we have $h(\sigma(x)) = h(x)$. Since $h$ is injective on underlying sets, it follows that $\sigma(x) = x$. Hence $\sigma = \id_B$, and the right action is free.
\end{proof}

\begin{lemma}\label{morphism factorization}
For every morphism $f: B \to A$ in $\C$, there exist
\begin{itemize}
\item a unique object $C$ of $\C$,
\item a surjective morphism $f^- \in \C^-(B, C)$,
\item an injective morphism $f^+ \in \C^+(C, A)$,
\end{itemize}
such that $f = f^+ \circ f^-$.

Furthermore, this factorization is unique up to a unique automorphism of $C$: if $f = g^+ \circ g^-$ is another such factorization with $g^- \in \C^-(B, C)$ and $g^+ \in \C^+(C, A)$, then there exists a unique $\sigma \in G_C$ such that the diagram
\[
\xymatrix{
 & C \ar[d]^-{\sigma} \ar[dr]^-{f^+} \\
B \ar[r]_-{g^-} \ar[ur]^-{f^-} & C \ar[r]_-{g^+} & A
}
\]
commutes.
\end{lemma}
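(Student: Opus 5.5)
We will obtain the factorization by taking the image of $f$. Concretely, let $f: B \to A$ be a relation-preserving map. Its set-theoretic image $f(B)$ is a finite subset of $A \subseteq \Omega$. We equip it with the relation inherited from $\mathcal{R}_\Omega$, which is the same as the relation restricted from $A$. Since $\C$ is skeletal and contains a representative of every finite subset of $\Omega$, there is a unique object $C$ of $\C$ together with an isomorphism of relational structures $\theta: C \to f(B)$. We define $f^+ = \iota \circ \theta$, where $\iota: f(B) \hookrightarrow A$ is the inclusion, and $f^- = \theta^{-1} \circ f$, viewed as a map $B \to C$. Then $f^+$ is injective and $f^-$ is surjective, and $f = f^+ \circ f^-$ holds as maps of sets.

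We next check that both maps are morphisms of $\C$, i.e.\ relation-preserving. For $f^+$ this holds because $\theta$ is an isomorphism and the relation on $f(B)$ is the one inherited from $A$. For $f^-$ we use that $f$ preserves relations and lands in $f(B)$, where the relation is the induced one: if a tuple of $B$ is related, its image under $f$ is related in $A$, hence related in $f(B)$, hence its image under $\theta^{-1}$ is related in $C$.

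For uniqueness of $C$: in any factorization $f = g^+ \circ g^-$ with $g^-$ surjective and $g^+$ injective, the map $g^+$ is a bijection of $C'$ onto $f(B)$. We will check that it is an isomorphism of relational structures, not just a relation-preserving bijection. By skeletality this gives $C' = C$.

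This is the main subtle point: a relation-preserving bijection need not reflect relations. I expect to handle it through the structure of $C'$ as a subset of $\Omega$ with the inherited relation. The map $g^+$ carries $C'$ bijectively onto $f(B)$ and preserves the relation, so $g^+(\mathcal{R}_{C'}) \subseteq \mathcal{R}_{f(B)}$. We get equality from a counting argument: finite relations transported by a bijection onto a finite set compare by cardinality. Both relations are induced from $\Omega$, and $f(B)$ and $C'$ are the same subset up to the structure considered. Alternatively, we can use $g^-$: since $g^-$ is surjective and relation-preserving, every related tuple of $f(B)$ that comes from a related tuple of $B$ also comes from one in $C'$. If that is not available in general, we will instead argue symmetrically. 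Both $f^+$ and $g^+$ identify their sources with $f(B)$, so $\sigma = (f^+)^{-1} g^+$ and its inverse are both relation-preserving bijections of finite relational structures. This yields $|\mathcal{R}_{C'}| \le |\mathcal{R}_C| \le |\mathcal{R}_{C'}|$, so both are isomorphisms.

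Finally, for the existence and uniqueness of $\sigma$, we set $\sigma = (g^+)^{-1} \circ f^+$, taken on $C$ onto $f(B)$. This is a bijection $C \to C$ and, by the above, an automorphism in $G_C$. The equation $g^+ \sigma = f^+$ holds by construction. From $g^+ \sigma f^- = f = g^+ g^-$ and the injectivity of $g^+$ we get $\sigma f^- = g^-$. Uniqueness of $\sigma$ follows from Lemma \ref{free action}(1): if $g^+\sigma = g^+\sigma'$, then $\sigma = \sigma'$.
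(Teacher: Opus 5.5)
Your existence argument is correct and matches the paper's: the paper takes the quotient $B/\!\sim$ by the kernel of $f$ with the relation pulled back from $A$, which is isomorphic to your image $f(B)$ with the inherited relation. The gap is your claim that every factorization $f=g^+\circ g^-$ through an object $C'$ forces $C'=C$. Your first inequality $|\mathcal{R}_{C'}|\leqslant|\mathcal{R}_C|$ is fine, since it comes from the relation-preserving bijection $(f^+)^{-1}g^+\colon C'\to C$. The reverse inequality, however, rests on declaring the inverse of that bijection relation-preserving. That is exactly the statement that $g^+$ reflects relations, which is what you set out to prove, so the argument is circular. Your two fallbacks do not help either. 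A related tuple of $f(B)$ need not be the image of a related tuple of $B$, and $C'$ and $f(B)$ are not ``the same subset''.

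In fact no argument can work in the generality of Section~2. Let $\Omega$ be a graph, let $B$ be a non-edge and $A$ an edge on two vertices, and let $f\colon B\to A$ be a bijection. Then $f=\id_A\circ f=f\circ\id_B$ are two surjective-then-injective factorizations through the non-isomorphic objects $A$ and $B$. So uniqueness of $C$ holds only under extra hypotheses. One is the Cameron case, where finite substructures are determined up to isomorphism by cardinality, and $|C'|=|f(B)|=|C|$. The other is taking $\C^+$ to consist of embeddings.

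What the second half of the lemma asserts, and what the paper's proof actually treats, is a second factorization through the same object $C$. In that case your counting idea closes the argument without circularity, because the second inequality becomes vacuous. The map $\tau=(f^+)^{-1}\circ g^+\colon C\to C$ is a relation-preserving bijection of a finite structure onto itself. For each relation symbol it therefore maps the finite set $\mathcal{R}_C$ injectively into itself, hence bijectively. So $\tau\in G_C$, and $\sigma=\tau^{-1}$ satisfies $g^+\sigma=f^+$ and $\sigma f^-=g^-$. It is unique by injectivity of $g^+$, as you say.

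This finiteness step is also what properly justifies the paper's assertion that $\sigma$ is relation-preserving. Surjectivity of $f^-$ alone does not give it, because the relation on $C$ is pulled back from $A$, not pushed forward from $B$.
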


\begin{proof}
We first show the existence of the factorization. Define an equivalence relation $\sim$ on the underlying set of $B$ by
\[
x \sim y \quad \Longleftrightarrow \quad f(x) = f(y).
\]
Let $C = B/\sim$ be the set of equivalence classes. We equip $C$ with a relational structure as follows: for any relation symbol $R$ and any $[x_1], \dots, [x_n] \in C$, define
\[
R_C([x_1], \dots, [x_n]) \quad \Longleftrightarrow \quad R_A(f(x_1), \dots, f(x_n)).
\]
This is well-defined, since if $x_i \sim x_i'$ for all $i$, then $f(x_i) = f(x_i')$, so the right-hand side does not depend on the choice of representatives.

Without loss of generality, we may assume that $C$ is an object of $\C$. Let $f^-: B \twoheadrightarrow C$ be the canonical projection. By construction, $f^-$ is surjective. Moreover, if $R_B(x_1,\dots,x_n)$ holds in $B$, then since $f$ is relation-preserving, $R_A(f(x_1),\dots,f(x_n))$ holds in $A$, and hence $R_C([x_1],\dots,[x_n])$ holds in $C$. Thus $f^-$ is relation-preserving, so $f^- \in \C^-$.

Next, define $f^+: C \to A$ by $f^+([x]) = f(x)$. This is well-defined by definition of $\sim$, and by construction we have $f = f^+ \circ f^-$. It is clear that $f^+$ is injective. Furthermore, if $R_C([x_1],\dots,[x_n])$ holds, then by definition $R_A(f(x_1),\dots,f(x_n))$ holds, so $R_A(f^+([x_1]),\dots,f^+([x_n]))$ holds as well. Thus $f^+$ is relation-preserving, so $f^+ \in \C^+$. This gives the required factorization.

\medskip
For uniqueness, suppose $f = g^+ \circ g^-$ is another factorization. Then for $x,y \in B$,
\[
g^-(x) = g^-(y) \quad \Longrightarrow \quad f(x) = f(y),
\]
so $\ker(g^-) = \sim$. By the universal property, there exists a unique map $\sigma: C \to C$ such that
\[
g^- = \sigma \circ f^-.
\]
Since $g^- = \sigma \circ f^-$ and $f^-$ is surjective, it follows that $\sigma$ is relation-preserving and surjective; from
\[
f = f^+ \circ f^- = g^+ \circ g^- = g^+ \circ \sigma \circ f^-
\]
and the surjectivity of $f^-$, we deduce that $f^+ = g^+ \circ \sigma$ and the injectivity of $\sigma$. Thus $\sigma$ is contained in $G_C$ and makes the diagram commute.
\end{proof}

Recall from \cite[Definitions~1.1]{BergerMoerdijk2011} that a small skeletal category $\mathcal{C}$ is called a \emph{generalized Reedy category} with respect to a degree map $d: \Ob(\mathcal{C}) \to \lambda$, where $\lambda$ is an ordinal, if it satisfies the following conditions:
\begin{enumerate}
\item $\mathcal{C}$ has a wide subcategory $\mathcal{C}^+$ such that $\mathcal{C}^+(x,y) \neq \varnothing$ only if $x=y$ or $d(x)<d(y)$;

\item $\mathcal{C}$ has another wide subcategory $\mathcal{C}^-$ such that $\mathcal{C}^-(x,y) \neq \varnothing$ only if $x=y$ or $d(x)>d(y)$;

\item for every $x \in \Ob(\mathcal{C})$, one has $\mathcal{C}^+(x,x) = \mathcal{C}^-(x,x) = G_x$, the automorphism group of $x$;

\item every morphism $f$ in $\mathcal{C}$ admits a factorization $f = f^+ \circ f^-$ with $f^+$ in $\mathcal{C}^+$ and $f^-$ in $\mathcal{C}^-$, and this factorization is unique up to automorphisms;

\item for all $x,y\in \Ob(\mathcal{C})$, the automorphism group $G_y$ acts freely on $\mathcal{C}^-(x,y)$ from the left.
\end{enumerate}

\medskip
From Lemma~\ref{free action} and Lemma~\ref{morphism factorization}, we immediately obtain:

\begin{proposition}\label{Reedy}
The category $\C$ is a generalized Reedy category with respect to the degree map
\[
d: \Ob(\C) \to \mathbb{Z}_+, \qquad A \mapsto |A|.
\]
\end{proposition}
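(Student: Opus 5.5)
The plan is to check the five axioms of Berger--Moerdijk one at a time. We take $\C^+$ and $\C^-$ to be the wide subcategories of injective and surjective morphisms, with degree map $d(A)=|A|$. Skeletality of $\C$ is part of the standing hypotheses, so the only work is to show that each axiom reduces to a cardinality count or to one of Lemmas~\ref{free action} and~\ref{morphism factorization}.

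For axiom (1), let $f\in\C^+(A,B)$ be injective. Then $|A|\leqslant|B|$. If equality holds, $f$ is a bijection between finite sets of the same size. To conclude $A=B$ we use skeletality: two objects of $\C$ are isomorphic only if they are equal. So we must show that $f$ is an isomorphism in $\C$, i.e.\ that $f^{-1}$ is relation-preserving.

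This is the only genuinely non-formal point, and I expect it to be the main obstacle. I would argue as follows. For each relation symbol $R$ of arity $r$, the bijection $f$ induces an injective map from $R_A$ into $R_B$, where both are viewed as subsets of the finite sets $A^r$ and $B^r$. Likewise any relation-preserving bijection $B\to A$ induces an injective map $R_B\to R_A$. Comparing cardinalities then forces the induced map to be bijective, so $f^{-1}$ preserves relations.

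I would organize this by applying the same reasoning to $f$ composed with itself. Since $f$ is a bijection of finite sets, some power of the self-composite is the identity, and this gives $f^{-1}$ as a composite of relation-preserving maps. For this to make sense we need a relation-preserving map back from $B$ to $A$. Here I plan to use that $A$ and $B$ are finite substructures of $\Omega$ with the same underlying cardinality. In Cameron's setting, high homogeneity means substructures of $\Omega$ of equal size are isomorphic, so $A\cong B$ and hence $A=B$. In full generality I would instead rely on the finite counting argument for the relations.

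Axiom (2) is the dual of (1). A surjective $f\colon A\to B$ has $|A|\geqslant|B|$, and if equality holds then $f$ is a bijection and we conclude exactly as in (1).

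Axiom (3) follows from the same counting argument. An injective endomorphism of a finite object is bijective, hence an automorphism by the above, and the same holds for a surjective endomorphism. Conversely, every automorphism is both injective and surjective. Therefore $\C^+(A,A)=\C^-(A,A)=G_A$.

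Axiom (4) is exactly Lemma~\ref{morphism factorization}. Axiom (5) is Lemma~\ref{free action}(2).
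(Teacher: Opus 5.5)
For axioms (4) and (5) you do exactly what the paper does. The paper's entire proof is the remark that the proposition follows from Lemma~\ref{free action} and Lemma~\ref{morphism factorization}. Axioms (1)--(3) are taken for granted there: the identity $G_A=\C^+(A,A)=\C^-(A,A)$ is simply asserted. Supplying (1)--(3) is where you go beyond the paper, and also where there is a problem.

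In Cameron's setting your reasoning is correct and can be streamlined. High homogeneity leaves exactly one object $[n]$ per cardinality, so (1) and (2) follow at once from skeletality. For (3), an injective or surjective endomorphism $f$ of $[n]$ is a permutation. Hence $f^N=\id$ for some $N\geqslant 1$, and $f^{-1}=f^{N-1}$ is relation-preserving. Your sentence about composing $f$ with itself only makes sense for endomorphisms, so it belongs to (3), not to (1).

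The fallback ``in full generality I would rely on the finite counting argument'' does not work. Counting only gives $|R_A|\leqslant|R_B|$. Equality needs a relation-preserving map $B\to A$, as you yourself noticed, and such a map need not exist. In fact the proposition is false in the generality of Section~2.

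Here is a counterexample. Take $\Omega=\mathbb{N}$ with a single binary relation consisting of the one pair $(0,1)$, and let $A=\{2,3\}$ and $B=\{0,1\}$. The bijection $2\mapsto 0$, $3\mapsto 1$ preserves relations vacuously and is both injective and surjective, yet $A\not\cong B$. So it is a morphism in $\C^+\cap\C^-$ between distinct objects of the same degree, violating (1) and (2). No other degree function can repair this, since (1) and (2) would force both $d(A)<d(B)$ and $d(A)>d(B)$.

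You should therefore state the result under an explicit hypothesis. One option is that finite substructures of $\Omega$ of equal size are isomorphic, which holds for the five highly homogeneous structures. A more general option is that every bijective morphism in $\C$ is an isomorphism. Under either hypothesis your verification of (1)--(3) is complete, and it fills a step the paper's proof leaves implicit.
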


Let $k$ be a commutative ring. By definition, a \emph{$\C$-module} is a covariant functor from $\C$ to $k\Mod$. Denote by $\C \Mod$ the category of all $\C$-modules. It is a Grothendieck category and in particular has enough projective objects. Equivalently, one may form the associative algebra $k\C$, called the \emph{category algebra} of $\C$, defined in the same spirit as the path algebra of a quiver. A $\C$-module can then be identified with a left $k\C$-module graded by the set $\Ob(\C)$ of objects.

Given an object $A$ of $\C$, define the following $\C$-modules:
\begin{itemize}
\item $P_A = k\C(A,-)$, the representable functor;

\item $\mathfrak{I}_{A}$, the submodule of $P_A$ spanned by all \emph{non-injective} morphisms with source $A$;

\item $\Delta_{A} = P_{A}/\mathfrak{I}_{A}$.
\end{itemize}

We call $\Delta_{A}$ the \emph{standard module} associated to $A$. By construction, $\Delta_{A}$ admits a $k$-basis parameterized by all injective morphisms starting from $A$.

We collect some fundamental properties of these modules in the following proposition, which follow from Lemma~\ref{free action}, Proposition~\ref{Reedy}, and \cite[Lemmas~2.10 and~3.4, and Theorem~3.6]{DLL}. Set $\C^0 = \C^+ \cap \C^-$, the wide subcategory of $\C$ consisting of isomorphisms.

\begin{proposition}\label{standard modules}
With notation as above, the following hold:
\begin{enumerate}
\item $k\C$ is a left projective $\C^+$-module and a right projective $\C^-$-module;

\item there is a decomposition of $(\C^0, \C^0)$-bimodules
\[
\mathfrak{I}_{A} \cong \bigoplus_{|B|<|A|} k\C^+(B, -) \otimes_{kG_{B}} k\C^-(A, B);
\]

\item $P_{A}$ admits a finite filtration whose successive quotients are standard modules $\Delta_{B}$ with $|B| \leqslant |A|$, and in which $\Delta_{A}$ occurs with multiplicity one;

\item there is a natural isomorphism
\[
\Delta_{A} \cong k\C \otimes_{k\C^-} kG_{A},
\]
where $kG_{A}$ is viewed as a left $\C^-$-module concentrated on the object $A$;

\item $\End_{k\C}(\Delta_{A}) \cong kG_{A}$;

\item $\Hom_{k\C}(\Delta_{A}, \, \Delta_{B}) \neq 0 \Longrightarrow |A| \geqslant |B|$;

\item $\Ext_{k\C}^1(\Delta_{A}, \, \Delta_{B}) \neq 0 \Longrightarrow |A|>|B|$.
\end{enumerate}
\end{proposition}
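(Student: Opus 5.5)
The plan is to prove each item directly from the generalized Reedy structure of Proposition~\ref{Reedy} and the free actions of Lemma~\ref{free action}, following \cite{DLL} closely. The key tool is the unique factorization of Lemma~\ref{morphism factorization}. For every pair of objects $A,C$ it gives a bijection
\[
\C(A,C) \;\cong\; \bigsqcup_{B} \C^+(B,C) \times_{G_B} \C^-(A,B),
\]
where $B$ runs over the skeleton. By Lemma~\ref{free action} the $G_B$-actions are free, so on $k$-spans this bijection yields an isomorphism of $(\C^0,\C^0)$-bimodules
\[
k\C \cong \bigoplus_B k\C^+(-,B)\otimes_{kG_B} k\C^-(B,-)
\]
(written with the appropriate variance).

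Item (1) follows at once. As a left $\C^+$-module, each summand $k\C^+(B,-)\otimes_{kG_B} k\C^-(A,B)$ is $k\C^+(B,-)$ tensored over $kG_B$ with a free $kG_B$-module, so it is a direct sum of representables of $\C^+$, hence projective. The right $\C^-$-statement is dual.

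For item (2), a morphism $f$ with source $A$ is non-injective exactly when its image object $C$ in the factorization has $|C|<|A|$. Restricting the decomposition to the summands with $|B|<|A|$ therefore gives (2). One must also check that the span $\mathfrak{I}_A$ is closed under postcomposition, which holds because composing a non-injective map keeps it non-injective.

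For item (3), I filter $P_A$ by rank. Let $F_r\subseteq P_A$ be the span of the morphisms whose image has size at most $r$; each $F_r$ is a submodule. The quotient $F_r/F_{r-1}$ is then identified with $\bigoplus_{|B|=r} k\C^+(B,-)\otimes_{kG_B} k\C^-(A,B)$, with the $\C$-action obtained by composing and discarding anything that drops rank. Since $k\C^-(A,B)$ is a free right $kG_B$-module, this is a finite direct sum of copies of $\Delta_B$. Here I need item (4) together with the observation that $\Delta_B\otimes_{kG_B}kG_B\cong\Delta_B$. Only $B=A$ contributes $k\C^-(A,A)=kG_A$, which gives multiplicity one. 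I expect this identification of $F_r/F_{r-1}$ as a genuine $\C$-module, rather than merely a bimodule, to be the main obstacle: one must verify that a morphism $g$ acting on $f^+\otimes f^-$ either preserves rank, in which case $gf^+$ is injective and the action matches that on $\Delta_B$, or drops rank, in which case it lands in $F_{r-1}$.

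Item (4) comes from the same reasoning applied to $k\C\otimes_{k\C^-}kG_A$. Since $k\C$ is right projective over $\C^-$ with the decomposition above, tensoring kills every morphism that factors through a proper surjection out of $A$. What survives is $k\C^+(A,-)\otimes_{kG_A}kG_A$, which matches the basis of $\Delta_A$.

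For items (5)--(7), I use the adjunction coming from (4): $\Hom_{k\C}(\Delta_A,V)\cong\Hom_{\C^-}(kG_A,\Res V)$. For $V=\Delta_B$, this space consists of the $G_A$-maps into the vectors of $\Delta_B(A)$ that are killed by every proper surjection out of $A$. Now $\Delta_B(A)$ is nonzero only when an injection $B\hookrightarrow A$ exists, which forces $|A|\geqslant|B|$ and gives (6). When $A=B$, we have $\Delta_A(A)=kG_A$, and the condition is vacuous because there are no surjections to smaller objects landing nonzero; this gives (5). For (7), $k\C$ is right projective over $\C^-$, so the tensor functor $k\C\otimes_{k\C^-}-$ is exact. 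It therefore sends a projective resolution of $kG_A$ over $k\C^-$ to one of $\Delta_A$, and we get $\Ext^1_{k\C}(\Delta_A,\Delta_B)\cong\Ext^1_{k\C^-}(kG_A,\Res\Delta_B)$. Over $\C^-$, the syzygy of $kG_A$ is built from $k\C^-(C,-)$ with $|C|<|A|$, namely the maps $A\to C$. So this Ext group vanishes unless some $\Delta_B(C)\neq0$ with $|C|<|A|$, which requires $|B|\leqslant|C|<|A|$.
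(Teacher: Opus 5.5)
Your proposal is correct and follows essentially the route the paper takes by citation: the paper invokes Lemma~\ref{free action}, Proposition~\ref{Reedy} and the general results of \cite{DLL}. Those results rest on exactly your ingredients, namely the bimodule decomposition of $k\C$ from unique factorization with free $G_B$-actions, the rank filtration of $P_A$, and induction along $k\C\otimes_{k\C^-}-$ with the Eckmann--Shapiro isomorphism for $\Hom$ and $\Ext^1$. One small slip: in item (3), $k\C^-(A,B)$ is free as a \emph{left} $kG_B$-module (post-composition, Lemma~\ref{free action}(2)), and this left freeness is what the tensor product over $kG_B$ actually uses.
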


\section{Highly homogeneous structures and mutation graphs}

Recall that the action of a permutation group $G \leqslant \Sym(\Omega)$ on $\Omega$ is \emph{highly homogeneous} if, for every $n \in \mathbb{Z}_+$, the set of $n$-element subsets of $\Omega$ forms a single $G$-orbit. By the fundamental classification theorem of Cameron, there are, up to equivalence, exactly five highly homogeneous relations $\mathcal{R}_{\Omega}$ on $\Omega$ induced by such group actions: the empty relation (pure set), the dense linear order, the dense cyclic order, the dense linear betweenness relation, and the dense separation relation; see \cite[Section 13]{BMMN}.

Accordingly, a skeletal category $\C$ of finite subsets and relation-preserving maps can be described explicitly as follows:

\begin{center}
\begin{tabular}{c|c|c|c}
relation & category & objects & morphisms\\
  \hline
empty & $\FA$ & $[n]$, $n \in \mathbb{Z}_+$ & all maps\\
linear order & $\OA$ & $[n]$, $n \in \mathbb{Z}_+$ & order-preserving maps\\
cyclic order & $\CA$ & $[n]$, $n \in \mathbb{Z}_+$ & cyclic order-preserving maps\\
linear betweenness & $\BA$ & $[n]$, $n \in \mathbb{Z}_+$ & betweenness-preserving maps\\
separation & $\SA$ & $[n]$, $n \in \mathbb{Z}_+$ & separation-preserving maps\\
\end{tabular}
\end{center}

In this section we fix $\C$ to be one of the above five categories unless otherwise specified, and introduce a key combinatorial property of the morphism set of $\C$.

\begin{definition}
Let $f,f': [m]\hookrightarrow[n]$ be two morphisms in $\C^+$. We say that $f'$ is a \emph{mutation} of $f$ if the following conditions hold:
\begin{itemize}
\item there exists a unique element $x \in [m]$ such that $f(x) \neq f'(x)$;
\item there exists a surjective morphism $g: [n] \twoheadrightarrow [n-1]$ such that $g\circ f = g\circ f'$.
\end{itemize}

For $m,n \in \mathbb{Z}_+$, the \emph{mutation graph} $\Gamma_{m, n}$ is the graph whose vertices are injective morphisms $[m] \hookrightarrow [n]$, and where two vertices are joined by an edge if one is a mutation of the other.
\end{definition}

\begin{remark} \label{mutation}
By the definition, the surjective morphism $g$ must collapse $f(x)$ and $f'(x)$. Furthermore, the composite $g \circ f = g \circ f'$ is injective.
\end{remark}

The following example illustrates the above construction.

\begin{example}
Let $\C = \FA$. Then $\Gamma_{2, 3}$ is depicted as below, where we denote an injection $f$ by the tuple $(f(1), f(2))$:
\[
\xymatrix{
(1, 2) \ar@{-}[r] \ar@{-}[d] & (1, 3) \ar@{-}[r] & (2, 3) \ar@{-}[d]\\
(3, 2) \ar@{-}[r] & (3, 1) \ar@{-}[r] & (2, 1)
}
\]

\medskip

Let $\C = \OA$, in which morphisms are order-preserving maps. Then $\Gamma_{2, 4}$ is depicted as below:
\[
\xymatrix{
 & & (2, 3) \ar@{-}[dr] \ar@{-}[dl] & &\\
(1, 2) \ar@{-}[r] & (1, 3) \ar@{-}[r] & (1, 4) \ar@{-}[r] & (2, 4) \ar@{-}[r] & (3, 4)
}
\]

\medskip

Let $\C = \BA$, in which morphisms are order-preserving or order-reversing maps. Then $\Gamma_{2, 4}$ is depicted as below:
\[
\xymatrix{
(1, 2) \ar@{-}[r] & (1, 3) \ar@{-}[r] & (2, 3) \\
(3, 2) \ar@{-}[r] & (3, 1) \ar@{-}[r] & (2, 1)
}
\]
\end{example}

The reader can see from the above example that $\Gamma_{m, n}$ is either connected or has two connected components. This is not occasional.

\begin{proposition}\label{connectedness}
Suppose that the action of $G$ on $\Omega$ is highly homogeneous and that $n-m\geqslant 1$. Then the mutation graph $\Gamma_{m, n}$ is connected for $\OA$, $\CA$, and $\FA$, and has at most two connected components for $\BA$ and $\SA$.
\end{proposition}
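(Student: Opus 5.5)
We identify an injective morphism $f:[m]\hookrightarrow[n]$ with its image $f([m])$ together with the induced bijection, and first analyse when two injections differing in exactly one point $x$ are related by a mutation. By Remark~\ref{mutation}, a mutation requires a surjection $g:[n]\twoheadrightarrow[n-1]$ that collapses exactly the pair $\{f(x),f'(x)\}$ and is injective elsewhere. So the first step is a local lemma: for each of the five relations, decide which pairs $\{a,b\}\subseteq[n]$ can be the unique collapsed pair of a relation-preserving surjection $[n]\to[n-1]$.

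For $\FA$ every pair works. For $\OA$ (and hence for $\BA$, which contains $\OA$) every pair of adjacent elements $\{i,i+1\}$ works. For $\CA$ and $\SA$ every cyclically adjacent pair works, including $\{n,1\}$. I expect that a collapse is possible whenever $f(x)$ and $f'(x)$ are adjacent in the relevant order with no element of $f([m])\setminus\{f(x)\}$ strictly between them, and that $f'$ then agrees with $f$ off $x$. Hence the basic move is: slide one image point to an adjacent unoccupied position.

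Next I prove connectivity for $\OA$. Every order-preserving injection is determined by its image, an $m$-subset of $[n]$. Using slides of a point into an adjacent empty slot, any $m$-subset can be moved to $\{1,\dots,m\}$; concretely, move the smallest point down as far as possible, then the next, and so on. Each slide is a mutation, because collapsing $\{i,i+1\}$ with $i+1$ unused by the other points yields an order-preserving surjection $g$ with $g\circ f=g\circ f'$. Thus $\Gamma_{m,n}$ is connected for $\OA$.

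For $\CA$ (respectively $\BA$, $\SA$), each injection is an $\OA$-type image together with a rotation (respectively a reversal, or both) in $G_m$. The slides connect all injections sharing the same ``combinatorial type'', that is, the same $G_m$-coset datum relative to the image. For $\CA$ one must additionally show that rotation is reachable. Since $n\geq m+1$, there is an empty slot, and sliding the point adjacent to it around the wrap-around edge $\{n,1\}$ changes which point is ``first''. Iterating this cycles through all rotations, so the graph is connected.

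For $\BA$ and $\SA$, reversals cannot be produced by slides. A slide preserves the orientation of the induced linear or cyclic order of $f$ relative to $[n]$, and this orientation is an invariant under mutation. The injections therefore split into at most two classes, orientation-preserving and orientation-reversing, and within each class the $\OA$/$\CA$ argument gives connectivity. This yields at most two components.

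For $\FA$, the images are arbitrary and all permutations occur. Sliding gives connectivity among injections with a fixed image up to relabelling. To realise a transposition of the values $f(x)$ and $f(y)$, use an empty slot $z$ and three mutations: $x\to z$, then $y\to f(x)$, then $x\to f(y)$. Transpositions generate $S_m$, so $\Gamma_{m,n}$ is connected for $\FA$, as illustrated in $\Gamma_{2,3}$.

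The main obstacle is the local lemma for $\BA$ and $\SA$. For these relations we must verify that collapsing an adjacent pair really preserves betweenness and separation, including the wrap-around pair in $\SA$. We must also check that the orientation is genuinely invariant, or at least that no other mutations exist that could merge more than two classes. I would handle this by embedding $\BA\supseteq\OA$ and $\SA\supseteq\CA$ and checking on generators.
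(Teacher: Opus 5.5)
Your proposal is correct and follows essentially the same route as the paper. You show that sliding one image point into an adjacent unoccupied slot (cyclically adjacent for $\CA$, $\SA$) is a mutation, then use such slides to move every injection to the standard inclusion, or to the standard inclusion or its reversal for $\BA$ and $\SA$. Your handling of rotations via the wrap-around slide and of $\FA$ via three-mutation transpositions are only minor variants of the paper's normalization $f(1)=1,\dots,f(m)=m$.

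For the bound of at most two components, the orientation invariance you flag at the end is not needed. Extra mutations could only merge classes, never split them.
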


\begin{proof}
It suffices to show that any injective morphism $f: [m]\hookrightarrow[n]$ can be transformed by a finite sequence of mutations into the standard inclusion (for $\FA,\OA,\CA$), or, in the cases $\BA$ and $\SA$, into either the standard inclusion or its reversal $i \mapsto n+1-i$.

\medskip
\noindent\textbf{Pure set ($\FA$).} Let $f: [m] \hookrightarrow [n]$ be any injection. Since $n>m$, there exists at least one element of $[n]$ not in the image of $f$. If $1 \notin f([m])$, we mutate $f$ by sending $1 \in [m]$ to $1 \in [n]$; if $1 = f(i)$ for some $i \neq 1$, choose $j \notin f([m])$, mutate $f$ by sending $i$ to $j$, and then mutate again to send $1$ to $1$. Repeating this procedure successively for $2,3,\dots,m$, we obtain the natural inclusion.

\medskip
\noindent\textbf{Dense linear order ($\OA$).} Let $f: [m] \hookrightarrow [n]$ be order-preserving with $n>m$. If $f(1) \neq 1$, then $f(1)-1 \notin f([m])$. The order-preserving surjection collapsing $f(1)-1$ and $f(1)$ produces a valid mutation $f'$ with $f'(1) = f(1)-1$. Iterating this procedure, we move $f(1)$ to $1$. After fixing $f(1)=1$, the same argument applies inductively to $f(2),\dots,f(m)$, yielding the standard inclusion.

\medskip
\noindent\textbf{Dense cyclic order ($\CA$).} Let $f: [m] \hookrightarrow [n]$ be a cyclic order embedding. Since $n>m$, there exists at least one hole in the $n$-cycle. A cyclic-order-preserving surjection collapsing two adjacent points allows an occupied point $f(x)$ to be moved into a neighboring hole, producing a mutation. By repeating such mutations, we may arrange $f(1) = 1$. Removing the point $1$ breaks the cycle into a linear order, reducing the problem to the $\OA$ case. Hence the mutation graph is connected.

\medskip
\noindent\textbf{Betweenness ($\BA$).}
For $m\geqslant 2$, embeddings fall into two classes: order-preserving and order-reversing. Any surjective morphism preserving the betweenness relation preserves this orientation type. Consequently, mutations preserve whether an embedding is order-preserving or order-reversing. As in the linear order case, any embedding can be mutated into either the standard inclusion or its reversal. Thus the graph has two connected components for $m\geqslant 2$, and is connected for $m=1$.

\medskip
\noindent\textbf{Separation ($\SA$).} The argument is identical. The separation relation detects cyclic orientation, which is preserved by separation-preserving surjections. Hence mutations preserve orientation class. Every embedding is mutation-equivalent to either the standard embedding or its reverse, and these two are not mutation-equivalent. Therefore the graph has at most two connected components.
\end{proof}

For the special case $m=n-1$, one can check that $f': [n-1] \hookrightarrow [n]$ is a mutation of $f$ if and only if there exists a surjective morphism $g:[n] \twoheadrightarrow [n-1]$ such that $g\circ f = g\circ f' = \id_{[n-1]}$. In this case, the mutation graph has a particularly rigid structure.

\begin{lemma}
Suppose that the action of $G$ on $\Omega$ is highly homogeneous. Then the mutation graph $\Gamma_{n-1, n}$ is bipartite.
\end{lemma}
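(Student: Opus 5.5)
The plan is to construct an explicit $2$-colouring of the vertices of $\Gamma_{n-1,n}$ and check that every edge joins vertices of different colours. Because this colouring will use only the underlying maps of sets, the argument works uniformly for all five categories $\FA,\OA,\CA,\BA,\SA$. Each of these has $\C^+(n-1,n)$ as a subset of the set of all injections $[n-1]\hookrightarrow[n]$, and a mutation in $\C$ is in particular a pair of injections differing in exactly one value. The existence of the surjection $g$ in the definition will not be needed; only the first condition of a mutation enters.

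\textbf{Step 1 (extension to a permutation).} Every injection $f:[n-1]\hookrightarrow[n]$ misses exactly one point $j_f\in[n]$. Define the bijection $\hat f\in\Sym([n])$ by $\hat f(i)=f(i)$ for $i\leqslant n-1$ and $\hat f(n)=j_f$. The assignment $f\mapsto\hat f$ is injective, and we colour the vertex $f$ by $\sgn(\hat f)\in\{\pm1\}$.

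\textbf{Step 2 (a mutation is a transposition).} Let $f'$ be a mutation of $f$, so $f'(x)\neq f(x)$ for exactly one $x\in[n-1]$. Since $f'$ is injective and agrees with $f$ on $[n-1]\setminus\{x\}$, the value $f'(x)$ is distinct from $f(x)$ and from all $f(y)$ with $y\neq x$. Hence $f'(x)=j_f$. The image of $f'$ is then $(f([n-1])\setminus\{f(x)\})\cup\{j_f\}$, so $j_{f'}=f(x)$. It follows that $\hat f'=\tau\circ\hat f$, where $\tau$ is the transposition exchanging $f(x)$ and $j_f$. Indeed, both sides agree at every $y\neq x,n$; at $x$ both give $j_f$; and at $n$ both give $f(x)$. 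Therefore $\sgn(\hat f')=-\sgn(\hat f)$.

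\textbf{Step 3 (conclusion).} By Step 2 every edge of $\Gamma_{n-1,n}$ joins vertices of opposite sign, so the sign colouring is a proper $2$-colouring and $\Gamma_{n-1,n}$ is bipartite. The only point requiring care is Step 2, specifically the observation that injectivity forces the moved point to land exactly in the hole. After that, the transposition identity is a direct check. As a by-product, this colouring should also supply the alternating signs that appear later in the description of singular $\lambda$.
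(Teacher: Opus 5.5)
Your proof is correct and is essentially the paper's argument. The paper also extends $f$ to a permutation of $[n]$ by sending $n$ to the hole, and shows that a mutation changes this permutation by a single transposition; it writes this as $\sigma_{f'}=\sigma_f\circ(a\;n)$, which equals your $\tau\circ\hat f$. The only cosmetic difference is the handling of the five categories: the paper proves the result for $\FA$ and then passes to the other four by noting that their mutation graphs are subgraphs, whereas you build that reduction in directly by using only the ``differ in exactly one value'' condition.
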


\begin{proof}
We first treat the pure set case $\FA$. Each vertex $f$ of the mutation graph is uniquely determined by the sequence $(f(1), \ldots, f(n-1))$. Since the target has $n$ elements, there is a unique element $j \in [n] \setminus f([n-1])$, called the \emph{hole} of $f$. We associate to $f$ a permutation $\sigma_f$ in the symmetric group $S_n$ by
\[
\sigma_f(i)=
\begin{cases}
f(i), & 1 \leqslant i \leqslant n-1,\\
j, & i=n.
\end{cases}
\]
This gives a bijection between injections $[n-1] \hookrightarrow [n]$ and permutations in $S_n$. By definition of mutation, $f$ and $f'$ differ at a unique $a \in [n-1]$, and injectivity forces $f'(a) = j$, the hole of $f$. Equivalently, $\sigma_{f'}$ is obtained from $\sigma_f$ by exchanging the values at positions $a$ and $n$, i.e.
\[
\sigma_{f'} = \sigma_f \circ (a \; n),
\]
a single transposition. Thus every edge in the mutation graph corresponds to a transposition in $S_n$. Since each transposition flips the parity of a permutation, every edge flips parity, and therefore the graph contains no odd cycles. Hence $\Gamma_{n-1, n}$ is bipartite.

For the other highly homogeneous relations, the mutation graph $\Gamma'_{n-1, n}$ is a subgraph of $\Gamma_{n-1, n}$. Since $\Gamma_{n-1, n}$ is a bipartite graph, so is $\Gamma'_{n-1, n}$.
\end{proof}

\medskip

By this lemma, each connected component of $\Gamma_{n-1, n}$ admits a canonical sign system compatible with mutations.

\begin{corollary}\label{sign}
On each connected component of $\Gamma_{n-1, n}$ there exists a function $\epsilon: \Gamma_{n-1, n} \to \{\pm1\}$ such that whenever $f$ and $g$ differ by a mutation, one has $\epsilon(f) = -\epsilon(g)$. In addition, such a function is unique up to multiplication by $-1$.
\end{corollary}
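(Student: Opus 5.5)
This is a direct consequence of the preceding lemma together with elementary facts about bipartite graphs. The plan is to work one connected component at a time, since both the existence and the uniqueness claims are local to a component.

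\textbf{Existence.} Fix a connected component $\Gamma'$ of $\Gamma_{n-1,n}$ and choose a base vertex $f_0 \in \Gamma'$.
For any vertex $f \in \Gamma'$, pick a path from $f_0$ to $f$ of length $\ell(f)$, and set $\epsilon(f) = (-1)^{\ell(f)}$.
This is well defined: two paths from $f_0$ to $f$ together form a closed walk. By the lemma, $\Gamma_{n-1,n}$ is bipartite, so every closed walk has even length, and hence the two path lengths have the same parity.
Concretely, one can take the explicit formula from the proof of the lemma: $\epsilon(f) = \sgn(\sigma_f)\cdot\sgn(\sigma_{f_0})$, where $\sigma_f \in S_n$ is the permutation attached to $f$.
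Now if $f$ and $g$ are adjacent, a path to $f$ extended by the edge $fg$ is a path to $g$ whose length differs by one. Therefore $\epsilon(g) = -\epsilon(f)$. Equivalently, $\sigma_g = \sigma_f \circ (a\; n)$ flips the sign.

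\textbf{Uniqueness.} Suppose $\epsilon'$ is another function on $\Gamma'$ with the same property. Consider the product $\epsilon\epsilon'$. On every edge $fg$ we have
\[
\epsilon(g)\epsilon'(g) = (-\epsilon(f))(-\epsilon'(f)) = \epsilon(f)\epsilon'(f),
\]
so $\epsilon\epsilon'$ takes the same value at adjacent vertices. Since $\Gamma'$ is connected, $\epsilon\epsilon'$ is constant, equal to $\pm1$. Hence $\epsilon' = \pm\epsilon$ on $\Gamma'$.

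There is no real obstacle here. The only point needing care is that the choice of sign is made independently on each component. The statement should therefore be read as: ``unique up to $-1$ on each component.'' A function defined on all of $\Gamma_{n-1,n}$ (which has up to two components for $\BA$ and $\SA$) is unique up to an independent sign on each component.
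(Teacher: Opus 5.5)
Your proposal is correct and follows the paper's argument: the paper also fixes a basepoint $f_0$ in each component, sets $\epsilon(f)=(-1)^{d(f,f_0)}$, and uses bipartiteness to make the parity independent of the chosen path. Your $\epsilon\epsilon'$ argument and the explicit formula via $\sgn(\sigma_f)$ (valid because each $\Gamma_{n-1,n}$ is a subgraph of the $\FA$ graph) simply spell out what the paper calls immediate, including the reading of uniqueness as holding separately on each component.
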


\begin{proof}
Fix a basepoint $f_0$ in each connected component and define
\[
\epsilon(f)=(-1)^{d(f,f_0)},
\]
where $d(f,f_0)$ is the length of any path from $f_0$ to $f$ in the mutation graph. Since the graph is bipartite, the parity of $d(f,f_0)$ is independent of the chosen path. Thus $\epsilon$ is well defined and satisfies $\epsilon(f)=-\epsilon(g)$ whenever $f$ and $g$ are adjacent. Uniqueness up to global sign on each connected component is immediate.
\end{proof}

We need to establish one more elementary fact, which plays a crucial role for us to compute Hom-spaces between standard modules.

\begin{lemma} \label{lifts}
Let $f: [m] \hookrightarrow [n]$ be an injective morphism with $m < n$, and let $h: [n] \twoheadrightarrow [n-1]$ be a surjective morphism identifying the pair $\{a, b\} \subseteq [n]$. Then the following hold:
\begin{enumerate}
\item the composite $h \circ f$ is injective if and only if $|\{a,b\} \cap f([m])| \leqslant 1$;

\item if $\{a,b\}\cap f([m]) = \varnothing$, then for an injective morphism $f': [m] \hookrightarrow [n]$, one has
\[
h \circ f' = h \circ f \quad \Longleftrightarrow f = f';
\]

\item if $|\{a,b\}\cap f([m])| = 1$, then there exists a unique injective morphism
\[
f': [m] \hookrightarrow [n], \quad f' \neq f,
\]
such that $h \circ f' = h \circ f$. Furthermore, $f'$ is a mutation of $f$.
\end{enumerate}
\end{lemma}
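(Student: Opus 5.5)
Since $h:[n]\twoheadrightarrow[n-1]$ is surjective between sets differing in size by one, its fibres are singletons except for the unique two-element fibre $\{a,b\}$. I will work with this description of $h$ throughout.

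For (1), note that $h\circ f$ fails to be injective exactly when two distinct points of $[m]$ land in the same fibre of $h$. Since $f$ is injective, this happens precisely when both $a$ and $b$ lie in $f([m])$.

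For (2), suppose $h\circ f'=h\circ f$. For each $x\in[m]$, the point $f(x)$ is not in $\{a,b\}$, so its fibre $h^{-1}(h(f(x)))$ is the singleton $\{f(x)\}$. Since $f'(x)$ lies in this fibre, $f'(x)=f(x)$. The converse implication is trivial.

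For (3), say $f(x_0)=a$ and $b\notin f([m])$, where $x_0$ is unique by injectivity. Any $f'$ with $h\circ f'=h\circ f$ agrees with $f$ off $x_0$, by the fibre argument of (2). At $x_0$ we need $f'(x_0)\in\{a,b\}$, so $f'(x_0)=b$ if $f'\neq f$. This gives uniqueness, and it also shows that $f'$ differs from $f$ at exactly one point and satisfies $h\circ f'=h\circ f$, which is the definition of a mutation.

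The remaining issue is existence, and this is the main obstacle: we must check that the map $f'$ defined by $f'(x_0)=b$ and $f'=f$ elsewhere is a morphism of $\C$, i.e. relation-preserving. Injectivity is clear, since $b\notin f([m])$. For relation preservation I would argue uniformly rather than case by case. The composite $h\circ f$ is an injective morphism $[m]\to[n-1]$, and $f'$ is a set-theoretic lift of it through $h$ that differs from the lift $f$ only by swapping $a$ for $b$. The image of $f'$ is the image of $f$ with $a$ replaced by $b$. Since $h$ is relation-preserving and $a,b$ have the same image, I expect that $a$ and $b$ are adjacent in the relevant (linear or cyclic) order, with the betweenness and separation cases reducing to the linear and cyclic orders respectively. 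Granting adjacency, replacing $a$ by $b$ does not change the relative position of the chosen point with respect to any other point of $f([m])$, because no point of $f([m])$ lies between $a$ and $b$. Hence every instance of the relation on $f'([m])$ matches the corresponding instance on $f([m])$.

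For $\FA$ there is nothing to check. For the other four categories, I would verify by a short direct check that a surjection $[n]\to[n-1]$ in $\C$ identifies two neighbouring points. This uses Lemma~\ref{morphism factorization}-style reasoning: the relation on the quotient must be induced, so a non-adjacent pair would force some third point to be both between and not between. This adjacency check is the step I would write out carefully.
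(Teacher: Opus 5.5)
Your proof is correct. For (1), (2), uniqueness in (3) and the mutation claim, it is the same fibre argument the paper uses.

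The routes diverge only at the existence step, where one must show that $f'$ is relation-preserving. The paper never locates $a$ and $b$ in the underlying order. It argues that, because $h$ identifies only $a$ and $b$, the relations among points of $h(f([m]))$ are those induced from $[n]$. The embedding $h\circ f=h\circ f'$ then forces the relations on $(f'(1),\dots,f'(m))$ to coincide with those on $(f(1),\dots,f(m))$. Made precise, the argument runs as follows:
\begin{itemize}
\item $f'([m])\subseteq[n]\setminus\{a\}$.
\item The restriction of $h$ to $[n]\setminus\{a\}$ is a bijective morphism between two $(n-1)$-element substructures, and these are isomorphic by high homogeneity.
\item Hence it is an isomorphism, since it sends the finitely many relation tuples injectively into a set of the same size.
\item So $f'=(h|_{[n]\setminus\{a\}})^{-1}\circ(h\circ f)$ is a composite of morphisms.
\end{itemize}
This argument is genuinely uniform. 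It needs no information about which pairs a surjection $[n]\twoheadrightarrow[n-1]$ may collapse.

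Your adjacency route, although you describe it as uniform, is really case-based. The adjacency check for $\BA$ and $\SA$ depends on how the non-strict betweenness and separation relations treat tuples with repeated entries. For $\OA$ (and for $\BA$, granting that its morphisms are monotone), the check is really a fibre-size argument rather than a ``between and not between'' contradiction: monotonicity would pull any point strictly between $a$ and $b$ into the same fibre.

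What your route buys is the concrete picture of a mutation as sliding a point into a neighbouring hole. That is exactly how mutations are manipulated in the proof of Proposition~\ref{connectedness}. The adjacency fact you need is also one the paper itself relies on later, e.g.\ in the proof of Proposition~\ref{filtration of projectives}.
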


\begin{proof}
Statements (1) and (2) are obvious, so we only give a proof for (3). Without loss of generality assume $\{a,b\} \cap f([m]) = \{a\}$, and let $i_0 \in [m]$ be the unique index with $f(i_0) = a$. Define $f': [m] \to[n]$ by
\[
f'(i)=
\begin{cases}
f(i), & i\neq i_0,\\
b, & i=i_0 .
\end{cases}
\]
The map $f'$ is clearly an injective map of sets and satisfies $h \circ f' = h \circ f$ by construction. Besides, $f'$ is a morphism in $\C$. To see this, note that the relation on $h(f([m])) \subseteq [n-1]$ is induced by the relations on $[n]$ via the morphism $h$. Since $h \circ f' = h \circ f$ and $h$ is a morphism that identifies only the pair $\{a, b\}$, the fact that $h \circ f$ is an embedding implies that the relations on the tuple $(f'(1), \dots, f'(m))$ must coincide with those on $(f(1), \dots, f(m))$.

Let $f''$ be any injective morphism with $h \circ f'' = h \circ f$. Then for all $i \neq i_0$ the fiber of $h$ over $(h \circ f)(i)$ is a singleton, so $f''(i) = f(i)$. Injectivity of $f''$ forces $f''(i_0) \in \{a,b\}$, hence $f'' = f$ or $f'' = f'$. Thus $f'$ is the unique injective morphism distinct from $f$ with $h \circ f' = h \circ f$. Moreover, since $f$ and $f'$ differ at exactly one index and are identified by the surjection $h$, $f'$ is a mutation of $f$ by definition.
\end{proof}

\section{Normalization}

The classical Dold--Kan correspondence is established via normalization. The main goal of this section is to extend this powerful tool from the simplex category to categories equipped with an underlying linear order. Unless otherwise specified, in this section let $\C$ be one of the following categories:
\[
\OA, \quad \CA, \quad \BA, \quad \SA.
\]
Correspondingly, for a fixed object $[n]$, its automorphism group $G_n$ is one of the following: the trivial group; the cyclic group $C_n$ of order $n$; the cyclic group $C_2$ of order $2$ (for $n \geqslant 2$); and the dihedral group $D_{2n}$ of order $2n$ (for $n \geqslant 3$).

Let $k$ be a commutative ring and $\D$ be the simplex category, viewed as a wide subcategory of $\C$. For each $n$, define the normalization operator by the ordered product
\[
\Psi_n = (\id_n - d_{n-1}s_{n-1}) (\id_n - d_{n-2}s_{n-2}) \cdots (\id_n - d_1 s_1) \;\in \; k\D(n,n),
\]
where the product is taken in the category algebra $k\D$. Here $d_i: [n-1] \hookrightarrow [n]$ is the $i$-th simplicial face map defined by
\[
d_i(j)=
\begin{cases}
j, & j < i,\\
j+1, & j \geqslant i,
\end{cases}
\]
and $s_i: [n] \twoheadrightarrow [n-1]$ is the $i$-th simplicial degeneracy map defined by
\[
s_i(j)=
\begin{cases}
j, & j \leqslant i,\\
j-1, & j > i.
\end{cases}
\]
By convention, we set $\Psi_1 = \id_{[1]}$. Note that we use the shifted convention $[n] = \{1,2,\ldots,n\}$ rather than the standard simplicial indexing $\{0,1,\ldots,n\}$.

The following property about $\Psi_n$ is implicitly used in \cite{DK85, Pirashvili99}. For completeness, we provide a detailed proof.

\begin{lemma} \label{idempotent}
Notation as above. For each $n \geqslant 1$, the operator $\Psi_n$ is an idempotent in $k\D(n, n)$ whose associated projection is precisely the normalization.
\end{lemma}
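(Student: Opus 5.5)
The plan is to establish two structural facts about $\Psi_n$ and derive idempotency, together with the identification of its image and kernel, formally from them. Write $e_i = d_i s_i \in k\D(n,n)$ for $1 \leqslant i \leqslant n-1$. Since $s_i d_i = \id_{[n-1]}$, each $e_i$ is idempotent, and $\id_n - e_i$ is the complementary idempotent. The two facts are:

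\begin{enumerate}
\item[(A)] $\Psi_n \in \id_n + \sum_{i=1}^{n-1} d_i \, k\D(n,n-1)$.
\item[(B)] $s_i \Psi_n = 0$ for every $1 \leqslant i \leqslant n-1$.
\end{enumerate}

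Fact (A) is immediate. Expanding the ordered product, every term other than $\id_n$ has the form $\pm e_{i_1} e_{i_2}\cdots e_{i_r}$, and its leftmost factor is $d_{i_1}$. So we may write $\Psi_n = \id_n + \sum_i d_i a_i$ with $a_i = s_i(\cdots)$, that is, each $a_i \in s_i\, k\D(n,n)$.

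Granting (B), idempotency follows at once. We have $\Psi_n^2 = \Psi_n + \sum_i d_i a_i \Psi_n$. Each $a_i$ begins with $s_i$, so by (B) every $a_i\Psi_n$ vanishes, giving $\Psi_n^2 = \Psi_n$.

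For the ``normalization'' claim, let $V$ be any $\D$-module (equivalently, apply the argument to $P_n$). Then:
\begin{itemize}
\item by (B), $\Psi_n V(n) \subseteq \bigcap_i \ker V(s_i)$;
\item if $V(s_i)x = 0$ for all $i$, then $\Psi_n x = x$, because each factor $\id_n - e_i$ fixes $x$;
\item by (A), $(\id_n - \Psi_n)V(n) \subseteq \sum_i \operatorname{im} V(d_i)$, and $\Psi_n$ kills every $d_i y$ with $y \in V(n-1)$. This last point is the dual statement $\Psi_n d_i = 0$, which I would prove by the mirror of the argument for (B).
\end{itemize}
Hence $\Psi_n$ is precisely the projection of $V(n)$ onto the normalized part $\bigcap_i \ker V(s_i)$, along the degenerate part $\sum_i \operatorname{im} V(d_i)$.

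The heart of the proof, and the main obstacle, is (B) (and its mirror $\Psi_n d_i = 0$). I would proceed by induction on partial products. Put $X_j = (\id_n - e_j)\cdots(\id_n - e_1)$, so that $X_{n-1} = \Psi_n$, and prove $s_i X_j = 0$ for all $i \leqslant j$. The inputs are the shifted simplicial identities:
\begin{itemize}
\item $s_i d_i = s_i d_{i+1} = \id$;
\item $s_i d_j = d_{j-1} s_i$ for $j > i+1$;
\item $s_i s_j = s_{j-1} s_i$ for $j > i$, and the analogous identities for $s_i d_j$ with $j < i$.
\end{itemize}
The base case is $s_i(\id_n - e_i) = s_i - s_i = 0$, so $s_i X_i = 0$. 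For $j > i+1$, the identities give $s_i(\id_n - e_j) = (\id - e'_{j-1})\, s_i$, where $e'_{j-1}$ is the corresponding idempotent one level down. Hence $s_i X_j$ is a left multiple of $s_i X_{j-1}$, and the induction proceeds. The delicate case is $j = i+1$. There $s_i(\id_n - e_{i+1}) = s_i - s_{i+1}$, so
\[
s_i X_{i+1} = s_i X_i - s_{i+1} X_i = -\,s_{i+1} X_i .
\]
It remains to show $s_{i+1} X_i = 0$. I would first try commuting $s_{i+1}$ rightward through the factors $\id_n - e_k$ with $k \leqslant i$, using $s_{i+1} d_k s_k = d_k s_i s_k$ and $s_i s_k = s_k s_{i+1}$ (valid for $k \leqslant i$). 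This should reduce $s_{i+1}X_i$ to an expression containing $s_k X_k$-type pieces, which vanish by the base case. If this bookkeeping becomes unwieldy, the fallback is to strengthen the induction hypothesis to the statement that $s_l X_j \in \sum_{k \leqslant j} k\D \cdot s_k X_k$ for all $l$, which is closed under the commutations above.
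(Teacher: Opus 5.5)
Your architecture is the same as the paper's: prove $s_j\Psi_n=0=\Psi_nd_j$, then expand the product. You have also found exactly the step that is not routine, namely the factor $\id_n-e_{i+1}$, where $s_id_{i+1}=\id$ rather than $d_is_i$. The paper's proof passes over this step by applying $s_jd_i=d_{i-1}s_j$ for all $i>j$ (and $s_id_j=d_{j-1}s_i$ for all $j>i$). Those identities are false when $i=j+1$ (resp.\ $j=i+1$).

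This step is not merely delicate; it is where the argument breaks. The claim $s_{i+1}X_i=0$ that you reduce to is false. Your own commutation rule gives $s_{i+1}(\id_n-e_k)=(\id_{n-1}-e_k)s_{i+1}$ for every $k\leqslant i$. Hence $s_{i+1}X_i=X_i's_{i+1}$, where $X_i'$ is the analogous product on $[n-1]$. Expanding, this is the surjection $s_{i+1}$ plus maps that are not surjective, so it is nonzero. Combined with your reduction, $s_i\Psi_n=-s_{i+1}+(\text{non-surjective terms})\neq 0$ for every $i\leqslant n-2$. So (B) fails as soon as $n\geqslant 3$, and with it the lemma as stated.

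Concretely, take $n=3$ and write maps by their tuples of values. Then $s_1=(1,1,2)$, $e_1=(2,2,3)$, $e_2=(1,3,3)$, $e_2e_1=(3,3,3)$, $e_1e_2=(2,3,3)$, and
\[
\Psi_3=(1,2,3)-(1,3,3)-(2,2,3)+(3,3,3),\qquad s_1\Psi_3=(2,2,2)-(1,2,2)\neq 0,
\]
while $\Psi_3^2=(1,2,3)-(1,3,3)-(2,2,3)+(2,3,3)\neq\Psi_3$. Your fallback cannot rescue this. Since $s_kX_k=0$ for every $k$, the strengthened hypothesis just asserts $s_lX_j=0$ for all $l$, which is false.

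The missing idea is that the factors must be taken in the opposite order:
\[
\Psi_n=(\id_n-e_1)(\id_n-e_2)\cdots(\id_n-e_{n-1}).
\]
Then you only need $s_jd_i=d_is_{j-1}$ and $s_{j-1}s_i=s_is_j$ for $i<j$, and these hold with no exceptional case. So $s_j(\id_n-e_i)=(\id_{n-1}-e_i)s_j$, and $s_j$ slides all the way to the factor $\id_n-e_j$, which kills it. Dually, $s_id_j=d_js_{i-1}$ and $d_id_j=d_jd_{i-1}$ for all $i>j$ give $(\id_n-e_i)d_j=d_j(\id_{n-1}-e_{i-1})$, hence $\Psi_nd_j=0$. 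For $n=3$ this yields $(1,2,3)-(1,3,3)-(2,2,3)+(2,3,3)$, which is idempotent and annihilated on the left by $s_1$ and $s_2$.

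With this ordering your (A), the idempotency, and the image/kernel identification all go through, after one small repair. Knowing $a_i\in s_i\,k\D(n,n)$ does not by itself give $a_i\Psi_n=0$, because (B) acts only on the factor adjacent to $\Psi_n$. Argue instead that every non-identity term of the expansion ends on the right with some $d_js_j$, which (B) annihilates.
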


\begin{proof}
The first step is to check that $s_j \Psi_n = 0 = \Psi_n d_j$ for all $1 \leqslant j \leqslant n-1$. Fix $j$ and factor $\Psi_n = F_> (\id_n - d_j s_j) F_<$, where
\[
F_> = \prod_{i=n-1}^{j+1} (\id_n - d_i s_i), \qquad F_< = \prod_{i=j-1}^{1} (\id_n - d_i s_i),
\]
with products ordered decreasing in $i$. For $i>j$, the simplicial identities
\[
s_j d_i = d_{i-1} s_j, \quad s_j s_i = s_{i-1} s_j
\]
imply the operator relation:
\[
s_j (\id_n - d_i s_i) = s_j - d_{i-1} s_j s_i = s_j - d_{i-1} s_{i-1} s_j = (\id_{n-1} - d_{i-1} s_{i-1}) s_j.
\]
As $s_j$ moves right through $F_>$, each factor $\id_n - d_is_i$ is replaced by $\id_{n-1} - d_{i-1}s_{i-1}$, while the operator $s_j$ keeps its index. Upon reaching the central factor,
\[
s_j (\id_n - d_j s_j) = s_j - s_j d_j s_j = 0
\]
since $s_j d_j = \id_{n-1}$. Multiplication on the right by $F_<$ preserves zero, so $s_j \Psi_n = 0$.

Next, consider $d_j$ acting on the right. For any $i < j$, the identities
\[
d_i d_{j-1} = d_j d_i, \quad s_i d_j = d_{j-1} s_i
\]
imply:
\[
(\id_n - d_i s_i) d_j = d_j - d_i (d_{j-1} s_i) = d_j - d_j d_i s_i = d_j (\id_{n-1} - d_i s_i).
\]
Sliding $d_j$ leftward through $F_<$, it eventually reaches the central factor,
\[
(\id_n - d_j s_j) d_j = d_j - d_j s_j d_j = 0.
\]
Multiplication on the left by $F_>$ preserves zero, hence $\Psi_n d_j = 0$.

To see that $\Psi_n$ is idempotent, expand the product:
\[
\Psi_n = \id_n + \sum_{r \geqslant 1} \sum_{i_1 < \cdots < i_r} (\pm\, d_{i_r}s_{i_r} \cdots d_{i_1}s_{i_1}).
\]
Every non-identity term ends on the right with a factor $d_i s_i$. Since $s_i \Psi_n = 0$ for all $i$, it follows that $(d_i s_i) \Psi_n = 0$. Therefore,
\[
\Psi_n^2 = (\id_n + \text{terms ending in } d_i s_i) \cdot \Psi_n = \id_n \cdot \Psi_n + 0 = \Psi_n.
\]

Finally, if $v \in \bigcap \ker s_i$, then $s_i v = 0$ for all $i$. Thus $(\id_n - d_i s_i) v = v - 0 = v$. Applying this factor by factor, $\Psi_n v = v$. Consequently, $\Psi_n$ is an idempotent operator that projects onto the intersection of the kernels of the $s_i$, which is precisely the standard normalization.
\end{proof}

\begin{remark}
Although each factor in $\Psi_n$ is an idempotent, adjacent factors do not commute. The above proof relies only on the simplicial identities and associativity, without assuming commutativity. We shall remind the reader that the ordering of the factors is crucial: the push-through procedure guarantees that each $s_j$ reaches its corresponding annihilating factor $(\id_n - d_j s_j)$ without requiring commuting adjacent factors.
\end{remark}

Since $k\D(n, n)$ is a subalgebra of $k\C(n, n)$, it follows that $\Psi_n$ is also an idempotent in $k\C$, and hence $k\C \Psi_n$ as a direct summand of $P_n = k\C(n, -)$ is projective.

Note that $\CA$ or $\SA$ have extra cyclic degeneracy maps $s_n: [n] \twoheadrightarrow [n-1]$ identifying the pair $(1, n)$. However, the next lemma tells us that we can factorize any surjective morphism using solely simplicial degeneracy maps when its source and target are not adjacent.

\begin{lemma}\label{factorization}
If $m \geqslant n + 2$, then for any $f \in \C^-(m, n)$, there is a simplicial degeneracy map $s_i: [m] \twoheadrightarrow [m-1]$ with $i < m$, and a surjective morphism $h \in \C^-(m-1, n)$ such that $f = h \circ s_i$.

Dually, if $n \geqslant m +2$, then for any $f \in \C^+(m, n)$, there is a simplicial face map $d_i: [n-1] \hookrightarrow [n]$ with $i < n$, and an injective morphism $h: [m] \hookrightarrow [n-1]$ such that $f = d_i \circ h$.
\end{lemma}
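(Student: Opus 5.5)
The plan is to reduce both statements to elementary facts about simplicial maps. The point is to build the required factor $h$ as a composite of $f$ with a simplicial face or degeneracy map. Then $h$ is automatically a morphism of $\C$, since $\D\subseteq\C$, and no separate check of relation-preservation is needed.

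\emph{Surjective case.} Let $f\in\C^-(m,n)$ with $m\geqslant n+2$. First I will show there is some $i<m$ with $f(i)=f(i+1)$. Granting this, set $h=f\circ d_{i+1}:[m-1]\to[n]$. The simplicial identity $s_i d_{i+1}=\id_{m-1}$ together with $f(i)=f(i+1)$ gives $h\circ s_i=f$. Indeed, $d_{i+1}s_i$ is the identity on $[m]$ except that it sends $i+1\mapsto i$, and $f$ does not distinguish $i$ from $i+1$. Then $h$ is surjective because $f$ is, and $h$ lies in $\C^-(m-1,n)$, giving $f=h\circ s_i$ as required.

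\emph{Existence of the adjacent pair.} This is the step I expect to be the main obstacle. I would first establish that every fibre of $f$ is an interval: a linear interval of $1<2<\cdots<m$ for $\OA,\BA$, and a cyclic interval of the $m$-cycle for $\CA,\SA$. I would prove this directly from relation-preservation. For the linear order and betweenness, if $x<y<z$ with $f(x)=f(z)\neq f(y)$, then the betweenness or order relation on $(x,y,z)$ is violated in the image. For cyclic order and separation, I would use the analogous argument with the quaternary separation relation applied to two points of one fibre and two points of another fibre that interleave. Granting the interval property, the fibres have total excess $\sum(|f^{-1}(j)|-1)=m-n\geqslant 2$, and we count as follows.
\begin{itemize}
\item In the linear cases, any fibre of size $\geqslant 2$ contains some pair $i,i+1$ with $i<m$.
\item In the cyclic cases, at most one fibre wraps around through the pair $(m,1)$. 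If that fibre has size $\geqslant 3$, it contains $m-1,m$ or $1,2$. Otherwise the wrapping fibre contributes excess at most $1$, so some other fibre, which is a genuine linear interval, has size $\geqslant 2$.
\end{itemize}
In every case we obtain $i<m$ with $f(i)=f(i+1)$, as needed.

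\emph{Injective (dual) case.} Let $f\in\C^+(m,n)$ with $n\geqslant m+2$. Then $[n]\setminus f([m])$ has at least two elements, so it contains some $i<n$. Put $h=s_i\circ f:[m]\to[n-1]$; it is a morphism of $\C$. Since $f$ misses $i$ and $s_i$ identifies only $i$ with $i+1$, $h$ is injective. Finally, $d_i s_i$ is the identity on $[n]\setminus\{i\}$, because it sends $j\leqslant i-1$ to $j$ and $j\geqslant i+1$ to $j$. Since $f$ avoids $i$, this gives $d_i\circ h=f$. This half needs no interval argument, so the only real work is the fibre-interval property in the surjective case.
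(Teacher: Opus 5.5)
Your argument is correct, but it takes a different route from the paper's.

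\emph{The paper's route.} It reduces to the simplex category. A surjection is written as $f=\sigma\circ f_0$ with $\sigma\in G_n$ and $f_0\in\D^-(m,n)$; an injection is written as $f=f_0\circ\sigma$ with $\sigma$ an automorphism of the source. The paper then quotes the factorizations $f_0=h_0\circ s_i$ and $f_0=d_i\circ h_0$ known in $\D$, and absorbs $\sigma$ into $h$.

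\emph{Your route.} You find the collapsed adjacent pair (resp.\ a missed point $i<n$) directly and take $h=f\circ d_{i+1}$ (resp.\ $h=s_i\circ f$). This makes $h\in\C$ automatic.

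\emph{Comparison.} For injections both arguments work. For surjections yours is the more reliable one. In $\CA$ and $\SA$, a surjection with a fibre containing $\{m,1\}$ has a fibre that is not a linear interval. An example is $f\colon[4]\to[2]$ with fibres $\{4,1\}$ and $\{2,3\}$. Such a map is not of the form $\sigma\circ f_0$ with $\sigma$ acting on the target, so the paper's reduction does not literally apply to it. Your excess count covers exactly this case: the wrapping fibre absorbs at most one unit of $m-n$ unless it already contains $\{m-1,m\}$ or $\{1,2\}$. It also shows why $m\geqslant n+2$ is needed, since the cyclic degeneracy identifying $1$ and $m$ is the obstruction when $m=n+1$.

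\emph{One caveat.} State the fibre-interval property in $\CA$ and $\SA$ carefully. Treat it as part of the definition of these categories (monotone degree-one maps of cycles, as in the cyclic category); this is also how the paper uses it. It does not follow from an arbitrary non-strict reading of the ternary cyclic relation when the image has two points. If every triple with a repeated entry counts as satisfying the relation, then the map $[4]\to[2]$ with fibres $\{1,3\},\{2,4\}$ becomes a morphism, and the lemma fails for it.
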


\begin{proof}
These statements are well known for the simplex category $\D = \OA$ when $m \geqslant n+2$. We extend them to the four categories considered in this section. For $f \in \C^-(m,n)$, we can write $f = \sigma \circ f_0$ with $\sigma \in G_n$ and $f_0 \in \D^-(m,n)$. But in $\D$ we have $f_0 = h_0 \circ s_i$ for some simplicial degeneracy map $s_i: [m] \twoheadrightarrow [m-1]$ with $i < m$ and a morphism $h_0 \in \D^-(m-1, n)$. Setting $h = \sigma \circ h_0$ gives the desired decomposition.

Dually, for $f\in \C^+(m, n)$ we write $f = f_0 \circ \sigma$ with $\sigma \in G_n$ and $f_0 \in \D^+(m, n)$. But in $\D$ we have $f_0 = d_i \circ h_0$ for some face map $d_i: [n-1] \hookrightarrow [n]$ with $i < n$ and $h_0 \in \D^+(m, n-1)$. Setting $h = h_0 \circ \sigma$ yields the desired factorization.
\end{proof}

The above lemma does not hold for $\FA$ since $h \circ s_i$ identify the pair of adjacent indices $(i, i+1)$, but there are surjections $f$ identifying no adjacent indices.

Let $\mathfrak{J}_n$ be the left $k\C$-ideal generated by simplicial degeneracy maps $s_i: [n] \twoheadrightarrow [n-1]$ with $i<n$. By the proof of Lemma \ref{idempotent}, $\mathfrak{J}_n$ is contained in the kernel of $\Psi_n$. The next result asserts that it is precisely the kernel.

\begin{corollary}\label{ideal description}
 Then there is a canonical isomorphism of left $\C$-modules
\[
k\C \Psi_n \longrightarrow k\C(n,-)/\mathfrak{J}_n .
\]
\end{corollary}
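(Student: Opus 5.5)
The natural map is right multiplication by $\Psi_n$, $\rho: P_n = k\C(n,-) \to k\C\Psi_n$, $x \mapsto x\Psi_n$. I will show that $\rho$ is surjective with kernel exactly $\mathfrak{J}_n$, and then invert the induced map.

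Since $\Psi_n$ is an idempotent by Lemma~\ref{idempotent}, we have $P_n = k\C\Psi_n \oplus k\C(\id_n - \Psi_n)$ as left $\C$-modules. Hence $\rho$ is surjective and $\ker\rho = k\C(\id_n-\Psi_n)$. The inclusion $\mathfrak{J}_n \subseteq \ker\rho$ is already recorded before the statement: the proof of Lemma~\ref{idempotent} gives $s_j\Psi_n = 0$ for $j<n$, so $x s_j \Psi_n = 0$ for all $x$.

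For the reverse inclusion it suffices to show $\id_n - \Psi_n \in \mathfrak{J}_n$, because $\mathfrak{J}_n$ is a left ideal. I would take this from the expansion used in the proof of Lemma~\ref{idempotent}:
\[
\Psi_n = \id_n + \sum \pm\, d_{i_r}s_{i_r}\cdots d_{i_1}s_{i_1}, \qquad 1 \leqslant i_1<\dots<i_r \leqslant n-1 .
\]
Every non-identity term ends on the right with a factor $s_{i_1}$ with $i_1<n$. Each such term is therefore a left multiple of a simplicial degeneracy $s_{i_1}$ and lies in $\mathfrak{J}_n$. It follows that $\id_n - \Psi_n \in \mathfrak{J}_n$, hence $k\C(\id_n-\Psi_n) \subseteq \mathfrak{J}_n$, and so $\ker\rho = \mathfrak{J}_n$. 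Consequently $\rho$ induces an isomorphism $k\C(n,-)/\mathfrak{J}_n \cong k\C\Psi_n$.

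The canonical isomorphism in the statement is the inverse of this induced map, namely the composite $k\C\Psi_n \hookrightarrow P_n \twoheadrightarrow P_n/\mathfrak{J}_n$. To see that this composite is an isomorphism, note that $\rho$ restricts to the identity on $k\C\Psi_n$, since $\Psi_n^2 = \Psi_n$, so the composite is inverse to the map induced by $\rho$. Both maps are natural in the target object, so they are maps of $\C$-modules.

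I do not expect a genuine obstacle here. The only point needing care is the order-of-multiplication convention, composition from right to left. We must have $\mathfrak{J}_n$ equal to left multiples $x\circ s_i$, with $s_i$ applied first, and $\ker\rho$ equal to left multiples of $\id_n-\Psi_n$. The argument also needs the expansion terms to end with $s_i$ on the right, which is exactly how they are written in the expansion. None of this uses anything specific to $\CA$, $\BA$ or $\SA$ beyond the fact that $\D = \OA$ is a wide subcategory of $\C$.
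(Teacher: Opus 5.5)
Your proof is correct, and it reaches the key inclusion $\ker\rho\subseteq\mathfrak{J}_n$ by a different route from the paper's.

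\textbf{The paper's route.} It proves this inclusion combinatorially. Using Lemma \ref{factorization}, it shows that $k\C(n,-)/\mathfrak{J}_n$ is spanned by the injective morphisms out of $[n]$, together with the composites $fs_n$ with $f$ injective (for $\CA,\SA$). It then expands $v\Psi_n$ for a combination $v$ of these. The injections, and afterwards the maps $fs_n$ (whose only identified pair is $\{1,n\}$), cannot be cancelled by the remaining terms, so all coefficients of $v$ vanish.

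\textbf{Your route.} You observe that $\id_n-\Psi_n$ itself lies in $\mathfrak{J}_n$, so $\ker\rho=k\C(n,-)(\id_n-\Psi_n)\subseteq\mathfrak{J}_n$ is immediate.

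\textbf{Comparison.} Your argument is shorter and needs neither Lemma \ref{factorization} nor the case split between $\OA,\BA$ and $\CA,\SA$. As you note, it works for any category containing $\OA$ as a wide subcategory, including $\FA$, where Lemma \ref{factorization} fails. The paper's argument yields more information. Its independence step shows that those injections and composites $fs_n$ form a $k$-basis of the quotient, and this makes the sequence $0\to\Delta_{n-1}\to k\C\Psi_n\to\Delta_n\to0$ of Proposition \ref{filtration of projectives} visible directly.

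\textbf{A caveat about Lemma \ref{idempotent}.} Both your argument and the paper's rely on this lemma, which needs a correction. With the paper's conventions (composition right to left, $d_i$ omitting $i$, $s_i$ identifying $i$ and $i+1$), the identity $s_j\Psi_n=0$ holds for the factors multiplied in the opposite order, $(\id_n-d_1s_1)(\id_n-d_2s_2)\cdots(\id_n-d_{n-1}s_{n-1})$.
\begin{itemize}
\item For the order as written, the relation $s_jd_i=d_{i-1}s_j$ used in that proof fails at $i=j+1$, where $s_jd_{j+1}=\id_{n-1}$.
\item Concretely, for $n=3$ one computes $s_1\Psi_3=c-s_2\neq0$, where $c\colon[3]\to[2]$ is the constant map with value $2$.
\end{itemize}
Your argument goes through unchanged after this correction. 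It uses only $s_j\Psi_n=0$ for $j<n$ and the fact that every non-identity term of the expansion ends on the right in some $s_i$ with $i<n$, and the latter holds for any ordering of the factors.
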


\begin{proof}
Define a $k$-linear map
\[
\phi: k\C(n, -) \longrightarrow k\C \Psi_n, \quad  x \longmapsto x\Psi_n.
\]
We show that $\ker\phi = \mathfrak{J}_n$. One direction follows from the proof of Lemma \ref{idempotent}. It remains to prove the inclusion of the other direction.

By Lemma \ref{factorization}, every non-injective morphism with source $[n]$ either factors through some $s_i$ with $i<n$, or in the cases $\C = \CA, \SA$, factors as $fs_n$ for an injective morphism $f \in \C^+(n-1, -)$. Consequently, $k\C(n, -)/\mathfrak{J}_n$ as a $k$-module is spanned by morphisms $g \in \C^+(n, -)$, and in the cases $\C = \CA, \SA$, morphisms of the form $f s_n$ with $f \in \C^+(n-1, -)$.

Take $v \in \ker \phi$ and write it as
\[
v = \sum_{i=1}^r a_i f_i s_n + \sum_{j=1}^s b_j g_j,
\]
with $f_i \in \C^+(n-1, -)$, $g_j \in \C^+(n, -)$, and $a_i, b_j \in k$. Then
\[
v\Psi_n = \sum_{i=1}^r a_i f_i s_n \Psi_n + \sum_{j=1}^s b_j g_j + \sum_{j=1}^s b_j g_j \cdot (\text{products involving some } d_q s_q),
\]
a linear combination of morphisms. Since degeneracy maps identifies two elements, the only injective morphisms appearing in this expansion are the original $g_j$ with coefficients $b_j$. Thus $v \Psi_n = 0$ forces
\[
\sum_{j=1}^s b_j g_j = 0.
\]
For $\C=\OA,\BA$, there are no terms $f_i s_n$, so we conclude $v=0$.

For $\C =\CA, \SA$, we may therefore assume
\[
v = \sum_{i=1}^r a_i f_i s_n .
\]
Accordingly,
\[
v \Psi_n = \sum_{i=1}^r a_i f_i s_n + \sum_{i=1}^r a_i f_i s_n \cdot (\text{products involving some } d_q s_q).
\]
Each morphism $f_i s_n$ identifies exactly the pair $(1,n)$ in $[n]$. By contrast, any morphism in the second sum identifies a different pair or more than one pair. Therefore, no term in the second sum can equal any $f_i s_n$. Thus the linear combination of $a_i f_i s_n$ is linearly independent from all other terms in the expansion of $v \Psi_n$, and $v \Psi_n = 0$ implies
\[
v = \sum_{i=1}^r a_i f_i s_n = 0.
\]
This finishes the proof.
\end{proof}

The normalized projective module $k\C \Psi_n$ has a very simple filtration by standard modules.

\begin{proposition} \label{filtration of projectives}
If $\C$ is $\OA$ or $\BA$, then $\Delta_n \cong k\C \Psi_n$, and hence is projective. If $\C$ is $\CA$ or $\SA$, then for $n \geqslant 3$, there exists a short exact sequence of $\C$-modules
\[
0 \to \Delta_{n-1} \to k\C \Psi_n \to \Delta_n \to 0.
\]
\end{proposition}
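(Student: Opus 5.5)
The plan is to work with the description $k\C\Psi_n \cong k\C(n,-)/\mathfrak{J}_n$ from Corollary \ref{ideal description}, and to compare $\mathfrak{J}_n$ with the ideal $\mathfrak{I}_n$ of non-injective morphisms from $[n]$, which defines $\Delta_n = P_n/\mathfrak{I}_n$.

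Clearly $\mathfrak{J}_n \subseteq \mathfrak{I}_n$, so there is always a canonical surjection
\[
k\C\Psi_n \cong P_n/\mathfrak{J}_n \twoheadrightarrow P_n/\mathfrak{I}_n = \Delta_n,
\]
whose kernel is $\mathfrak{I}_n/\mathfrak{J}_n$. Every non-injective morphism $f$ with source $[n]$ factors as $f = f^+\circ f^-$ with $f^-$ a proper surjection (Lemma \ref{morphism factorization}). Iterating Lemma \ref{factorization}, and writing $f^- = \sigma\circ f_0$ with $f_0$ in $\OA$ as in its proof, $f^-$ factors through a simplicial degeneracy $s_i$ with $i<n$, except possibly when $f^- \in \C^-(n,n-1)$.

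\textbf{Case $\OA$, $\BA$.} Every surjection $[n]\twoheadrightarrow[n-1]$ preserving order, or preserving order up to reversal, identifies an adjacent pair $(i,i+1)$ with $i<n$. Hence it equals $\sigma\circ s_i$ with $\sigma\in G_{n-1}$, so it lies in $\mathfrak{J}_n$. Therefore $\mathfrak{I}_n = \mathfrak{J}_n$, which gives $\Delta_n\cong k\C\Psi_n$, and projectivity follows since $\Psi_n$ is idempotent (Lemma \ref{idempotent}).

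\textbf{Case $\CA$, $\SA$.} A surjection $[n]\twoheadrightarrow[n-1]$ identifies a cyclically adjacent pair, so it lies in $\mathfrak{J}_n$ unless the identified pair is $\{1,n\}$. In that exceptional case it equals $\tau\circ s_n$ with $\tau\in G_{n-1}$, where $s_n$ is the cyclic degeneracy. By the proof of Corollary \ref{ideal description}, the quotient $\mathfrak{I}_n/\mathfrak{J}_n$ is spanned by the classes of $f s_n$ with $f\in\C^+(n-1,-)$, and those classes are linearly independent. So I define
\[
\Delta_{n-1}\to \mathfrak{I}_n/\mathfrak{J}_n,\qquad f\mapsto f s_n .
\]
This is the map of left modules induced by right multiplication by $s_n$. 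It is well defined because a non-injective $g\in\C(n-1,-)$ gives $g s_n\in\mathfrak{J}_n$: when $n\ge 3$, $g s_n$ identifies at least two pairs, or a triple, so its surjective part factors through some $s_i$ with $i<n$ by the argument above. The map is surjective by the spanning statement and injective by the linear independence statement, which yields the short exact sequence.

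\textbf{Main obstacle.} The key point is the verification that $g s_n \in \mathfrak{J}_n$ whenever $g$ is non-injective, together with checking that the classes $f s_n$ are independent modulo $\mathfrak{J}_n$, not merely in $P_n$. For the latter, I would reuse the kernel computation in Corollary \ref{ideal description}: the elements of $\mathfrak{J}_n$ that involve morphisms identifying only $\{1,n\}$ cancel under $\Psi_n$. This is also where the hypothesis $n\ge3$ enters: for $n=2$ the pair $\{1,2\}=\{1,n\}$ is itself simplicially adjacent, so the argument breaks down.
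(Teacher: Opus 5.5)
Your argument is correct, and it reaches the key identification $\mathfrak I_n/\mathfrak J_n\cong\Delta_{n-1}$ by a different route than the paper.

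The paper argues structurally. It filters $\mathfrak I_n$ and $\mathfrak J_n$ by the size of the image, with layers $k\C\otimes_{k\C^-}k\C^-(n,m)$ and $k\C\otimes_{k\C^-}\mathfrak J^-(n,m)$. It uses Lemma~\ref{factorization} to see that the layers coincide for $m<n-1$. It then reads off the remaining discrepancy as $k\C\otimes_{k\C^-}\bigl(k\C^-(n,n-1)/\mathfrak J^-(n,n-1)\bigr)\cong k\C\otimes_{k\C^-}kG_{n-1}\cong\Delta_{n-1}$ via Proposition~\ref{standard modules}(4).

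You instead write down the explicit map $\bar f\mapsto[fs_n]$ induced by right multiplication by $s_n$. You check well-definedness by hand: for non-injective $g$ with source $[n-1]$, the image of $gs_n$ has at most $n-2$ elements, so Lemma~\ref{factorization} puts $gs_n$ in $\mathfrak J_n$. You then match bases using the basis of $P_n/\mathfrak J_n$ implicit in the proof of Corollary~\ref{ideal description}.

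Your version is more elementary and sidesteps the tacit compatibility of the two filtrations in the paper's argument. It also exhibits the concrete embedding $\Delta_{n-1}\hookrightarrow k\C\Psi_n$, $\bar f\mapsto fs_n\Psi_n$, which is what later stands behind $\mathbf s_n$ and $\delta_n$. The paper's version gets the $\C$-module structure of the kernel for free and stays inside the standardly stratified framework used throughout.

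Two points of precision. First, your sentence about elements of $\mathfrak J_n$ ``cancelling under $\Psi_n$'' is imprecise; the independence step is best phrased as follows. If $x=\sum_i a_if_is_n\in\mathfrak J_n$, then $0=x\Psi_n=x+y$, where $y$ is a combination of morphisms each identifying some pair $\{q,q+1\}$ with $q<n$. For $n\geqslant 3$ no such morphism equals any $f_js_n$, so $x=0$. Second, $n\geqslant 3$ is needed only in that step, not for well-definedness, since for $n=2$ there are no non-injective morphisms out of $[1]$.
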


\begin{proof}
If $\C = \OA$ or $\C = \BA$, then $\mathfrak{J}_n = \mathfrak{I}_n$, so by Corollary \ref{ideal description}
\[
k\C \Psi_n \cong k\C(n, -) / \mathfrak{J}_n = k\C(n, -) / \mathfrak{I}_n = \Delta_n,
\]
which implies the projectivity of $\Delta_n$.

If $\C = \CA$ or $\C = \SA$ and $n \geqslant 3$, then $\mathfrak{J}_n$ is a proper subideal of $\mathfrak{I}_n$, and the quotient of the map
\[
k\C \Psi_n \cong k\C(n, -) / \mathfrak{J}_n \longrightarrow \Delta_n = k\C(n, -) / \mathfrak{I}_n
\]
is isomorphic to $\mathfrak{I}_n / \mathfrak{J}_n$. By Proposition \ref{standard modules}, $\mathfrak{I}_n$ has a filtration whose factors are
\[
k\C \otimes_{k\C^-} k\C^-(n, m) \cong \Delta_m \otimes_{kG_m} k\C^-(n, m),
\]
with $m < n$. Similarly, we can construct a filtration for $ \mathfrak{J}_n $ whose factors are
\[
k\C \otimes_{k\C^-} \mathfrak{J}^-(n, m),
\]
with $ m < n $, where $ \mathfrak{J}^-(n, m) $ is the free $k$-module spanned by all surjective morphisms from $ [n] $ to $ [m] $ factoring through some simplicial degeneracy map $s_i$ with $i < n$. By Lemma \ref{factorization}, we have
\[
\mathfrak{J}^-(n, m) = k\C^-(n, m)
\]
when $m < n - 1$. Consequently,
\[
\mathfrak{I}_n / \mathfrak{J}_n \cong k\C \otimes_{k\C^-} (k\C^-(n, n-1) / \mathfrak{J}^-(n, n-1)).
\]

In $ \CA $ and $ \SA $, the set $ \C^-(n, n-1) $ consists of surjective morphisms identifying exactly one adjacent pair (either simplicial or cyclic). Since $\mathfrak{J}^-(n, n-1)$ is spanned by those identifying a simplicial pair, the quotient $ k\C^-(n, n-1) / \mathfrak{J}^-(n, n-1)$ is a free right $kG_{n-1}$-module with a basis consisting of surjective morphisms identifying the cyclic pair $(1, n)$. Therefore, we obtain
\[
\mathfrak{I}_n / \mathfrak{J}_n \cong k\C \otimes_{k\C^-} kG_{n-1} \cong \Delta_{n-1},
\]
by Proposition \ref{standard modules}. The desired short exact sequence then follows.
\end{proof}

For $n \leqslant 2$, one has $\Delta_n = k\C \Psi_n$, which is projective since $s_2: [2] \twoheadrightarrow [1]$ is the same as $s_1$.

\begin{proposition} \label{thin category}
Suppose that $|m - n| \geqslant 2$. Then
\[
\Hom_{k\C} (k\C \Psi_n, \, k\C\Psi_m) = 0.
\]
If $\C$ is $\OA$ or $\BA$, then the conclusion also holds for $m = n+1$.
\end{proposition}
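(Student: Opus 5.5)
Since $\Psi_n$ and $\Psi_m$ are idempotents, we have the standard identification
\[
\Hom_{k\C}(k\C\Psi_n, k\C\Psi_m) \cong \Psi_n \, k\C(m,n)\, \Psi_m ,
\]
with composition read right to left: an element $x \in k\C(m,n)$ acts by right multiplication $y\Psi_n \mapsto y\Psi_n x \Psi_m$. The task is therefore to show that $\Psi_n f \Psi_m = 0$ for every morphism $f\colon [m]\to[n]$ in $\C$ when $|m-n|\geqslant 2$, and also when $m=n+1$ for $\OA$ and $\BA$.

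\textbf{Factorization step.} Lemma \ref{morphism factorization} gives $f = f^+\circ f^-$ through some $[l]$ with $l\leqslant \min(m,n)$.

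\textbf{Case $l \leqslant m-2$.} Lemma \ref{factorization} applies to $f^-\in\C^-(m,l)$, so $f^- = h\circ s_i$ with $i<m$. Then $f\Psi_m = f^+ h s_i \Psi_m = 0$, because the proof of Lemma \ref{idempotent} shows $s_i\Psi_m = 0$.

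\textbf{Case $l \leqslant n-2$.} Dually, $f^+ = d_i\circ h$ with $i<n$. Then $\Psi_n f = \Psi_n d_i h f^- = 0$, because $\Psi_n d_i = 0$.

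\textbf{Case $|m-n|\geqslant 2$.} Here $l\leqslant\min(m,n)\leqslant\max(m,n)-2$, so at least one of the two cases above applies, and every $f$ is killed. Summing over $f$ gives the vanishing of the Hom-space.

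\textbf{Case $m=n+1$.} If $l\leqslant n-1 = m-2$, the first case applies. The remaining possibility is $l=n$, so $f^+\in G_n$ and $f^-\in\C^-(n+1,n)$ is a surjection identifying a single adjacent pair.
\begin{itemize}
\item For $\OA$, $f^-$ is some $s_i$ with $i<m=n+1$, so $f^-\Psi_m=0$ and hence $f\Psi_m=0$.
\item For $\BA$, order-reversing maps are allowed, but $f^-$ can still be written as $\sigma\circ s_i$ with $\sigma\in G_n$ and $s_i$ simplicial, as in the proof of Lemma \ref{factorization}. Every surjection in $\BA$ identifies a linearly adjacent pair $(i,i+1)$ with $i<m$, and there is no cyclic pair $(1,m)$. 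So again $f\Psi_m=0$.
\end{itemize}

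The main point to check is this last observation: in $\OA$ and $\BA$ every codimension-one surjection factors through a simplicial degeneracy $s_i$ with $i<m$. This is exactly what fails for $\CA$ and $\SA$, where the cyclic degeneracy $s_m$ gives nonzero maps. It follows from the description of $\C^-(m,m-1)$ used in the proof of Proposition \ref{filtration of projectives}, together with the decomposition $f^- = \sigma f_0$ with $f_0\in\D^-$.
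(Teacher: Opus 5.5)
Your argument is correct. For $|m-n|\geqslant 2$ it is the paper's proof: factor $f=f^+\circ f^-$, use Lemma~\ref{factorization} to extract a simplicial $s_i$ with $i<m$ or $d_i$ with $i<n$, and kill it with $s_i\Psi_m=0$ or $\Psi_n d_i=0$.

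You differ from the paper in the case $m=n+1$ for $\OA$ and $\BA$.
\begin{enumerate}
\item The paper does not argue morphism by morphism there. It quotes Proposition~\ref{filtration of projectives} to get $k\C\Psi_n\cong\Delta_n$ and $k\C\Psi_{n+1}\cong\Delta_{n+1}$. It then applies the Reedy vanishing $\Hom_{k\C}(\Delta_A,\Delta_B)\neq 0\Rightarrow|A|\geqslant|B|$ from Proposition~\ref{standard modules}(6).
\item You show directly that $k\C(n+1,n)\Psi_{n+1}=0$. In these two categories every surjection $[n+1]\twoheadrightarrow[n]$ is $\sigma\circ s_i$ with $\sigma\in G_n$ and $i\leqslant n$.
\end{enumerate}

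Both routes rest on the same combinatorial fact. In $\OA$ and $\BA$, every non-injective morphism out of $[m]$ factors through some $s_i$ with $i<m$; the paper records this as $\mathfrak J_m=\mathfrak I_m$. In effect you verify by hand the one consequence of $k\C\Psi_{n+1}\cong\Delta_{n+1}$ that is needed here, namely that $k\C\Psi_{n+1}$ vanishes at $[n]$.

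Your version is self-contained and avoids Corollary~\ref{ideal description} and the imported Reedy-theoretic results. The paper's version is shorter given that machinery, and it shows the conceptual reason: normalized projectives are standard modules, and nonzero maps between standard modules never go up in degree.

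One small citation point. The proof of Proposition~\ref{filtration of projectives} describes $\C^-(m,m-1)$ only for $\CA$ and $\SA$. For $\BA$, what you actually need is that betweenness-preserving maps between finite chains are monotone. Hence a surjection $[m]\twoheadrightarrow[m-1]$ is either $s_i$ or the reversal composed with $s_i$.
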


\begin{proof}
By the Yoneda lemma, we have
\[
\Hom_{k\C}(k\C\Psi_n, \, k\C\Psi_m) \cong \Psi_n \, k\C \Psi_m = \Psi_n \, k\C(m, n) \, \Psi_m.
\]
We will show $k\C(m, n)\Psi_m = 0$ for $m \geqslant n + 2$ and $\Psi_n k\C(m, n) = 0$ for $n \geqslant m+2$.

Suppose that $m \geqslant n + 2$. Then any morphism $f \in \C(m, n)$ can be written as $f = g \circ h$, where $h: [m] \twoheadrightarrow [r]$ is surjective, $g: [r] \hookrightarrow [n]$ is injective, and $m \geqslant n+2 \geqslant r$. By Lemma \ref{factorization}, one can write $h = h' \circ s_i$, where $s_i: [m] \twoheadrightarrow [m-1]$ is a simplicial degeneracy map. It follows that
\[
f \Psi_m = gh \Psi_m = g h' s_i \Psi_m = 0
\]
by the proof of Lemma \ref{idempotent}, so $k\C(m, n) \Psi_m = 0$ as desired.

Now suppose $n \geqslant m + 2$. Then any morphism $f \in \C(m, n)$ can be written as $f = g \circ h$, where $h: [m] \twoheadrightarrow [r]$ is surjective, $g: [r] \hookrightarrow [n]$ is injective, and $n \geqslant m+2 \geqslant r$. By Lemma \ref{factorization}, we have $g = d_i \circ g'$ where $d_i: [n-1] \hookrightarrow [n]$ is a face map with $i < n$. It follows that
\[
\Psi_n f = \Psi_n gh = \Psi_n d_i g' h = 0
\]
by the proof of Lemma \ref{idempotent}, so $\Psi_n k\C(m, n) = 0$ as desired.

Finally let $\C$ be $\OA$ or $\BA$. In this case $k\C \Psi_m \cong \Delta_m$ and $k\C \Psi_n \cong \Delta_n$. The second statement then follows from Proposition \ref{standard modules}.
\end{proof}

Now we are ready to establish a Dold-Kan correspondence for $\C$. We define a $k$-linear category $\mathscr{K}$ as follows:
\begin{itemize}
\item objects of $\mathscr{K}$ are positive integers $n \in \mathbb{Z}_+$,
\item for $m,n \in \mathbb{Z}_+$, the morphism set is
\[
\mathscr{K} (m, n) = \Hom_{k\C} (k\C \Psi_n, \, k\C \Psi_m) \cong \Psi_n \, k\C \Psi_m = \Psi_n \, k\C(m, n) \, \Psi_m.
\]
\end{itemize}
Call $\mathscr{K}$ the \emph{normalized category} of $\C$. For a morphism $f \in \C(m, n)$, we denote by
\[
\mathbf{f} = \Psi_n f \Psi_m \in \mathscr{K}(m,n)
\]
its \emph{normalized version}. In particular, we define the normalized face, degeneracy, and boundary operators as follows:
\begin{align*}
\mathbf{d}_{n+1} &= \Psi_{n+1} d_{n+1} \Psi_n, \\
\mathbf{s}_{n+1} &= \Psi_n s_{n+1} \Psi_{n+1} \quad \text{(for $\C = \CA$ or $\SA$)},\\
\delta_n &= \Psi_n d_n s_n \Psi_n \quad \text{(for $\C = \CA$ or $\SA$)}.
\end{align*}

By construction, the identity morphism in $\mathscr{K}(n,n)$ is the idempotent $\Psi_n$. Consequently, for any normalized morphism $\mathbf{f} \in \mathscr{K}(m,n)$, we may freely insert or remove the normalization idempotents $\Psi$ at the source and target without changing the morphism.

\begin{theorem} \label{generalized D-K correspondence}
The category $\C \Mod$ is equivalent to $\mathscr{K} \Mod$, the category of $k$-linear functors from $\mathscr K$ to $k\Mod$. Furthermore, $\mathscr{K}$ satisfies the following properties:

\medskip

\textbf{(I) General properties (for all $\C$):}
\begin{enumerate}
\item $\mathscr K(m,n) = 0$ if $|m-n| > 1$,

\item $\mathscr K(n,n+1)$ is spanned by $\mathbf{d}_{n+1} \boldsymbol{\sigma}$ with $\sigma \in G_n$,

\item $\mathbf{d}_{n+1} \mathbf{d}_n = 0$.
\end{enumerate}

\medskip
\textbf{(II) Additional structure for $\C = \OA,\ \BA$:}
\begin{enumerate}
\setcounter{enumi}{3}
\item $\mathscr K(n,n)$ is spanned by $\boldsymbol{\sigma}$ with $\sigma\in G_n$,

\item $\mathscr{K} (m, n) = 0$ if $m < n$.
\end{enumerate}

\medskip
\textbf{(III) Additional structure for $\C= \CA,\ \SA$:}
\begin{enumerate}
\setcounter{enumi}{5}
\item $\mathscr K(n,n)$ is spanned by $\boldsymbol{\sigma}$ with $\sigma\in G_n$ and $\mathbf{d}_n \boldsymbol{\tau} \mathbf{s}_n$ with $\tau \in G_{n-1}$,

\item $\mathscr K(n+1,n)$ is spanned by $\boldsymbol{\sigma}\mathbf{s}_{n+1}$ with $\sigma\in G_n$,

\item $\mathbf{s}_n \mathbf{s}_{n+1}=0$,

\item $\mathbf{s}_{n+1} \mathbf{d}_{n+1} = \Psi_n$,

\item $\mathbf{d}_{n+1} \mathbf{s}_{n+1} = \delta_{n+1}$,

\item $\delta_n^2 = \delta_n$,

\item $\mathbf{s}_n \delta_n = \mathbf{s}_n$ and $\delta_n \mathbf{d}_n = \mathbf{d}_n$,

\item $\mathbf{d}_n \delta_{n-1} = 0$ and $\delta_{n-1} \mathbf{s}_n = 0$.
\end{enumerate}

\medskip
\textbf{Crossed simplicial structure.} The automorphism groups $G_n$ form a crossed simplicial group: elements of $G_n$ permute the simplicial face and degeneracy maps; in $\mathscr K$ where only the top face and degeneracy survive, their images are stable under this induced action.
\end{theorem}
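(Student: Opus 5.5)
The equivalence $\C\Mod\simeq\mathscr K\Mod$ is a Morita-type statement, so I would prove it first and separately. The modules $k\C\Psi_n$ are direct summands of $P_n$, hence projective. They generate $\C\Mod$ because each $P_n$ has a finite filtration by them: by Corollary \ref{ideal description}, $P_n/k\C\Psi_n\cong\mathfrak J_n$, and $\mathfrak J_n$ is generated by images of $P_{n-1}$ under the maps $s_i$ with $i<n$. An induction on $n$, starting from $\Psi_1=\id$, then shows that every $P_n$ lies in the smallest subcategory closed under extensions and quotients generated by $\{k\C\Psi_m\}_{m\le n}$. More concretely, $\Hom(k\C\Psi_m,V)=\Psi_mV(m)=0$ for all $m$ forces $V=0$, again by induction on $m$. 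Indeed, if $V(m')=0$ for $m'<m$, then every $s_i$ acts as zero on $V(m)$, so $\Psi_m$ acts as the identity on $V(m)$. The functor $V\mapsto(n\mapsto\Psi_nV(n))$ is therefore an equivalence onto $\mathscr K\Mod$, by the standard theorem on small projective generators.

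Next I would treat the structural list. Item (1) is Proposition \ref{thin category}. For (2) and (7), I would use Lemma \ref{morphism factorization} to write every $f\in\C(n,n+1)$ as an injective map composed with a surjective map. Every non-injective $f$ is killed on the right by $\Psi_n$: such $f$ factors through some $s_i$, and when $n\ge3$ every surjection $[n]\to[m]$ with $m<n$ either factors through an $s_i$ with $i<n$ or equals $\tau s_n$. Since $s_n$ lands in $[n-1]$, which has size at most $n-1$, it cannot be followed by an injection into $[n+1]$ that passes through $\Psi_{n+1}$. Hence only injective $f=d\circ\sigma$ survive. By the face-map analogue of Lemma \ref{factorization} for target size $n+1=n+1$, each such $f$ is $d_i\sigma$, and $\Psi_{n+1}d_i=0$ for $i<n+1$. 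This leaves $d_{n+1}\sigma$. The surjective case (7) is the dual argument, leaving $\sigma s_{n+1}$. Items (4) and (6) follow the same pattern: an endomorphism of $[n]$ is either an automorphism or factors as $[n]\to[n-1]\to[n]$, and after killing the simplicial faces and degeneracies only $d_n\tau s_n$ remains.

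For the relations, $\mathbf d_{n+1}\mathbf d_n=\Psi_{n+2}d_{n+1}\Psi_nd_n\Psi_{n-1}$ is an element of $\mathscr K(n-1,n+1)$, so it vanishes by (1); the same holds for $\mathbf s_n\mathbf s_{n+1}$. Item (5) comes from Proposition \ref{standard modules}(6) together with Proposition \ref{filtration of projectives}. Item (10) is the definition of $\delta_{n+1}$, once one checks that $\Psi_{n+1}$ can be inserted in the middle of $d_{n+1}s_{n+1}$ at no cost, which follows from the expansion of $\Psi_{n+1}$ into terms ending in $d_is_i$.

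The key computation is (9). Expanding $\Psi_{n+1}$, I would show that $s_{n+1}\,d_is_i\,d_{n+1}$ is either killed by the $\Psi_n$ on one side or reduces via the cyclic identities. Finally, $s_{n+1}d_{n+1}=\id$, since the cyclic degeneracy collapses $(1,n+1)$ and the top face skips $n+1$. Relations (11) through (13) then follow formally from (9) and (1). For example, $\delta_n^2=\mathbf d_n(\mathbf s_n\mathbf d_n)\mathbf s_n=\mathbf d_n\Psi_{n-1}\mathbf s_n=\delta_n$, and $\delta_n\mathbf d_n=\mathbf d_n$ holds similarly. The orthogonality $\mathbf d_n\delta_{n-1}=\mathbf d_n\mathbf d_{n-1}\mathbf s_{n-1}=0$ follows from (3).

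The main obstacle is the explicit verification of (9), together with the claim that $\Psi_{n+1}$ commutes past the cyclic maps. This requires identities in $\CA$ and $\SA$ that relate $d_{n+1},s_{n+1}$ to the simplicial $d_i,s_i$, namely conjugation by the rotation in $G$. That is exactly the crossed simplicial structure: the rotation sends $d_i$ to $d_{i+1}$ and sends the cyclic degeneracy to a simplicial one. I would first establish these conjugation rules, then check (9) in the smallest cases $n=2,3$ to fix signs, and then argue in general.
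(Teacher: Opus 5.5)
The equivalence and items (1)–(8), (10) and (13) are fine and essentially follow the paper. Your generation argument, via $P_n=k\C\Psi_n\oplus\mathfrak J_n$ or via ``$\Psi_mV(m)=0$ for all $m$ forces $V=0$'', is a self-contained substitute for the paper's appeal to standard filtrations. One small slip in (10): the idempotent sitting in the middle of $\mathbf d_{n+1}\mathbf s_{n+1}$ is $\Psi_n$, not $\Psi_{n+1}$. It is absorbed using $d_{n+1}d_i=d_id_n$ and $\Psi_{n+1}d_i=0$.

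\textbf{The gap is (9), and it cannot be closed, because (9) is false.} The inner $\Psi_{n+1}$ in $\mathbf s_{n+1}\mathbf d_{n+1}=\Psi_n s_{n+1}\Psi_{n+1}d_{n+1}\Psi_n$ does not disappear. Since $s_i\Psi_{n+1}=0$ for $i\leqslant n$, the injective part of $\Psi_{n+1}d_{n+1}$ is $\sum_{i=1}^{n+1}(-1)^{n+1-i}d_i$. The $d_1$ term comes from the summand $(-1)^n d_1s_1d_2s_2\cdots d_ns_n$ of $\Psi_{n+1}$, via $s_id_{i+1}=\id$. Now compose with the cyclic degeneracy ($j\mapsto j$ for $j\leqslant n$, $n+1\mapsto 1$):
\begin{itemize}
\item $s_{n+1}d_{n+1}=\id_n$;
\item $s_{n+1}d_1=\rho$, the rotation $j\mapsto j+1 \bmod n$;
\item $s_{n+1}d_i$ for $2\leqslant i\leqslant n$ identifies $1$ and $n$, so it is non-injective.
\end{itemize}
The outer $\Psi_n$'s only add non-injective terms. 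Consider the ring map $\mathscr K(n,n)\to kG_n$ that kills non-injective endomorphisms, i.e.\ $\End(k\C\Psi_n)\to\End(\Delta_n)$. Under it, $\mathbf s_{n+1}\mathbf d_{n+1}\mapsto 1+(-1)^n\rho$, which is not $1$ for $n\geqslant 2$, in both $\CA$ and $\SA$. Concretely, for $n=2$ in $\CA$, writing maps as tuples of values, $\mathbf s_3\mathbf d_3=\Psi_2+\boldsymbol\rho$ with $\Psi_2=(1,2)-(2,2)$ and $\boldsymbol\rho=(2,1)-(1,1)$.

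The paper's one-line justification of (9) (``since $s_{n+1}d_{n+1}=\id_n$'') ignores exactly the middle idempotent you were worried about. No conjugation rules or sign choices repair this: the rotation has to enter the normalization relation. This is consistent with the paper's later results: $1+(-1)^n\rho$ kills the singular characters of Proposition \ref{classify singular}, matching $\mathbf s_n\mathbf d_n=0$ on the singular block in Theorem \ref{DK for CA and SA}.

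Items (11) and (12), which you derive formally from (9), fail along with it. By (6) applied to $\mathscr K(n-1,n-1)$, $\mathbf s_n\mathbf d_n=\Psi_{n-1}+(-1)^{n-1}\boldsymbol\rho+\sum_\tau c_\tau\mathbf d_{n-1}\boldsymbol\tau\mathbf s_{n-1}$. Then (3) gives $\delta_n\mathbf d_n=\mathbf d_n+(-1)^{n-1}\mathbf d_n\boldsymbol\rho$. For $n\geqslant 3$, $\mathbf d_n$ and $\mathbf d_n\boldsymbol\rho$ are distinct basis vectors of $\mathscr K(n-1,n)$, so $\delta_n\mathbf d_n\neq\mathbf d_n$.

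A warning for your planned small-case check. With composition read right to left, the displayed product $(\id-d_{n-1}s_{n-1})\cdots(\id-d_1s_1)$ is not idempotent: for example, $\Psi_3^2=\Psi_3+(2,3,3)-(3,3,3)$. The proof of Lemma \ref{idempotent} uses $s_jd_i=d_{i-1}s_j$ at $i=j+1$, where in fact $s_jd_{j+1}=\id$. With that ordering, the naive computation gives $s_3\Psi_3d_3=\Psi_2$ and would seem to confirm (9). The idempotent satisfying $s_j\Psi_n=0=\Psi_nd_j$ for $j<n$ is $(\id-d_1s_1)(\id-d_2s_2)\cdots(\id-d_{n-1}s_{n-1})$, which is the one used above.
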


In a more transparent way, for $\OA$ and $\BA$, $\mathscr{K}$ looks like
\[
\xymatrix{
1 \ar@(ul,ur)[]^{G} \ar[r]^{\mathbf{d}_2} &
2 \ar@(ul,ur)[]^{G} \ar[r]^{\mathbf{d}_3} &
3 \ar@(ul,ur)[]^{G} \ar[r]^{\mathbf{d}_4} &
\cdots
}
\]
where $G$ is either the trivial group or the cyclic group of order 2. For $\CA$ and $\SA$, $\mathscr{K}$ looks like
\[
\xymatrix{
1 \ar@(ul,ur)[]^{G_1} \ar[r]^{\mathbf{d}_2} &
2 \ar@(ul,ur)[]^{G_2} \ar@/^1pc/[r]^{\mathbf{d}_3} &
3 \ar@(ul,ur)[]^{G_3} \ar@/^1pc/[r]^{\mathbf{d}_4} \ar@/^1pc/[l]^{\mathbf{s}_3} &
\cdots \ar@/^1pc/[l]^{\mathbf{s}_4}
}
\]
Note that $\mathbf{s}_2$ does not appear in this diagram since it coincides with $\mathbf{s}_1$, which is killed by $\Psi_2$.

\begin{proof}
Since each representable functor $k\C(n-)$ has a finite filtration by standard modules, and each $\Delta_n$ is a quotient of $k\C \Psi_n$ by Proposition \ref{filtration of projectives}, we deduce that $\{ k\C \Psi_n \mid n \in \mathbb{Z}_+ \}$ is a family of projective generators of $\C \Mod$. The equivalence then follows from the Morita equivalence induced by the functor
\[
V \mapsto \bigoplus_n \Hom(k\C \Psi_n, V).
\]
Now we check the properties of $\mathscr{K}$.

\medskip
(1), (3) and (8). These statements immediately follow from Proposition \ref{thin category}.

\medskip
(2). By Lemma $\ref{factorization}$, any non-injective morphism $f \in \C(n, n+1)$ factors through some $[r]$ with $r < n$, and is thus killed by $\Psi_{n+1}$. Consequently, $\mathscr{K}(n, n+1)$ is spanned by the images of injective morphisms. Note that an injective morphism $f  \in \C^+(n, n+1)$ can be written as $d_i \sigma$ for some $\sigma \in G_n$. Since $\Psi_{n+1}$ kills all simplicial face maps $d_i$ for $i < n+1$ from the left, the only non-zero survivors in the normalized category is the orbit of $d_{n+1}$ under the right action of $G_n$. Thus the space is spanned by $\mathbf{d}_{n+1} \boldsymbol{\sigma}$ with $\sigma \in G_n$.

\medskip
(4) and (6). Any endomorphism $f: [n] \to [n]$ is killed by $\Psi_n$ if it factors through some $s_i: [n] \to [n-1]$ with $i < n$ or a certain $d_i: [n-1] \to [n]$ with $i < n$. By Lemma \ref{factorization}, only automorphisms $\sigma \in G_n$ and endomorphisms $d_n \tau s_n$ with $\tau \in G_{n-1}$ (for $\CA$ and $\SA$) can survive in $\mathscr{K}$.

\medskip
(5). One has $k\C \Psi_n \cong \Delta_n$ by Proposition \ref{filtration of projectives}. The conclusion follows from (6) of Proposition \ref{standard modules}.

\medskip
From now on we assume $\C \in \{\CA,\SA\}$, so that the cyclic degeneracy map $s_n: [n] \twoheadrightarrow [n-1]$ is a morphism in $\C$.

\medskip
(7). This is dual to (2). Any morphism $f: [n+1] \to [n]$ that factors through $[r]$ with $r < n$ is killed by $\Psi_{n+1}$ from the right via Lemma $\ref{factorization}$. The remaining candidates are surjective morphisms in $\C^-(n+1, n)$. In $\CA$ and $\SA$, $G_n$ acts on $\C^-(n+1, n)$ from the left with orbits represented by $s_1, \dots, s_{n+1}$. Since $\Psi_{n+1}$ kills the simplicial degeneracies $s_1, \dots, s_n$ from the right, the only survivor is the left $G_n$-orbit of the cyclic degeneracy $s_{n+1}$. Thus, the space is spanned by $\boldsymbol{\sigma}\mathbf{s}_{n+1}$ with $\sigma \in G_n$.

\medskip
(9). Since $s_{n+1} d_{n+1} = \id_n$ in $\C$, it follows that $\mathbf{s}_{n+1} \mathbf{d}_{n+1}$ is the corresponding identity $\Psi_n$ in $\mathscr{K}$.

\medskip
(10). By definition, one has
\[
\mathbf{d}_{n+1} \mathbf{s}_{n+1} = (\Psi_{n+1} d_{n+1} \Psi_n) (\Psi_n s_{n+1} \Psi_{n+1}) = \Psi_{n+1} d_{n+1} \Psi_n s_{n+1} \Psi_{n+1}.
\]
Recall that $\Psi_n$ is the ordered product of terms $(\id_n - d_i s_i)$ for $i = n-1, \ldots, 1$, and
\[
d_{n+1}(\id_n - d_i s_i) = d_{n+1} - d_{n+1} d_i s_i = d_{n+1} - d_i d_n s_i.
\]
since $d_{n+1} d_i = d_i d_n$. But $d_i$ is killed by $\Psi_{n+1}$ from the left, so we have
\[
\Psi_{n+1} d_{n+1} (\id_n - d_i s_i) = \Psi_{n+1} (d_{n+1} - d_i d_n s_i) = \Psi_{n+1} d_{n+1}
\]
for $i < n$. Consequently,
\[
\Psi_{n+1} d_{n+1} \Psi_n = \Psi_{n+1} d_{n+1}.
\]
This allows the internal normalization to be absorbed, yielding
\[
\mathbf{d}_{n+1}\mathbf{s}_{n+1} = \Psi_{n+1} d_{n+1} s_{n+1} \Psi_{n+1} = \delta_{n+1}.
\]

\medskip
(11)-(13). By (3), (9) and (10),  we have
\begin{align*}
\delta_n^2 & = (\mathbf{d}_n \mathbf{s}_n)(\mathbf{d}_n \mathbf{s}_n) = \mathbf{d}_n (\mathbf{s}_n \mathbf{d}_n) \mathbf{s}_n = \mathbf{d}_n (\Psi_{n-1}) \mathbf{s}_n = \mathbf{d}_n \mathbf{s}_n = \delta_n,\\
\delta_n \mathbf{d}_n & = (\mathbf{d}_n \mathbf{s}_n) \mathbf{d}_n = \mathbf{d}_n (\mathbf{s}_n \mathbf{d}_n) = \mathbf{d}_n \Psi_{n-1} = \mathbf{d}_n,\\
\mathbf{s}_n \delta_n & = \mathbf{s}_n (\mathbf{d}_n \mathbf{s}_n) = (\mathbf{s}_n \mathbf{d}_n) \mathbf{s}_n = \Psi_{n-1} \mathbf{s}_n = \mathbf{s}_n,\\
\mathbf{d}_n \delta_{n-1} & = \mathbf{d_n} \mathbf{d}_{n-1} \mathbf{s}_{n-1} = 0,\\
\delta_{n-1} \mathbf{s}_n & = \mathbf{d}_{n-1} \mathbf{s}_{n-1} \mathbf{s}_n = 0.
\end{align*}

\medskip
Finally, the reader can refer to \cite{FL91} for details on the crossed simplicial structure.
\end{proof}

We can deduce several results from the above equivalence. In the rest of this section let $\C$ be one of the five categories associated to highly homogeneous relations (including $\FA$).

\begin{corollary} \label{finite length 1}
If $k$ is noetherian, then every finitely generated $\C$-module is noetherian. If $k$ is artinian, then every finitely generated $\C$-module is of finite length.
\end{corollary}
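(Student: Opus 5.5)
For $\C \in \{\OA,\CA,\BA,\SA\}$ I would work through the equivalence $\C\Mod \simeq \mathscr{K}\Mod$ of Theorem~\ref{generalized D-K correspondence}. For $\FA$ I would argue directly with the Reedy structure. In both cases the key point is that a finitely generated module is a quotient of a finite direct sum of projectives, each with a finite filtration whose layers have very few possible composition factors.

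\emph{The four categories other than $\FA$.} Under the Morita equivalence, finitely generated $\C$-modules correspond to finitely generated $\mathscr{K}$-modules. These are quotients of finite sums of representables $\mathscr{K}(n,-)$. By (1) of the theorem, $\mathscr{K}(n,-)$ is supported only on the objects $n-1, n, n+1$. Each value is a finitely generated $k$-module, since $\mathscr{K}(n,m)$ is a direct summand of $k\C(n,m)$, which is free of finite rank. So a finitely generated $\mathscr{K}$-module $V$ has finite total support $S$, and each $V(m)$ is a finitely generated $k$-module. Any submodule $W \subseteq V$ is determined by the family $(W(m))_{m \in S}$ of $k$-submodules. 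An ascending chain of submodules therefore gives finitely many ascending chains in the noetherian $k$-modules $V(m)$, and these stabilize. If $k$ is artinian, each $V(m)$ has finite length over $k$. A strictly increasing chain of $\mathscr{K}$-submodules strictly increases $\sum_{m\in S}\ell_k(W(m))$, so $V$ has finite length bounded by that sum. Noetherianity is preserved under the equivalence, since it is a lattice property of subobjects. Finite generation is preserved because the generators $k\C\Psi_n$ correspond to the representables.

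\emph{The category $\FA$.} Here there is no normalization, so I would reduce to standard modules instead. By Proposition~\ref{standard modules}(3), each $P_n$ has a finite filtration by $\Delta_m$ with $m\le n$, so it suffices to show that each $\Delta_m$ is noetherian (respectively of finite length). The crux is that $\Delta_m$ is \emph{not} finitely supported, since $\Delta_m([N])$ has basis the injections $[m]\hookrightarrow[N]$ for all $N\ge m$. This is the main obstacle.

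My first attempt would be to show that any nonzero $\FA$-submodule of $\Delta_m$ contains $\Delta_m([N])$ for all large $N$. The intended mechanism is that surjections $[N+1]\to[N]$ act on $\Delta_m$ by identifying the two injections differing by a mutation, with one killed when two image points collide (Lemma~\ref{lifts}). Combined with the connectedness of the mutation graph $\Gamma_{m,N}$ (Proposition~\ref{connectedness}), this should let me move an arbitrary nonzero element onto every basis vector modulo lower terms. That would show $\Delta_m$ is a finitely supported module modulo a submodule that is eventually everything. If this is too crude over a general $k$, I would instead fall back on the known noetherianity of $\FA$-modules, namely \cite[Corollary 7.3.5]{SamSnowden2017}, as the paper's remark indicates.

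For finite length over artinian $k$, I would reduce to the case that $k$ is a field by filtering by powers of the Jacobson radical. Over a field, finite length would follow from the Hom-finiteness and the bounded filtration of $P_n$, together with the fact that each $\Delta_m$ has a composition series with finitely many factors. The latter should follow from the irreducibility results that appear later in the paper, or from \cite{Powell, WGordon}.
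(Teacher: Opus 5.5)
Your treatment of $\OA,\CA,\BA,\SA$ is correct and is the paper's argument: $\mathscr{K}(n,-)$ is supported on $n-1,n,n+1$ with finitely generated values. The $\FA$ part, however, has a genuine gap.

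\textbf{Why your $\FA$ argument fails.} The mechanism you propose is false: a nonzero submodule of $\Delta_m$ need not contain $\Delta_m([N])$ for large $N$.
\begin{itemize}
\item Already for $m=1$ one has $\Delta_1=P_1$, since every map out of $[1]$ is injective. The subspace $W([N])=\{\sum_i c_ie_i : \sum_i c_i=0\}\subseteq\Delta_1([N])=k^N$ is an $\FA$-submodule: every map preserves coefficient sums, and $W$ is the kernel of the augmentation onto the constant module. It is nonzero for $N\geqslant 2$ and never all of $\Delta_1([N])$.
\item In characteristic $0$, $\Delta_m$ even splits into many summands $\Delta_{m,\lambda}$.
\end{itemize}
Mutations (Lemma~\ref{lifts}) only tell you which combinations are killed by surjections; that is what computes $\Hom(\Delta_{m+1},\Delta_m)$. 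They say nothing about which elements generate.

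Your fallbacks do not close the gap. Quoting Sam--Snowden outsources the statement rather than proving it. Appealing to the irreducibility results later in the paper is circular: Proposition~\ref{regular standard is simple} uses Corollary~\ref{socle}, which is itself deduced from the corollary you are proving. Your reduction of the artinian case to fields via the nilpotent radical is fine, but it then rests entirely on that cited or circular input.

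\textbf{The missing idea: restrict along the wide subcategory $\OA\subseteq\FA$.}
\begin{itemize}
\item Every injection $f:[n]\hookrightarrow[N]$ factors uniquely as $f_0\circ\sigma$ with $f_0$ order-preserving and $\sigma\in S_n$.
\item An order-preserving injection $g$ sends $\overline{f_0\sigma}$ to $\overline{(gf_0)\sigma}$.
\item An order-preserving surjection $h$ either kills $\overline{f_0\sigma}$ or sends it to $\overline{(hf_0)\sigma}$, with $hf_0$ an order-preserving injection.
\end{itemize}
Hence $\Res^{\FA}_{\OA}\Delta_n^{\FA}\cong\bigoplus_{\sigma\in S_n}\Delta_n^{\OA}$ as $\OA$-modules.

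Each $\Delta_n^{\OA}$ is cyclic, so it is noetherian (resp.\ of finite length) by the first part applied to $\OA$. This is exactly where the infinite support you identified as the obstacle becomes harmless: $\Delta_n^{\OA}\cong k\OA\Psi_n$ corresponds to the finitely supported $\mathscr{K}(n,-)$.

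Every $\FA$-submodule of $\Delta_n^{\FA}$ is an $\OA$-submodule of its restriction. Therefore chain conditions and length bounds pass back to $\FA$. This works uniformly for any noetherian or artinian $k$, with no reduction to fields and no external input.
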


\begin{proof}
It suffices to show the conclusion for every $P_n = k\C(n, -)$. Firstly suppose that $\C$ is not $\FA$. By the above equivalence, we need to show the conclusion for $\mathscr{K}(n, -)$. By Proposition \ref{thin category},
\[
\mathscr{K} (n, -) = \mathscr{K} (n, n) \oplus \mathscr{K} (n, n+1) \oplus \mathscr{K} (n, n-1)
\]
as $k$-modules. Since each direct summand is a free $k$-module of finite rank, so is $\mathscr{K} (n, -)$ viewed as a $k$-module. The conclusion follows.

\medskip

Now suppose that $\C = \FA$. It suffices to show the conclusion for standard modules $\Delta_n^{\C}$ since each representable functor has a finite filtration by standard modules. Let $\D = \OA $ and $\Res^\C_\D$ be the restriction functor. We prove the following isomorphism
\begin{equation} \label{eq:res-decomp}
\Res^\C_\D \Delta_n^\C \cong \bigoplus_{\sigma \in G_n} (\Delta_n^\D)_{\sigma},
\end{equation}
where $G_n = S_n$ and $\Delta_n^\D$ is the standard module for $\OA$. This decomposition implies the conclusion since $\Delta_n^\D$ has the desired property.

Note that $\Delta_n^\C$ has a $k$-basis consisting of $\overline{f}$ with $f: [n] \hookrightarrow [m]$ injections, $\Delta_n^\D$ has a basis indexed by order-preserving injections, and every injection $f$ can be uniquely written as $f = f_0 \circ \sigma$ with $f_0$ order-preserving and $\sigma \in G_n$. This observation gives the decomposition \eqref{eq:res-decomp} of the underlying $k$-modules. It remains to check that this decomposition is compatible with the action of $\D$. Since any morphism in $\D$ is a composite of an injective morphism and a surjective morphism, we only need to check that the decomposition is compatible with these special morphisms.

\begin{itemize}
\item \textbf{Injective morphisms.} Let $g: [m] \hookrightarrow [\ell]$ be an order-preserving injection. Then
\[
g \cdot \overline{f} = \overline{g \circ (f_0 \circ \sigma)},
\]
where $g \circ f_0$ is still order-preserving. Thus the action preserves each summand $(\Delta_n^\D)_\sigma$, and the decomposition is compatible with injective morphisms.

\item \textbf{Surjective morphisms.} Let $h: [m] \twoheadrightarrow [\ell]$ be an order-preserving surjection. Then
\[
h \cdot \overline{f} = \overline{h \circ (f_0 \circ \sigma)}.
\]
If $h \circ f_0$ is not injective, it becomes zero in $\Delta_n^{\C}$. Otherwise, $h \circ f_0$ is injective and order-preserving, and $h \cdot \overline{f}$ is therefore contained in $(\Delta_n^\D)_\sigma$.
\end{itemize}

Since in either case each summand $(\Delta_n^\D)_\sigma$ is preserved under the actions, \eqref{eq:res-decomp} is indeed a decomposition of $\D$-modules.
\end{proof}

One immediately deduces the following result.

\begin{corollary} \label{socle}
If $k$ is artinian, then every nonzero $\C$-module has a nonzero socle.
\end{corollary}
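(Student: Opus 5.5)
The plan is to reduce the statement to the finite length result of Corollary~\ref{finite length 1}, using the fact that every module is the union of its finitely generated submodules. Let $V$ be a nonzero $\C$-module with $k$ artinian. Since $V \neq 0$, there is an object $[n]$ and a nonzero element $v \in V(n)$. By the Yoneda lemma, $v$ corresponds to a morphism $P_n = k\C(n,-) \to V$. Its image $W = k\C \cdot v$ is a nonzero cyclic, hence finitely generated, submodule of $V$.

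By Corollary~\ref{finite length 1}, $W$ has finite length. The key steps are then as follows.

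\begin{enumerate}
\item Choose a nonzero submodule $S \subseteq W$ of minimal length among the nonzero submodules of $W$. Such an $S$ exists because the lengths of submodules of $W$ form a nonempty set of positive integers.
\item Observe that $S$ is simple. Any nonzero proper submodule of $S$ would be a nonzero submodule of $W$ of strictly smaller length, contradicting minimality.
\item Conclude that $S$ is a simple submodule of $V$, so $\operatorname{soc}(V) \supseteq S \neq 0$.
\end{enumerate}

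There is essentially no obstacle once Corollary~\ref{finite length 1} is available. The only point needing care is that the corollary applies to all five categories, including $\FA$, where the finite length argument went through the restriction to $\OA$ rather than through the normalized category $\mathscr{K}$. Since the corollary is stated for all five categories, the argument above is uniform. In particular, no separate treatment of $\FA$ and no use of the Dold--Kan equivalence is needed here.
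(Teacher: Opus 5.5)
Your proposal is correct and follows the paper's own argument: pass to a nonzero finitely generated (here cyclic) submodule, invoke Corollary~\ref{finite length 1} to get finite length, and conclude that it contains a simple submodule. The only difference is that you spell out the minimal-length step showing a nonzero finite-length module has a simple submodule, which the paper leaves implicit.
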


\begin{proof}
Note that a nonzero $\C$-module $V$ is a sum of finitely generated $\C$-modules. Since every finitely generated $\C$-module is of finite length, its socle is nonzero. Thus $V$ has a nonzero socle.
\end{proof}

The next corollary tells us that most Hom-spaces between standard modules are trivial.

\begin{corollary} \label{no homo with big gap}
If $\Hom_{k\C} (\Delta_m, \, \Delta_n) \neq 0$, then $m = n$ or $m = n+1$.
\end{corollary}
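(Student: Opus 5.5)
By Proposition \ref{standard modules}(6), $\Hom_{k\C}(\Delta_m,\Delta_n)\neq 0$ forces $m\geqslant n$. So the task is to rule out $m\geqslant n+2$. I will give a direct argument valid for all five categories, including $\FA$, and avoid normalization. A homomorphism $\phi:\Delta_m\to\Delta_n$ is determined by the image $v=\phi(\bar{1}_m)\in\Delta_n(m)$ of the canonical generator. By Proposition \ref{standard modules}(4), $\Delta_m\cong k\C\otimes_{k\C^-}kG_m$. Hence $\phi$ corresponds to a vector $v\in\Delta_n(m)$ that satisfies two conditions: $G_m$ acts on it compatibly with the trivial quotient, and, more importantly, every non-invertible surjection $h\in\C^-(m,r)$ with $r<m$ sends $v$ to $0$.

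Now $\Delta_n(m)$ is free with basis the injective morphisms $f:[n]\hookrightarrow[m]$. Write $v=\sum_f a_f\,\bar f$. It suffices to show that every $a_f$ vanishes when $m\geqslant n+2$. Fix such an $f$. Since $m-n\geqslant 2$, there are two distinct elements $a,b\in[m]\setminus f([n])$.

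The plan is to produce a surjection $h:[m]\twoheadrightarrow[m-1]$ in $\C$ that identifies exactly the pair $\{a,b\}$. By Lemma \ref{lifts}(2), since $\{a,b\}\cap f([n])=\varnothing$, the basis vector $\overline{h\circ f}$ arises from $f$ alone among injections $f'$. The coefficient of $\overline{h\circ f}$ in $h\cdot v=0$ is therefore exactly $a_f$, so $a_f=0$.

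Any other $f'$ either has $h\circ f'$ non-injective, in which case it is zero in $\Delta_n$, or has $h\circ f'\neq h\circ f$. This requires care. Two injections $f'\neq f$ could still satisfy $h f'=hf$. Lemma \ref{lifts}(2) excludes exactly this, because the fibres of $h$ over $h f([n])$ are singletons.

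The main obstacle is the existence of $h$. In $\FA$ any pair can be collapsed. In $\OA$, $\CA$, $\BA$ and $\SA$, however, only adjacent pairs (in the linear or cyclic sense) can be identified by a surjection onto $[m-1]$. So $a$ and $b$ must be chosen adjacent. This fails in general: the two holes of $f$ need not be adjacent.

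In that case I would switch to the normalization argument. For $\C\neq\FA$, Proposition \ref{filtration of projectives} makes $\Delta_m$ a quotient of $k\C\Psi_m$, and $\Delta_n$ a subquotient of $k\C\Psi_n$ or $k\C\Psi_{n+1}$. A nonzero map $\Delta_m\to\Delta_n$ with $m\geqslant n+2$ would then give a nonzero element of $\Delta_n(m)\Psi_m$. This element vanishes because every non-injective morphism from $[m]$ into degree $\leqslant n$... rather, because by Lemma \ref{factorization} every morphism $[m]\to[n']$ with $n'\leqslant m-2$ factors through a simplicial $s_i$ and is killed by $\Psi_m$. The same holds for elements of $\Delta_n(j)$ reached from $[m]$. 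So the image of $\bar1_m$, which is $\Psi_m$-fixed, must lie in $\Delta_n(m)\Psi_m$.

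To finish, I would show that $\Delta_n(m)\Psi_m=0$ when $m\geqslant n+2$. The injections $[n]\hookrightarrow[m]$ are read in $\Delta_n$, where $\Psi_m$ acts on the right of $\C(n,m)$. So the needed statement is really about the left vanishing $\Psi_m\C^+(n,m)=0$, which follows from Lemma \ref{factorization} (dual part) together with $\Psi_m d_i=0$.

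For $\FA$ the collapsing argument above works directly, since every pair of holes can be collapsed.
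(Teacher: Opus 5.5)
Your proof is correct, but it takes a different route from the paper.

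\textbf{The paper's route.} The paper reduces to $\OA$. It restricts along the wide inclusion $\OA\subseteq\C$, uses that restriction is faithful and that $\Res^{\C}_{\OA}\Delta_m$ is a finite direct sum of copies of $\Delta_m^{\OA}$, and then applies the $\OA$ case, namely $\Delta_n^{\OA}\cong k\OA\Psi_n$ together with Proposition \ref{thin category}.

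\textbf{Your route.} You stay inside $k\C$. Every non-identity term of $\Psi_m$ contains a degeneracy, so $\Psi_m$ fixes $\bar 1_m\in\Delta_m$, and hence $v=\phi(\bar 1_m)=\Psi_m v$. For $m\geqslant n+2$, each injection $f:[n]\hookrightarrow[m]$ factors through some $d_i$ with $i<m$ by the dual half of Lemma \ref{factorization}, and $\Psi_m d_i=0$, so $\Psi_m\circ f=0$ and $v=0$. The key identity is the same one behind Proposition \ref{thin category}. What your version buys is that it skips the restriction decomposition (and the check that it is compatible with the $\OA$-action) and never needs $\Delta_n$ to be projective.

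\textbf{The $\FA$ case.} Your separate collapsing argument for $\FA$ is valid via Lemma \ref{lifts}(2), but you don't need it. The injective half of Lemma \ref{factorization} also holds for $\FA$: every injection is an order-preserving injection precomposed with a permutation of the source. So the $\Psi_m$ argument covers all five categories at once. Your abandoned collapsing approach for the other categories can also be completed, using connectivity of the mutation graphs $\Gamma_{n,m}$; this is the alternative proof given in Theorem \ref{hom spaces}.

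\textbf{Points to clean up in the write-up.}
\begin{itemize}
\item There is no $G_m$-compatibility condition on $v$. Since $\Delta_m\cong k\C\otimes_{k\C^-}kG_m$ with $kG_m$ free over itself, the only condition is that non-invertible surjections out of $[m]$ kill $v$.
\item The sentence about morphisms $[m]\to[n']$ factoring through some $s_i$ is irrelevant to the argument.
\item The observation that $\Delta_n$ is a subquotient of a normalized projective is never used.
\item $\Delta_n(m)\Psi_m$ should read $\Psi_m\Delta_n(m)$, because $\Psi_m$ acts on $\Delta_n(m)$ by post-composition.
\end{itemize}
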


\begin{proof}
By Proposition \ref{standard modules}, we have $m \geqslant n$. It remains to show $m \leqslant n+1$. The conclusion for $\D = \OA$ follows from Propositions \ref{filtration of projectives} and \ref{thin category}. For the category $\C$, since the restriction functor is faithful, we deduce that
\[
\Hom_{k\D} (\Res^\C_\D \Delta_m^\C, \, \Res^\C_\D \Delta_n^\C) \neq 0.
\]
But as in the proof of Corollary \ref{finite length 1}, one can show that $\Res^\C_\D \Delta_m^\C$ is a direct sum of finitely many copies of $\Delta_m^{\D}$. It follows that
\[
\Hom_{k\D} (\Delta_m^\D, \, \Delta_n^\D) \neq 0.
\]
The conclusion then follows from the $\OA$ case.
\end{proof}

For $\C = \BA$, we have a ``two-fold'' version of the classical Dold-Kan correspondence.

\begin{corollary} \label{DK for BA}
Let $k$ be a commutative ring and $\C = \BA$. Then $\C \Mod$ is equivalent to the category of chain complexes $(V_n, \mathbf{d}_n)_{n \geqslant 1}$ such that:
\begin{itemize}
\item $V_1$ is a $k$-module;
\item $V_n$ is a $kC_2$-module for all $n \geqslant 2$, with $kC_2$-linear differential maps $\mathbf{d}_n: V_n \to V_{n+1}$ satisfying $\mathbf{d}_{n+1} \mathbf{d}_n = 0$.
\end{itemize}

In addition, if the characteristic of $k$ is not $2$, then $\mathscr{K} \Mod$ decomposes as
\[
\mathscr{K}_+ \Mod \;\times\; \mathscr{K}_- \Mod,
\]
where $\mathscr{K}_+$ is isomorphic to the normalized category of $\OA$, and $\mathscr{K}_-$ is isomorphic to the full subcategory of $\mathscr{K}_+$ obtained by removing its initial object.
\end{corollary}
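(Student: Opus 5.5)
The plan is to read off the structure of $\mathscr{K}$ for $\C=\BA$ from Theorem \ref{generalized D-K correspondence}, parts (I) and (II), and then split it by idempotents when $2$ is invertible.

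First, the chain-complex description. By part (II)(4), $\mathscr{K}(n,n)$ is spanned by the $\boldsymbol{\sigma}$ with $\sigma\in G_n$. Since $k\C\Psi_n\cong\Delta_n$ by Proposition \ref{filtration of projectives}, Proposition \ref{standard modules}(5) identifies $\mathscr{K}(n,n)\cong\End(\Delta_n)^{\op}$ with $kG_n$. Here $G_1$ is trivial and $G_n=C_2$ for $n\geqslant 2$, so both algebras are commutative and the $\op$ is harmless.

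By (I)(1) and (II)(5), the only other nonzero Hom-spaces are $\mathscr{K}(n,n+1)$. By (I)(2), $\mathscr{K}(n,n+1)$ is spanned by the elements $\mathbf d_{n+1}\boldsymbol\sigma$. I also need $\boldsymbol\tau\mathbf d_{n+1}$ to lie in this span. This follows from the crossed simplicial structure: the reversal $\tau\in G_{n+1}$ satisfies $\tau d_{n+1}=d_1\tau'$, so $\tau d_{n+1}$ is an injection whose image omits $1$. After normalization it again has the form $\mathbf d_{n+1}\boldsymbol\sigma$, possibly up to sign. This sign is the step I would verify most carefully, using $\Hom(\Delta_{n},\Delta_{n+1})$-type computations or the explicit action of $\Psi_{n+1}$ on face maps.

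A $\mathscr{K}$-module is then the following data: a $kG_n$-module $V_n$ for each $n$, together with maps $V_n\to V_{n+1}$ given by the action of $\mathbf d_{n+1}$. These maps are $kC_2$-semilinear with respect to the identification $\sigma\mapsto\tau\sigma\tau^{-1}$ (possibly twisted by a sign character), and they satisfy $\mathbf d_{n+1}\mathbf d_n=0$ by (I)(3). If the twist is nontrivial, I would replace the $C_2$-action on $V_n$ by its sign-twist in every other degree to make the differentials genuinely $kC_2$-linear.

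I would also check freeness, namely that $\mathscr{K}(n,n+1)$ is free of rank $|G_n|$ over $k$ with no further relations. This should follow from $\Delta_{n+1}\cong k\C\Psi_{n+1}$ having a basis of injections: the elements $\mathbf d_{n+1}\boldsymbol\sigma$ correspond to distinct basis injections $d_{n+1}\sigma$ modulo the kernel, and they remain independent. Then $\mathscr{K}\Mod$ coincides with the category of complexes described in the statement, and the first claim follows from the equivalence $\C\Mod\simeq\mathscr{K}\Mod$.

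For the decomposition when $\mathrm{char}\,k\neq 2$, set $e_\pm=(1\pm\tau)/2\in kC_2$ for $n\geqslant 2$. These are orthogonal central idempotents summing to $\Psi_n$, so they split each object $n\geqslant2$ of $\mathscr{K}$ (passing to the idempotent completion, which has the same module category) into $n_+$ and $n_-$. The object $1$ is assigned to the $+$ part.

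Because the differentials commute with $C_2$ (after the possible twist above), $\mathbf d_{n+1}$ maps $e_\pm$ to $e_\pm$ and the cross terms vanish. Hence $\mathscr{K}$ is Morita equivalent to $\mathscr{K}_+\times\mathscr{K}_-$. Each piece has one-dimensional free endomorphism rings and a single generator $\mathbf d$ in each degree with $\mathbf d^2=0$, which is exactly the normalized category of $\OA$. The $+$ part starts at $1$, while the $-$ part starts at $2$, since $G_1$ is trivial and has no sign component. This yields the stated description of $\mathscr{K}_-$.
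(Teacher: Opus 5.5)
Your route is the paper's: read off $\mathscr K$ from Theorem~\ref{generalized D-K correspondence} and split each $V_n$ into $\tau$-eigenspaces. The difference is that the paper's proof simply asserts that the differential is $kC_2$-linear, while you leave the sign open. The sign is really there, so this step has to be settled rather than hedged.

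Using $k\C\Psi_m\cong\Delta_m$, identify $\mathscr K(n,n+1)$ with $\Hom_{k\C}(\Delta_{n+1},\Delta_n)$. Equivalently, these are the normalized vectors in $\Delta_n(n+1)$, on which $\boldsymbol\tau_{n+1}$ acts by left composition and $\boldsymbol\tau_n$ by right composition. Then $\mathbf d_{n+1}$ corresponds to $\pm v$, where $v=\sum_{i=1}^{n+1}(-1)^i d_i$ is the alternating sum over the order-preserving component of $\Gamma_{n,n+1}$. Indeed, $\mathbf d_{n+1}$ is a normalized combination of order-preserving injections in which $d_{n+1}$ has coefficient $1$. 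Since $\tau_{n+1}d_i=d_{n+2-i}\tau_n$ (both are the order-reversing injection missing $n+2-i$), you get $\tau_{n+1}v=(-1)^n v\tau_n$, that is,
\[
\boldsymbol\tau_{n+1}\mathbf d_{n+1}=(-1)^n\,\mathbf d_{n+1}\boldsymbol\tau_n\qquad(n\geqslant 1,\ \tau_1=\id).
\]
So for odd $n$ the differential is not $kC_2$-linear for the natural actions, and your twist is genuinely needed.

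This forces three corrections to your sketch.

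\emph{The twist pattern.} The twist that works is $\tau\mapsto\epsilon_n\tau$ on $V_n$ with $\epsilon_1=1$ and $\epsilon_{n+1}=(-1)^n\epsilon_n$. This gives $\epsilon_n=(-1)^{n(n-1)/2}$, i.e.\ twisting in degrees $n\equiv 2,3 \pmod 4$, not in every other degree. It is harmless over any $k$, because $\tau\mapsto-\tau$ is an algebra automorphism of $kC_2$.

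\emph{The case $n=1$ is a real relation.} Here $\mathscr K(1,2)=k\mathbf d_2$ has rank $|G_1|=1$, and $\boldsymbol\tau_2\mathbf d_2=-\mathbf d_2$. So the statement must be read with $V_1$ a trivial $kC_2$-module and the first differential required to be $C_2$-linear. Before twisting, this means the first differential lands in the $(-1)$-eigenspace of $V_2$. Allowing an arbitrary $k$-linear map $V_1\to V_2$ would instead describe modules over a category with $\mathscr K(1,2)$ of rank $2$.

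\emph{The eigenspaces interleave.} Before twisting, the two chains are
\[
V_1\to V_2^-\to V_3^-\to V_4^+\to V_5^+\to\cdots
\quad\text{and}\quad
V_2^+\to V_3^+\to V_4^-\to V_5^-\to\cdots.
\]
Each chain still has rank-one arrows with $\mathbf d\mathbf d=0$. So your identification of $\mathscr K_+$ (containing $1$) and $\mathscr K_-$ (starting at $2$) stands.

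Two smaller remarks. First, $e_\pm=(1\pm\tau)/2$ needs $2\in k^\times$, which is what ``characteristic not $2$'' has to mean for a ring. Second, if you check the sign by applying $\Psi_{n+1}$ to $d_{n+1}$ directly, be careful with the order of the factors. The identities $s_j\Psi_n=0$ and $\Psi_n^2=\Psi_n$ need the order $(\id_n-d_1s_1)\cdots(\id_n-d_{n-1}s_{n-1})$; with the printed order one already has $\Psi_3^2\neq\Psi_3$. The hom-space computation above is therefore the safer route.
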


\begin{proof}
From Theorem \ref{generalized D-K correspondence}, the normalized category $\mathscr{K}$ for $\BA$ has objects $n \geqslant 1$ and
\[
\mathscr{K}(n,n) \simeq kC_2 \quad \text{for } n \geqslant 2,
\]
while $\mathscr{K}(1,1)$ is trivial. Each $V_n$ is a $kC_2$-module for $n \geqslant 2$, and the differential $\mathbf{d}_n$ is $kC_2$-linear. This yields the first statement.

In the case that $k$ has characteristic distinct from $2$, each $kC_2$-module $V_n$ decomposes into a plus and a minus eigenspace under the nontrivial element of $C_2$, giving a decomposition of $\mathscr{K}$-modules into two independent complexes. The second statement follows.
\end{proof}

\section{Standard modules}

Standard modules play a vital role for representation theory of $\C$. In this section we deduce more structural information on them. Firstly, we explicitly compute Hom-spaces between them, strengthening Corollary \ref{no homo with big gap}. This computation is completely based on the generalized Reedy structure and mutation graphs, without relying on the normalization described in the previous section.

\begin{theorem} \label{hom spaces}
Let $k$ be a commutative ring. Then for all $m, n \geqslant 1$, one has
\[
\mathrm{Hom}_{k\C}(\Delta_m, \, \Delta_n) \; \cong \;
\begin{cases}
kG_n, & m=n,\\
k^d, & m = n+1,\\
0, & \text{else},
\end{cases}
\]
where $d$ is the number of connected components in the mutation graph $\Gamma_{n, n+1}$. Explicitly, $d = 1$ for $\FA$, $\OA$, or $\CA$; $d = 2$ for $\BA$ or $\SA$ when $m \geqslant 2$, and $d = 1$ when $m = 1$.
\end{theorem}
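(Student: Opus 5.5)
The case split is immediate from earlier results. Proposition~\ref{standard modules}(5) gives $\End_{k\C}(\Delta_n)\cong kG_n$, and Corollary~\ref{no homo with big gap} gives vanishing unless $m\in\{n,n+1\}$. It therefore remains to treat $m=n+1$. Here I would use the description $\Delta_{n+1}\cong k\C\otimes_{k\C^-}kG_{n+1}$ from Proposition~\ref{standard modules}(4). Since the image of $\id_{[n+1]}$ generates $\Delta_{n+1}$, and the only relations are that non-injective morphisms out of $[n+1]$ kill it, Yoneda gives
\[
\Hom_{k\C}(\Delta_{n+1},\Delta_n)\cong\{v\in\Delta_n([n+1]) \mid h\cdot v=0 \text{ for all non-injective } h:[n+1]\to[r]\}.
\]
Every non-injective morphism out of $[n+1]$ factors through a surjection $[n+1]\twoheadrightarrow[n]$. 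It therefore suffices to impose $h\cdot v=0$ for all $h\in\C^-(n+1,n)$, that is, for surjections identifying a single pair $\{a,b\}$.

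Next I compute this annihilator. Write $v=\sum_f c_f\,\overline f$, where $f$ ranges over the injections $[n]\hookrightarrow[n+1]$; these are the vertices of $\Gamma_{n,n+1}$ and form a $k$-basis of $\Delta_n([n+1])$. Fix $h$ identifying $\{a,b\}$. By Lemma~\ref{lifts}(1), $h\circ f$ is non-injective, hence zero in $\Delta_n$, exactly when $\{a,b\}\subseteq f([n])$. The case $\{a,b\}\cap f([n])=\varnothing$ cannot occur, since the image misses only one point. In the remaining case $|\{a,b\}\cap f([n])|=1$, Lemma~\ref{lifts}(3) says that $h\circ f$ is an injection $[n]\to[n]$, i.e.\ an element of $G_n$, and exactly one other injection $f'$, a mutation of $f$, has the same image under $h$. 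Since distinct $h\circ f\in G_n$ are distinct basis elements of $\Delta_n([n])$, the condition $h\cdot v=0$ becomes $c_f+c_{f'}=0$ for each such mutation pair $\{f,f'\}$ collapsed by $h$. Conversely, every edge of $\Gamma_{n,n+1}$ arises from some surjection $h$ in this way, by the definition of mutation together with Remark~\ref{mutation}. Hence the solution space consists of all $c$ with $c_f=-c_g$ along every edge.

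By bipartiteness and Corollary~\ref{sign}, this solution space is free with basis the vectors $\sum_{f\in K}\epsilon(f)\overline f$, one for each connected component $K$. It is therefore isomorphic to $k^d$. Finally, Proposition~\ref{connectedness} gives $d=1$ for $\FA,\OA,\CA$. For $\BA$ and $\SA$ with $n\geqslant 2$, the orientation argument there shows that there are exactly two components, because both orientation classes are nonempty. For $n=1$ the graph is connected.

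The main obstacle is the Yoneda reduction: one must check that $\Delta_{n+1}$ is presented by the generator $\overline{\id}$ modulo exactly the relations $h\cdot\overline{\id}=0$ for $h$ non-injective, which is the content of the definition $\Delta_{n+1}=P_{n+1}/\mathfrak I_{n+1}$. One must also check that restricting to one-step surjections loses nothing, which holds because each non-injective map factors as something composed with a map in $\C^-(n+1,n)$. A further point to verify is that distinct pairs $(h,f)$ giving the same element $h\circ f$ always come from a single mutation pair. This follows from the uniqueness statement in Lemma~\ref{lifts}(3).
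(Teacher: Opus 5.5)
Your proof is correct and is essentially the paper's proof. It uses the same Yoneda reduction to the one-step surjections in $\C^-(n+1,n)$, the same appeal to Lemma~\ref{lifts}(3) to pair the surviving injective terms of $h\cdot v$ into mutation pairs, and the same alternating sums over components of $\Gamma_{n,n+1}$. You package necessity and sufficiency as a single equivalence, and for $m>n+1$ you rely on Corollary~\ref{no homo with big gap}, which the paper also cites before giving its alternative mutation-graph argument.

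One caveat, inherited from the statement itself, concerns your ``both orientation classes are nonempty'' argument for $d=2$. For $\SA$ with $n=2$ one has $G_3=D_6\cong S_3$, so every injection $[2]\hookrightarrow[3]$ and every pair-collapsing surjection $[3]\twoheadrightarrow[2]$ is a morphism. Hence $\Gamma_{2,3}$ is the connected hexagon of the $\FA$ case and $d=1$ there. Your argument is valid only for $\BA$ with $n\geqslant 2$ and $\SA$ with $n\geqslant 3$.
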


\begin{proof}
The case $m \leqslant n$ is established in Proposition \ref{standard modules}. Assume $m > n$. We have
\[
\Hom_{k\C}(\Delta_m, \, \Delta_n) = \Hom_{k\C}(P_m/\mathfrak I_m, \, \Delta_n) \cong \{\phi \in \Hom_{k\C}(P_m, \, \Delta_n) \mid \mathfrak{I}_m \subseteq \ker \phi\}.
\]
Yoneda's Lemma gives a natural identification
\[
\Hom_{k\C} (P_m, \, \Delta_n) \cong \Delta_n(m),
\]
the value of $\Delta_n$ on $[m]$. Under this identification, an element $v\in\Delta_n(m)$ defines a morphism $P_m \to \Delta_n$ that vanishes on $\mathfrak I_m$ if and only if $\mathfrak I_m \cdot v = 0$ in $\Delta_n$, or equivalently, $\mathfrak I_m \cdot v \subseteq \mathfrak I_n$ in $P_n$. Thus
\[
\Hom_{k\C}(\Delta_m,\, \Delta_n) \cong \{ v \in \Delta_n(m) \mid \mathfrak I_m\cdot v \subseteq \mathfrak I_n \}.
\]

Since $\Delta_n(m)$ has a natural $k$-basis given by morphisms in $\C^+(n,m)$, we may identify $\Delta_n(m)$ with $k\C^+(n,m)$. Also note that $\mathfrak I_m$ is generated by morphisms in $\C^-(m, m-1)$. Under this identification, we have
\begin{equation}
\Hom_{k\C}(\Delta_m,\, \Delta_n) \cong \{ v \in k\C^+(n,m) \mid g \cdot v \in \mathfrak I_n, \, \forall \, g \in\C^-(m,m-1) \}, \tag{$\ast$}
\end{equation}
where $g\cdot v$ is induced by composition.

\medskip
Let $v\in k\C^+(n,m)$ satisfy the condition in $(\ast)$, and write
\[
v = \sum_{f\in \C^+(n,m)} c_f f, \qquad c_f\in k.
\]
Suppose that $f'$ is a mutation of $f$. Then there exists a morphism $g \in \C^-(m, m-1)$ such that $g \circ f = g \circ f'$ is injective; see Remark \ref{mutation}. Besides, by (3) of Lemma \ref{lifts}, there is no other morphism $f'' \in \C^+(n,m)$ such that $g \circ f'' = g \circ f$. Since $g \cdot v \in \mathfrak I_n$ is a linear combination of non-injective morphisms, the coefficient of $g \circ f$ in $g \cdot v$ must vanish, so $c_f + c_{f'} = 0$, namely $c_f = -c_{f'}$. It follows that the coefficients $c_f$ of vertices $f$ in any connected component of $\Gamma_{n, m}$ are uniquely determined up to a global sign.

Now we prove the conclusion for the last two cases.

\medskip
\noindent
\textbf{Case $m>n+1$.}  This has been shown in Corollary \ref{no homo with big gap}. Here we use the mutation graph to give an alternative proof. Since $m > n+1$, we can find in each connected component of $\Gamma_{n, m}$ a morphism $f: [n] \hookrightarrow [m]$ such that $[m] \setminus f([n])$ contains the pair $(m-1, m)$ (the standard inclusion) or the pair $(1, 2)$ (the reverse inclusion). Let $g \in \C^-(m, m-1)$ be a morphism identifying this pair. Then by Lemma \ref{lifts}, $g \circ f$ is injective (hence not contained in $\mathfrak{I}_n$), and there is no other morphism $f' \in \C^+(n, m)$ with $g \circ f = g \circ f'$. Thus $g \cdot v \in \mathfrak{I}_n$ forces $c_f = 0$, and hence coefficients for all vertices in the same component must be 0. Consequently, we deduce that $\Hom_{k\C} (\Delta_m, \, \Delta_n) = 0$ for $m > n + 1$ by $(\ast)$.

\medskip
\noindent
\textbf{Case $m=n+1$.} We already know that each connected component of $\Gamma_{n, n+1}$ contributes at most one element $v$ (up to scalar) satisfying the condition in $(\ast)$, namely the alternating sum of vertices in the component. Thus $\Hom_{k\C} (\Delta_{n+1}, \, \Delta_n)$ has dimension at most $d$. The conclusion follows after showing that this $v$ indeed satisfies this condition.

Let $C$ be a connected component of $\Gamma_{n, n+1}$. By Corollary \ref{sign}, we can take a mutation-compatible function $\epsilon: C \to \{1, -1\}$ such that $\epsilon(f) = -\epsilon(f')$ if $f'$ is a mutation of $f$. Then up to sign, one has
\[
v = \sum_{f \in C} \epsilon(f) f.
\]
Fix a morphism $h \in \C^+(n+1, n)$. We have
\[
h \cdot v = \sum_{f \in C} \epsilon(f) (h \circ f).
\]
We need to show that the coefficient of $g$ is 0 for every injective morphism $g = h \circ f$ appearing in the above expression of $h \cdot v$. It then follows that $h \cdot v$ is a linear combination of non-injective morphisms, and hence is contained in $\mathfrak{I}_n$ as desired.

Since both $f: [n] \hookrightarrow [n+1]$ and $h \circ f$ are injective, the hypotheses of Lemma \ref{lifts} (3) apply. Thus there are exactly two injective morphisms $f$ and $f'$ in $\C^+(n, n+1)$ such that
\[
g = h \circ f = h \circ f',
\]
and furthermore $f'$ is a mutation of $f$. By mutation compatibility, one has $\epsilon(f) = - \epsilon(f')$, so the coefficient of $g$ is indeed $\epsilon(f) + \epsilon(f') = 0$.
\end{proof}

When $k$ is a field, this elementary computation provides an alternating proof of Dold-Kan correspondence for $\OA$ and $\BA$.

\begin{corollary} \label{OA and BA}
Let $\C$ be $\OA$ or $\BA$ and $k$ a field. Then every $\Delta_n$ is a projective $\C$-module.
\end{corollary}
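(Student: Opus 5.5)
The statement is Corollary \ref{OA and BA}: for $\OA$ and $\BA$ over a field, every $\Delta_n$ is projective, and we want a proof that avoids normalization and uses only Theorem \ref{hom spaces} together with the standardly stratified structure from Proposition \ref{standard modules}. The plan is to show $\Ext^1_{k\C}(\Delta_n, M)=0$ for every $\C$-module $M$. Equivalently, we show that the surjection $P_n \to \Delta_n$ splits, i.e. $\mathfrak I_n$ is a direct summand of $P_n$.

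First reduce to standard modules. By Proposition \ref{standard modules}(7), $\Ext^1(\Delta_n,\Delta_m)\neq 0$ forces $n>m$. Consider the short exact sequence $0\to \mathfrak I_n \to P_n \to \Delta_n \to 0$. By Proposition \ref{standard modules}(2), $\mathfrak I_n$ is filtered by $k\C\otimes_{k\C^-} k\C^-(n,m)\cong \Delta_m\otimes_{kG_m}k\C^-(n,m)$ with $m<n$. Since $G_m$ acts freely on $\C^-(n,m)$ (Lemma \ref{free action}), each such factor is a finite direct sum of copies of $\Delta_m$.

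The sequence splits as soon as every map $\mathfrak I_n\to$ (anything) extends along the inclusion into $P_n$. It suffices to prove $\Ext^1(\Delta_n,\Delta_m)=0$ for all $m<n$, and then use induction on the filtration of $\mathfrak I_n$ to obtain $\Ext^1(\Delta_n,\mathfrak I_n)=0$. That vanishing makes the sequence split, so $\Delta_n$ is a summand of $P_n$.

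To compute $\Ext^1(\Delta_n,\Delta_m)$, apply $\Hom(-,\Delta_m)$ to the sequence above:
\[
0\to \Hom(\Delta_n,\Delta_m)\to \Hom(P_n,\Delta_m)\to \Hom(\mathfrak I_n,\Delta_m)\to \Ext^1(\Delta_n,\Delta_m)\to 0 .
\]
The goal is to show the restriction map $\Delta_m(n)=\Hom(P_n,\Delta_m)\to\Hom(\mathfrak I_n,\Delta_m)$ is surjective. Over a field this becomes a dimension count once we know $\dim\Hom(\mathfrak I_n,\Delta_m)$.

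Using the filtration of $\mathfrak I_n$ together with Theorem \ref{hom spaces}, only factors $\Delta_{m}$ and $\Delta_{m+1}$ can map nontrivially to $\Delta_m$. Since $\mathfrak I_n$ is generated by $\C^-(n,n-1)$, one can alternatively describe $\Hom(\mathfrak I_n,\Delta_m)$ through its generators and check it by hand.

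For $\OA$ and $\BA$ the surjections $[n]\to[n-1]$ are exactly the $s_i\sigma$. A homomorphism $\mathfrak I_n\to\Delta_m$ is determined by compatible images $w_i\in\Delta_m(n-1)$ of the $s_i$, subject to the simplicial relations $s_is_j=s_{j-1}s_i$. The claim to prove is that any compatible family has the form $w_i=s_i\cdot v$ for some $v\in k\C^+(m,n)$. I would try the standard simplicial construction $v=\sum_i (\pm) d_i w_i$ (top-to-bottom, as in the normalization operator). The mutation graph bookkeeping of Theorem \ref{hom spaces} should show that this yields a valid lift precisely because the relevant components are linear chains without the cyclic pair $(1,n)$.

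This lifting step is the main obstacle. If direct verification is messy, I would fall back on a dimension count. We have $\dim\Delta_m(n)-\dim\Hom(\Delta_n,\Delta_m)$ on one side, and on the other a computation of $\dim\Hom(\mathfrak I_n,\Delta_m)$ via the filtration (each factor contributes $kG_m$-many or $d$-many maps by Theorem \ref{hom spaces}). Equality of the two shows $\Ext^1=0$.
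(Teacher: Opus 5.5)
As written, this is not yet a proof. Your reduction is the paper's, and your fallback dimension count is the paper's method, but the step that carries all the content is missing.

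\textbf{Where the proposal falls short.} The filtration of $\mathfrak I_n$ by finite sums of $\Delta_i$ ($i<n$) does reduce splitting of $0\to\mathfrak I_n\to P_n\to\Delta_n\to 0$ to $\Ext^1(\Delta_n,\Delta_r)=0$ for $r<n$. After that, two things go wrong.

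First, your primary route fails as stated: a signed sum $\sum_i\pm d_iw_i$ is not a lift in general. Take $\OA$ with $n=3$, $r=2$. Every pair $(w_1,w_2)$ in $\Delta_2(2)^2$ is compatible, since the relation lives in $\Delta_2(1)=0$. For $(w_1,w_2)=(0,\id_{[2]})$, every candidate $\pm d_1w_1\pm d_2w_2=\pm d_2$ has $s_1\cdot(\pm d_2)=\pm\id_{[2]}\neq 0$. An actual lift, such as $d_3$ or $d_2w_2+d_1(w_1-s_1d_2w_2)$, needs correction terms.

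Second, in the fallback you say $\dim\Hom(\mathfrak I_n,\Delta_r)$ is ``computed via the filtration'' and that ``equality'' holds, but you do neither. Since $\Hom(-,\Delta_r)$ is only left exact, a non-split filtration gives only $\dim\Hom(\mathfrak I_n,\Delta_r)\le\sum_i\dim\Hom(I^i/I^{i-1},\Delta_r)$. The paper gets an honest decomposition $\mathfrak I_n\cong\bigoplus_{i<n}\Delta_i^{\binom{n-1}{i-1}}$ by inducting on $n$. The $\Delta_i$ with $i<n$ are then already projective, so the filtration splits.

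\textbf{How to close the gap.} Either adopt that induction, or note that the upper bound suffices. The four-term sequence gives
$\dim\Ext^1(\Delta_n,\Delta_r)=\dim\Hom(\mathfrak I_n,\Delta_r)-\dim\Delta_r(n)+\dim\Hom(\Delta_n,\Delta_r)$,
so it is enough to show the right side is $\le 0$. You then have to actually count:
\begin{itemize}
\item The multiplicity of $\Delta_i$ in $\mathfrak I_n$ is $|\C^-(n,i)|/|G_i|=\binom{n-1}{i-1}$.
\item For $\OA$ one has $\dim\Delta_r(n)=\binom{n}{r}$.
\item By Theorem~\ref{hom spaces} only the factors $\Delta_r$ and $\Delta_{r+1}$ contribute. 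This gives $\binom{n-1}{r-1}+\binom{n-1}{r}-\binom nr+0=0$ for $r\le n-2$ by Pascal's identity, and $(n-1)-n+1=0$ for $r=n-1$.
\item For $\BA$ with $r\ge 2$ every term doubles: $\dim kG_r=2$, $d=2$, $\dim\Delta_r(n)=2\binom nr$, while the multiplicities are unchanged. For $r=1$ the numbers agree with $\OA$.
\end{itemize}
This count is the entire substance of the corollary; without it, the proposal only records the standard reduction.
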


\begin{proof}
We prove the conclusion by an induction on $n$. Since $\Delta_1 = P_1$ is projective, we assume that $n \geqslant 2$, and suppose that $\Delta_i$ is projective for $i < n$. Now we consider $\Delta_n$.

Note that $\mathfrak{I}_n$ has a filtration
\[
0 \subseteq I^1 \subseteq I^2 \subseteq \ldots \subseteq I^{n-1} = \mathfrak{I}_n
\]
such that $I^i$ is the submodule spanned by all morphisms with source $[n]$ whose image has cardinality at most $i$. Furthermore, each graded quotient $I^i/I^{i-1}$ is a direct sum of copies of $\Delta_i$ with multiplicity $s(n, i) = \binom{n-1}{i-1}$, the number of linear order-preserving surjections from $[n]$ to $[i]$: this is clear for $\OA$, while for $\BA$, note that the group $G_i$ acts freely on $\C^-(n, i)$ from the left. Since each $\Delta_i$ is projective for $i < n$ by the induction hypothesis, we deduce that
\[
\mathfrak{I}_n \cong \bigoplus_{i < n} (\Delta_i)^{s(n, i)}.
\]

Applying $\Hom_{k\C} (-, \Delta_r)$ for a fixed $r < n$ to the short exact sequence
\[
0 \longrightarrow \bigoplus_{i < n} (\Delta_i)^{s(n, i)} \longrightarrow P_n \longrightarrow \Delta_n \longrightarrow 0,
\]
we obtain an associated exact sequence
\[
0 \longrightarrow \Hom(\Delta_n, \, \Delta_r) \longrightarrow \Hom(P_n, \, \Delta_r) \longrightarrow \Hom(\bigoplus_{i < n} (\Delta_i)^{s(n, i)}, \, \Delta_r) \longrightarrow \Ext^1_{k\C} (\Delta_n, \, \Delta_r) \longrightarrow 0.
\]

\medskip
\textbf{Case $\OA$.} The second Hom-space has dimension
\[
\dim_k \Hom_{k\C} (P_n, \, \Delta_r) = \dim_k \Delta_r(n) = \binom{n}{r}.
\]
On the other hand, by Theorem \ref{hom spaces}, for $r = n - 1$, one has
\[
\dim_k \Hom_{k\C} (\Delta_n, \, \Delta_{n-1}) + \dim_k \Hom_{k\C} ((\Delta_{n-1})^{s(n, n-1)}, \, \Delta_{n-1}) = 1 + n-1 = \binom{n}{n-1};
\]
for $1 \leqslant r < n-1$, the first Hom-space is 0, and the third Hom-space has dimension
\[
\dim_k \Hom_{k\C} ((\Delta_r)^{s(n, r)}, \, \Delta_r) + \dim_k \Hom_{k\C} ((\Delta_{r+1})^{s(n, r+1)}, \, \Delta_r) = \binom{n-1}{r-1} + \binom{n-1}{r} = \binom{n}{r},
\]
by Pascal's identity. Thus $\Ext^1_{k\C} (\Delta_n, \, \Delta_r)=0$ for each $r < n$, and hence $\Ext^1_{k\C} (\Delta_n, \, \mathfrak{I}_n) = 0$ as well. Consequently, the original short exact sequence splits, so $\Delta_n$ is  projective.

\medskip

\textbf{Case $\BA$.} Since the number of connected components in $\Gamma_{r, r+1}$ is 1 (for $r = 1$) or 2 (for $r \geqslant 2$), we need to handle these two cases separately. For $r = 1$, dimensions of all Hom-spaces are identical to those in the $\OA$ case; for $r \geqslant 2$, dimensions of all Hom-spaces are twice of those in the $\OA$ case. The conclusion then follows by the same dimension-counting and exactness argument.
\end{proof}

In the rest of this section let $k$ be a field. For a fixed object $[n]$ in $\C$, denote by $\Irr(G_n)$ the set of irreducible representations (up to isomorphism) of $G_n$. For a fixed $\lambda \in \Irr(G_n)$, let $e_{\lambda} \in kG_n$ be the primitive idempotent such that $kG_n e_{\lambda}$ is the projective cover of $\lambda$. Define
\begin{itemize}
\item $P_{n, \lambda} = P_n \otimes_{kG_n} kG_n e_{\lambda}$;

\item $\mathfrak{I}_{n, \lambda} = \mathfrak{I}_n \otimes_{kG_n} kG_n e_{\lambda}$;

\item $\Delta_{n, \lambda} = P_{n, \lambda} / \mathfrak{I}_{n, \lambda} \cong \Delta_n \otimes_{kG_n} kG_n e_{\lambda}$.
\end{itemize}
Here the isomorphism holds since $\Delta_n$ is a right free $kG_n$-module, so the functor $\Delta_n \otimes_{kG_n} -$ is exact. Indeed, for each $l \geqslant 1$, the value $\Delta_n (l)$ of $\Delta_n$ on $[l]$ is isomorphic to $k\C^+(n, l)$, which is a right free $kG_n$-module since $G_n$ acts freely on $\C^+(n, l)$ from the right side. Call $\Delta_{n, \lambda}$ an \textit{indecomposable standard module} as it is an indecomposable direct summand of $\Delta_n$ corresponding the primitive idempotent $e_{\lambda}$.

\begin{proposition} \label{filtration}
Notation as above. Then:
\begin{enumerate}
\item $P_{n, \lambda}$ has a finite filtration
\[
0 = I^0 \subseteq I^1 \subseteq I^2 \subseteq \ldots \subseteq I^n = P_{n, \lambda}
\]
such that $I^n / I^{n-1} \cong \Delta_{n, \lambda}$ and $I^i/I^{i-1}$ is a finite coproduct of some $\Delta_{i, \mu}$ with $i < n$;

\item isomorphism classes of irreducible $\C$-modules are parameterized by the set
\[
\bigsqcup_{n \geqslant 1} \Irr(G_n);
\]

\item $\Delta_{n, \lambda}$ has a simple top isomorphic to the irreducible $\C$-module $L_{n, \lambda}$ parameterized by $(n, \lambda)$.
\end{enumerate}
\end{proposition}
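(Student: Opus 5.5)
For (1), I would start from the filtration of $P_n$ in Proposition~\ref{standard modules}(3) and tensor it over $kG_n$ with the projective module $kG_ne_\lambda$. Concretely, set $\tilde I^i\subseteq P_n$ to be the $k$-span of morphisms with source $[n]$ whose image has cardinality at most $i$. Each $\tilde I^i$ is a $(\C,kG_n)$-subbimodule, since precomposition by automorphisms preserves image size. By Proposition~\ref{standard modules}(2),(4) there are isomorphisms of bimodules
\[
\tilde I^i/\tilde I^{i-1}\cong k\C\otimes_{k\C^-}k\C^-(n,i)\cong \Delta_i\otimes_{kG_i}k\C^-(n,i)
\]
for $i<n$, and $\tilde I^n/\tilde I^{n-1}=\Delta_n$. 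Since $-\otimes_{kG_n}kG_ne_\lambda$ is just multiplication by the idempotent $e_\lambda$, it is exact. Putting $I^i=\tilde I^i e_\lambda$ therefore gives a filtration with top factor $\Delta_{n,\lambda}$ and lower factors
\[
\Delta_i\otimes_{kG_i}k\C^-(n,i)e_\lambda .
\]
Now $G_i$ acts freely on the left of $\C^-(n,i)$ (Lemma~\ref{free action}), so $k\C^-(n,i)e_\lambda$ is a finitely generated left $kG_i$-module. I would then argue that it is projective: $k\C^-(n,i)$ is free as a left $kG_i$-module, hence projective as a $(kG_i,kG_n)$-bimodule summand after applying $e_\lambda$. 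A left $kG_i$-summand of a free module is a direct sum of $kG_ie_\mu$'s by Krull--Schmidt over the finite-dimensional algebra $kG_i$. Tensoring with $\Delta_i$ then yields a finite sum of $\Delta_{i,\mu}$, which establishes (1).

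For (3), I would use that $\End(\Delta_{n,\lambda})\cong e_\lambda kG_ne_\lambda$. This follows from Proposition~\ref{standard modules}(5) together with the freeness of $\Delta_n$ over $kG_n$, and it is a local ring, so $\Delta_{n,\lambda}$ is indecomposable. For the simple top, I would note that $\Delta_{n,\lambda}$ is generated by its value $\Delta_{n,\lambda}(n)=kG_ne_\lambda$ at $[n]$, and vanishes on objects $[l]$ with $l<n$. Any proper submodule $U$ must satisfy $U(n)\subseteq \operatorname{rad}(kG_ne_\lambda)$, since otherwise $U(n)$ contains a generator. Hence the sum of all proper submodules is proper, which gives a unique maximal submodule $M$. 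Concretely, $M(n)=\operatorname{rad}(kG_ne_\lambda)$, and for $l\neq n$, $M(l)$ consists of those $x$ such that $k\C(l,n)\,x\subseteq\operatorname{rad}$. Define $L_{n,\lambda}=\Delta_{n,\lambda}/M$; then $L_{n,\lambda}(n)\cong\lambda$.

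For (2), let $L$ be a simple $\C$-module, and let $n$ be minimal with $L(n)\neq 0$. The non-injective morphisms out of $[n]$ factor through smaller objects and so act by zero on $L(n)$. Choose a simple $kG_n$-submodule $\lambda\subseteq L(n)$. I would then obtain a nonzero map $\Delta_n\otimes_{kG_n}\lambda\to L$, and hence a nonzero map $\Delta_{n,\lambda}\to L$, which must be surjective because $L$ is simple. Thus $L\cong L_{n,\lambda}$. Conversely, the pair $(n,\lambda)$ is recovered from $L_{n,\lambda}$, since $n$ is the minimal support and $\lambda=L_{n,\lambda}(n)$. This is the standard argument that simple modules are determined by their minimal-degree component.

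The main obstacle I expect is in (1): making sure the factors are genuinely direct sums of $\Delta_{i,\mu}$ in positive characteristic, where $kG_i$ is not semisimple. This rests on the freeness of $\C^-(n,i)$ as a left $G_i$-set and on Krull--Schmidt, and I would check the bimodule bookkeeping there carefully.
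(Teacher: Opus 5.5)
Your proof is correct. It differs from the paper's mainly in being self-contained. The paper gives no argument of its own here: it deduces (1) from the proofs of Lemma 3.10 and Proposition 3.11 in \cite{DLL}, and (2), (3) from Theorem 3.13 there, i.e.\ from the general theory of generalized $k$-linear Reedy categories.

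You instead work directly in $\C$. The image-size filtration of $P_n$ and the Reedy factorization identify the subquotients with $\Delta_i \otimes_{kG_i} k\C^-(n,i)$, and exactness of $M \mapsto Me_\lambda$ transports this to $P_{n,\lambda}$. Left freeness of $\C^-(n,i)$ over $G_i$, together with Krull--Schmidt over $kG_i$, then splits each factor into copies of $\Delta_{i,\mu}$. Your (3) is the local-module argument in the bottom degree $[n]$, and your (2) is the minimal-support argument.

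The citation buys uniformity over all generalized $k$-linear Reedy categories. Your route buys explicit multiplicities: the multiplicity of $\Delta_{i,\mu}$ in $I^i/I^{i-1}$ equals that of $kG_ie_\mu$ in the left $kG_i$-module $k\C^-(n,i)e_\lambda$. This is the form the paper actually uses later, e.g.\ in the proof of Proposition~\ref{ext for CA and SA}. It also gives quick consequences. Take $\C=\FA$, $\lambda=\mathrm{sgn}$ and $\mathrm{char}\,k=0$. Every surjection $f\colon[n]\twoheadrightarrow[i]$ with $i<n$ satisfies $f\tau=f$ for a transposition $\tau$ inside a fiber, so $fe_{\mathrm{sgn}}=f\tau e_{\mathrm{sgn}}=-fe_{\mathrm{sgn}}=0$. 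Hence all lower factors vanish and $P_{n,\mathrm{sgn}}=\Delta_{n,\mathrm{sgn}}$, which is the first half of Proposition~\ref{disjoint} without the ordinary-quiver computation.

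One wording fix: $k\C^-(n,i)e_\lambda$ is not a $(kG_i,kG_n)$-bimodule, and $k\C^-(n,i)$ need not be projective as a bimodule. What you need, and what is true, is that $k\C^-(n,i)e_\lambda$ is a left $kG_i$-direct summand of the free module $k\C^-(n,i)$. This holds because right multiplication by $e_\lambda$ commutes with the left $G_i$-action.
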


\begin{proof}
Statement (1) follows from the proofs of \cite[Lemma 3.10 and Proposition 3.11]{DLL}, and the last two statements follow from \cite[Theorem 3.13]{DLL} and its proof.
\end{proof}

By Theorem \ref{hom spaces}, $\Hom_{k\C} (\Delta_{n+1}, \, \Delta_n)$ has dimension 1 or 2. Thus one can find certain $\Delta_{n+1, \lambda}$ and $\Delta_{n, \mu}$ such that
\[
\Hom_{k\C} (\Delta_{{n+1}, \lambda}, \, \Delta_{n, \mu}) \neq 0.
\]
The following proposition classifies these special indecomposable standard modules.

\begin{proposition} \label{classify singular}
Let $k$ be a field with characteristic different from 2. If there is a nonzero homomorphism from $\Delta_{n+1, \lambda}$ to $\Delta_{n, \mu}$, then $(\lambda, \mu)$ must be the pair of irreducible representations appearing in the following table.
\begin{center}
\begin{tabular}{c|c|c}
category & group & irreducible representations $(\lambda,\mu)$ \\
\hline
$\FA$
& $S_{n+1},\,S_n$
& $(\sgn_{n+1},\,\sgn_n)$ \\[4pt]

$\OA$
& trivial
& $(\mathrm{triv},\,\mathrm{triv})$ \\[4pt]

$\CA$
& $C_{n+1}=\langle g \rangle, \; C_n = \langle h \rangle$
& $\lambda(g)=(-1)^n,\quad \mu(h)=(-1)^{n-1}$ \\[6pt]

$\BA$
& $C_2 (n \geqslant 2)$
& $(\mathrm{triv},\,\mathrm{triv}),\;(\sgn,\,\sgn)$ \\[6pt]

$\SA$
& $D_{2(n+1)} = \langle \sigma, \tau \rangle, \; D_{2n} = \langle \sigma, \tau \rangle$
& $(\lambda^+,\mu^+), \; (\lambda^-,\mu^-)$ \\[2pt]
& $(n \geqslant 3)$ & $\lambda^\pm(\sigma)=(-1)^n,\;\lambda^\pm(\tau)=\pm1$ \\[2pt]
& & $\mu^\pm(\sigma)=(-1)^{n-1},\;\mu^\pm(\tau)=\pm1$
\end{tabular}
\end{center}
\end{proposition}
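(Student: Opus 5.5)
We would reduce the statement to the explicit description of $\Hom_{k\C}(\Delta_{n+1},\Delta_n)$ obtained in Theorem~\ref{hom spaces}. Since $\Delta_{n+1} \cong \bigoplus_\lambda \Delta_{n+1,\lambda}^{\oplus a_\lambda}$ and similarly for $\Delta_n$, a nonzero map $\Delta_{n+1,\lambda}\to\Delta_{n,\mu}$ gives a nonzero element of $e_\mu$-part of $\Hom_{k\C}(\Delta_{n+1},\Delta_n)$ on which $G_{n+1}$ acts through $\lambda$. Concretely, $\Hom_{k\C}(\Delta_{n+1},\Delta_n)$ is a $(kG_n, kG_{n+1})$-bimodule: $G_n=\End(\Delta_n)$ acts by postcomposition and $G_{n+1}=\End(\Delta_{n+1})$ by precomposition. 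Under the identification $(\ast)$ from the proof of Theorem~\ref{hom spaces}, elements are the vectors $v=\sum_{f\in C}\epsilon(f) f$ attached to components $C$ of $\Gamma_{n,n+1}$, and the two group actions are $v\mapsto v\circ h^{-1}$ for $h\in G_n$ (right action on $\C^+(n,n+1)$) and $v\mapsto g\circ v$ for $g\in G_{n+1}$ (left action). So $\Hom(\Delta_{n+1,\lambda},\Delta_{n,\mu})\cong e_\mu$-isotypic-with-$\lambda$ part, and since char $k\neq 2$ and the space is spanned by one or two sign-vectors, we only need to compute on which characters $G_{n+1}\times G_n$ act on each $v$.

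The key step is therefore to show each $v_C$ spans a one-dimensional representation (or the two $v_C$'s are permuted), and compute its characters. Both actions preserve the mutation relation (precomposition and postcomposition with automorphisms send mutations to mutations, since they transport the collapsing surjection), hence permute components and carry $\epsilon$ to $\pm\epsilon$. For $\FA$ the description via $\sigma_f\in S_{n+1}$ with $\epsilon(f)=\sgn(\sigma_f)$ from the bipartiteness lemma gives immediately: left multiplication by $g\in S_{n+1}$ multiplies $\epsilon$ by $\sgn(g)$, right by $h\in S_n\subseteq S_{n+1}$ by $\sgn(h)$, giving $(\sgn_{n+1},\sgn_n)$. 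For $\OA$ there is nothing to do. For $\CA$, $\Gamma_{n,n+1}$ is a path-like cycle of the $n+1$ rotations of $d_{n+1}$ composed with rotations of $[n]$; we will compute the sign of the generator $g$ by tracking how the hole moves: rotating by $g$ moves the hole by one step, which corresponds to $n$ mutations (one for each shifted point), giving $(-1)^n$, and similarly $h$ contributes $(-1)^{n-1}$. For $\BA$ and $\SA$ the two components are orientation classes; the reflection $\tau$ (in either group) swaps them, so $v_+\pm v_-$ are the eigenvectors, giving $\tau\mapsto\pm1$ jointly on both sides, while the rotation part $\sigma$ in $\SA$ is computed exactly as in $\CA$, and for $\BA$ the rotation part is trivial.

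The main obstacle is the parity bookkeeping in the cyclic cases: verifying that the rotation $g\in C_{n+1}$ applied to $f$ lands at mutation distance $\equiv n \pmod 2$, and that in $\SA$ the reflection acts with matching signs on the two sides so that $\lambda(\tau)=\mu(\tau)$ (rather than opposite). I would handle it uniformly via the $\FA$ embedding: every $\C$-mutation graph is a subgraph of the $\FA$ one, so $\epsilon$ may be taken as the restriction of $\sgn(\sigma_f)$; then the character is just $\sgn$ restricted along $G_{n+1}\hookrightarrow S_{n+1}$ and $G_n\hookrightarrow S_n$, times a possible component swap sign. An $(n+1)$-cycle has sign $(-1)^n$ and an $n$-cycle $(-1)^{n-1}$, matching the table; for reflections one must compare $\sgn$ of the reversal on $[n+1]$ and $[n]$ and absorb the discrepancy into the choice of relative sign between $v_+$ and $v_-$, which yields the two pairs $(\lambda^\pm,\mu^\pm)$.
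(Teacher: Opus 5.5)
Apart from the reflections, your route is the paper's. You identify $\Hom_{k\C}(\Delta_{n+1,\lambda},\Delta_{n,\mu})$ with $e_\lambda H e_\mu$, where $H\subseteq k\C^+(n,n+1)$ is spanned by the alternating component sums from Theorem~\ref{hom spaces}. You then read off the characters of the left $G_{n+1}$-action and the right $G_n$-action.

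Your normalization $\epsilon(f)=\sgn(\sigma_f)$ improves on the paper's ``direct computation''. Put $w_C=\sum_{f\in C}\sgn(\sigma_f)f$, and let $\tilde h\in S_{n+1}$ extend $h$ by fixing $n+1$. Then $\sigma_{gf}=g\sigma_f$ and $\sigma_{fh}=\sigma_f\tilde h$ give $g\cdot w_C=\sgn(g)\,w_{gC}$ and $w_C\cdot h=\sgn(h)\,w_{Ch}$ exactly, with no free sign. This settles the $\FA$, $\OA$ and $\CA$ rows at once.

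The gap is your last step: the mismatch between $\sgn(\tau_{n+1})$ and $\sgn(\tau_n)$ cannot be absorbed into the relative sign of $v_+$ and $v_-$. Rescaling one component vector by $-1$ only swaps the names $v_+\leftrightarrow v_-$. The joint eigenlines and their eigenvalue pairs are intrinsic, and your own formulas show that on each of them $\lambda(\tau_{n+1})\mu(\tau_n)=\sgn(\tau_{n+1})\sgn(\tau_n)$. For the reversal $r_m(i)=m+1-i$ one has $\sgn(r_m)=(-1)^{\lfloor m/2\rfloor}$, so this product is $(-1)^n$.

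Here is a concrete instance in $\BA$ with $n=3$, writing $f$ as $(f(1),f(2),f(3))$. Take $w_+=(1,2,3)-(1,2,4)+(1,3,4)-(2,3,4)$. Then $r_4w_+=(4,3,2)-(4,3,1)+(4,2,1)-(3,2,1)=:w_-$, but $w_+r_3=-w_-$. So $w_++w_-$ is fixed by $r_4$ on the left and negated by $r_3$ on the right, giving a nonzero map $\Delta_{4,\mathrm{triv}}\to\Delta_{3,\sgn}$. Hence the $\BA$ row of the table is false for odd $n\geqslant 3$; there the pairs are $(\mathrm{triv},\sgn)$ and $(\sgn,\mathrm{triv})$.

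In $\SA$ the product likewise depends on $n$ and on the reflection class of $\tau$. For even $m$ the two reflection classes in $D_{2m}$ have opposite signs, and $D_{2m}$ would need different classes in its roles at $n=m-1$ and at $n=m$. So no fixed choice of $\tau$ gives $\lambda^\pm(\tau)=\mu^\pm(\tau)$ for all $n$. Your sentence ``giving $\tau\mapsto\pm1$ jointly on both sides'' asserts exactly this false matching. The paper's proof makes the same unjustified claim (``the same description applies to the right action of $G_n$'').

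What your method does prove is the corrected relation $\lambda(\tau)\mu(\tau)=\sgn(\tau_{n+1})\sgn(\tau_n)$. It also gives the correct \emph{set} of singular characters of each $G_n$: both characters of $C_2$ in $\BA$, and the two characters with $\sigma\mapsto(-1)^{n-1}$ in $\SA$.
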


\begin{proof}
By the proof of Theorem \ref{hom spaces}, a nonzero homomorphism $\Delta_{n+1,\lambda} \to \Delta_{n,\mu}$ can be represented by a nonzero vector $v \in \Delta_{n,\mu}(n+1)$ annihilated by all morphisms $g \in \C^+(n+1, n)$. Furthermore, each connected component of $\Gamma_{n, n+1}$ contributes a basic vector satisfying this requirement: an alternating sum of vertices appearing in that component, so $v$ must be a linear combination of these basic vectors. Besides, using the induced-module description,
\begin{equation}\label{eq:induced}
\Delta_{n,\mu}(n+1) = k\C(n, n+1)\otimes_{kG_n}\mu,
\end{equation}
we deduce that $v \otimes_{kG_n} \mu$ in \eqref{eq:induced} is nonzero.

\medskip
We now determine all possible pairs $(\lambda,\mu)$ case by case.

\medskip
\noindent
\textbf{Case 1: $\FA$.}
Here $G_n=S_n$.
The mutation graph $\Gamma_{n, n+1}$ is connected, so one can take $v$ to be the alternating sum of all injections from $[n]$ to $[n+1]$. A direct computation shows that the left $S_{n+1}$-action on $v$ is the sign character $\sgn_{n+1}$, while the right $S_n$-action is $\sgn_n$. Thus the tensor product $v \otimes_{kG_n} \mu \neq 0$ if and only if $\mu$ is the sign representation.

\medskip
\noindent
\textbf{Case 2: $\CA$.}
Here $G_n=C_n$ and $G_{n+1}=C_{n+1}$. Again, the mutation graph $\Gamma_{n, n+1}$ is connected, so one can take $v$ to be an alternating sum of all injective morphisms from $[n]$ to $[n+1]$. Let $g$ be a generator of $C_{n+1}$ and $h$ a generator of $C_n$. A direct computation using the cyclic relations gives
\[
g \cdot v = (-1)^n v, \qquad v \cdot h = (-1)^{n-1} v.
\]
Thus $v$ affords a one-dimensional left $kC_{n+1}$-module $\lambda$ and a one-dimensional right $kC_n$-module $\mu'$. Consequently, the tensor product $v \otimes_{kG_n} \mu \neq 0$ if and only if $\mu \cong \mu'$.

\medskip
\noindent
\textbf{Case 3: $\BA$.}
For $n \geqslant 2$, $G_n\cong C_2$ and $G_{n+1}\cong C_2$. The mutation graph $\Gamma_{n, n+1}$ has two connected components: one consisting of linear order-preserving embeddings and the other of linear order-reversing embeddings. Each component contributes a vector, say $v_1$ and $v_2$, and $G_{n+1}$ permutes them. Consequently, the vectors
\[
v_+ = v_1 + v_2, \qquad v_- = v_1 - v_2
\]
span the trivial representation and the sign representation respectively. The same description applies to the right action of $G_n$. Thus $v_\pm \otimes_{kG_n} \mu \neq 0$ if and only if $\mu$ is the trivial character (for $v_+$) or the sign character (for $v_-$).

For $n = 1$, a direct computation shows that the pair must be $(\mathrm{sgn}, \mathrm{sgn})$.

\medskip
\noindent
\textbf{Case 4: $\SA$.}
For $n \geqslant 3$, $G_n = D_{2n}$ and $G_{n+1} = D_{2(n+1)}$. Let $\sigma$ and $\tau$ denote the standard rotation and reflection generators of $D_{2(n+1)}$. The mutation graph $\Gamma_{n, n+1}$ again has two connected components: the cyclic order-preserving component and the cyclic order-reversing component. The reflection $\tau$ interchanges the two mutation components, while the rotation $\sigma$ preserves each component. Each component contributes a vector, say $v_1$ and $v_2$. Then $v_+ = v_1 + v_2$ and $v_- = v_1 - v_2$ span two one-dimensional $kG_{n+1}$-modules.

A direct computation shows that the two characters of $D_{2(n+1)}$ afforded by $v_\pm$ are
\[
\lambda^\pm(\sigma) = (-1)^n, \qquad \lambda^\pm(\tau) = \pm 1.
\]
The same analysis applies to the right action of $D_{2n}$, yielding two characters $\mu^\pm$ with
\[
\mu^\pm(\sigma) = (-1)^{n-1}, \qquad \mu^\pm(\tau) = \pm 1,
\]
so $v_\pm \otimes_{kG_n} \mu \neq 0$ if and only if $\mu \cong \mu^\pm$.

For $n = 1$, a direct computation shows that the pair is $(\mathrm{sgn}, \mathrm{sgn})$; for $n = 2$, the pairs are $(\mathrm{triv}, \mu^+)$ and $(\mathrm{sgn}, \mu^-)$.
\end{proof}

\begin{remark}
The assumption that the characteristic of $k$ is distinct from 2 guarantees that $v_+$ and $v_-$ in the above proof are linearly independent. The reader can see that this assumption is not necessary for $\OA$, $\CA$ or $\FA$.

If the characteristic of $k$ is 2, then the one-dimensional representations of the dihedral groups (or cyclic groups of order 2) coincide. More precisely:
\begin{itemize}
\item for $\SA$, we have $\lambda^+ = \lambda^-$ and $\mu^+ = \mu^-$, so there is only one singular pair $(\lambda, \mu)$;

\item for $\BA$, we have $\mathrm{triv} = \sgn$, so there is only one singular pair $(\mathrm{triv}, \mathrm{triv})$.
\end{itemize}

\end{remark}

In the rest of this paper, we call irreducible representations $\lambda$ appearing in the above table \emph{singular} irreducible representations of $G_n$. The corresponding $\Delta_{n, \lambda}$ are called \emph{singular} indecomposable standard $\C$-modules.

To classify irreducible $\C$-modules, given $\lambda \in \Irr(G_n)$, we define
\[
\overline{\Delta}_{n, \lambda} = \Delta_n \otimes_{kG_n} \lambda,
\]
which is a quotient of $\Delta_{n, \lambda}$. If $kG_n$ is semisimple, then $\overline{\Delta}_{n, \lambda} = \Delta_{n, \lambda}$, but in general they are different.

\begin{proposition} \label{regular standard is simple}
Let $k$ be a field. If $\lambda \in \Irr(G_n)$ is regular, then $\overline{\Delta}_{n, \lambda}$ is irreducible.
\end{proposition}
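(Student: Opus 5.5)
Since $\overline{\Delta}_{n,\lambda}=\Delta_n\otimes_{kG_n}\lambda$ is a quotient of $\Delta_{n,\lambda}$, it is generated in degree $n$ by $\overline{\Delta}_{n,\lambda}(n)\cong \lambda$. It also vanishes in degrees below $n$, since $\Delta_n(l)=k\C^+(n,l)$. The plan is therefore to show that every nonzero submodule $W\subseteq \overline{\Delta}_{n,\lambda}$ meets degree $n$ nontrivially. Because $\lambda$ is an irreducible $kG_n$-module, $W(n)$ is then all of $\lambda$, and this generates everything, so $W=\overline{\Delta}_{n,\lambda}$.

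To reach degree $n$, take $0\neq w\in W(m)$ with $m>n$ minimal. For every surjection $g\in\C^-(m,m-1)$, minimality gives $g\cdot w=0$ (when $m-1\geqslant n$; if $m-1<n$ then $m=n$). Write $\overline{\Delta}_{n,\lambda}(m)=k\C^+(n,m)\otimes_{kG_n}\lambda$. Choosing orbit representatives for the free right $G_n$-action on $\C^+(n,m)$, this is the direct sum $\bigoplus_{[f]} f\otimes\lambda$. Expand $w=\sum_f f\otimes x_f$, using an honest sum over all $f$ with the relation $f\sigma\otimes x = f\otimes \sigma x$ in mind. Then I will rerun the coefficient argument from the proof of Theorem \ref{hom spaces} with coefficients in $\lambda$ rather than in $k$.

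If $f'$ is a mutation of $f$ witnessed by $g$, Lemma \ref{lifts}(3) says the basis element $g\circ f$ of degree $m-1$ arises only from $f$ and $f'$. Hence $x_f=-x_{f'}$, after transporting through the $G_n$-identifications. For $m>n+1$, use the argument in the case $m>n+1$ of that proof: each component of $\Gamma_{n,m}$ contains a vertex $f$ for which some $g$ has a unique lift. This forces every $x_f=0$, so $w=0$, a contradiction. For $m=n+1$, the vector $w$ must be an alternating-sum vector on each component tensored with elements of $\lambda$. The map $P_{n+1}\to\overline{\Delta}_{n,\lambda}$ sending $1\mapsto w$ kills $\mathfrak I_{n+1}$, so it gives a nonzero map $\Delta_{n+1}\to \Delta_n\otimes_{kG_n}\lambda$.

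The main obstacle is the case $m=n+1$: I must show that such a nonzero map forces $\lambda$ to be singular, contradicting regularity. First I would check that the analysis of Theorem \ref{hom spaces} goes through verbatim after $\otimes_{kG_n}\lambda$, because $\Delta_n$ is right $kG_n$-free. The conclusion of that check is that the annihilated vectors are exactly $\bigoplus_C (v_C\otimes_{kG_n}\lambda)$, where $v_C$ is the alternating sum over a component $C$. The vectors $v_C$ span a right $kG_n$-stable space on which $G_n$ acts through the characters $\mu$ computed in Proposition \ref{classify singular}. So $v\otimes_{kG_n}\lambda\neq0$ forces $\lambda\cong\mu$ for one of those characters, which means $\lambda$ is singular. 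If the characteristic is $2$ the characters may coincide, but the same argument still applies. This contradicts regularity, so $w$ cannot exist in degree $n+1$. Combined with the previous paragraph, every nonzero $W$ meets degree $n$, and $\overline{\Delta}_{n,\lambda}$ is irreducible.
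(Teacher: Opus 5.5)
Your lowest-degree-vector strategy is a sound alternative to the paper's route, but one step fails over some fields. The paper finds a simple submodule via Corollary \ref{socle}, which relies on the finite-length theorem. You instead take $w\in W(m)$ with $m$ minimal; it is killed by $\C^-(m,m-1)$ and so gives a nonzero map $\Delta_m\to\overline{\Delta}_{n,\lambda}$.

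The failing step is the case $m=n+1$. You claim the vectors of $\overline{\Delta}_{n,\lambda}(n+1)$ killed by all $g\in\C^-(n+1,n)$ are exactly $\bigoplus_C v_C\otimes_{kG_n}\lambda$. This is false in general when $\operatorname{char}k$ divides $|G_n|$. The functor $-\otimes_{kG_n}\lambda$ is not left exact, so the kernel computed in Theorem \ref{hom spaces} does not pass to $\overline{\Delta}_{n,\lambda}$. Right freeness of $\Delta_n$ only gives exactness in the $\lambda$-variable, which is not what is needed here.

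Here is a concrete counterexample. Take $\C=\FA$, $n=2$, $\operatorname{char}k=2$, $\lambda=k$, and let $r_{12},r_{13},r_{23}$ be orbit representatives of $\C^+(2,3)/S_2$, indexed by their images. Then $w=\sum r\otimes 1\neq 0$ is killed by every $g$: the two surviving terms of $g\cdot w$ both equal $\id_{[2]}\otimes 1$. By contrast, $v\otimes 1=\sum_f f\otimes 1=\sum_r r\otimes 2=0$.

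The same defect affects your ``honest sum over all $f$''. With coefficients satisfying $x_{f\sigma}=\sigma^{-1}x_f$, that sum equals $|G_n|$ times the sum over representatives. With unconstrained coefficients, the $x_f$ are not determined by $w$, so $x_f=-x_{f'}$ cannot be read off. Your conclusion that $\lambda$ is singular is still true, but your justification fails in small positive characteristic, and the proposition is stated for every field.

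The cleanest repair is the one the paper uses. Since $kG_n$ is symmetric, $\lambda\cong\mathrm{soc}(kG_ne_\lambda)$. Exactness of $\Delta_n\otimes_{kG_n}-$ then turns $\lambda\hookrightarrow kG_ne_\lambda$ into an embedding $\overline{\Delta}_{n,\lambda}\hookrightarrow\Delta_{n,\lambda}$, and $\Delta_{n,\lambda}$ is a direct summand of $\Delta_n$. Composing, your lowest-degree vector with $m>n$ gives a nonzero map $\Delta_m\to\Delta_{n,\lambda}$. Theorem \ref{hom spaces} then forces $m=n+1$, with no coefficient computation in $\lambda$. That case is exactly the defining condition for singularity handled by Proposition \ref{classify singular}.

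Alternatively, keep your computation but sum over orbit representatives, extending coefficients by $x_{f\sigma}=\sigma^{-1}x_f$. On a component $C$ with stabilizer $H_C\leqslant G_n$ you get $x_f=\epsilon(f)y_C$, where $\sigma y_C=\chi_C(\sigma)y_C$ for $\sigma\in H_C$. By Frobenius reciprocity, a nonzero solution makes $\lambda$ a quotient of $\mathrm{Ind}_{H_C}^{G_n}\chi_C$, whose irreducible quotients are singular characters.

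With either repair your argument is complete. It also has an advantage over the paper's: a minimal degree of a nonzero submodule always exists, so you do not need the finite-length input behind Corollary \ref{socle}.
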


\begin{proof}
Suppose, for contradiction, that there exists a $\lambda \in \Irr(G_n)$ such that $\overline{\Delta}_{n,\lambda}$ is not irreducible. By Corollary \ref{socle}, $\overline{\Delta}_{n,\lambda}$ has a nonzero socle. Take an irreducible $\C$-module $L_{m,\mu}$ in this socle. Then $L_{m, \mu}$ is a proper submodule of $\overline{\Delta}_{n, \lambda}$. Besides, we get a nonzero homomorphism
\[
f: \Delta_{m,\mu} \longrightarrow \overline{\Delta}_{n,\lambda}
\]
whose image is precisely $L_{m, \mu}$.

Note that the functor $\Delta_n \otimes_{kG_n} -$ is exact. Also note that $\lambda$ is the socle of $kG_n e_{\lambda}$ since $kG_n$ is a symmetric algebra. Therefore, applying $\Delta_n \otimes_{kG_n} -$ to the inclusion $\lambda \to kG_ne_{\lambda}$ we get an injective $\C$-module homomorphism
\[
g: \overline{\Delta}_{n, \lambda} = \Delta_n \otimes_{kG_n} \lambda \longrightarrow \Delta_n \otimes_{kG_n} kG_ne_{\lambda} = \Delta_{n, \lambda}.
\]
Consequently, we obtain a nonzero $\C$-module homomorphism
\[
g \circ f: \Delta_{m, \mu} \longrightarrow \Delta_{n, \lambda}.
\]
By Theorem \ref{hom spaces}, this can occur if and only if $m = n$ or $m = n+1$.

If $m = n$, we obtain a nonzero $kG_n$-module homomorphism
\[
f_n: \Delta_{n, \mu} (n) \cong kG_n e_{\mu} \longrightarrow \overline{\Delta}_{n, \lambda} (n) \cong \lambda.
\]
This happens if and only if $\mu \cong \lambda$. Accordingly, $f_n$ is surjective, so is $f$ since $\overline{\Delta}_{n, \lambda}$ is generated by $\lambda$, contradicting the assumption that $L_{m, \mu}$ is a proper submodule of $\overline{\Delta}_{n, \lambda}$. Thus $m = n + 1$. But by Proposition \ref{classify singular}, this implies that $\lambda$ is singular, contradicting the hypothesis. Therefore $\overline{\Delta}_{n,\lambda}$ must be irreducible.
\end{proof}

Now we consider the singular case.

\begin{lemma} \label{at most 2}
Let $k$ be a field. If $\lambda \in \Irr(G_n)$ is singular, then $\overline{\Delta}_{n, \lambda}$ has length at most 2.
\end{lemma}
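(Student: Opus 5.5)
We follow the proof of Proposition \ref{regular standard is simple} and look at the radical of $\overline{\Delta}_{n,\lambda}$. Let $R = \mathrm{rad}\,\overline{\Delta}_{n,\lambda}$. Since $\overline{\Delta}_{n,\lambda}$ is a quotient of $\Delta_{n,\lambda}$, Proposition \ref{filtration}(3) shows it has simple top $L_{n,\lambda}$. It therefore suffices to show that $R$ is either zero or simple.

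The first step is to rule out composition factors of $R$ that are too small. The value $\overline{\Delta}_{n,\lambda}(n) \cong \lambda$ is already the top, so $R(n)=0$. Also $\overline{\Delta}_{n,\lambda}(l)=0$ for $l<n$. Hence every composition factor $L_{m,\mu}$ of $R$ has $m \geqslant n+1$, and $R$ is generated by its values in degrees $\geqslant n+1$. By Corollary \ref{finite length 1} (for $\C$ other than $\FA$ this follows since $\mathscr{K}(n,-)$ has finite rank; for $\FA$ via restriction to $\OA$), $\overline{\Delta}_{n,\lambda}$ has finite length, so $R$ has a nonzero socle and a nonzero top whenever $R\neq 0$.

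The second step analyses the socle of $\overline{\Delta}_{n,\lambda}$. As in Proposition \ref{regular standard is simple}, take any simple submodule $L_{m,\mu}$. Composing $\Delta_{m,\mu} \twoheadrightarrow L_{m,\mu}$ with the inclusion $g\colon \overline{\Delta}_{n,\lambda}\hookrightarrow \Delta_{n,\lambda}$ gives a nonzero map $\Delta_{m,\mu}\to\Delta_{n,\lambda}$. By Theorem \ref{hom spaces}, $m\in\{n,n+1\}$. If $L_{m,\mu}$ lies in $R$, the first step gives $m=n+1$. Moreover, any such $\mu$ corresponds to the singular pair attached to $\lambda$ in Proposition \ref{classify singular}.

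We then bound the multiplicity. A simple submodule of type $(n+1,\mu)$ is determined by its degree-$(n+1)$ value, which must be a vector in $\overline{\Delta}_{n,\lambda}(n+1)$ killed by all of $\C^-(n+1,n)$, i.e.\ by the radical action. By the computation $(\ast)$ in the proof of Theorem \ref{hom spaces}, such vectors are alternating sums over the components of $\Gamma_{n,n+1}$. After tensoring with $\lambda$, the argument in Proposition \ref{classify singular} leaves exactly one line, the one transforming by the matching character. So $\mathrm{soc}\, R$ is simple, isomorphic to $L_{n+1,\lambda'}$ with $(\lambda',\lambda)$ the singular pair.

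The final step shows $R = \mathrm{soc}\,R$, and this is the main obstacle. We would argue that $R$ is generated in degree $n+1$: by Lemma \ref{factorization}-type factorizations, every injective $[n]\hookrightarrow[l]$ with $l\geqslant n+2$ factors through a face map from $[n+1]$. We would then show that $R(n+1)$ is exactly the singular line $L$. Since $\overline{\Delta}_{n,\lambda}(n+1)=k\C^+(n,n+1)\otimes_{kG_n}\lambda$, an element of $R(n+1)$ must map to zero under every $h\in\C^-(n+1,n)$; otherwise it would generate the top, because $h\cdot x$ would be a nonzero element of $\lambda$. So $R(n+1)$ is the common kernel of all $h\in \C^-(n+1,n)$, which by the second step is the one-dimensional singular line. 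Then $R$ is generated by this line, whose generator is annihilated by all surjections. Finally, one applies the argument of Proposition \ref{regular standard is simple} to $R$, now as a quotient of $\overline{\Delta}_{n+1,\lambda'}$ with $\lambda'$ at the position where the socle analysis forces simplicity. If $\lambda'$ is regular, Proposition \ref{regular standard is simple} directly gives that $R$ is simple. The delicate point is the case where $\lambda'$ is itself singular. There one must check directly that the singular vector in degree $n+2$ arising from the image of $\overline{\Delta}_{n+1,\lambda'}$ vanishes in $\overline{\Delta}_{n,\lambda}$, using $\mathbf d_{n+2}\mathbf d_{n+1}=0$ from Theorem \ref{generalized D-K correspondence} (or, for $\FA$, the restriction to $\OA$). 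This yields length at most $2$.
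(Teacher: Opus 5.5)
\textbf{There is a genuine gap at the step you yourself call the main obstacle.} Your route (a radical/socle analysis inside $\C\Mod$) differs from the paper's, and it fails at the claim that $R=\mathrm{rad}\,\overline{\Delta}_{n,\lambda}$ is generated by $R(n+1)$.

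Factoring every injection $[n]\hookrightarrow[l]$ ($l\geqslant n+2$) through $[n+1]$ only shows that $\overline{\Delta}_{n,\lambda}(l)=\C(n+1,l)\cdot\overline{\Delta}_{n,\lambda}(n+1)$. So an element of $R(l)$ has the form $\sum_i f_i\cdot y_i$ with $y_i\in\overline{\Delta}_{n,\lambda}(n+1)$. Nothing forces the $y_i$, or any rewriting of the sum, to lie in $R(n+1)$.

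This generation claim is really the whole content of the lemma. If you had it, you would be done at once. Any simple $S\subseteq R$ has $S(n+1)\neq 0$ by your second step, so $S(n+1)=R(n+1)$ (a line), and then $S\supseteq R$. Without generation, your first two steps only show that $\mathrm{soc}\,R$ is simple, which does not bound the length of $R$.

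Your workaround is circular:
\begin{enumerate}
\item The partner $\lambda'$ is always singular (read the table of Proposition~\ref{classify singular} one level up), so the ``regular'' branch is vacuous.
\item To show the map $\overline{\Delta}_{n+1,\lambda'}\to R$ factors through the top by killing one singular vector in degree $n+2$, you need that vector to generate $\mathrm{rad}\,\overline{\Delta}_{n+1,\lambda'}$. That is the same unproved generation claim one level higher, and there is no base case.
\item The relation $\mathbf d_{n+2}\mathbf d_{n+1}=0$ concerns the normalized projectives $k\C\Psi_m$. These differ from the $\Delta_m$ for $\CA$ and $\SA$, and are not available at all for $\FA$.
\end{enumerate}

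\textbf{Your second step also has a characteristic problem.} You get $\dim R(n+1)\leqslant 1$ by tensoring the solution space of $(\ast)$, which is computed in $\Delta_n(n+1)$, with $\lambda$, and then invoking Proposition~\ref{classify singular}. That proposition assumes $\mathrm{char}\,k\neq 2$, and your argument tacitly assumes $-\otimes_{kG_n}\lambda$ preserves kernels. The lemma is stated for every field, and this reasoning breaks in modular characteristic.

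For example, take $\FA$, $n=2$, $\mathrm{char}\,k=2$, and $\lambda=k$, which is singular. Identify $\overline{\Delta}_{2,k}(3)$ with the span of the $2$-subsets of $[3]$. The alternating sum over $\Gamma_{2,3}$ maps to $0$ there. Yet the common kernel of all surjections $[3]\twoheadrightarrow[2]$ is the line spanned by $\{1,2\}+\{1,3\}+\{2,3\}$. The bound $\dim R(n+1)\leqslant1$ still holds, but not for the reason you give.

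\textbf{The missing idea is to leave $\C$.} The paper restricts to $\D=\OA$. Each injection factors uniquely as $f_0\circ\sigma$ with $f_0$ order-preserving, so $\Res^{\C}_{\D}\Delta_n\cong\Delta_n^{\D}\otimes_k kG_n$. Since a singular $\lambda$ is one-dimensional, this gives $\Res^{\C}_{\D}\overline{\Delta}_{n,\lambda}\cong\Delta_n^{\D}$, which has length $2$ by the classical Dold--Kan correspondence. Restriction is exact and sends nonzero modules to nonzero modules. Hence a $\C$-composition series of $\overline{\Delta}_{n,\lambda}$ restricts to a $\D$-filtration with nonzero factors, and the length is at most $2$.

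This argument is characteristic-free and needs no radical bookkeeping. It also repairs your degree-$(n+1)$ count: the common kernel of $s_1,\dots,s_n$ on $\Delta_n^{\D}(n+1)$ is spanned by $\sum_j(-1)^j d_j$ in every characteristic.
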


\begin{proof}
Let $\D$ be the simplex category, viewed as a subcategory of $\C$. As in the proof of Corollary \ref{finite length 1}, we have
\[
\Res^{\C}_{\D} \Delta_n \cong \bigoplus_{\sigma \in G_n} (\Delta_n^{\D})_{\sigma} \cong \Delta_n^{\D} \otimes_k kG_n.
\]
It follows that
\[
\Res^{\C}_{\D} \overline{\Delta}_{n, \lambda} = \Res_{\D}^{\C} (\Delta_n \otimes_{kG_n} \lambda) \cong  (\Delta_n^{\D} \otimes_k kG_n) \otimes_{kG_n} \lambda \cong \Delta_n^{\D} \otimes_k \lambda \cong \Delta_n^{\D},
\]
since the singular $\lambda$ is one-dimensional as shown in Proposition \ref{classify singular}.

Note that $\Delta_n^{\D}$ is the normalized projective $\D$-module. By the Dold-Kan correspondence, it has a simple top and a simple socle, and no further composition factors. Hence its length is $2$. Since the restriction functor is exact and cannot send a nonzero module to zero (as $\D$ is a wide subcategory of $\C$), it follows that $\overline{\Delta}_{n,\lambda}$ has length at most $2$ as a $\C$-module.
\end{proof}

\begin{lemma} \label{existence of morphism}
Let $k$ be a field and $\lambda \in \Irr(G_n)$ be a singular irreducible representation. Then there exists a nonzero non-surjective homomorphism
\[
\overline{\phi}: \overline{\Delta}_{n+1, \mu} \longrightarrow \overline{\Delta}_{n, \lambda}
\]
with $\mu \in \Irr(G_{n+1})$ singular.
\end{lemma}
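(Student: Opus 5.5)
The plan is to build $\overline{\phi}$ directly from the alternating-sum vectors of Theorem~\ref{hom spaces}, which already give maps $\Delta_{n+1}\to\Delta_n$. Since $\lambda$ is singular, Proposition~\ref{classify singular} (and its proof) supplies a component $C$ of $\Gamma_{n,n+1}$, or a $\pm$ combination $v_\pm$ of the alternating sums $v_1,v_2$ in the $\BA$, $\SA$ cases. Call this vector $v\in k\C^+(n,n+1)$. It spans a one-dimensional left $kG_{n+1}$-module $\mu$ and a one-dimensional right $kG_n$-module isomorphic to $\lambda^{*}$, up to the convention used to pair it with $\lambda$. It is annihilated modulo $\mathfrak I_n$ by every $g\in\C^-(n+1,n)$.

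First, take $0\neq x\in\lambda$, which is one-dimensional, and set $w=v\otimes x\in \Delta_n(n+1)\otimes_{kG_n}\lambda=\overline{\Delta}_{n,\lambda}(n+1)$. I will check $w\neq 0$. Because $\Delta_n(n+1)=k\C^+(n,n+1)$ is right free over $kG_n$, the map $v\mapsto v\otimes x$ is just the projection onto the $\lambda$-isotypic quotient. By the matching of characters in Proposition~\ref{classify singular}, $v$ is an eigenvector for the right action with exactly the character making this projection nonzero: choose orbit representatives $f_j$, write $v=\sum_j f_j\cdot(\sum_\sigma \chi(\sigma)\sigma)$, and each term maps to a nonzero multiple of $f_j\otimes x$.

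Second, Yoneda gives a map $P_{n+1}\to\overline{\Delta}_{n,\lambda}$, $1_{n+1}\mapsto w$. It kills $\mathfrak I_{n+1}$, since every $g\in \C^-(n+1,n)$ sends $v$ into $\mathfrak I_n$, by the computation in the proof of Theorem~\ref{hom spaces}. So it factors through $\Delta_{n+1}$. Moreover $\sigma\cdot w=\mu(\sigma)w$ for $\sigma\in G_{n+1}$, so the map factors further through $\Delta_{n+1}\otimes_{kG_{n+1}}\mu=\overline{\Delta}_{n+1,\mu}$. This produces $\overline\phi$, which is nonzero because $w\neq0$. Here $\mu$ is the singular character of $G_{n+1}$ from the table, and it is singular because the analogous vector in $\Gamma_{n+1,n+2}$ exists. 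The only extra check is that the left character of $v$ coincides with the right character in the next row of the table, e.g.\ $(-1)^n$ for $\CA$ and $\sgn_{n+1}$ for $\FA$.

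Third, $\overline\phi$ is not surjective, because $\overline{\phi}$ vanishes at the object $[n]$: $\overline{\Delta}_{n+1,\mu}(n)=0$, while $\overline{\Delta}_{n,\lambda}(n)\cong\lambda\neq 0$.

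The main obstacle is the identification of characters, namely that $w\neq 0$ and that the resulting $\mu$ is singular. This is a sign computation with orbit representatives for each of the five categories, especially $\SA$ at small $n$ and in characteristic $2$. The characteristic-$2$ case needs care because $v_\pm$ coincide, but a single vector still works. I would handle this uniformly by noting that right multiplication by $\sigma\in G_n$ and left multiplication by $\tau\in G_{n+1}$ are graph automorphisms of $\Gamma_{n,n+1}$. Hence they act on the alternating sum by the sign $\pm1$ determined by parity, together with the component permutation, which is exactly the data recorded in Proposition~\ref{classify singular}.
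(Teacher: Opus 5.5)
Your route differs from the paper's. The paper never writes down $\overline{\phi}$. It takes the nonzero map $\Delta_{n+1,\mu}\to\Delta_{n,\lambda}$ from Theorem~\ref{hom spaces} and passes to some filtration factor $\overline{\Delta}_{n,\lambda_i}$ of $\Delta_{n,\lambda}$ on which that map is nonzero. It then uses Lemma~\ref{at most 2} to put the image in the socle, and adds a twisting argument for $\BA$ and $\SA$ to get $\lambda_i\cong\lambda$. Your Yoneda factorization through $\overline{\Delta}_{n+1,\mu}$ and your non-surjectivity argument (evaluating at $[n]$) are fine and would make all of that unnecessary.

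\textbf{The gap.} The step $w\neq 0$ fails in general. Since $\C^+(n,n+1)$ is a free right $G_n$-set and $v$ is a right $\lambda$-eigenvector, $v$ is supported on whole orbits. Hence $v=v'N_\lambda$, where $v'=\sum_j\epsilon(f_j)f_j$ runs over orbit representatives and $N_\lambda=\sum_{\sigma\in G_n}\lambda(\sigma^{-1})\sigma$. This gives $v\otimes x=|G_n|\,(v'\otimes x)$. The ``nonzero multiple'' in your computation is exactly $\sum_\sigma\lambda(\sigma^{-1})\lambda(\sigma)=|G_n|$, which vanishes whenever $\mathrm{char}\,k$ divides $|G_n|$. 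For example, take $\FA$ with $n=3$ and $\mathrm{char}\,k=3$, or $\CA$ with $\mathrm{char}\,k\mid n$, or $\BA$ and $\SA$ in characteristic $2$ for every $n\geqslant 2$. In these cases $w=0$ and your $\overline{\phi}$ is the zero map, yet the lemma is stated for every field. Your characteristic-$2$ remark only addresses the coincidence of $v_+$ and $v_-$, not this scalar. The paper's argument does not hit this problem, because it never needs a specific vector of $\overline{\Delta}_{n,\lambda}$ to be nonzero.

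\textbf{The repair.} Take $w=v'\otimes x$ instead. Apply the exact functor $\Delta_n\otimes_{kG_n}-$ to the inclusion $\lambda\hookrightarrow kG_n$, $x\mapsto N_\lambda$; this is a left module map because $\sigma N_\lambda=\lambda(\sigma)N_\lambda$. The result is an injective $\C$-module homomorphism $\overline{\Delta}_{n,\lambda}\to\Delta_n$, $u\otimes x\mapsto uN_\lambda$, and it sends $w$ to $v$. Injectivity transfers both needed facts from $v$ to $w$: $g\cdot w=0$ for all $g\in\C^-(n+1,n)$, and $\tau\cdot w=\mu(\tau)w$ for $\tau\in G_{n+1}$. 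Now $w\neq0$ in every characteristic, and with this change the rest of your argument goes through.
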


\begin{proof}
Let $\{ \lambda_i \mid i \in [r] \}$ be the composition factors of $kG_ne_{\lambda}$ where $e_{\lambda} \in kG_n$ is a primitive idempotent such that $kG_n e_{\lambda}$ is the projective cover of $\lambda$. Then $\Delta_{n, \lambda}$ has a filtration whose factors are exactly $\overline{\Delta}_{n, \lambda_i}$. Since $\lambda$ is singular, we can find a unique singular $\mu \in \Irr(G_{n+1})$ together with a nonzero homomorphism $\phi: \Delta_{n+1, \mu} \to \Delta_{n, \lambda}$ by Theorem \ref{hom spaces}. Therefore, there exists a nonzero homomorphism from $\Delta_{n+1, \mu}$ to $\overline{\Delta}_{n, \lambda_i}$ for a certain $i \in [r]$. Note that $\lambda_i$ must be singular. Otherwise, $\overline{\Delta}_{n, \lambda_i}$ is irreducible by Proposition \ref{regular standard is simple}. But this is impossible since the top of $\Delta_{n+1, \mu}$ is not isomorphic to $\overline{\Delta}_{n, \lambda_i}$.

By Lemma \ref{at most 2}, the length of $\overline{\Delta}_{n, \lambda_i}$ is at most 2. Since the above map from $\Delta_{n+1, \mu}$ to $\overline{\Delta}_{n, \lambda_i}$ is clearly not surjective, its image is contained in the socle of $\overline{\Delta}_{n, \lambda_i}$. Consequently, one gets a nonzero homomorphism
\[
\overline{\phi}: \overline{\Delta}_{n+1, \mu} \longrightarrow \overline{\Delta}_{n, \lambda_i},
\]
which is the desired homomorphism for $\C = \OA, \CA, \FA$ since $\lambda_i \cong \lambda$ by Proposition \ref{classify singular}: there is a unique singular irreducible representation in $\Irr(G_n)$.

For $\C = \BA$ or $\SA$, $\lambda_i$ may not be isomorphic to $\lambda$, but the other singular irreducible $\lambda'$ twisted by a sign. In this case, we repeat the above argument for $\lambda'$ to get a nonzero homomorphism
\[
\overline{\phi}': \overline{\Delta}_{n+1, \mu'} \longrightarrow \overline{\Delta}_{n, \lambda'_i},
\]
where $\lambda_i' \in \Irr(G_n)$ is singular and $\mu'$ is not isomorphic to $\mu$. Then $\lambda_i'$ is isomorphic to $\lambda$ and we get the desired map. Indeed, if this is not the case, then $\lambda_i'$ is isomorphic to $\lambda'$ as well. But then we get two nonzero homomorphisms $\overline{\phi}$ and $\overline{\phi}'$ to $\overline{\Delta}_{n, \lambda'}$, whose images land in distinct simple submodules of the socle. Consequently, the length of $\overline{\Delta}_{n, \lambda'}$ is at least 3, contradicting Lemma \ref{at most 2}.
\end{proof}

Combining these two lemmas, we get:

\begin{proposition} \label{length 2}
Let $k$ be a field. If $\lambda \in \Irr(G_n)$ is singular, then $\overline{\Delta}_{n, \lambda}$ has length 2.
\end{proposition}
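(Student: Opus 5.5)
Combining Lemma \ref{at most 2} with Lemma \ref{existence of morphism} gives the result: the first bounds the length of $\overline{\Delta}_{n,\lambda}$ above by $2$, and the second supplies a proper nonzero submodule. Since $\overline{\Delta}_{n,\lambda}\neq 0$ (its value at $[n]$ is $\lambda$), it suffices to show that $\overline{\Delta}_{n,\lambda}$ is not irreducible.

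First I record that $\overline{\Delta}_{n,\lambda}$ is nonzero and has a simple top $L_{n,\lambda}$. It is a nonzero quotient of $\Delta_{n,\lambda}$, which has simple top $L_{n,\lambda}$ by Proposition \ref{filtration}(3), and $\overline{\Delta}_{n,\lambda}(n)\cong\lambda\neq 0$. Next, Lemma \ref{existence of morphism} gives a singular $\mu\in\Irr(G_{n+1})$ and a homomorphism $\overline{\phi}:\overline{\Delta}_{n+1,\mu}\to\overline{\Delta}_{n,\lambda}$ that is nonzero and not surjective. Hence $\operatorname{im}\overline{\phi}$ is a nonzero proper submodule, so $\overline{\Delta}_{n,\lambda}$ is not simple and has length at least $2$. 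With Lemma \ref{at most 2}, the length is exactly $2$.

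The top and socle can also be identified. The image of $\overline{\phi}$ is a quotient of $\overline{\Delta}_{n+1,\mu}$, hence a quotient of $\Delta_{n+1,\mu}$, and is therefore a nonzero module with simple top $L_{n+1,\mu}$. Because the length is $2$, this image is simple, so it equals $L_{n+1,\mu}$ and is the socle. The top is $L_{n,\lambda}$. These two composition factors are distinct, since they are indexed by different objects $n+1\neq n$ in the parametrization of Proposition \ref{filtration}(2).

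The real content lies in Lemma \ref{existence of morphism}, which may be assumed here. The only point to check is that ``non-surjective'' in that lemma really gives a proper image. This holds because, as noted above, $\overline{\Delta}_{n,\lambda}$ has simple top $L_{n,\lambda}$, which is not isomorphic to the top $L_{n+1,\mu}$ of $\overline{\Delta}_{n+1,\mu}$, so $\overline{\phi}$ cannot be onto.
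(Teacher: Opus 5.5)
Your proposal is correct and follows the paper's proof: Lemma~\ref{at most 2} bounds the length of $\overline{\Delta}_{n,\lambda}$ by $2$, and the nonzero non-surjective map from Lemma~\ref{existence of morphism} forces the length to be at least $2$. Your additional identification of the top $L_{n,\lambda}$ and socle $L_{n+1,\mu}$ is also correct; the statement does not need it, but the paper relies on it later in the Grothendieck group corollary.
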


\begin{proof}
Lemma \ref{at most 2} tells us that $\overline{\Delta}_{n, \lambda}$ has length at most 2, and Lemma \ref{existence of morphism} asserts that its length is at least 2. The conclusion follows.
\end{proof}

These two propositions bring a complete classification of irreducible $\C$-modules.

\begin{theorem} \label{classification of irreducibles}
Let $k$ be a field. Then the isomorphism classes of irreducible $\C$-modules are parameterized by elements in the set
\[
\{ (n, \lambda) \mid n \in \mathbb{Z}_+, \, \lambda \in \Irr(G_n) \}.
\]

More precisely, we have the following cases:
\begin{enumerate}
\item if $\lambda$ is regular, then the corresponding irreducible $\C$-module is isomorphic to $\overline{\Delta}_{n, \lambda}$;

\item if $\lambda$ is singular, then the corresponding irreducible $\C$-module is the top of $\overline{\Delta}_{n, \lambda}$.
\end{enumerate}
\end{theorem}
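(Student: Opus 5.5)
The plan is to combine the parametrization of simples from Proposition \ref{filtration} with the regular--singular dichotomy of Propositions \ref{regular standard is simple} and \ref{length 2}. Proposition \ref{filtration}(2) already shows that isomorphism classes of irreducible $\C$-modules are in bijection with $\bigsqcup_n \Irr(G_n)$. Proposition \ref{filtration}(3) says that the simple module $L_{n,\lambda}$ attached to $(n,\lambda)$ is the simple top of $\Delta_{n,\lambda}$. So the parametrization statement is immediate. What remains is to identify $L_{n,\lambda}$ with the top of the quotient $\overline{\Delta}_{n,\lambda}$, and with $\overline{\Delta}_{n,\lambda}$ itself when $\lambda$ is regular.

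\textbf{Step 1: $\overline{\Delta}_{n,\lambda}$ has simple top $L_{n,\lambda}$.} The surjection $kG_ne_\lambda \twoheadrightarrow \lambda$ and right exactness of $\Delta_n\otimes_{kG_n}-$ give a surjection $\Delta_{n,\lambda}\twoheadrightarrow \overline{\Delta}_{n,\lambda}$. The module $\overline{\Delta}_{n,\lambda}$ is nonzero, since its value at $[n]$ is $\lambda$. Any nonzero quotient of a module with simple top has the same simple top. Hence $\overline{\Delta}_{n,\lambda}$ has simple top $L_{n,\lambda}$.

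\textbf{Step 2: the regular case.} If $\lambda$ is regular, Proposition \ref{regular standard is simple} says $\overline{\Delta}_{n,\lambda}$ is irreducible. It therefore equals its top, which is $L_{n,\lambda}$ by Step 1.

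\textbf{Step 3: the singular case.} If $\lambda$ is singular, Proposition \ref{length 2} says $\overline{\Delta}_{n,\lambda}$ has length $2$. By Step 1 its top is $L_{n,\lambda}$, so the irreducible module attached to $(n,\lambda)$ is the top of $\overline{\Delta}_{n,\lambda}$, and not $\overline{\Delta}_{n,\lambda}$ itself. From the proof of Lemma \ref{existence of morphism}, the socle is the image of $\overline{\Delta}_{n+1,\mu}$ for the singular $\mu\in\Irr(G_{n+1})$. This is a consistency check only and is not needed for the statement.

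\textbf{Main obstacle.} The only point needing care is Step 1: the simple top of $\Delta_{n,\lambda}$ must survive in the quotient $\overline{\Delta}_{n,\lambda}$. I would justify this by evaluating at $[n]$. There $\overline{\Delta}_{n,\lambda}(n)\cong\lambda\neq 0$, and it generates $\overline{\Delta}_{n,\lambda}$, so the quotient is nonzero. Once the quotient is nonzero, the top argument above applies.
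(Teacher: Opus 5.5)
Your proposal is correct and follows the paper's route. The paper gives no separate proof; it states the theorem as an immediate consequence of Proposition~\ref{filtration}(2)--(3) together with Propositions~\ref{regular standard is simple} and~\ref{length 2}, which is exactly how you combine them. Your Step~1 (that the nonzero quotient $\overline{\Delta}_{n,\lambda}$ of $\Delta_{n,\lambda}$ keeps the simple top $L_{n,\lambda}$) makes explicit a point the paper leaves implicit.
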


From this result we can deduce an explicit description of the Grothendieck group.

\begin{corollary}
Let $k$ be a field. Then the Grothendieck group $K_0(\C)$ of the category of finitely generated $\C$-modules has the following canonical decomposition:
\[
K_0(\C) \;\cong\; \bigoplus_{n \geqslant 1} R(G_n),
\]
where $R(G_n)$ is the representation ring of $G_n$. In addition, the following set forms a basis of $K_0(\C)$:
\[
\{ [\Delta_{n, \lambda}] \mid n \in \mathbb{Z}_+, \, \lambda \in \Irr(G_n) \}.
\]
\end{corollary}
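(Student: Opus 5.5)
The plan is to compare three families of classes in $K_0(\C)$: the simples $[L_{n,\lambda}]$, the $[\overline{\Delta}_{n,\lambda}]$, and the $[\Delta_{n,\lambda}]$. All three are finite-length modules by Corollary \ref{finite length 1}. Throughout, $\C\Mod$ is used for the category of $\C$-modules and $kG_n\Mod$ for that of $kG_n$-modules.

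\textbf{Step 1 (the isomorphism).} By Theorem \ref{classification of irreducibles} and Jordan--Hölder, $K_0(\C)$ is free abelian on $\{[L_{n,\lambda}]\}$. Likewise $R(G_n)$, read as the Grothendieck group of finite-dimensional $kG_n$-modules, is free on $\Irr(G_n)$. The bijection $(n,\lambda)\leftrightarrow L_{n,\lambda}$ therefore gives the canonical isomorphism $K_0(\C)\cong\bigoplus_n R(G_n)$, sending $[L_{n,\lambda}]$ to $[\lambda]$ in the $n$-th summand.

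To make this identification less ad hoc, I would also record the exact functors $\Delta_n\otimes_{kG_n}-\colon kG_n\Mod\to\C\Mod$. They are exact because $\Delta_n$ is right free over $kG_n$. They induce maps $\Phi_n\colon R(G_n)\to K_0(\C)$ with $\Phi_n[\lambda]=[\overline{\Delta}_{n,\lambda}]$. By Propositions \ref{regular standard is simple} and \ref{length 2},
\[
\Phi_n[\lambda]=[L_{n,\lambda}]+\epsilon_\lambda\,[L_{n+1,\mu(\lambda)}],
\]
where $\epsilon_\lambda\in\{0,1\}$ and $\epsilon_\lambda=1$ exactly when $\lambda$ is singular. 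Here $\mu(\lambda)$ is the socle label, identified via Lemma \ref{existence of morphism}. So $\Phi=\sum_n\Phi_n$ is the identity on the $[L]$-basis plus a part that raises $n$ by one.

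\textbf{Step 2 (the $\Delta$-classes).} Since $\Delta_{n,\lambda}=\Delta_n\otimes_{kG_n}kG_ne_\lambda$, we get $[\Delta_{n,\lambda}]=\Phi_n[kG_ne_\lambda]=\sum_\nu c_{\nu\lambda}\Phi_n[\nu]$, where $(c_{\nu\lambda})$ is the Cartan matrix of $kG_n$. The transition matrix from $\{[L_{n,\lambda}]\}$ to $\{[\Delta_{n,\lambda}]\}$ is then block bidiagonal in $n$. The diagonal blocks are the Cartan matrices $C_n$, and the only off-diagonal blocks are from level $n$ to level $n+1$. To show these classes form a basis, I would invert level by level: first solve at the lowest level, then at level $n+1$ correct by the contributions coming from level $n$. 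This needs each $C_n$ to be invertible over $\mathbb{Z}$. That holds when $kG_n$ is semisimple, for instance in characteristic $0$, where $C_n=\mathrm{id}$ and $\Delta_{n,\lambda}=\overline{\Delta}_{n,\lambda}$. In that case the transition matrix is unitriangular with respect to the ordering by $n$.

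\textbf{Main obstacle.} The triangularity runs \emph{upward} in $n$ and never terminates. Inverting it expresses $[L_{n,\lambda}]$ as the alternating sum
\[
[\overline{\Delta}_{n,\lambda}]-[\overline{\Delta}_{n+1,\mu}]+[\overline{\Delta}_{n+2,\mu'}]-\cdots
\]
along the singular chain, which is not a finite sum. So spanning is the real difficulty; linear independence follows immediately from triangularity. My first attempt would be to interpret $K_0(\C)$ through the grading by $n$, as the inverse limit of the truncations to objects of size at most $N$. On each truncation the chain is cut off and the unitriangular argument works verbatim, so the $[\Delta_{n,\lambda}]$ form a basis compatible with the limit. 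I would then check whether the paper's $K_0(\C)$ admits this reading. In positive characteristic I would additionally have to restrict to the span of projective classes in $R(G_n)$, since $C_n$ is then only invertible over $\mathbb{Q}$.
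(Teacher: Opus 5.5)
Your Step 1 is the paper's argument for the decomposition, and the linear independence of the $[\Delta_{n,\lambda}]$ is fine. The problem is spanning. Your ``main obstacle'' cannot be worked around: the basis assertion is false for the group $K_0(\C)$ in the statement.

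Take $\C=\OA$ over any field. Every $G_n$ is trivial and every $\lambda$ is singular, so $K_0(\C)=\bigoplus_{n\geqslant 1}\mathbb{Z}[L_n]$. Moreover $[\Delta_n]=[L_n]+[L_{n+1}]$: by Dold--Kan, $\Delta_n\cong k\C\Psi_n$ corresponds to $\mathscr{K}(n,-)$, which is one-dimensional in degrees $n$ and $n+1$; equivalently, use Proposition~\ref{length 2} and Lemma~\ref{existence of morphism}. Now let $\chi\colon K_0(\C)\to\mathbb{Z}$ be the homomorphism with $\chi[L_n]=(-1)^n$. It kills every $[\Delta_n]$, but $\chi[L_1]=-1$. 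Hence $[L_1]$ is not a finite integral combination of the $[\Delta_n]$; their span is exactly $\ker\chi$.

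The paper's proof writes down the same unitriangular transition matrix and then asserts it is invertible over $\mathbb{Z}$ ``as long as the degrees are bounded below.'' That holds when the off-diagonal entries lower the degree, because the inverse is then column-finite. Here they raise the degree, and the inverse is exactly your non-terminating alternating sum.

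Your proposed repair replaces the statement rather than proving it. The inverse limit of the truncations is $\prod_n R(G_n)$, not $\bigoplus_n R(G_n)$, which contradicts the first half of the corollary. There the classes only give unique convergent infinite expansions, not a basis.

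Your Cartan-matrix remark also catches a second, independent flaw in the paper's proof. The paper uses $[\Delta_{n,\lambda}]=[L_{n,\lambda}]$ for regular $\lambda$ and $[\Delta_{n,\lambda}]=[L_{n,\lambda}]+[L_{n+1,\mu}]$ for singular $\lambda$. These come from Propositions~\ref{regular standard is simple} and~\ref{length 2}, which concern $\overline{\Delta}_{n,\lambda}$, not $\Delta_{n,\lambda}$. In general $[\Delta_{n,\lambda}]=\sum_\nu c_{\nu\lambda}[\overline{\Delta}_{n,\nu}]$, and the Cartan matrix need not be unimodular; for $\BA$ in characteristic $2$ with $n\geqslant 2$ it is $(2)$. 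So even in the completion the $[\Delta_{n,\lambda}]$ would fail to span, whereas the $[\overline{\Delta}_{n,\lambda}]$ would.

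What can actually be proved is the following: the isomorphism $K_0(\C)\cong\bigoplus_n R(G_n)$; the linear independence of the standard classes; and a basis statement only after completing and replacing $\Delta_{n,\lambda}$ by $\overline{\Delta}_{n,\lambda}$.
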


\begin{proof}
By Corollary \ref{finite length 1}, the category of finitely generated $\C$-modules has the Jordan-Hölder property. Thus, $K_0(\C)$ is the free abelian group generated by the isomorphism classes of simple modules. It immediately follows from Theorem \ref{classification of irreducibles} that
\[
K_0(\C) \cong \bigoplus_{n \geqslant 1} R(G_n).
\]

Next we show that the set $\{ [\Delta_{n, \lambda}] \mid n \in \mathbb{Z}_+, \, \lambda \in \Irr(G_n) \}$ is a basis as well. By Proposition \ref{regular standard is simple} and \ref{length 2}, if $\lambda \in \Irr(G_n)$ is regular, then $[ \Delta_{n,\lambda}] = [L_{n,\lambda}]$; otherwise,
\[
[\Delta_{n,\lambda}] = [L_{n,\lambda}] + [L_{n+1,\mu}]
\]
by the proof of Proposition \ref{length 2}. Therefore, the transition matrix between $\{[L_{n, \lambda}]\}$ and $\{[\Delta_{n, \lambda}]\}$ is upper unitriangular. Since an upper unitriangular matrix (even of infinite size, as long as the degrees are bounded below) is invertible over $\mathbb{Z}$, the classes $\{ [\Delta_{n, \lambda}] \}$ form an alternative basis for $K_0(\C)$.
\end{proof}

\section{Further results for $\CA$, $\SA$ and $\FA$}

In the case that $k$ is a field, the representation theory of $\OA$ and $\BA$ is very transparent. Thus in this section we only consider representations of the other three categories.

\subsection{$\CA$ and $\SA$} In this subsection let $\C$ be $\CA$ or $\SA$. First we compute extensions between standard modules. Note that $\Delta_n$ is projective if $n \leqslant 2$.

\begin{proposition} \label{ext groups}
Let $k$ be a field and $\C$ be $\CA$ or $\SA$. Then for all $m \geqslant 3$, one has
\[
\Ext^1_{k\C}(\Delta_m,\, \Delta_n) \; \cong \;
\begin{cases}
k^d & m = n+1,\\
k^d & m = n+2,\\
0 & \text{else},
\end{cases}
\]
where $d$ is the number of connected components in the mutation graph $\Gamma_{n, n+1}$.
\end{proposition}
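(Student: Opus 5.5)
The plan is to compute $\Ext^1$ from the short exact sequence of Proposition \ref{filtration of projectives}. For $m\geqslant 3$ we have
\[
0\to \Delta_{m-1}\to k\C\Psi_m\to \Delta_m\to 0,
\]
and $k\C\Psi_m$ is projective. Applying $\Hom_{k\C}(-,\Delta_n)$ gives the exact sequence
\[
0\to \Hom(\Delta_m,\Delta_n)\to \Hom(k\C\Psi_m,\Delta_n)\to \Hom(\Delta_{m-1},\Delta_n)\to \Ext^1(\Delta_m,\Delta_n)\to 0 .
\]
By Yoneda, the middle term is $\Hom(k\C\Psi_m,\Delta_n)\cong \Psi_m\Delta_n(m)$. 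The outer Hom-terms are known from Theorem \ref{hom spaces}, and the whole computation then reduces to determining the $k$-dimension of $\Psi_m\Delta_n(m)$. Throughout, $k$ is a field, so we may count dimensions.

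First I compute $\Psi_m\Delta_n(m)$. The space $\Delta_n(m)$ has basis $\C^+(n,m)$. If $m\geqslant n+2$, Lemma \ref{factorization} shows that every injection $[n]\hookrightarrow[m]$ factors as $d_i\circ h$ with $i<m$. Since $\Psi_m d_i=0$ by the proof of Lemma \ref{idempotent}, we get $\Psi_m\Delta_n(m)=0$. If $m<n$, then $\Delta_n(m)=0$. If $m=n$, then $\Psi_n\Delta_n(n)=\Psi_n kG_n$, and this equals $kG_n$ because every $d_is_i$ acts by zero on $\Delta_n(n)$ (it is non-injective). The remaining case is $m=n+1$. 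There $\Psi_{n+1}\Delta_n(n+1)$ is spanned by the images of $d_{n+1}\sigma$ for $\sigma\in G_n$, since the other injections are $d_i\sigma$ with $i\leqslant n$ and are killed.

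The main step is to show that these $|G_n|$ vectors are linearly independent, so that $\dim\Psi_{n+1}\Delta_n(n+1)=|G_n|$. I would expand $\Psi_{n+1}d_{n+1}\sigma$ as $d_{n+1}\sigma$ plus terms $\pm d_{i_r}s_{i_r}\cdots d_{i_1}s_{i_1}d_{n+1}\sigma$. Modulo $\mathfrak I_n$, only the injective such composites survive. I would then check that the injection $d_{n+1}\sigma$ itself never reappears among the correction terms. Each correction term moves the image through $d_is_i$ with $i\leqslant n$, so its missing point is some $i\leqslant n$ rather than $n+1$, or the term is non-injective. Hence the coefficient matrix is unitriangular with respect to the ``hole'' position, and independence follows. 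An alternative route is the equivalence with $\mathscr K$: one could use the description $\mathscr K(n,n+1)=\mathrm{span}\{\mathbf d_{n+1}\boldsymbol\sigma\}$ together with a freeness argument. I expect this independence check to be the only real obstacle.

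With the dimensions in hand, the cases are read off from the exact sequence.
\begin{itemize}
\item For $m\geqslant n+3$, both $\Psi_m\Delta_n(m)$ and $\Hom(\Delta_{m-1},\Delta_n)$ vanish, so $\Ext^1=0$.
\item For $m=n+2$, the middle term vanishes, so $\Ext^1\cong\Hom(\Delta_{n+1},\Delta_n)\cong k^d$.
\item For $m=n+1$, the sequence reads $0\to k^d\to \Psi_{n+1}\Delta_n(n+1)\to kG_n\to\Ext^1\to 0$. Since the middle term has dimension $|G_n|$, dimension counting gives $\dim\Ext^1=d$.
\item For $m=n$, the sequence is $0\to kG_n\to kG_n\to \Hom(\Delta_{n-1},\Delta_n)=0$, so $\Ext^1=0$.
\item For $m<n$, every term is zero (this also agrees with Proposition \ref{standard modules}(7)).
\end{itemize}
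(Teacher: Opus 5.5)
Your proposal is correct and follows the paper's proof: you apply $\Hom_{k\C}(-,\Delta_n)$ to $0\to\Delta_{m-1}\to k\C\Psi_m\to\Delta_m\to 0$, identify the middle term with $\Psi_m\Delta_n(m)$, and read off each case from Theorem \ref{hom spaces} together with a dimension count. The differences are minor. You show $\Psi_{n+2}\Delta_n(n+2)=0$ directly via Lemma \ref{factorization}, where the paper uses Proposition \ref{thin category}. You also spell out the paper's terse ``as in the proof of Corollary \ref{ideal description}'' independence claim, by noting that every correction term in $\Psi_{n+1}d_{n+1}\sigma$ has its hole at some $i\leqslant n$, so the coordinates on the basis vectors $d_{n+1}\tau$ recover $\sigma$.
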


\begin{proof}
By Proposition \ref{standard modules}, the extension group vanishes if $m \leqslant n$, so we assume that $m > n$. Applying $\Hom_{k\C} (-, \, \Delta_n)$ to the short exact sequence
\[
0 \to \Delta_{m-1} \to k\C \Psi_m \to \Delta_m \to 0
\]
given by Proposition \ref{filtration of projectives}, we obtain an associated exact sequence
\[
0 \to \Hom_{k\C} (\Delta_m, \, \Delta_n) \to \Hom_{k\C} (k\C \Psi_m, \, \Delta_n) \to \Hom_{k\C} (\Delta_{m-1}, \, \Delta_n) \to \Ext_{k\C}^1 (\Delta_m, \, \Delta_n) \to 0.
\]

\medskip
\noindent
\textbf{Case $m > n+2$.} By Theorem \ref{hom spaces}, the third term vanishes, so does the extension group.

\medskip
\noindent
\textbf{Case $m = n+2$.} In this case we have $\Hom_{k\C} (k\C \Psi_{n+2}, \, \Delta_n) = 0$. To see it, one applies the exact functor $\Hom_{k\C} (k\C \Psi_{n+2}, \, -)$ to the short exact sequence
\[
0 \to \Delta_{n-1} \to k\C \Psi_n \to \Delta_n \to 0
\]
given by Proposition \ref{filtration of projectives} and note that $\Hom_{k\C} (k\C \Psi_{n+2}, \, k\C \Psi_n) = 0$ by Proposition \ref{thin category}. Thus,
\[
\Ext_{k\C}^1 (\Delta_{n+2}, \, \Delta_n) \cong \Hom_{k\C} (\Delta_{n+1}, \, \Delta_n) \cong k^d
\]
by Theorem \ref{hom spaces}.

\medskip
\noindent
\textbf{Case $m = n+1$.} In this case, by Theorem \ref{hom spaces}
\[
\Hom_{k\C} (\Delta_{n+1}, \, \Delta_n) \cong k^d, \quad \Hom_{k\C} (\Delta_n, \, \Delta_n) \cong kG_n,
\]
and
\[
\Hom_{k\C} (k\C \Psi_{n+1}, \, \Delta_n) \cong \Psi_{n+1} \Delta_n = \Psi_{n+1} \Delta_n(n+1)
\]
which is a vector space spanned by $\Psi_{n+1} d_{n+1} \sigma$ with $\sigma \in G_n$ since $\Psi_{n+1}$ kills all $d_i$ with $i \leqslant n$ by Lemma \ref{idempotent}. As in the proof of Corollary \ref{ideal description}, one can show that
\[
kG_n \to (\Psi_{n+1} d_{n+1}) kG_n, \quad \sigma \mapsto \Psi_{n+1} d_{n+1} \sigma
\]
is an isomorphism of right $kG_n$-module. Thus $\Psi_{n+1} \Delta_n \cong kG_n$. By comparing dimensions, one deduces that $\Ext_{k\C}^1 (\Delta_{n+1}, \, \Delta_n) \cong k^d$.
\end{proof}

Since $d$ is either 1 or $2$, this computation tells us that most indecomposable standard modules are actually projective. More precisely, fix $n \geqslant 1$, and let
\[
\Irr(G_n) = \{\lambda_1, \ldots, \lambda_r\}, \quad \Irr(G_{n+1}) = \{\mu_1, \ldots, \mu_s\}.
\]
Then we have
\[
k^d \cong \Ext_{k\C}^1 (\Delta_{n+1}, \, \Delta_n) = \bigoplus_{i=1}^r \Ext_{k\C}^1 ((\Delta_{n+1, \mu_i})^{a_i}, \, \Delta_n)
\]
Since $d = 1$ or $2$, there are at most two irreducible representations $\mu_i$ of $kG_{n+1}$ such that
\[
\Ext_{k\C}^1 (\Delta_{n+1, \mu_i}, \, \Delta_n) \neq 0.
\]
For all other $\mu_i$, one has
\[
\Ext_{k\C}^1 (\Delta_{n+1, \mu_i}, \, \Delta_n) = 0.
\]
It follows from Proposition \ref{filtration of projectives} that these $\Delta_{n+1, \mu_i}$ are projective $\C$-modules.

The following result classifies those indecomposable standard modules which are not projective.

\begin{proposition} \label{ext for CA and SA}
Let $\lambda \in \Irr(G_n)$ and $\mu \in \Irr(G_{n+1})$. Then $\Ext_{k\C}^1 (\Delta_{n+1, \mu}, \, \Delta_{n, \lambda}) \neq 0$ if and only if $(\mu, \lambda)$ is a pair of singular irreducible representations appearing in Proposition \ref{classify singular}.
\end{proposition}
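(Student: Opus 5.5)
The plan is to run the long exact sequence used in the proof of Proposition~\ref{ext groups}, now tensored with the idempotents. Apply $-\otimes_{kG_{n+1}} kG_{n+1}e_\mu$ to the sequence $0\to\Delta_n\to k\C\Psi_{n+1}\to\Delta_{n+1}\to 0$ of Proposition~\ref{filtration of projectives}. This functor is exact because all three terms are right free $kG_{n+1}$-modules; for $k\C\Psi_{n+1}$ this holds since $\Psi_{n+1}$ commutes with nothing in particular, but $k\C\Psi_{n+1}$ is the kernel/extension of free modules. The result is a projective presentation
\[
0\to \Delta_n\otimes_{kG_{n+1}} kG_{n+1}e_\mu \to k\C\Psi_{n+1}e_\mu \to \Delta_{n+1,\mu}\to 0 .
\]
Here the left term is $\Delta_n$ with right $G_{n+1}$-structure coming from the identification $\mathfrak I_{n+1}/\mathfrak J_{n+1}\cong \Delta_n\otimes_{kG_n} k(\text{cyclic degeneracies})$. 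Applying $\Hom(-,\Delta_{n,\lambda})$ then gives
\[
\Ext^1(\Delta_{n+1,\mu},\Delta_{n,\lambda}) \cong \operatorname{coker}\bigl(\Hom(k\C\Psi_{n+1}e_\mu,\Delta_{n,\lambda})\to \Hom(\text{kernel},\Delta_{n,\lambda})\bigr).
\]

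\emph{``If'' direction.} Summing over $\lambda$ and $\mu$ reduces to the dimension count already done: $\Ext^1(\Delta_{n+1},\Delta_n)\cong k^d$. This space decomposes as $\bigoplus_{\mu,\lambda}$ of the pieces, with multiplicities given by $\dim$ of the irreducibles. So it suffices to show that the $d$-dimensional space is concentrated on the singular pairs and that each singular pair contributes. The comparison map in the proof of Proposition~\ref{ext groups} is the connecting map. Its cokernel has the same dimension as $\Hom(\Delta_{n+1},\Delta_n)$, which by Theorem~\ref{hom spaces} is spanned by the alternating sums $v$ on mutation components.

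The key step is a natural identification of the cokernel with this Hom space, compatible with the left $G_{n+1}$- and right $G_n$-actions. I would obtain it by noting that the map $\Hom(k\C\Psi_{n+1},\Delta_n)\cong kG_n\to \Hom(\Delta_n,\Delta_n)\cong kG_n$ is given by composing $\Psi_{n+1}d_{n+1}\sigma$ with the cyclic degeneracy. Its failure to be an isomorphism is measured exactly by the vectors $v$ killed by all surjections $[n+1]\to[n]$. Concretely, I would compute that the cokernel, as a $(kG_{n+1},kG_n)$-bimodule, is isomorphic to the span of the $v$'s, possibly twisted by the action of $G_n$ on the degeneracy. In Proposition~\ref{classify singular} these spans are exactly the characters $\lambda\otimes\mu$ of the singular pairs. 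Then
\[
\Ext^1(\Delta_{n+1,\mu},\Delta_{n,\lambda}) \cong e_\mu\,(\text{this bimodule})\otimes_{kG_n}\cdots,
\]
which is nonzero precisely when $\mu$ and $\lambda$ are the characters occurring, i.e.\ the singular pair.

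\emph{Main obstacle.} The hard part is getting the bimodule structure of the cokernel right: the degeneracy $s_{n+1}$ intertwines the $G_{n+1}$- and $G_n$-actions only up to the crossed simplicial twist, and a sign $(-1)^n$ may appear. I would first verify the identification for $\CA$ by direct computation of $g\cdot v$ using the rotation, then deduce the case of $\SA$ by restricting to the rotation subgroup and tracking the reflection, which swaps the two components. The case of characteristic $2$ should be treated separately, where the singular pairs coincide as in the remark after Proposition~\ref{classify singular}.
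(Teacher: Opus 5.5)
Your opening step is not well defined, and the rest of the argument depends on it. The normalized projective $k\C\Psi_{n+1}$ has no right $kG_{n+1}$-module structure, because $\Psi_{n+1}$ does not commute with $G_{n+1}$. Equivalently, the ideal $\mathfrak{J}_{n+1}$ of Corollary~\ref{ideal description} is not stable under right multiplication by automorphisms. For example, take $\CA$ with $n+1=3$ and the rotation $g\colon 1\mapsto 2\mapsto 3\mapsto 1$. The surjection $s_1\circ g$ identifies exactly the cyclic pair $\{1,3\}$, so it factors through neither $s_1$ nor $s_2$, and $s_1 g\notin\mathfrak{J}_3$.

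Hence $0\to\Delta_n\to k\C\Psi_{n+1}\to\Delta_{n+1}\to0$ is not a sequence of right $kG_{n+1}$-modules. Its left term is only a right $kG_n$-module, and the isomorphism $\mathfrak{I}_{n+1}/\mathfrak{J}_{n+1}\cong\Delta_n$ singles out the cyclic pair $(1,n+1)$, which rotations move. So $-\otimes_{kG_{n+1}}kG_{n+1}e_\mu$ cannot be applied, and $k\C\Psi_{n+1}e_\mu$ is not in general a projective presentation of $\Delta_{n+1,\mu}$.

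For the same reason, the left $G_{n+1}$-action on $\Ext^1_{k\C}(\Delta_{n+1},\Delta_n)$ cannot be read off from the normalized presentation. You would have to lift each automorphism of $\Delta_{n+1}$ to a non-canonical endomorphism of $k\C\Psi_{n+1}$ and track its restriction to $\Delta_n$, and you do not do this. Your ``key step'' therefore has nothing behind it. Equal dimensions of the kernel and cokernel of $kG_n\to kG_n$ do not give an equivariant isomorphism. Computing $g\cdot v$ on the alternating sums only recovers the characters of $\Hom_{k\C}(\Delta_{n+1},\Delta_n)$, which is Proposition~\ref{classify singular}, not those of $\Ext^1$. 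What you call the main obstacle is the entire content of the proposition, and it is left open.

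The fix is to present $\Delta_{n+1,\mu}$ with the unnormalized projective and to replace the bimodule identification by a dimension count. The sequence $0\to\mathfrak{I}_{n+1}\to P_{n+1}\to\Delta_{n+1}\to0$ consists of $(k\C,kG_{n+1})$-bimodules. So $0\to\mathfrak{I}_{n+1}e_\mu\to k\C e_\mu\to\Delta_{n+1,\mu}\to0$ is a genuine projective presentation. Apply $\Hom_{k\C}(-,\Delta_n)$:
\begin{itemize}
\item The middle term $e_\mu\Delta_n(n+1)$ has dimension $n\dim_k kG_{n+1}e_\mu$.
\item $\Hom_{k\C}(\mathfrak{I}_{n+1}e_\mu,\Delta_n)$ only sees the top layer $k\C\otimes_{k\C^-}k\C^-(n+1,n)e_\mu$, because $\Hom_{k\C}(\Delta_m,\Delta_n)=0$ for $m<n$. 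It has the same dimension.
\item $\Hom_{k\C}(\Delta_{n+1,\mu},\Delta_n)$ is one-dimensional for singular $\mu$, so $\Ext^1_{k\C}(\Delta_{n+1,\mu},\Delta_n)$ is one-dimensional as well.
\end{itemize}
The analogous count, applying $\Hom_{k\C}(-,\Delta_{n,\lambda})$ to the sequence without idempotents, handles singular $\lambda$. The total dimension $\dim_k\Ext^1_{k\C}(\Delta_{n+1},\Delta_n)=d$ from Proposition~\ref{ext groups} then leaves no room for any other pair. This is how the paper argues, and it never needs the bimodule structure of $\Ext^1$.
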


\begin{proof}
We give a detailed proof for $\C = \CA$. Since
\[
\Ext_{k\C}^1 (\Delta_{n+1}, \, \Delta_n) = \bigoplus_{i=1}^r \bigoplus_{j=1}^s \Ext_{k\C}^1 ((\Delta_{n+1, \mu_j})^{a_j}, \, (\Delta_{n, \lambda_i})^{b_i})
\]
has dimension 1, it suffices to show the following conclusion: if $(\mu, \lambda)$ is a pair of singular irreducible representations, then
\[
\Ext_{k\C}^1 (\Delta_{n+1, \mu}, \, \Delta_n) \neq 0 \neq \Ext_{k\C}^1 (\Delta_{n+1}, \, \Delta_{n, \lambda}).
\]

Assume $\mu \in \Irr(G_{n+1})$ and $\lambda \in \Irr(G_n)$ are singular. Applying $\Hom_{k\C} (-, \, \Delta_{n, \lambda})$ to the sequence
\[
0 \to \mathfrak{I}_{n+1} \to P_{n+1} \to \Delta_{n+1} \to 0,
\]
we obtain another exact sequence
\begin{align*}
0 \to \Hom_{k\C}(\Delta_{n+1}, \, \Delta_{n, \lambda}) \longrightarrow \Hom_{k\C} (P_{n+1}, \, \Delta_{n, \lambda})\\
\to \Hom_{k\C} (\mathfrak{I}_{n+1}, \, \Delta_{n, \lambda}) \to \Ext_{k\C}^1 (\Delta_{n+1}, \, \Delta_{n, \lambda}) \to 0.
\end{align*}
Now we compute the dimension of each term.

The first term has dimension 1 by Theorem \ref{hom spaces}. The second one is isomorphic to
\[
\Delta_{n, \lambda} (n + 1) \cong k\C^+(n, n+1) \otimes_{kG_n} kG_n e_{\lambda},
\]
which has dimension $(n+1) \dim_k (kG_n e_{\lambda})$ since $k\C^+(n, n+1)$ is a right free $kG_n$-module of rank $n+1$. For the third term, we note that $\mathfrak{I}_{n+1}$ has a short exact sequence
\[
0 \to V \to \mathfrak{I}_{n+1} \to k\C \otimes_{k\C^-} k\C^-(n+1, n) \to 0,
\]
where $V$ has a filtration whose factors are of the form $\Delta_m$ with $m < n$. Furthermore,
\[
k\C \otimes_{k\C^-} k\C^-(n+1, n) \cong k\C \otimes_{k\C^-} (kG_n \otimes_{kG_n} k\C^-(n+1, n)) \cong (k\C \otimes_{k\C^-} kG_n)^{n+1} \cong (\Delta_n)^{n+1}.
\]
Thus by Proposition \ref{standard modules},
\[
\Hom_{k\C} (\mathfrak{I}_{n+1}, \, \Delta_{n, \lambda}) \cong \Hom_{k\C} (k\C \otimes_{k\C^-} k\C^-(n+1, n), \, \Delta_{n, \lambda}) \cong \Hom_{k\C} ((\Delta_n)^{n+1}, \, \Delta_{n, \lambda})
\]
has dimension $(n+1) \dim_k (kG_n e_{\lambda})$. Consequently, $\Ext_{k\C}^1 (\Delta_{n+1}, \, \Delta_{n, \lambda})$ has dimension 1, so it is nonzero as claimed.

\medskip
Let $e_{\mu} \in kG_{n+1}$ be a primitive idempotent such that $kG_{n+1} e_\mu$ is the projective cover of $\mu$. Applying $\Hom_{k\C} (-, \, \Delta_n)$ to the short exact sequence
\[
0 \to \mathfrak{I}_{n+1}e_{\mu} \to k\C e_{\mu} \to \Delta_{n+1, \mu} \to 0,
\]
we obtain
\begin{align*}
0 \to \Hom_{k\C}(\Delta_{n+1, \mu}, \, \Delta_n) \longrightarrow \Hom_{k\C} (k\C e_{\mu}, \, \Delta_n)\\
\to \Hom_{k\C} (\mathfrak{I}_{n+1}e_{\mu}, \, \Delta_n) \to \Ext_{k\C}^1 (\Delta_{n+1, \mu}, \, \Delta_n) \to 0.
\end{align*}
Let us compute dimensions again.

The first term has dimension 1 by Theorem \ref{hom spaces}. The second term is naturally isomorphic to
\[
e_\mu \Delta_n(n+1) \cong \Hom_{kG_{n+1}}(kG_{n+1}e_\mu, \, \Delta_n(n+1)),
\]
which has dimension $n \dim_k (kG_{n+1} e_{\mu})$ since $\Delta_n(n+1) \cong k\C^+(n,n+1)$ is a free left $kG_{n+1}$-module of rank $n$. Finally, $\mathfrak{I}_{n+1}e_\mu$ admits a filtration whose only contribution to $\Hom(-, \, \Delta_n)$ comes from
\[
k\C \otimes_{k\C^-} k\C^-(n+1,n)e_{\mu} \cong k\C \otimes_{k\C^-} (kG_n \otimes_k kG_{n+1} e_{\mu}) \cong (k\C \otimes_{k\C^-} kG_n) \otimes_k kG_{n+1} e_{\mu} \cong \Delta_n \otimes_k kG_{n+1} e_{\mu},
\]
where the first isomorphism holds as $k\C^-(n+1,n)$ is a free $(kG_n, kG_{n+1})$-bimodule of rank $1$. Thus
\[
\Hom_{k\C} (\mathfrak{I}_{n+1} e_{\mu}, \, \Delta_n) \cong \Hom_{k\C} (\Delta_n, \, \Delta_n \otimes_k kG_{n+1} e_{\mu})
\]
has dimension $n \dim_k (kG_{n+1} e_{\mu})$, so $\Ext_{k\C}^1(\Delta_{n+1,\mu}, \, \Delta_n)$ has dimension 1, which is nonzero as claimed.

\medskip
For the category $\SA$, the argument proceeds similarly with a refined look at the bimodule structure of the morphisms. For $n \geqslant 3$, $G_{n+1} = D_{2(n+1)}$ and $G_n = D_{2n}$ are dihedral, so $k\C^-(n+1, n)$ is not a free bimodule. Instead, since the action of $G_n \times G_{n+1}$ on $\C^-(n+1, n)$ is transitive and the stabilizer is a subgroup of order 2, one has
\[
k\C^-(n+1, n) \cong kG_n \otimes_{kC_2} kG_{n+1},
\]
as bimodules, where $C_2$ is the common subgroup of order 2 generated by the reflection. Under this identification, applying the singular idempotent $e_{\mu}$ yields
\[
k\C^-(n+1, n)e_{\mu} \cong kG_n \otimes_{kC_2} kG_{n+1} e_{\mu},
\]
which has dimension $n \dim_k (kG_{n+1} e_{\mu})$ since $kG_n$ is a free right $kC_2$-module of rank $n$. Consequently, the dimension of $\Hom_{k\C}(\mathfrak{I}_{n+1}e_\mu, \, \Delta_n)$ remains $n  \dim_k (kG_{n+1} e_{\mu})$, preserving the exact dimension balance required to show that $\Ext^1_{k\C} (\Delta_{n+1, \mu}, \, \Delta_{n, \lambda})$ is one-dimensional.
\end{proof}

As an immediate consequence, we have:

\begin{corollary} \label{projective cover}
Let $k$ be a field and $\lambda \in \Irr(G_n)$. One has:
\begin{enumerate}
\item if $\lambda$ is regular, then $\Delta_{n, \lambda}$ is projective;

\item if $n = 1$, then $\Delta_{n, \lambda}$ is projective of length 2;

\item if $n \geqslant 2$ and $\lambda$ is singular, then the projective cover $P^c_{n, \lambda}$ of $\Delta_{n, \lambda}$ has a filtration with factors $\Delta_{n, \lambda}$ and $\Delta_{n-1, \mu}$, where $(\mu, \lambda)$ is a pair appearing in Proposition \ref{classify singular}.
\end{enumerate}
\end{corollary}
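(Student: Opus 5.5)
The plan is to extract all three statements from the short exact sequence of Proposition \ref{filtration of projectives}, after cutting it down with a primitive idempotent, and then use Proposition \ref{ext for CA and SA} to decide which pieces split. For $n \geqslant 3$ there is $0 \to \Delta_{n-1} \to k\C\Psi_n \to \Delta_n \to 0$, and each term is a right $kG_n$-module compatibly. The module $\Delta_n$ is right free over $kG_n$. I will check, or take from the construction in that proof, that the right $kG_n$-action on the kernel $\mathfrak{I}_n/\mathfrak{J}_n \cong k\C \otimes_{k\C^-} (k\C^-(n,n-1)/\mathfrak{J}^-(n,n-1))$ comes from the right action of $G_n$ on the surjections identifying the cyclic pair. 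Applying $-\otimes_{kG_n} kG_ne_\lambda$, or equivalently right multiplication by $e_\lambda$, gives
\[
0 \to \Delta_{n-1}\otimes_{kG_{n-1}} M_\lambda \to k\C\Psi_n e_\lambda \to \Delta_{n,\lambda} \to 0 .
\]
Here $M_\lambda = k\C^-_{\mathrm{cyc}}(n,n-1)\, e_\lambda$ is a projective left $kG_{n-1}$-module. By the bimodule description used in the proof of Proposition \ref{ext for CA and SA} ($kG_{n-1}\otimes_{kC_2}kG_n$ for $\SA$, free of rank one for $\CA$), $M_\lambda$ is a sum of $kG_{n-1}e_\mu$'s. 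So the kernel is a direct sum of indecomposable standard modules $\Delta_{n-1,\mu}$, and $k\C\Psi_n e_\lambda$ is projective.

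For (1) I would argue directly. If $\lambda$ is regular, Proposition \ref{ext for CA and SA} gives $\Ext^1(\Delta_{n,\lambda},\Delta_{n-1,\mu})=0$ for every $\mu$, so the displayed sequence splits. Then $\Delta_{n,\lambda}$ is a summand of a projective module, hence projective. For $n \leqslant 2$ the remark after Proposition \ref{filtration of projectives} gives $\Delta_n = k\C\Psi_n$ projective outright, so every $\Delta_{n,\lambda}$ is projective. This settles (2) as far as projectivity goes.

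For the length in (2), note that when $n=1$ we have $G_1$ trivial, so $\Delta_{1,\lambda}=\Delta_1=\overline{\Delta}_{1,\mathrm{triv}}$. Proposition \ref{classify singular} lists $\mathrm{triv}$ (equivalently the sign) of $G_1$ as singular in the $n=1$ row. Proposition \ref{length 2} then gives length $2$.

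For (3), take $\lambda$ singular and $n\geqslant 2$. The Ext group $\Ext^1(\Delta_{n,\lambda},\Delta_{n-1})$ is nonzero exactly against the partner $\mu$, and it has dimension $1$ by the proof of Proposition \ref{ext for CA and SA}. I would split off every $\Delta_{n-1,\mu'}$ summand of the kernel with $\mu'\neq\mu$, and every surplus copy of $\Delta_{n-1,\mu}$, by a base change inside the kernel. What remains is a nonsplit extension $0\to\Delta_{n-1,\mu}\to Q\to\Delta_{n,\lambda}\to 0$ with $Q$ a direct summand of $k\C\Psi_ne_\lambda$, hence projective. Now $Q$ surjects onto $\Delta_{n,\lambda}$, which has simple top $L_{n,\lambda}$. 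Since the extension is nonsplit and $\Delta_{n-1,\mu}$ has simple top, $Q$ should be indecomposable with simple top, and therefore it is the projective cover $P^c_{n,\lambda}$. The case $n=2$ needs separate care, because $\Delta_2$ is already projective there; I would check it against the small cases of Proposition \ref{classify singular}.

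The main obstacle is this splitting-off step. It requires the one-dimensionality of $\Ext^1(\Delta_{n,\lambda},\Delta_{n-1,\mu})$ together with the vanishing of $\Ext^1$ against the other summands. Given those, the extension class in $\Ext^1(\Delta_{n,\lambda},K)=\bigoplus\Ext^1(\Delta_{n,\lambda},\Delta_{n-1,\mu'})$ is supported in a single one-dimensional coordinate. An automorphism of $K$ then moves it onto one copy of $\Delta_{n-1,\mu}$, and the rest splits.
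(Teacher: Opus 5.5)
\textbf{The gap is in your first step.} The right $G_n$-action on $P_n$ does not descend to $k\C\Psi_n\cong P_n/\mathfrak{J}_n$ for $\C=\CA,\SA$ and $n\geqslant 3$, because $\mathfrak{J}_n$ is not stable under precomposition with automorphisms. For the rotation $\rho\colon j\mapsto j+1 \pmod n$, the morphism $s_1\rho$ collapses exactly the cyclic pair $\{n,1\}$. So $s_1\rho$ is not of the form $g s_i$ with $i<n$, and hence $s_1\rho\notin\mathfrak{J}_n$ even though $s_1\in\mathfrak{J}_n$.

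For the same reason, the surjections $[n]\twoheadrightarrow[n-1]$ collapsing the cyclic pair do not form a right $G_n$-set: precomposing with $\rho$ moves the collapsed pair to $\{n-1,n\}$. The bimodule descriptions in the proof of Proposition~\ref{ext for CA and SA} concern all of $k\C^-(n,n-1)$, not this part. So neither $-\otimes_{kG_n}kG_ne_\lambda$ nor right multiplication by $e_\lambda$ can be applied to the sequence of Proposition~\ref{filtration of projectives}. The module $k\C\Psi_ne_\lambda$ is just the cyclic submodule of $P_n$ generated by $\Psi_ne_\lambda$. Nothing you have said makes it projective, or identifies its kernel over $\Delta_{n,\lambda}$ with $\Delta_{n-1}\otimes_{kG_{n-1}}M_\lambda$. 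Your arguments for (1) and (3) both start from that displayed sequence, so as written they are not yet proofs.

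\textbf{How to repair it.} Do not cut; this is essentially the paper's first paragraph. Since $k\C\Psi_n\twoheadrightarrow\Delta_n\cong\bigoplus_\lambda\Delta_{n,\lambda}^{a_\lambda}$, the projective cover $\bigoplus_\lambda(P^c_{n,\lambda})^{a_\lambda}$ of $\Delta_n$ is a summand of $k\C\Psi_n$, with complement $R$ lying inside the kernel. Hence $\Delta_{n-1}\cong\bigoplus_\lambda K_\lambda^{a_\lambda}\oplus R$, where $K_\lambda=\ker(P^c_{n,\lambda}\to\Delta_{n,\lambda})$. By Krull--Schmidt (finite length, and each $\Delta_{n-1,\mu}$ is indecomposable with simple top), every $K_\lambda$ is a direct sum of $\Delta_{n-1,\mu}$'s.

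From there your Ext arguments go through unchanged. For regular $\lambda$ the sequence splits, and indecomposability of $P^c_{n,\lambda}$ forces $K_\lambda=0$. For singular $\lambda$, your splitting-off argument together with indecomposability forces $K_\lambda\cong\Delta_{n-1,\mu}$. Alternatively, run the splitting-off argument on the uncut sequence: its class in $\bigoplus\Ext^1(\Delta_{n,\lambda},\Delta_{n-1,\mu})^{a_\lambda b_\mu}$ is supported only on singular pairs.

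\textbf{Further remarks.} Keeping the normalized projective is actually the better choice for (1). Its kernel lives only in level $n-1$, where Proposition~\ref{ext for CA and SA} applies. The paper's proof of (1) instead passes through $P_{n,\lambda}$, whose filtration in general contains $\Delta_{i,\mu}$ with $i\leqslant n-2$. Proposition~\ref{ext for CA and SA} says nothing about those factors, and $\Ext^1(\Delta_n,\Delta_{n-2})\neq 0$ by Proposition~\ref{ext groups}, so vanishing there is not automatic.

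Your treatment of $n=1$ is fine. Your worry about $n=2$ is justified: $\Delta_2=k\C\Psi_2$ is projective, so $P^c_{2,\lambda}=\Delta_{2,\lambda}$ and no $\Delta_{1,\mu}$ factor appears. Statement (3) really requires $n\geqslant 3$.
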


\begin{proof}
Since there is a surjective homomorphism $k\C \Psi_n \twoheadrightarrow \Delta_{n, \lambda}$, it follows that $P^c_{n, \lambda}$ is a direct summand of $k\C \Psi_n$. Consequently, it has a filtration with factors $\Delta_{n, \lambda}$ and some $\Delta_{n-1, \mu}$'s by Proposition \ref{filtration of projectives}. Furthermore, since $\Delta_{n, \lambda}$ has a simple top, we deduce that $P_{n, \lambda}^c$ is indecomposable.

(1) By Proposition \ref{filtration}, $P_{n, \lambda}$ has a filtration by $\Delta_{n, \lambda}$ and some $\Delta_{i, \mu}$ with $i < n$. But Proposition \ref{ext for CA and SA} tells us that there is no nontrivial extension of $\Delta_{n, \lambda}$ by other indecomposable standard modules. Consequently, $\Delta_{n, \lambda}$ is a direct summand of $P_{n, \lambda}$, and hence is projective.

(2) This is clear since $\Delta_{1, \lambda}  \cong k\C(1, -)$.

(3) By Proposition \ref{ext for CA and SA}, there is only a unique singular $\mu \in \Irr(G_{n-1})$ with multiplicity one such that $\Delta_{n-1, \mu}$ can occur in the filtration.
\end{proof}

If $k$ has characteristic 0, we obtain the following result, a refined version of Theorem \ref{generalized D-K correspondence}.

\begin{theorem} \label{DK for CA and SA}
Let $k$ be a field of characteristic 0, and $\lambda \in \Irr(G_n)$. One has:
\begin{enumerate}
\item if $\lambda$ is regular, then $\Delta_{n, \lambda}$ is projective and irreducible;

\item if $\lambda$ is singular and $n = 1$, then $\Delta_{n, \lambda}$ is projective of length 2;

\item if $\lambda$ is singular and $n \geqslant 2$, then $P^c_{n, \lambda}$ is of length 4, and contains a submodule $\Delta_{n-1, \mu}$ where $(\lambda, \mu)$ is a pair of singular representations appearing in Proposition \ref{classify singular}.
\end{enumerate}

\medskip

For $\C = \CA$, the category $\C \Mod$ is equivalent to the representation category of a $k$-linear category $\mathscr{K}$, which is a direct sum of two parts:
\begin{itemize}
\item $\mathscr{K}^{\mathrm{reg}}$ with vertices $(n, \lambda)$ and no arrows, where $\lambda \in \Irr(G_n)$ is regular;

\item $\mathscr{K}^{\mathrm{sing}}$ defined by the following quiver
\[
\xymatrix{
(1, \mu) \ar[r]^{\mathbf{d}_2} &
(2, \mu)  \ar@/^1pc/[r]^{\mathbf{d}_3} &
(3, \mu) \ar@/^1pc/[r]^{\mathbf{d}_4} \ar@/^1pc/[l]^{\mathbf{s}_3} &
\cdots \ar@/^1pc/[l]^{\mathbf{s}_4}
}
\]
modulo the relations:
\[
\mathbf{d}_{n+1} \mathbf{d}_n = 0, \quad \mathbf{s}_n \mathbf{s}_{n+1} = 0, \quad \mathbf{s}_n \mathbf{d}_n = 0,
\]
where $\mu \in \Irr(G_n)$ is singular.
\end{itemize}

\medskip

For $\C = \SA$, the category $\C \Mod$ is equivalent to the representation category of a $k$-linear category $\mathscr{K}$, which is a direct sum of two parts:
\begin{itemize}
\item $\mathscr{K}^{\mathrm{reg}}$ with vertices $(n, \lambda)$ and no arrows, where $\lambda \in \Irr(G_n)$ is regular;

\item $\mathscr{K}^{\mathrm{sing}}$ defined by the following quiver
\[
\xymatrix{
(1, \mu^+) \ar[r]^{\mathbf{d}_2} &
(2, \mu^+)  \ar@/^1pc/[r]^{\mathbf{d}_3} &
(3, \mu^+) \ar@/^1pc/[r]^{\mathbf{d}_4} \ar@/^1pc/[l]^{\mathbf{s}_3} &
\cdots, \ar@/^1pc/[l]^{\mathbf{s}_4} &
(2, \mu^-)  \ar@/^1pc/[r]^{\mathbf{d}_3} &
(3, \mu^-) \ar@/^1pc/[r]^{\mathbf{d}_4} \ar@/^1pc/[l]^{\mathbf{s}_3} &
\cdots, \ar@/^1pc/[l]^{\mathbf{s}_4}
}
\]
modulo the relations:
\[
\mathbf{d}_{n+1} \mathbf{d}_n = 0, \quad \mathbf{s}_n \mathbf{s}_{n+1} = 0, \quad \mathbf{s}_n \mathbf{d}_n = 0,
\]
where $\mu^+, \mu^- \in \Irr(G_n)$ are singular.
\end{itemize}
\end{theorem}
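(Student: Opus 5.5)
In characteristic 0 every $kG_n$ is semisimple, so $e_\lambda$ may be taken with $kG_ne_\lambda\cong\lambda$. Hence $\Delta_{n,\lambda}=\overline{\Delta}_{n,\lambda}$ and $\Delta_n\cong\bigoplus_\lambda\Delta_{n,\lambda}^{\dim\lambda}$. I will derive the three numbered statements from the earlier results, then compute endomorphisms of a basic projective generator to get the quiver.

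\textbf{Statements (1)--(3).} For (1), Corollary~\ref{projective cover}(1) gives projectivity, and Proposition~\ref{regular standard is simple} gives irreducibility because $\Delta_{n,\lambda}=\overline{\Delta}_{n,\lambda}$. Statement (2) is Corollary~\ref{projective cover}(2).

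For (3), Corollary~\ref{projective cover}(3) gives a short exact sequence
\[
0\to\Delta_{n-1,\mu}\to P^c_{n,\lambda}\to\Delta_{n,\lambda}\to 0.
\]
Both ends are singular, so each has length $2$ by Proposition~\ref{length 2}. Hence $P^c_{n,\lambda}$ has length $4$. The case $n=2$ is a boundary case: $\Delta_{1,\mu}$ has length $2$ as a projective whose top and socle are both simple, so it gives no exception.

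\textbf{Building $\mathscr K$.} Take the basic projective generator
\[
Q=\bigoplus_{\lambda\ \mathrm{reg}}\Delta_{n,\lambda}\;\oplus\;\bigoplus_{\lambda\ \mathrm{sing}}P^c_{n,\lambda},
\]
and let $\mathscr K=\End(Q)^{\op}$, viewed as a $k$-linear category. Morita theory then gives $\C\Mod\simeq\mathscr K\Mod$. Each regular summand is simple projective. Its simple top $L_{n,\lambda}$ does not occur as a composition factor of any singular projective, because the factors of the singular projectives are tops and socles of singular standards, which are singular $L$'s by Theorem~\ref{classification of irreducibles}. So there are no Homs between regular and singular summands in either direction. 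Together with $\End(\Delta_{n,\lambda})=k$, this gives $\mathscr K=\mathscr K^{\mathrm{reg}}\oplus\mathscr K^{\mathrm{sing}}$ with $\mathscr K^{\mathrm{reg}}$ semisimple.

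\textbf{The singular part.} I identify the composition factors of $P^c_{n,\lambda}$, with $\lambda$ the singular irreducible of $G_n$ and $\mu'$ the singular irreducible of $G_{n+1}$. They are $L_{n,\lambda}$ and $L_{n+1,\mu'}$, coming from $\Delta_{n,\lambda}$, and $L_{n-1,\mu}$ and $L_{n,\lambda}$, coming from $\Delta_{n-1,\mu}$. For $\SA$ the pairing of $\pm$ labels is read off from Proposition~\ref{classify singular}. Thus
\[
\dim\Hom(P^c_{m},P^c_{n})=[P^c_n:L_m]=\begin{cases}2,& m=n,\\ 1,& |m-n|=1,\\ 0,&\text{otherwise}.\end{cases}
\]
I define the arrows as follows. $\mathbf d_{n+1}$ is the map $P^c_{n+1}\to P^c_n$ hitting $L_{n+1}$ inside $\Delta_{n,\lambda}$. $\mathbf s_{n+1}$ is the map $P^c_{n}\to P^c_{n+1}$ onto the submodule $\Delta_{n,\lambda}\subseteq P^c_{n+1}$.

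I then check the relations by composition factors. The composite $\mathbf d_{n+1}\mathbf d_n$ would need $L_{n+1}$ to appear in $P^c_{n-1}$, which it does not, so $\mathbf d_{n+1}\mathbf d_n=0$. The same argument gives $\mathbf s_n\mathbf s_{n+1}=0$. The composite $P^c_n\to P^c_{n+1}\to P^c_n$ is nonzero and non-invertible, so it is the radical endomorphism of $P^c_n$, and the two composites through $n\pm1$ both produce this radical endomorphism up to scalar. The main obstacle is pinning down which of these two composites vanishes. The relation $\mathbf s_n\mathbf d_n=0$ together with the Loewy structure forces $\mathbf d_n\mathbf s_n\neq0$.

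To settle this I would use Theorem~\ref{generalized D-K correspondence}: there $\mathbf s_{n+1}\mathbf d_{n+1}=\Psi_n$, which is the identity, so I would split off the idempotent $\delta$ and re-choose the arrows. An alternative is to compute directly that the image of $\Delta_{n,\lambda}\hookrightarrow P^c_{n+1}$ followed by the projection $P^c_{n+1}\to\Delta_{n+1,\mu'}$ is zero, so that the composite lands in the socle. This step also needs care with the paper's orientation conventions for the composites.
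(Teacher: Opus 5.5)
\paragraph{Assessment.} Your derivations of (1), (2), (3) for $n\geqslant 3$, of the regular/singular splitting, and of $\mathbf d\mathbf d=0=\mathbf s\mathbf s$ follow the paper's route. The genuine gap is the relation $\mathbf s_n\mathbf d_n=0$. It carries the entire content of the presentation of $\mathscr K^{\mathrm{sing}}$, and you never establish it.

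Your paragraph on it is circular. The composite $P^c_n\to P^c_{n+1}\to P^c_n$ that you call nonzero is exactly the loop at $n$ through $n+1$, i.e.\ what $\mathbf s_{n+1}\mathbf d_{n+1}$ denotes, and the statement requires it to vanish. You then invoke $\mathbf s_n\mathbf d_n=0$ to deduce $\mathbf d_n\mathbf s_n\neq 0$.

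Neither of your remedies works.
\begin{itemize}
\item Relation (9) of Theorem~\ref{generalized D-K correspondence}, $\mathbf s_{n+1}\mathbf d_{n+1}=\Psi_n$, would make $k\C\Psi_n$ a direct summand of $k\C\Psi_{n+1}$. This is impossible for $n\geqslant 3$: $L_{n-1,\mu}$ is a composition factor of the former (via $\Delta_{n-1}$) but not of the latter.
\item The vanishing of $\Delta_{n,\lambda}\hookrightarrow P^c_{n+1}\to\Delta_{n+1,\mu'}$ is automatic. It only places the composite in the socle and does not decide whether it is zero.
\end{itemize}

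\paragraph{What is actually needed.} Since $\dim\Hom(P^c_{n+1},P^c_n)=1$, the loop at $n$ through $n+1$ vanishes exactly when the nonzero map $P^c_{n+1}\to P^c_n$ kills $\Delta_{n,\lambda}\subseteq P^c_{n+1}$. Equivalently, that map factors through $\Delta_{n+1,\mu'}$ and has image $L_{n+1}$, i.e.\ $L_{n+1}$ lies in the socle of $P^c_n$.

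The paper's own justification does not supply this. It argues that $\mathbf s_n$ lifts $\Delta_n\to\Delta_{n-1}$ and so kills $\Delta_{n-1}\subseteq P^c_n$, but a lift only sends $\Delta_{n-1}$ into $\Delta_{n-2}\subseteq P^c_{n-1}$.

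\paragraph{The relation fails for $\CA$.} Take the projective $Q_3=k\C(3,-)/\mathfrak J_3$, which has $P^c_{3,\mathrm{triv}}$ as a direct summand. Let $\rho$ be the rotation of $[4]$, and let $L_4$ be the singular simple at $[4]$, on which $\rho$ acts by $-1$. The space $Q_3(4)$ is $24$-dimensional, spanned by the $12$ injections $[3]\hookrightarrow[4]$ and the $12$ maps $g\circ s_3$ with $g\colon[2]\hookrightarrow[4]$. A direct check on this space shows that no nonzero $w$ satisfies both $\rho w=-w$ and $s_1w=0$ for $s_1\colon[4]\twoheadrightarrow[3]$.

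Hence $\Hom(L_4,Q_3)=0$, so $L_4$ is not in the socle of $P^c_{3,\mathrm{triv}}$. It follows that $P^c_3\to P^c_4\to P^c_3$ is nonzero and $\mathbf s_4\mathbf d_4=0$ cannot hold. Both loops at $3$ are nonzero multiples of the radical endomorphism, as you first suspected, so no argument can close this step as stated.

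\paragraph{The case $n=2$.} This is not a harmless boundary case. Since $\Delta_2=k\C\Psi_2$ is projective, $P^c_{2,\lambda}=\Delta_{2,\lambda}$ has length $2$ and $\End(P^c_2)=k$. This is what the displayed quiver, which has no $\mathbf s_2$, requires. The short exact sequence you use comes from Proposition~\ref{filtration of projectives} and exists only for $n\geqslant 3$.
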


\begin{proof}
The equivalence is given by the standard Morita equivalence with respect to the family of projective generators consisting of projective covers $P^c_{n, \lambda}$ of $\Delta_{n, \lambda}$ with $n \in \mathbb{Z}_+$ and $\lambda \in \Irr(G_n)$. All statements concerning projectivity, irreducibility, and Loewy length have already been established in Propositions \ref{regular standard is simple}, \ref{length 2}, and Corollary \ref{projective cover}. Furthermore, if $\lambda \in \Irr(G_n)$ is regular and $\mu \in \Irr(G_m)$ is singular, then one has
\[
\Hom_{k\C} (\Delta_{n, \lambda}, \, P^c_{m, \mu}) = 0 = \Hom_{k\C} (P^c_{m, \mu}, \, \Delta_{n, \lambda})
\]
by Proposition \ref{classify singular} and Corollary \ref{projective cover}. It therefore remains to describe the singular block $\mathscr K^{\mathrm{sing}}$ and to determine its relations. We treat the case $\C=\CA$; the case $\SA$ is entirely analogous.

Fix a singular representation $\lambda \in \Irr(G_n)$ with $n \geqslant 2$. By Proposition \ref{length 2}, Theorem \ref{classification of irreducibles}, and Corollary \ref{projective cover}, $P^c_{n,\lambda}$ has length $4$ with composition factors
\[
L_{n,\lambda} \text{ (top)}, \quad
L_{n-1,\lambda}, \quad
L_{n+1,\lambda}, \quad
L_{n,\lambda} \text{ (socle)}.
\]
Thus one has
\[
\dim_k \Hom_{k\C}(P^c_{n,\lambda}, \, P^c_{n-1,\lambda}) = 1,
\qquad
\dim_k \Hom_{k\C}(P^c_{n-1,\lambda}, \, P^c_{n,\lambda}) = 1,
\]
and
\[
\dim_k \End_{k\C}(P^c_{n,\lambda}) = 2.
\]

Let $\mathbf{s}_n: P^c_{n,\lambda} \to P^c_{n-1,\lambda}$ be a nonzero lift of the canonical map $\Delta_{n,\lambda} \to \Delta_{n-1,\lambda}$. Such a lift exists and is unique up to scalar since $P^c_{n,\lambda}$ is projective and the above Hom-space is one-dimensional. Define $\mathbf{d}_n: P^c_{n-1, \lambda} \to P^c_{n,\lambda}$ to be the composite
\[
P^c_{n-1,\lambda} \twoheadrightarrow \Delta_{n-1,\lambda} \hookrightarrow P^c_{n,\lambda}.
\]
The composite $\mathbf{d}_n \mathbf{s}_n$ induces the nonzero endomorphism of $P^c_{n,\lambda}$, which is the composite
\[
P_{n, \lambda}^c \twoheadrightarrow \Delta_{n,\lambda} \to \Delta_{n-1,\lambda} \hookrightarrow P^c_{n,\lambda}.
\]
Thus $\mathbf{d}_n \mathbf{s}_n$ is a nonzero, non-invertible endomorphism of $P^c_{n,\lambda}$. On the other hand, the composite
\[
\mathbf{s}_n \mathbf{d}_n: P^c_{n-1,\lambda} \twoheadrightarrow \Delta_{n-1,\lambda} \hookrightarrow P^c_{n,\lambda} \to P^c_{n-1, \lambda}
\]
is 0 since the last map is the lift of the natural map
\[
\Delta_{n, \lambda} = P^c_{n, \lambda} / \Delta_{n-1, \lambda} \longrightarrow \Delta_{n-1, \lambda}.
\]

By Proposition~\ref{thin category}, there are no nonzero morphisms $P^c_{m,\lambda} \to P^c_{m',\lambda}$ when $|m-m'| \geqslant 2$. In particular, we have
\[
\mathbf d_{n+1}\mathbf d_n = 0,
\qquad
\mathbf s_n\mathbf s_{n+1} = 0.
\]

Finally, since all Hom-spaces between indecomposable projective objects in the singular block are at most one-dimensional (except for $\End(P_{n,\lambda})$, which is two-dimensional), it follows that the morphisms $\mathbf{s}_\bullet$ and $\mathbf{d}_\bullet$ generate all morphisms in the singular block. The relations among them are precisely those listed above. This identifies the singular block $\mathscr{K}^{\mathrm{sing}}$ with the stated bound quiver and completes the proof.
\end{proof}

We can use this equivalence to classify indecomposable injective $\C$-modules.

\begin{corollary}
Let $k$ be a field of characteristic 0, and let $I_{n, \lambda}$ be the injective hull of the irreducible $\C$-module $L_{n, \lambda}$. One has:
\begin{enumerate}
\item if $\lambda$ is regular, then $I_{n, \lambda} \cong \Delta_{n, \lambda} \cong L_{n, \lambda}$;

\item if $\lambda$ is singular, then under the equivalence in Theorem \ref{DK for CA and SA}, $I_{n, \lambda}$ is the $\C$-module corresponding to $D\Hom_{\mathscr{K}} (-, \, (n, \lambda))$ where $D = \Hom_k(-, k)$.
\end{enumerate}
\end{corollary}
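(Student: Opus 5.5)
Throughout, $k$ has characteristic $0$ and $\C\in\{\CA,\SA\}$, so every $kG_n$ is semisimple, $\overline{\Delta}_{n,\lambda}=\Delta_{n,\lambda}$, and $\C\Mod\simeq\mathscr{K}\Mod$ by Theorem \ref{DK for CA and SA}. Our approach is to deduce both statements from that equivalence, from Theorem \ref{DK for CA and SA}(1), and from the standard description of injective hulls over a locally finite bound quiver. In each case the plan is to exhibit an injective module whose socle is exactly $L_{n,\lambda}$; such a module is automatically the injective hull of $L_{n,\lambda}$.

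\textbf{Regular case.} Assume $\lambda$ is regular. By Theorem \ref{DK for CA and SA}(1), $\Delta_{n,\lambda}\cong L_{n,\lambda}$. Also $\mathscr{K}=\mathscr{K}^{\mathrm{reg}}\oplus\mathscr{K}^{\mathrm{sing}}$, and in $\mathscr{K}^{\mathrm{reg}}$ the vertex $(n,\lambda)$ is isolated with endomorphism ring $k$. Hence the simple module at this vertex forms its own block, which is equivalent to $k\Mod$. It is therefore both projective and injective in $\mathscr{K}\Mod$. Transporting back through the equivalence, $L_{n,\lambda}$ is injective, so $I_{n,\lambda}=L_{n,\lambda}\cong\Delta_{n,\lambda}$. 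This proves (1).

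\textbf{Singular case.} Assume $\lambda$ is singular. Under the equivalence, $L_{n,\lambda}$ corresponds to the simple $\mathscr{K}$-module $S_{(n,\lambda)}$ supported at the vertex $(n,\lambda)$. Set $I:=D\Hom_{\mathscr{K}}(-,(n,\lambda))$. This is a covariant functor because $\Hom_{\mathscr{K}}(-,(n,\lambda))$ is contravariant and $D$ reverses variance. We then argue in three steps.
\begin{enumerate}
\item $I$ is pointwise finite dimensional. Each $\Hom_{\mathscr{K}}(x,(n,\lambda))$ has dimension at most $2$, and only the vertices $(n\pm1,\lambda')$ and $(n,\lambda)$ itself contribute, by the thin-category property in Proposition \ref{thin category}.
\item $I$ is injective. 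By Yoneda, $\Hom_{\mathscr{K}}(M,I)\cong D\,M((n,\lambda))$ naturally in $M$. The functor $M\mapsto M((n,\lambda))$ is exact and $D$ is exact over a field, so $\Hom_{\mathscr{K}}(-,I)$ is exact.
\item $I$ has socle $S_{(n,\lambda)}$. A simple $S_x$ maps into $I$ if and only if $D\,S_x((n,\lambda))\neq0$, that is, if and only if $x=(n,\lambda)$; and in that case the Hom space is one-dimensional. So the socle of $I$ is a single copy of $S_{(n,\lambda)}$. Since the socle of every nonzero module is nonzero (Corollary \ref{socle}), this socle is essential.
\end{enumerate}
Together these show that $I$ is the injective hull of $S_{(n,\lambda)}$. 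Transporting back gives $I_{n,\lambda}$, proving (2).

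\textbf{Main obstacle.} Step (iii) needs care. One must check that the simple $\mathscr{K}$-modules are exactly the vertex simples $S_x$, i.e.\ that each $\End_{\mathscr{K}}(x)$ is local with residue field $k$. In $\mathscr{K}^{\mathrm{sing}}$ the endomorphism ring is spanned by the identity and the nilpotent element $\mathbf{d}_n\mathbf{s}_n$, and this element squares to zero because $\mathbf{s}_n\mathbf{d}_n=0$. With that in hand, $\Hom(S_x,I)\cong D\,S_x((n,\lambda))$ gives the computation in (iii).
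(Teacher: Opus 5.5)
Your proposal is correct and is essentially the paper's own argument. Part (1) comes from the splitting $\mathscr{K}=\mathscr{K}^{\mathrm{reg}}\oplus\mathscr{K}^{\mathrm{sing}}$ with $\mathscr{K}^{\mathrm{reg}}$ semisimple, and part (2) from $\Hom_{\mathscr{K}}(-,(n,\lambda))$ being a finite-dimensional projective right $\mathscr{K}$-module, whose $k$-dual is therefore injective with simple socle at $(n,\lambda)$; you have written out the adjunction $\Hom_{\mathscr{K}}(M,I)\cong D\,M((n,\lambda))$ and the socle computation that the paper leaves implicit.

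Two harmless imprecisions, neither of which affects the argument:
\begin{itemize}
\item The endomorphism ring of a regular vertex is $\End_{kG_n}(\lambda)$, which need not be $k$ when $k$ is not a splitting field.
\item That the simple $\mathscr{K}$-modules are vertex simples is best justified by $\Hom_{k\C}(P^c_{x},L_{y})=0$ for $x\neq y$, not by locality of $\End_{\mathscr{K}}(x)$ alone.
\end{itemize}
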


\begin{proof}
If $\lambda$ is regular, then the conclusion follows from the decomposition $\mathscr{K} = \mathscr{K}^{\mathrm{reg}} \oplus \mathscr{K}^{\mathrm{sing}}$ and the fact that $\mathscr{K}^{\mathrm{reg}}$ is semisimple. If $\lambda$ is singular, then the conclusion follows from the fact that $\Hom_{\mathscr{K}} (-, \, (n, \lambda))$ is a finite dimensional right projective $\mathscr{K}$-module.
\end{proof}

Consequently, the category of finitely generated $\C$-modules has enough injective objects.

\subsection{$\FA$}

In this subsection let $\C = \FA$ and $k$ be a field of characteristic 0. Note that the category $\C^-$ of finite sets and surjections is locally finite in the sense that for each object, there are only finitely many morphisms in $\C^-$ starting at it. Consequently, every representable functor $k\C^-(n, -)$ is of finite dimensional. Thus one can apply the algorithm in \cite[Section 4]{LiHereditary} to construct the ordinary quiver of the category algebra $k\C^-$, whose vertices are pairs $(n, \lambda)$ with $n \in \mathbb{Z}_+$ and $\lambda \in \Irr(S_n)$. Note that singular irreducible representations are sign representations by Proposition \ref{classify singular}.

\begin{lemma}\label{lem:sgn-isolated-FAminus}
There is no arrow starting at the vertex $(n,\mathrm{sgn})$ in the ordinary quiver of $k\C^{-}$.
\end{lemma}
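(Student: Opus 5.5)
The plan is to identify the arrows of the ordinary quiver of $k\C^-$ through the usual description "radical modulo radical squared" and then show that the relevant space vanishes after tensoring with the sign representation.

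\textbf{Step 1: locate the possible arrows.} Since $\C^-$ is locally finite and $k$ has characteristic $0$, each $kS_n$ is semisimple. So the radical of $k\C^-$ is spanned by all non-invertible surjections, and the quiver algorithm of \cite[Section 4]{LiHereditary} applies. Any surjection $[n] \twoheadrightarrow [m]$ with $m \leqslant n-2$ collapses at least two pairs. It therefore factors as a composite of surjections through $[n-1]$, so it lies in the radical squared. Hence arrows can only join $(n,\lambda)$ and $(n-1,\mu)$. The number of such arrows is
\[
\dim_k e_\mu\, k\C^-(n,n-1)\, e_\lambda \;=\; \dim_k \Hom_{kS_{n-1}}\bigl(kS_{n-1}e_\mu,\; k\C^-(n,n-1)\otimes_{kS_n}\lambda\bigr),
\]
where $k\C^-(n,n-1)$ is regarded as a $(kS_{n-1},kS_n)$-bimodule. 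I will fix the paper's convention so that "starting at $(n,\lambda)$" means the right action of $G_n=S_n$ at the source object $[n]$ is twisted by $\lambda$. With that convention, it suffices to prove
\[
k\C^-(n,n-1)\otimes_{kS_n}\mathrm{sgn}_n = 0 .
\]

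\textbf{Step 2: the key cancellation.} Every surjection $s:[n]\twoheadrightarrow[n-1]$ identifies exactly one pair $\{a,b\}$. The transposition $\tau=(a\;b)\in S_n$ then satisfies $s\circ\tau=s$. In the tensor product this gives
\[
s\otimes 1 \;=\; s\tau\otimes 1 \;=\; s\otimes \tau\cdot 1 \;=\; -\,s\otimes 1 ,
\]
so $2(s\otimes 1)=0$. Since the characteristic is $0$, we get $s\otimes 1=0$. These elements span the tensor product, so the tensor product is zero and there are no arrows out of $(n,\mathrm{sgn})$.

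\textbf{Step 3: sanity check via orbit structure.} As a consistency check, $S_{n-1}\times S_n$ acts transitively on $\C^-(n,n-1)$. There are $\binom n2 (n-1)!$ surjections, so the stabilizer has order $2(n-2)!$. The bimodule is therefore induced from the trivial character of this stabilizer, which contains the transposition $\tau$; this is exactly why the sign character is killed.

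\textbf{Main obstacle.} The delicate point is bookkeeping of conventions rather than the computation. I need to make sure that, in the algorithm of \cite{LiHereditary}, arrows starting at $(n,\lambda)$ are counted by the $\lambda$-isotypic part for the right $S_n$-action at the source of surjections. I also need that the radical-squared description is valid, which uses semisimplicity of $kS_n$. If the convention turned out to be reversed, the same cancellation would have to be checked against the target side, where it fails. So I will verify the convention carefully before finalizing.
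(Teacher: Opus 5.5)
Your argument is correct and rests on the same fact as the paper's proof. The transposition $\tau$ with $s\tau=s$ acts by $-1$ on $\mathrm{sgn}_n$. The paper phrases this as $\Hom_{kG_0}(k,\mathrm{sgn}_2)=0$ for $G_0=\langle (n-1\;n)\rangle$, inside the criterion of \cite[Section 4]{LiHereditary}. You instead compute the radical modulo its square directly and kill $k\C^-(n,n-1)\otimes_{kS_n}\mathrm{sgn}_n$ by the cancellation $s\otimes 1=-s\otimes 1$.

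Your source-side convention (the right $S_n$-action at $[n]$) is the one the paper uses, so the worry in your last paragraph is moot. Your direct version also gives $k\C^-(n,m)e_{\mathrm{sgn}}=0$ for every $m<n$, which is exactly what Proposition~\ref{disjoint} needs.
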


\begin{proof}
We apply the quiver construction algorithm described in \cite[Section 4]{LiHereditary}. In $\C^{-}$, the only unfactorizable morphisms with source $[n]$ are surjections from $[n]$ to $[n-1]$. Up to the action of automorphism groups, all such surjections are equivalent, so we can choose the standard surjection $\alpha$ given by
\[
\alpha(i) = i \ (1\leqslant i \leqslant n-1),\qquad \alpha(n) = \alpha(n-1) = n-1.
\]

\smallskip

Let $G = S_n$, $H = S_{n-1}$, and $G_0 \leqslant G$ (resp.\ $H_0 \leqslant H$) be the stabilizer of $\alpha$ under right (resp., left) composition. By a direct computation, one has $G_0\cong S_2$, generated by the transposition exchanging $n-1$ and $n$, and $H_0$ is the trivial group. Similarly, let $G_1 \leqslant G$ (resp.\ $H_1 \leqslant H$) be the subgroup preserving the fibers of $\alpha$, namely
\[
G_1 = \{ g \in G \mid \alpha g \in H \alpha \}, \quad H_1 = \{h \in H \mid h \alpha \in \alpha G\}.
\]
One checks that
\[
G_1 \cong S_{n-2} \times S_2, \qquad H_1 \cong S_{n-2},
\]
and that there is a canonical identification
\[
G_1/G_0 \cong H_1/H_0 \cong S_{n-2}.
\]

According to the algorithm in \cite[Section 4]{LiHereditary}, there exists an arrow from $(n, \lambda)$ to $(n-1, \mu)$ in the ordinary quiver if and only if $\lambda \downarrow_{G_1}^G$ and $\mu \downarrow_{H_1}^H$ share common simple summands appearing in the induced module
\[
k \uparrow^{G_1}_{G_0} = kG_1 \otimes_{kG_0} k \cong kH_1 \otimes_{kH_0} k = k \uparrow^{H_1}_{H_0}.
\]
However, restricting to $G_1\cong S_{n-2} \times S_2$, we have
\[
\mathrm{sgn}_n \! \downarrow_{G_1}^G \;\cong\; \mathrm{sgn}_{n-2} \boxtimes \mathrm{sgn}_2,
\]
which cannot share a common simple summand with $k \uparrow^{G_1}_{G_0}$ since
\[
\Hom_{kG_1} (k \uparrow^{G_1}_{G_0}, \, \mathrm{sgn}_n \! \downarrow_{G_1}^G) \cong \Hom_{kG_0} (k, \, \mathrm{sgn}_n \! \downarrow_{G_1}^G \downarrow^{G_1}_{G_0}) \cong \Hom_{kG_0} (k, \, \mathrm{sgn}_2) = 0.
\]
Thus there is no arrow in the ordinary quiver of $k\C^{-}$ starting at $(n,\mathrm{sgn})$.
\end{proof}

Consequently, different from $\CA$ or $\SA$, singular indecomposable standard modules for $\FA$ are projective.

\begin{proposition}\label{disjoint}
Let $e_{\mathrm{sgn}} \in kG_n$ be the idempotent corresponding to the sign representation. Then $\Delta_{n, \mathrm{sgn}} \cong k\C e_{\mathrm{sgn}}$. Furthermore, if $\lambda \in \Irr(G_m)$ is regular, then
\[
\Hom_{k\C}(P^c_{m,\lambda}, \, \Delta_{n, \mathrm{sgn}}) = 0.
\]
\end{proposition}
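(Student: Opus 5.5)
The plan is to prove both statements directly from the definitions, using that $k$ has characteristic $0$ (in particular, characteristic different from $2$). For the first statement I will show that $\mathfrak{I}_n e_{\mathrm{sgn}} = 0$. Then the defining sequence $0 \to \mathfrak{I}_{n,\mathrm{sgn}} \to P_{n,\mathrm{sgn}} \to \Delta_{n,\mathrm{sgn}} \to 0$ degenerates, and since $P_{n,\mathrm{sgn}} = P_n \otimes_{kG_n} kG_n e_{\mathrm{sgn}} \cong k\C e_{\mathrm{sgn}}$, this gives $\Delta_{n,\mathrm{sgn}} \cong k\C e_{\mathrm{sgn}}$. In characteristic $0$ we may take
\[
e_{\mathrm{sgn}} = \frac{1}{n!}\sum_{\sigma \in S_n} \sgn(\sigma)\,\sigma ,
\]
which satisfies $\tau e_{\mathrm{sgn}} = \sgn(\tau)\, e_{\mathrm{sgn}}$ for every $\tau \in S_n$.

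\textbf{Step 1: $\mathfrak{I}_n e_{\mathrm{sgn}} = 0$.} Since $\mathfrak{I}_n$ is spanned by the non-injective maps $f:[n]\to[m]$, it is enough to treat such an $f$. Choose $i \neq j$ with $f(i) = f(j)$, and let $\tau = (i\ j)$. Then $f = f\circ\tau$ in $\FA$, because $\FA$ contains all maps. Hence
\[
f e_{\mathrm{sgn}} = f\tau e_{\mathrm{sgn}} = -f e_{\mathrm{sgn}},
\]
so $2 f e_{\mathrm{sgn}} = 0$, and therefore $f e_{\mathrm{sgn}} = 0$ because $2$ is invertible in $k$. This gives the isomorphism, and in particular $\Delta_{n,\mathrm{sgn}}$ is projective. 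It is also consistent with Lemma~\ref{lem:sgn-isolated-FAminus}, which I expect is the conceptual reason the author had in mind, though the transposition trick already suffices.

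\textbf{Step 2: the Hom-vanishing.} For a projective cover $P^c_{m,\lambda}$ of the module $\Delta_{m,\lambda}$, whose simple top is $L_{m,\lambda}$, the space $\Hom_{k\C}(P^c_{m,\lambda}, M)$ is nonzero for a finite-length module $M$ exactly when $L_{m,\lambda}$ is a composition factor of $M$. So I need the composition factors of $\Delta_{n,\mathrm{sgn}}$. Since $\sgn$ is singular by Proposition~\ref{classify singular}, and $\overline{\Delta}_{n,\mathrm{sgn}} = \Delta_{n,\mathrm{sgn}}$ in characteristic $0$, Proposition~\ref{length 2} shows that $\Delta_{n,\mathrm{sgn}}$ has length $2$. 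By the proof of Lemma~\ref{existence of morphism} (or the Grothendieck group corollary), its composition factors are $L_{n,\mathrm{sgn}}$ and $L_{n+1,\mathrm{sgn}}$. Both are labelled by singular representations. By Proposition~\ref{filtration}(2) the labels $(n,\lambda)$ of simple modules are distinct, so $L_{m,\lambda}$ with $\lambda$ regular is not among these factors. Hence the Hom-space vanishes.

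The main point needing care is the identification of the socle factor as $L_{n+1,\mathrm{sgn}}$. This follows because the nonzero non-surjective map $\overline{\Delta}_{n+1,\mathrm{sgn}} \to \overline{\Delta}_{n,\mathrm{sgn}}$ has image the socle, which is the top $L_{n+1,\mathrm{sgn}}$ of $\overline{\Delta}_{n+1,\mathrm{sgn}}$. For $\FA$ the singular representation in each degree is unique, so no ambiguity arises.
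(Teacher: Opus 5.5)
Your proof is correct. The second half is the paper's argument with the socle factor identified explicitly: the composition factors of $\Delta_{n,\mathrm{sgn}}$ are $L_{n,\mathrm{sgn}}$ and $L_{n+1,\mathrm{sgn}}$, and neither is the top $L_{m,\lambda}$ of $P^c_{m,\lambda}$ when $\lambda$ is regular.

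The first half takes a different route. The paper deduces $\Delta_{n,\mathrm{sgn}} \cong k\C e_{\mathrm{sgn}}$ from Lemma~\ref{lem:sgn-isolated-FAminus}. That lemma runs the ordinary-quiver algorithm of \cite{LiHereditary} for $k\C^-$, using the stabilizers of the standard surjection $[n] \twoheadrightarrow [n-1]$ and a Frobenius reciprocity computation, and shows that no arrow starts at $(n,\mathrm{sgn})$. Hence $k\C^- e_{\mathrm{sgn}} = kG_n e_{\mathrm{sgn}}$ is a projective $k\C^-$-module, and Proposition~\ref{standard modules}(4) gives $\Delta_{n,\mathrm{sgn}} \cong k\C \otimes_{k\C^-} k\C^- e_{\mathrm{sgn}} \cong k\C e_{\mathrm{sgn}}$.

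You instead observe that every non-injective $f$ satisfies $f = f\tau$ for a transposition $\tau \in S_n$, so $f e_{\mathrm{sgn}} = -f e_{\mathrm{sgn}} = 0$. This gives the same vanishing $\mathfrak{I}_n e_{\mathrm{sgn}} = 0$ directly, in one line and without external machinery. It also shows exactly which feature of $\FA$ is used: the stabilizer in $G_n$ of any non-injective map contains an element on which the sign character is $-1$. And it holds verbatim over any commutative ring in which $n!$ is invertible.

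In exchange, the paper's route records a structural fact about $k\C^-$ itself, namely that no arrow of its ordinary quiver starts at a sign vertex. That fits the paper's quiver-theoretic viewpoint but is more than this proposition needs.
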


\begin{proof}
By Lemma~\ref{lem:sgn-isolated-FAminus}, there is no arrow starting at the vertex $(n, \mathrm{sgn})$ in the ordinary quiver of $k\C^-$, so $kG_n e_{\mathrm{sgn}} = k\C^- e_{\mathrm{sgn}}$ is a projective $k\C^{-}$-module concentrated on the object $[n]$. Consequently, one has
\[
\Delta_{n, \mathrm{sgn}} \cong k\C \otimes_{k\C^-} kG_n e_{\mathrm{sgn}} = k\C \otimes_{k\C^-} k\C^- e_{\mathrm{sgn}},
\]
which is projective. The second statement is obvious since the top of $P^c_{m, \lambda}$ is $L_{m, \lambda} \cong \Delta_{m, \lambda}$, but composition factors of $\Delta_{n, \mathrm{sgn}}$ are $L_{n, \mathrm{sgn}}$ and $L_{n+1, \mathrm{sgn}}$ by the proof of Proposition \ref{length 2}.
\end{proof}

In summary, we obtain a refined version of Theorem \ref{generalized D-K correspondence} for $\FA$.

\begin{theorem} \label{DK for FA}
Let $k$ be a field of characteristic $0$. Then $\C \Mod$ is equivalent to the representation category of a $k$-linear category $\mathscr{K}$ describe as follows:
\begin{enumerate}
\item objects are pairs $(n, \lambda)$ with $n \geqslant 1$ and $\lambda \in \Irr(G_n)$;

\item the endomorphism algebra of each object is one-dimensional;

\item the full subcategory $\mathscr{K}^{\mathrm{reg}}$ consisting of regular objects is an inverse category;

\item the full subcategory $\mathscr{K}^{\mathrm{sing}}$ consisting of singular objects is the $k$-linear category appearing in the classical Dold--Kan correspondence;

\item $\mathscr{K}$ is triangular: there is no nonzero morphism from $\mathscr{K}^{\mathrm{sing}}$ to $\mathscr{K}^{\mathrm{reg}}$.
\end{enumerate}
\end{theorem}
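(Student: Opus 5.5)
We will take the family of projective covers $\{P^c_{n,\lambda}\}$, with $n \geqslant 1$ and $\lambda \in \Irr(G_n)$, as a set of projective generators of $\C\Mod$. It is a generating set because every irreducible $L_{n,\lambda}$ is the top of some $P^c_{n,\lambda}$, and every finitely generated module has finite length by Corollary \ref{finite length 1}. We then define $\mathscr{K}$ by $\mathscr{K}((m,\mu),(n,\lambda)) = \Hom_{k\C}(P^c_{n,\lambda}, P^c_{m,\mu})$. The Morita functor $V \mapsto \bigoplus \Hom(P^c_{n,\lambda}, V)$ then gives the equivalence, exactly as in Theorem \ref{generalized D-K correspondence}. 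Statement (1) is then immediate.

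Next we identify the projective covers. For singular $\lambda = \mathrm{sgn}$, Proposition \ref{disjoint} gives $P^c_{n,\mathrm{sgn}} = \Delta_{n,\mathrm{sgn}}$, which is uniserial of length $2$ with top $L_{n,\mathrm{sgn}}$ and socle $L_{n+1,\mathrm{sgn}}$ (Proposition \ref{length 2}). For regular $\lambda$, Proposition \ref{filtration} gives $P^c_{n,\lambda}$ a filtration with top factor $\Delta_{n,\lambda} = L_{n,\lambda}$ (Proposition \ref{regular standard is simple}) and lower factors $\Delta_{i,\nu}$ with $i < n$.

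For (2) we compute $\End(P^c_{n,\lambda})$, whose dimension equals the multiplicity $[P^c_{n,\lambda} : L_{n,\lambda}]$. In the singular case this multiplicity is $1$, since the two composition factors of $\Delta_{n,\mathrm{sgn}}$ are distinct. In the regular case, every lower factor $\Delta_{i,\nu}$ with $i<n$ has composition factors $L_{i,\nu}$ and possibly $L_{i+1,\nu}$, and $i+1 = n$ occurs only if $\nu$ is singular, in which case the factor is $L_{n,\mathrm{sgn}} \neq L_{n,\lambda}$. So the multiplicity is again $1$. Since $k$ has characteristic $0$, every $\lambda$ is absolutely irreducible after checking that $\End_{kG_n}(\lambda) = k$, which holds for symmetric groups, and hence each endomorphism algebra is one-dimensional.

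For (5), a nonzero map $P^c_{m,\mathrm{sgn}} \to P^c_{n,\lambda}$ with $\lambda$ regular means that $L_{m,\mathrm{sgn}}$ occurs as a composition factor of $P^c_{n,\lambda}$. In the arrow convention chosen above, this is precisely the vanishing asserted in Proposition \ref{disjoint}, namely $\Hom(P^c_{n,\lambda}, \Delta_{m,\mathrm{sgn}}) = 0$. The one delicate point is to fix the direction convention so that it matches ``no morphism from $\mathscr{K}^{\mathrm{sing}}$ to $\mathscr{K}^{\mathrm{reg}}$''. If it does not match with covariant Hom, I would pass to $\mathscr{K}^{\op}$, as the paper's convention $\mathscr{K}(m,n) = \Hom(k\C\Psi_n, k\C\Psi_m)$ suggests.

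For (4), the singular objects have projectives $\Delta_{n,\mathrm{sgn}}$, and by Theorem \ref{hom spaces} and Proposition \ref{classify singular} we have $\Hom(\Delta_{n+1,\mathrm{sgn}}, \Delta_{n,\mathrm{sgn}}) = k$ and all other Homs between distinct singular objects vanish. Composites of two consecutive such maps vanish by Corollary \ref{no homo with big gap}. This gives the Dold--Kan quiver $\mathbf d_{n+1}\mathbf d_n = 0$ with one-dimensional endomorphism algebras, matching Theorem \ref{generalized D-K correspondence}(II) for $\OA$.

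For (3), we must show that among regular objects the Hom-spaces are nonzero only in one degree direction and that each endomorphism algebra is $k$; the second part is already (2). A map $P^c_{m,\nu} \to P^c_{n,\lambda}$ between regular objects requires $L_{m,\nu}$ to be a composition factor of $P^c_{n,\lambda}$. Since $\nu$ is regular, this forces $L_{m,\nu}$ to come from a factor $\Delta_{m,\nu}$ with $m \leqslant n$, with equality only when $\nu = \lambda$. The degree function $(n,\lambda) \mapsto n$ therefore makes $\mathscr{K}^{\mathrm{reg}}$ directed with trivial endomorphisms, which is what being an inverse category means. I expect the main obstacle to be the bookkeeping of these composition factors: showing that a singular lower factor contributes only singular simples at level $i+1$, and that no regular $L_{n,\lambda'}$ sneaks in. Both points reduce to Propositions \ref{regular standard is simple} and \ref{length 2} applied to the factors provided by Proposition \ref{filtration}.
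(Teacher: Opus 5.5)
Your proposal follows the paper's proof essentially step for step. It uses the Morita equivalence with respect to the projective covers $P^c_{n,\lambda}$, and composition-factor counting for (2) and (3): regular $\Delta_{i,\nu}$ are simple, and singular $\Delta_{i,\mathrm{sgn}}$ have exactly the factors $L_{i,\mathrm{sgn}}$ and $L_{i+1,\mathrm{sgn}}$. It then uses $P^c_{n,\mathrm{sgn}}\cong\Delta_{n,\mathrm{sgn}}$ for (4) and Proposition~\ref{disjoint} for (5). Your remark that one also needs $\End(L_{n,\lambda})=k$ (absolute irreducibility) fills in a point the paper leaves implicit. The one thing to correct is the first sentence of your argument for (5), which has the direction reversed.

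With your (and the paper's) convention $\mathscr{K}((m,\mu),(n,\lambda))=\Hom_{k\C}(P^c_{n,\lambda},P^c_{m,\mu})$, a morphism in $\mathscr{K}$ from the singular object $(m,\mathrm{sgn})$ to a regular object $(n,\lambda)$ is an element of $\Hom_{k\C}(P^c_{n,\lambda},\Delta_{m,\mathrm{sgn}})$. This vanishes because $L_{n,\lambda}$ is not one of the two singular composition factors of $\Delta_{m,\mathrm{sgn}}$. That is exactly the second statement of Proposition~\ref{disjoint}, and it is all that (5) needs.

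Nonzero maps $P^c_{m,\mathrm{sgn}}\to P^c_{n,\lambda}$, i.e.\ occurrences of $L_{m,\mathrm{sgn}}$ in $P^c_{n,\lambda}$, are a different matter. They are the regular-to-singular morphisms of $\mathscr{K}$, and Proposition~\ref{disjoint} says nothing about them. They do exist: for example $\Ext^1_{k\C}(\Delta_{3,\mathrm{triv}},\Delta_{2,\mathrm{sgn}})\neq 0$, so the radical of $P^c_{3,\mathrm{triv}}$ has a singular composition factor. This is why only a triangular, rather than a direct-sum, decomposition is claimed.

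For the same reason your fallback of passing to $\mathscr{K}^{\op}$ is not available. The direction is fixed by requiring $\C\Mod$ to be equivalent to covariant $\mathscr{K}$-modules. It is also the direction for which (3) (morphisms lower degree) and (4) ($\mathbf{d}_{n+1}$ goes from $n$ to $n+1$) hold as stated. In $\mathscr{K}^{\op}$, statement (5) would be false.
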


\begin{proof}
The equivalence is given by the Morita equivalence with respect to the family of projective generators
\[
\{ P^c_{n, \lambda} \mid n \in \mathbb{Z}_+,\ \lambda \in \Irr(G_n) \},
\]
where $P^c_{n, \lambda}$ is the projective cover of $\Delta_{n, \lambda}$. Let $\mathscr K$ be the corresponding endomorphism category. It suffices to verify properties \emph{(2)-(5)} in the statement of the theorem.

\medskip
\noindent\textbf{(2) Endomorphism algebras.} If $\lambda \in \Irr(G_n)$ is regular, then by Proposition \ref{filtration} the projective module $P_{n,\lambda} = k\C(n, -) \otimes_{kG_n} \lambda$ admits a filtration whose factors consist of exactly one copy of $\Delta_{n,\lambda}$ and several $\Delta_{m,\mu}$ with $m<n$, so does $P^c_{n, \lambda}$ since it is isomorphic to a direct summand of $P_{n, \lambda}$. Note that $\Delta_{n, \lambda}$ is simple by Proposition \ref{regular standard is simple}, and every composition factor of $\Delta_{m, \mu}$ is either itself (when $\mu$ is regular) or singular (when $\mu$ is the sign representation). It follows that $\Delta_{m, \mu}$ cannot have a composition factor isomorphic to $\Delta_{n, \lambda}$. Consequently, $\Delta_{n, \lambda}$ occurs only once in the composition series of $P^c_{n, \lambda}$, and hence
\[
\End_{k\C}(P^c_{n,\lambda}) \cong k.
\]

If $\lambda = \mathrm{sgn}_n$ is singular, the conclusion follows from Propositions~\ref{length 2} and \ref{disjoint} since $P^c_{n, \mathrm{sgn}} \cong \Delta_{n, \mathrm{sgn}}$, whose composition factors are one copy of $L_{n, \mathrm{sgn}}$ and one copy of $L_{n+1, \mathrm{sgn}}$. Thus in all cases the endomorphism algebra of each object of $\mathscr K$ is one-dimensional.

\medskip
\noindent\textbf{(3) Regular part.}
Let $\lambda \in \Irr(G_n)$ and $\mu \in \Irr(G_m)$ be regular with $m>n$. We claim that
\[
\Hom_{k\C}(P^c_{m,\mu}, \, P^c_{n,\lambda}) = 0,
\]
namely $P^c_{n,\lambda}$ has no composition factor isomorphic to $L_{m, \mu} \cong \Delta_{m, \mu}$. But this follows from the same argument as in the proof of (2). Together with the one-dimensionality of endomorphism algebras, this shows that $\mathscr K^{\mathrm{reg}}$ is an inverse category.

\medskip
\noindent\textbf{(4) Singular part.}
If $\lambda = \mathrm{sgn} \in \Irr(G_n)$, then the projective module $P^c_{n, \mathrm{sgn}} \cong \Delta_{n, \mathrm{sgn}}$ has exactly two composition factors, namely $L_{n,\mathrm{sgn}}$ and $L_{n+1,\mathrm{sgn}}$. This structure reproduces the quiver and Loewy structure underlying the $k$-linear category appearing in the classical Dold--Kan correspondence.

\medskip
\noindent\textbf{(5) Triangular decomposition.} This follows from the second statement of Proposition \ref{disjoint}.
\end{proof}

\begin{remark}
The contrast between the direct sum decompositions for $\CA$ and $\SA$ described in Theorem \ref{DK for CA and SA} and the triangular decomposition for $\FA$ in Theorem~\ref{DK for FA} reflects a genuine difference in the homological behavior of these categories.

For $\CA$ and $\SA$, every regular indecomposable standard module is both simple and projective. As a consequence, there are no nonzero homomorphisms between regular and singular objects in either direction, which forces a direct sum decomposition
\[
\mathscr{K} = \mathscr{K}^{\mathrm{reg}} \oplus \mathscr{K}^{\mathrm{sing}}
\]
as well as the semisimplicity of $\mathscr{K}^{\mathrm{reg}}$.

In contrast, for $\FA$, the large number of ways in which morphisms in $\FA$ factor and interact with automorphism groups gives rise to a genuinely non-semisimple homological behavior. Although regular indecomposable standard modules remain irreducible, most of them fail to be projective because of the existence of abundant nontrivial extensions. Actually, by a direct computation
\[
\dim_k \Ext^1_{k\C}(\Delta_n, \, \Delta_{n-1}) = \frac{(n-1)n!}{2} + 1 - n!,
\]
which grows rapidly with $n$. This phenomenon prevents a direct sum decomposition and the semisimplicity of $\mathscr{K}^{\mathrm{reg}}$.
\end{remark}

In contrast to $\CA$ and $\SA$, the category of finitely generated $\C$-modules does not have enough injective objects. We end this section by giving a brief explanation of this fact.

Fix $n \geqslant 1$ and view $kG_n$ as a $\C^+$-module concentrated on the object $[n]$. It has an injective hull $I_n$ in $\C^+ \Mod$, which is the dual space of the finite dimensional space spanned by all injections ending at $[n]$. Since $\C^+$ is a wide subcategory of $\C$, we have a restriction functor from $\C \Mod$ to $\C^+ \Mod$ as well as a right adjoint
\[
Q: \C^+ \Mod \longrightarrow \C \Mod,
\]
called the \emph{coinduction functor}. It is well known that $Q$ preserves injectives, so we obtain an injective $\C$-module $QI_n$.

By Corollary \ref{socle}, the socle of $QI_n$ is nonzero. Actually, the socle is a finitely generated $\C$-module. To see this, we consider the Hom-space
\[
\Hom_{k\C} (\Delta_m, \, QI_n) \cong \Hom_{k\C^+} (k\C^+(m, -), \, I_n) \cong I_n(m) \cong k\C^+(m, n)
\]
since the restriction of $\Delta_m$ is isomorphic to $k\C^+(m, -)$ and the value of $I_n$ on the object $[m]$ has a natural basis consisting of all injections from $[m]$ to $[n]$. Since $k\C^+(m, n) = 0$ for $m > n$, it follows that the socle of $QI_n$ is a finitely generated $\C$-module.

We claim that $QI_n$ is the injective hull of its socle. Otherwise, we can write $QI_n$ as the direct sum of the injective hull of the socle of $QI_n$ and an injective complement $K$. But then the socle of $K$ is zero, contradicting Corollary \ref{socle}.

Now we show that $QI_n$ is not finitely generated. Note that for each $r \geqslant 0$, the function
\[
\dim: \mathbb{Z}_+ \longrightarrow \mathbb{N}, \quad t \mapsto \dim_k \Delta_r(t)
\]
is of polynomial growth, so is $P_r = k\C(r, -)$. Consequently, each finitely generated $\C$-module is of polynomial growth as well. Thus we only need to show that $QI_n$ is not of polynomial growth. This is indeed the case since for $t \geqslant n$,
\begin{align*}
\dim_k QI_n(t) & = \dim_k \Hom_{k\C^+} (k\C(t, -), \, I_n)\\
 & = \dim_k \Hom_{k\C^+} (\bigoplus_{i=1}^t k\C^+(i, -)^{S(t, i)}, \, I_n)\\
 & = \sum_{i=1}^t S(t, i) \dim_k I_n(i)\\
 & = \sum_{i=1}^n S(t, i) n!/(n-i)!
\end{align*}
is of exponential growth, where $S(t, i)$ is the Stirling number of the second kind.

\section{Uniformly continuous representations of endomorphism monoids}

In this section, we establish a categorical equivalence between uniformly continuous modules over topological transformation monoids $\mathscr{E}$ and sheaves on the corresponding categories $\C$ of finite subsets and finite restrictions of elements in $\mathscr{E}$, and explore several concrete examples and applications.

\subsection{Continuous and uniformly continuous representations}

Let $\Omega$ be an infinite set, and let $\mathscr{E} \leqslant \Omega^{\Omega}$ be a monoid of self-maps equipped with the \emph{permutation topology}. Specifically, we equip $\Omega$ with the discrete topology, $\Omega^{\Omega}$ with the product topology, and $\mathscr{E}$ with the subspace topology. Then any element $f \in \mathscr{E}$ has a basis of open neighborhoods described below:
\[
\mathscr{N}_f = \{ \mathscr{N}_{f, A} \mid A \subseteq \Omega, \, |A| < \infty  \}, \quad \mathscr{N}_{f, A} = \{g \in \mathscr{E} \mid g|_{A} = f |_{A}\}
\]
With respect to this topology the composition in $\mathscr{E}$ is continuous, so it becomes a topological monoid.

Let $k$ be a commutative ring. We say that a $k\mathscr{E}$-module $V$ is \emph{continuous} if the action of $\mathscr{E}$ on $V$ is continuous with respect to the permutation topology on $\mathscr{E}$ and the discrete topology on $V$. Denote by $k\mathscr{E} \Mod^{\con}$ the category of continuous $k\mathscr{E}$-modules. It is an abelian category since submodules and quotient modules of continuous $k\mathscr{E}$-modules are still continuous.

The following result gives a characterization of continuous $k\mathscr{E}$-modules.

\begin{lemma} \label{finite support}
A $k\mathscr E$-module $V$ is continuous if and only if for every $v\in V$ and $h \in \mathscr{E}$, there exists a finite subset $A_{h, v} \subseteq \Omega$ such that for all $h' \in \mathscr E$,
\[
h|_{A_{h, v}} = h'|_{A_{h, v}} \;\Longrightarrow\; h \cdot v = h'\cdot v.
\]
\end{lemma}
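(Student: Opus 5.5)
The plan is to unwind what continuity of the action map $\mathscr{E} \times V \to V$ means at a point, using the basic neighborhoods $\mathscr{N}_{h,A}$ of the permutation topology and the discrete topology on $V$. For a $k\mathscr{E}$-module, one natural reading of ``continuous action'' is joint continuity of $\mu: \mathscr{E}\times V \to V$, $(h,v)\mapsto h\cdot v$, with $V$ discrete. Since $V$ is discrete, $\{v\}$ is open in $V$. Hence joint continuity at $(h,v)$ is equivalent to continuity of the orbit map $\mu_v: \mathscr{E}\to V$, $h \mapsto h\cdot v$, at $h$, for every fixed $v$. I would state this reduction first, noting that $\mathscr{E}\times\{v\}$ is open in $\mathscr{E}\times V$.

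For the forward direction, suppose $\mu_v$ is continuous at $h$. The singleton $\{h\cdot v\}$ is open in $V$, so its preimage $\mu_v^{-1}(h\cdot v)$ is an open set containing $h$. It therefore contains a basic neighborhood $\mathscr{N}_{h,A}$ for some finite $A\subseteq\Omega$; this uses that $\mathscr{N}_h$ is a neighborhood basis at $h$, as recorded just above the lemma. Set $A_{h,v}=A$. Then any $h'$ with $h'|_{A}=h|_{A}$ lies in $\mathscr{N}_{h,A}$, so $h'\cdot v=h\cdot v$.

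For the converse, fix $v$ and an open $U\subseteq V$. To show $\mu_v^{-1}(U)$ is open, I take any $h$ in it and observe that $\mathscr{N}_{h,A_{h,v}}\subseteq \mu_v^{-1}(h\cdot v)\subseteq\mu_v^{-1}(U)$ by hypothesis. Hence every point of the preimage is interior, each $\mu_v$ is continuous, and so the action is continuous by the reduction above.

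There is no real obstacle here. The only point needing care is the reduction from joint continuity to continuity in the $\mathscr{E}$-variable, which relies on the discreteness of $V$; I would make that step explicit rather than leave it implicit.
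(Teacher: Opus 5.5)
Your proof is correct and follows the paper's argument: you reduce continuity of the action map to continuity of each orbit map $g \mapsto g\cdot v$ using discreteness of $V$, then translate openness of $\{h' \mid h'\cdot v = h\cdot v\}$ into containment of a basic neighborhood $\mathscr{N}_{h,A}$. You also justify that reduction explicitly via the openness of $\mathscr{E}\times\{v\}$ in $\mathscr{E}\times V$, which is a small improvement, since the paper only asserts it.
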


\begin{proof}
\textbf{The only if direction.} Fix $v \in V$ and consider the action map
\[
\alpha_v : \mathscr E \to V, \qquad g \mapsto g \cdot v.
\]
Since $V$ is equipped with the discrete topology and the action map is continuous, the set
\[
U_h = \{ h' \in \mathscr E \mid h' \cdot v = h \cdot v \}
\]
is an open neighborhood of $h$. By the above description of basic open neighborhoods, we can find a finite subset $A_{h, v} \subseteq \Omega$ such that $\mathscr{N}_{h, A_{h, v}} \subseteq U_h$. That is, if $h'|_{A_{h, v}} = h|_{A_{h, v}}$, then $h' \cdot v = h \cdot v$.

\medskip

\textbf{The if direction.}
Assume that for every $v \in V$ and $h \in \mathscr{E}$, there exists a finite subset $A_{h,v} \subseteq \Omega$ such that
\[
h|_{A_{h,v}} = h'|_{A_{h,v}} \;\Longrightarrow\; h \cdot v = h' \cdot v.
\]
We want to show that the global action map
\[
\alpha : \mathscr{E} \times V \to V, \qquad (g, v) \mapsto g \cdot v,
\]
is continuous. Since $V$ is given the discrete topology, the map $\alpha: \mathscr{E} \times V \to V$ is continuous if and only if for every $v \in V$, the action map
\[
\alpha_v: \mathscr{E} \to V, \quad g \mapsto g \cdot v
\]
is continuous, namely the preimage of every singleton $\{w\} \subseteq V$ is open in $\mathscr{E}$. Fix $v \in V$ and let $f \in \mathscr{E}$ such that $f \cdot v = w$. We need to find an open neighborhood $U$ of $f$ in $\mathscr{E}$ such that $\alpha_v(U) \subseteq \{w\}$. By hypothesis, there exists a finite subset $A_{f, v} \subseteq \Omega$ such that:
\[
g|_{A_{f, v}} = f|_{A_{f, v}} \;\Longrightarrow\; g \cdot v = f \cdot v.
\]
Taking $U = \mathscr{N}_{f, A_{f, v}}$, we see that $U$ is an open neighborhood of $f$ and $g \cdot v = w$ for all $g \in U$. Thus $\alpha_v^{-1}(w)$ is open, completing the proof.
\end{proof}

\begin{remark}
This characterization can be viewed as a topological refinement of the classical notion of continuous representations of totally disconnected groups, adapted to the setting of topological monoids. It might be well known to experts, but has not, to the author’s knowledge, been stated in precisely this form in the literature. Related ideas appear in several strands of the literature, including \cite{MMP, PS, Steinberg2023}.
\end{remark}

Note that in the above lemma, the finite subset $A_{h, v}$ depends on both $v$ and $h$. In general, we cannot find a common $A_v$ working for all elements in $\mathscr{E}$. This stronger requirement is precisely the uniform continuity in topology, so we make the following definition.

\begin{definition}
A $k\mathscr{E}$-module $V$ is \emph{uniformly continuous} if for every $v \in V$, there exists a finite subset $A_v \subseteq \Omega$ such that
\[
h|_{A_v} = h'|_{A_v} \;\Longrightarrow\; h \cdot v = h' \cdot v, \, \forall \, h, h' \in \mathscr{E}.
\]
Denote the category of uniformly continuous $k\mathscr{E}$-modules by $k\mathscr{E} \Mod^{\mathrm{uc}}$, which is also abelian.
\end{definition}

This terminology is justified by the natural uniform structure on $\mathscr{E}$ induced by pointwise convergence: basic entourages are $\mathcal{W}_A = \{ (h,h') \mid h|_A = h'|_A \}$ for finite $A \subseteq \Omega$. Then a module is uniformly continuous precisely when all action maps $h \mapsto h \cdot v$ are uniformly continuous with respect to this uniformity. Clearly, every uniformly continuous module is continuous, but the converse statement is not true. However, if $\mathscr{E}$ is a permutation group equipped with the pointwise convergence topology, then continuity is equivalent to uniform continuity. For monoids, the lack of inverses breaks translation-invariance, making the distinction significant.

\begin{remark} \label{remind}
We remind the reader that the regular representation $k\mathscr{E}$ is not a continuous $k\mathscr{E}$-module, and hence not uniformly continuous. The action map
\[
\rho: \mathscr{E} \times \mathscr{E} \longrightarrow \mathscr{E}
\]
is continuous with respect to the permutation topology on all three components, but is not continuous when we equip the acting monoid $\mathscr{E}$ with the permutation topology and the other two $\mathscr{E}$ with the discrete topology. Indeed, for the identity $\id \in \mathscr{E}$ and a given $f \in \mathscr{E}$, we cannot find a finite subset $A_f \subseteq \Omega$ such that when $g \in \mathscr{E}$ has the same restriction on $A$ as $f$, then $f \circ \id = g \circ \id$. Thus by Lemma \ref{finite support}, the action is not continuous.

The requirement of uniform continuity leads us to consider the submonoid of $\mathscr{E}$ consisting of maps whose image is finite. Explicitly, define
\[
\mathscr{E}_0 = \{ f \in \mathscr{E} \mid |f(\Omega)| < \infty \}.
\]
Then $k\mathscr{E}_0$ is a uniformly continuous $k\mathscr{E}$-module. To see this, take $v \in \mathscr{E}_0$ and write it as
\[
v = a_1 f_1 + \ldots + a_nf_n, \, a_i \in k, \, f_i \in \mathscr{E}_0.
\]
Set $A_v$ be the union of $f_i (\Omega)$, which is a finite set. One checks that if $g, h \in \mathscr{E}$ have the same restriction to $A_v$, then $g \cdot v = h \cdot v$ where $\cdot$ is induced by the composition of maps. Thus the action of $\mathscr{E}$ on $v$ is uniformly continuous, so $k\mathscr{E}_0$ is a uniformly continuous $k\mathscr{E}$-module.
\end{remark}

In the category $k\mathscr{E} \Mod^{\uc}$, the interaction between the algebraic structure and the topology is exceptionally rigid. Every homomorphism $\phi: V \to W$ inside it is continuous. Furthermore, $\phi$ respects the uniform structure of the action: if $v \in V$ is supported on a finite subset $A \subseteq \Omega$ (meaning $h|_A = h'|_A \implies h \cdot v = h' \cdot v$), then $\phi(v)$ is also supported on $A$ because
\[
h|_A = h'|_A \;\implies\; h \cdot \phi(v) = \phi(h \cdot v) = \phi(h' \cdot v) = h' \cdot \phi(v).
\]

\subsection{Realization and its right adjoint}

Given a transformation monoid $\mathscr{E} \leqslant \Omega^{\Omega}$, we construct a category $\C$ as follows. Objects in $\C$ are finite subsets of $\Omega$, and morphisms are restrictions of elements of $\mathscr{E}$. Call $\C$ the category of finite subset associated to $\mathscr{E}$. Explicitly, $f: A \to B$ is a morphism in $\C$ if there is an element $\tilde{f} \in \mathscr{E}$ such that $\tilde{f}|_A = f$. For examples:
\begin{itemize}
\item if $\mathscr{E} = \Omega^{\Omega}$, then $\C$ is the category of finite subsets of $\Omega$ and maps;

\item if $\Omega$ is equipped with a linear order $\leqslant$ and $\mathscr{E} = \End(\Omega, \leqslant)$, then $\C$ is the category of finite subsets of $\Omega$ and order-preserving maps.
\end{itemize}
Note that $\C$ contains every inclusion morphism $\iota_{A, B}: A \hookrightarrow B$ if $A$ is a finite subset of $B$.

We seek to relate representations of $\C$ to uniformly continuous representations of $\mathscr{E}$.

\begin{lemma}
Let $\C$ be the category of finite subset associated to $\mathscr{E}$, and $V$ a $\C$-module. Then
\[
\mathbb V \;=\; \varinjlim_{A \in \Ob(\C)} V(A),
\]
admits a natural structure of a uniformly continuous $k\mathscr E$-module, where the colimit is taken over the directed system given by inclusion morphisms.
\end{lemma}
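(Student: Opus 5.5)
The plan is to build the action directly on representatives of colimit classes. The index poset of finite subsets of $\Omega$ under inclusion is directed, since any two finite subsets lie in their union. Hence every element of $\mathbb V$ is the class $[A,v]$ of some $v\in V(A)$. Two pairs $(A,v)$ and $(B,w)$ define the same class exactly when $\iota_{A,C}v=\iota_{B,C}w$ for some finite $C\supseteq A\cup B$. Here $\iota_{A,C}$ is a morphism of $\C$ because the identity of $\Omega$ lies in $\mathscr E$ (we assume $\mathscr E$ is a submonoid of $\Omega^\Omega$, so it contains $\id_\Omega$).

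For $h\in\mathscr E$ and $v\in V(A)$, the restriction $h|_A$ is a morphism $A\to h(A)$ in $\C$, by the definition of $\C$. We define
\[
h\cdot[A,v]=[h(A),\,V(h|_A)(v)].
\]

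\textbf{Step 1: well-definedness.} It suffices to compare $(A,v)$ with $(C,\iota_{A,C}v)$ for $C\supseteq A$, since any two representatives become equal at a common larger $C$. The key identity is
\[
h|_C\circ\iota_{A,C}=\iota_{h(A),h(C)}\circ h|_A
\]
as morphisms $A\to h(C)$ in $\C$; both sides are the map $a\mapsto h(a)$. Applying $V$ gives
\[
V(h|_C)(\iota_{A,C}v)=\iota_{h(A),h(C)}V(h|_A)(v),
\]
so the two candidate classes agree in the colimit. $k$-linearity is clear.

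\textbf{Step 2: module axioms.} $\id_\Omega$ restricts to $\id_A$. For $g,h\in\mathscr E$ we have $(gh)|_A=g|_{h(A)}\circ h|_A$ as morphisms $A\to gh(A)$, and functoriality of $V$ yields $(gh)\cdot x=g\cdot(h\cdot x)$. Naturality in $V$ is immediate, so $V\mapsto\mathbb V$ is a functor.

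\textbf{Step 3: uniform continuity.} For $x=[A,v]$ take $A_x=A$. If $h|_A=h'|_A$, then $h(A)=h'(A)$ and the two morphisms $h|_A,h'|_A:A\to h(A)$ of $\C$ coincide. Therefore $h\cdot x=h'\cdot x$ with the same finite set $A$ for all $h,h'$, which is exactly uniform continuity.

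The only genuine point is Step 1: the action must be independent of the representative, and this rests on the compatibility identity between restrictions and inclusions (and on directedness). I also need to keep in mind that the morphisms of $\C$ are the restricted maps themselves, with the codomain taken to be the image or any finite superset. I expect no further obstacle.
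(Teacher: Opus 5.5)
Your proposal is correct and follows the paper's proof essentially step by step. It uses the same action $h\cdot[v]=[h|_A\cdot v]$, the same well-definedness argument through the identity $h|_C\circ\iota_{A,C}=\iota_{h(A),h(C)}\circ h|_A$ at a common upper bound $C$, the action axiom from $(gh)|_A=g|_{h(A)}\circ h|_A$, and the uniform support $A_v=A$; you additionally make explicit the unit axiom and $k$-linearity, which the paper leaves implicit.
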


\begin{proof}
By definition of the colimit, we have
\[
\mathbb V
=
\Big( \bigsqcup_{A \in \Ob(\C)} V(A) \Big) \Big/ \sim,
\]
where $A$ ranges over all finite subsets of $\Omega$, and the equivalence relation is generated by
\[
v \sim \iota_{A,B} \cdot v \quad \text{for all inclusions } \iota_{A,B}: A \hookrightarrow B.
\]
Denote by $[v]$ the equivalence class of $v \in V(A)$. We define the action of $\mathscr{E}$ on $\mathbb{V}$ as follows: for $f \in \mathscr E$ and $v \in V(A)$, set
\[
f \cdot [v] = [ f|_A \cdot v ].
\]

\medskip
We check that this is a well-defined monoid action. Suppose $[v] = [v']$ in $\mathbb V$, with $v \in V(A)$ and $v' \in V(B)$. Then there exists a finite subset $C$ such that $A, B \subseteq C$ and
\[
\iota_{A,C} \cdot v = \iota_{B,C} \cdot v' \in V(C).
\]
Using functoriality of $V$, we have
\[
f \cdot [v] = [\, f|_A \cdot v \,] = [\, (\iota_{f(A),f(C)}) \cdot (f|_A \cdot v) \,] = [\, (f|_C) \cdot (\iota_{A,C} \cdot v) \,],
\]
and similarly
\[
f \cdot [v'] = \big[\, (f|_C)  \cdot (\iota_{B,C} \cdot v') \,\big].
\]
Since $\iota_{A,C} \cdot v = \iota_{B,C} \cdot v'$, these two classes are equal, so the action is well-defined. Furthermore, for $f,g \in \mathscr E$ and $v \in V(A)$, one has
\[
(fg) \cdot [v]
= \big[\, (fg)|_A \cdot v \,\big]
= \big[\, (f|_{g(A)}) \cdot (g|_A \cdot v) \,\big]
= f \cdot \big(g \cdot [v]\big),
\]
by functoriality of $V$. Thus it is indeed a monoid action.

\medskip
We complete the proof by showing the uniform continuity of this action. Equip $\mathbb V$ with the discrete topology. Let $[v] \in \mathbb V$ be represented by $v \in V(A)$ for some finite subset $A \subseteq \Omega$.
For any $f, f' \in \mathscr{E}$, if $f|_A = f'|_A$, then by definition of the action,
\[
f \cdot [v] = [\, f|_A \cdot v \,] = [\, f'|_A \cdot v \,] = f' \cdot [v].
\]
Hence the single finite subset $A_v = A$ works for all $f, f' \in \mathscr{E}$, showing that the action map
\[
\alpha_{[v]} : \mathscr{E} \longrightarrow \mathbb V, \quad f \mapsto f \cdot [v],
\]
is uniformly continuous with respect to the uniformity of pointwise convergence on $\mathscr{E}$. Since $[v]$ was arbitrary, the action of $\mathscr{E}$ on $\mathbb V$ is uniformly continuous.
\end{proof}

The above construction by taking colimits gives a functor, called \emph{the realization functor}
\[
\mathcal{R} : \C \Mod \longrightarrow k\mathscr E\Mod^{\uc}, \qquad V \longmapsto \varinjlim_{A \in \Ob(\C)} V(A).
\]
It is exact since the indexing category (finite subsets with inclusions) is filtered, and filtered colimits of $k$-modules are exact. Furthermore, it has a right adjoint satisfying some special properties.

\begin{proposition} \label{adjunction}
The realization functor
\[
\mathcal{R} : \C\Mod \longrightarrow k\mathscr{E}\Mod^{\uc}
\]
has a right adjoint
\[
\mathcal{L}: k\mathscr{E}\Mod^{\uc} \longrightarrow \C\Mod,
\]
such that $\mathcal{R} \circ \mathcal{L}$ is isomorphic to the identity functor on $k\mathscr{E}\Mod^{\uc}$.
\end{proposition}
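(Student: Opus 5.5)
We define $\mathcal{L}$ explicitly by taking, for each uniformly continuous $k\mathscr{E}$-module $M$ and each finite subset $A \subseteq \Omega$,
\[
\mathcal{L}(M)(A) = \{ m \in M \mid h|_A = h'|_A \Longrightarrow h\cdot m = h'\cdot m \ \ \forall\, h,h' \in \mathscr{E} \},
\]
the $k$-submodule of elements supported on $A$. For a morphism $f: A \to B$ in $\C$, choose an extension $\tilde f \in \mathscr{E}$ with $\tilde f|_A = f$ and set $f \cdot m = \tilde f \cdot m$.

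\emph{Functoriality of $\mathcal{L}(M)$.} First, $\tilde f\cdot m$ is independent of the extension, since $m$ is supported on $A$. Second, $\tilde f \cdot m$ is supported on $B$. To see this, take $h|_B = h'|_B$. Then $(h\tilde f)|_A = (h'\tilde f)|_A$ because $\tilde f(A) = f(A) \subseteq B$, so $h\cdot(\tilde f\cdot m) = h'\cdot(\tilde f \cdot m)$. Functoriality then follows from $\widetilde{gf}$ and $\tilde g\tilde f$ agreeing on $A$, and identities extend to $\id_\Omega$, provided $\id \in \mathscr{E}$ (we assume $\mathscr{E}$ is a monoid containing the identity). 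Inclusions $\iota_{A,B}$ act as honest inclusions of submodules, because $\mathcal{L}(M)(A) \subseteq \mathcal{L}(M)(B)$. A homomorphism $\phi$ of uniformly continuous modules preserves supports, as remarked at the end of the previous subsection, so $\mathcal{L}$ is a functor.

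\emph{The counit $\mathcal{R}\mathcal{L} \cong \id$.} Inclusion maps act as inclusions, so the colimit $\varinjlim_A \mathcal{L}(M)(A)$ is simply the union $\bigcup_A \mathcal{L}(M)(A) \subseteq M$. By uniform continuity every $m$ lies in some $\mathcal{L}(M)(A_m)$, hence the union is all of $M$. The induced action is $f\cdot[m] = [f|_A \cdot m] = \tilde f\cdot m$ with $\tilde f = f$ itself, so it agrees with the original action. This gives a natural isomorphism $\varepsilon_M: \mathcal{R}\mathcal{L}(M) \to M$.

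\emph{The unit and the adjunction.} For a $\C$-module $V$, define $\eta_V(A): V(A) \to \mathcal{L}\mathcal{R}(V)(A)$ by $v \mapsto [v]$. The proof of the preceding lemma shows $[v]$ is supported on $A$, so $\eta_V(A)$ lands in the right place. Naturality in $A$ is the identity $[f\cdot v] = \tilde f \cdot [v]$, which is the definition of the action on $\mathbb{V}$. It remains to verify the triangle identities, or equivalently to show directly that
\[
\Hom_{k\mathscr{E}}(\mathcal{R}V, M) \cong \Hom_{\C}(V, \mathcal{L}M).
\]
Given $\psi: \mathcal{R}V \to M$, take $\psi \circ [-]$, using that $\psi$ preserves supports. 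Conversely, given $\theta: V \to \mathcal{L}M$, compose $\mathcal{R}\theta$ with $\varepsilon_M$. Both composites reduce to identities by checking on representatives $v \in V(A)$. The $\mathscr{E}$-equivariance of the map built from $\theta$ comes from naturality of $\theta$ with respect to the morphisms $\tilde f|_A$.

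\emph{Main obstacle.} The only delicate point is well-definedness: showing that the action of $\C$ on $\mathcal{L}(M)$ does not depend on the chosen extension and lands in the correct support. Both facts rest on the uniform, as opposed to merely pointwise, support condition, which is exactly where uniform continuity is needed. Everything else is formal bookkeeping.
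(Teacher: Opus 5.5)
Your proposal is correct and follows the paper's proof essentially step for step: the same $\mathcal{L}(M)(A)$ consisting of the elements supported on $A$, the same checks that the action is independent of the extension and transports supports, the same counit $[m]\mapsto m$, and the same explicit Hom bijection. Your observation that the colimit is literally the union $\bigcup_A \mathcal{L}(M)(A) \subseteq M$ makes bijectivity of the counit transparent; one small correction to your closing comment is that the $\C$-action on $\mathcal{L}(M)$ is well defined for any $k\mathscr{E}$-module by construction, and uniform continuity of $M$ is needed only to ensure $\bigcup_A \mathcal{L}(M)(A) = M$.
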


\begin{proof}
We construct $\mathcal{L}$ explicitly and verify the adjunction as well as the natural isomorphism.

\medskip
\textbf{Definition of $\mathcal{L}$ on objects.} Given a uniformly continuous $k\mathscr{E}$-module $M$, we construct a $\C$-module $V = \mathcal{L}(M)$ as follows. For each finite subset $A \subseteq \Omega$, define
\[
V(A) = \{ v \in M \; | \; \tilde{f} \cdot v = \tilde{g} \cdot v \text{ for all } \tilde{f}, \tilde{g} \in \mathscr E \text{ with } \tilde{f}|_A = \tilde{g}|_A \}.
\]
For a morphism $f: A \to B$ in $\C$, choose any extension $\tilde{f} \in \mathscr E$ (which exists since every morphism in $\C$ is the restriction of an element in $\mathscr{E}$) and set
\[
f \cdot v = \tilde{f} \cdot v.
\]
This construction is well-defined. Firstly, the action is independent of the choice of extensions by definition of $V(A)$. Secondly, for $\tilde{g}, \tilde{h} \in \mathscr E$ satisfying $\tilde{g}|_B = \tilde{h}|_B$, one has
\[
(\tilde{g} \tilde{f}) |_A = (\tilde{h} \tilde{f})|_A,
\]
which implies
\[
\tilde{g} \cdot (f \cdot v) = \tilde{g} \cdot(\tilde{f} \cdot v) = (\tilde{g} \tilde{f}) \cdot v = (\tilde{h} \tilde{f}) \cdot v  = \tilde{h} \cdot (\tilde{f} \cdot v) = \tilde{h} \cdot (f \cdot v).
\]
Therefore, $f \cdot v \in V(B)$ by the definition of $V(B)$. To check the functoriality of $V$, let $h: B \to C$ be another morphism in $\C$ and choose extensions $\tilde{h} \in \mathscr E$. Then $\tilde{h} \tilde{f}$ extends $hf$, so we have
\[
h \cdot (f \cdot v) = \tilde{h} \cdot (\tilde{f} \cdot v) = (\tilde{h} \tilde{f}) \cdot v = (hf) \cdot v.
\]

\medskip
\textbf{Definition of $\mathcal{L}$ on morphisms.} Given a homomorphism $\phi: M \to N$ in $k\mathscr{E} \Mod^{\uc}$, define
\[
\mathcal{L}(\phi)_A : V(A) \longrightarrow W(A), \quad v \mapsto \phi(v),
\]
where $W = \mathcal{L}(N)$. This is well-defined:
\[
\tilde{f}|_A = \tilde{g}|_A \implies \tilde{f} \cdot \phi(v) = \phi(\tilde{f} \cdot v) = \phi(\tilde{g} \cdot v) = \tilde{g} \cdot \phi(v),
\]
so $\phi(v) \in W(A)$. Moreover, for any morphism $f: A \to B$ in $\C$ and $v \in V(A)$,
\[
f \cdot \phi(v) = \tilde{f} \cdot \phi(v) = \phi(\tilde{f} \cdot v) = \phi(f \cdot v),
\]
showing that $\mathcal{L}(\phi)$ is a homomorphism of $\C$-modules. Functoriality is immediate.

\medskip
\textbf{$\mathcal{R} \circ \mathcal{L} \cong \id$.} By uniform continuity, each $v \in M$ has a finite support $A_v \subseteq \Omega$ such that $\tilde{f} \cdot v = \tilde{g} \cdot v$ whenever $\tilde{f}|_{A_v} = \tilde{g}|_{A_v}$. Thus $v \in V(A)$ for any finite $A \supseteq A_v$, and the colimit
\[
\mathcal{R} \circ \mathcal{L}(M) = \varinjlim_{A \subseteq \Omega \text{ finite}} V(A)
\]
maps canonically to $M$ via $\phi: [v] \longmapsto v$. This map is surjective since every $v$ lies in some $V(A)$. To see that $\phi$ is well-defined, suppose $[v] = [v']$ with $v \in V(A)$ and $v' \in V(B)$. Then there exists a finite subset $C \supseteq A,B$ such that
\[
\iota_{A,C} \cdot v = \iota_{B,C} \cdot v'.
\]
Since $\iota_{A,C}$ and $\iota_{B,C}$ are restrictions of the identity map $\id_{\Omega}$, by the definition of $V(A)$ and $V(B)$, we have
\[
\iota_{A,C} \cdot v = v, \quad \iota_{B, C} \cdot v' = v'.
\]
Thus $v = v'$ in $M$, so $\phi$ is well-defined. Clearly, $\phi$ is injective. Moreover, $\phi$ is $\mathscr E$-equivariant:
\[
\phi(\tilde{f} \cdot [v]) = \phi([\tilde{f}|_A \cdot v]) = \tilde{f}|_A \cdot v = \tilde{f} \cdot v = \tilde{f} \cdot \phi([v]),
\]
so $\phi$ is an isomorphism of $k\mathscr E$-modules.

\medskip
\textbf{ Adjunction.}
For $V \in \C\Mod$ and $M \in k\mathscr{E}\Mod^{\uc}$, define
\[
\Hom_{k\mathscr{E}}(\mathcal{R}(V), \, M) \longleftrightarrow \Hom_{\C\Mod}(V, \, \mathcal{L}(M))
\]
as follows. Given $\phi: \mathcal{R}(V) \to M$, set
\[
\psi_A: V(A) \longrightarrow \mathcal{L}(M)(A), \quad v \mapsto \phi([v]).
\]
For $\tilde{f}|_A = \tilde{g}|_A$, one has
\[
\tilde{f} \cdot \psi_A(v) = \tilde{f} \cdot \phi([v]) = \phi(\tilde{f} \cdot [v]) = \phi([\tilde{f}|_A \cdot v]) = \phi([\tilde{g}|_A \cdot v]) = \tilde{g} \cdot \phi([v]) = \tilde{g} \cdot \psi_A(v),
\]
so $\psi_A(v) \in \mathcal{L}(M)(A)$. Furthermore, for $f: A \to B$ in $\C$ and $v \in V(A)$, one has
\[
\psi_B(f \cdot v) = \phi([f \cdot v]) = \phi( \tilde{f} \cdot [v]) = \tilde{f} \cdot \phi([v]) = f \cdot \psi_A(v),
\]
so $\psi$ is a natural transformation.

Conversely, given $\psi: V \to \mathcal{L}(M)$, set
\[
\phi: \mathcal{R}(V) \longrightarrow M, \quad [v] \mapsto \psi_A(v), \quad v \in V(A),
\]
which is independent of the choice of representative. Specifically, if $v \in V(A)$ and $v' \in V(B)$ represent the same class $[v]$, there exists a finite $C \supseteq A, B$ such that $\iota_{A,C}(v) = \iota_{B,C}(v')$. By the naturality of $\psi$, we have
\[
\iota_{A,C} \cdot \psi_A(v) = \psi_C(\iota_{A,C} \cdot v) = \psi_C(\iota_{B,C} \cdot v') = \iota_{B,C} \cdot \psi_{B}(v').
\]
Thus $\psi_A(v) = \psi_B(v')$ since $\iota_{A,C}$ and $\iota_{B,C}$ are restrictions of the identity $\id_{\Omega}$. In addition, $\phi$ is $\mathscr{E}$-equivariant since for $\tilde{f} \in \mathscr{E}$ and $[v] \in \mathcal{R}(V)$, one has
\[
\phi(\tilde{f} \cdot [v]) = \phi([\tilde{f}|_A \cdot v]) = \psi_{\tilde{f}(A)}(\tilde{f}|_A \cdot v) = \tilde{f} \cdot \psi_A(v) = \tilde{f} \cdot \phi([v]).
\]

It is routine to check that these maps are inverse to each other. This finishes the proof.
\end{proof}

\subsection{Equivalences}

The above proposition allows us to construct an explicit equivalence. For this purpose, we introduce the following definition.

\begin{definition} \label{def torsion}
Given a $\C$-module $V$ and $A \in \Ob(\C)$, an element $v \in V(A)$ is \emph{torsion} if there is an inclusion morphism $\iota_{A, B}: A \hookrightarrow B$ such that $\iota_{A, B} \cdot v = 0$. We call $V$ a \emph{torsion module} if for all $A \in \Ob(\C)$, each $v \in V(A)$ is torsion.
\end{definition}

The following lemma is well known in torsion theory.

\begin{lemma} \label{torsion}
Let $V$ be a $\C$-module. Then torsion elements in $V$ form a submodule.
\end{lemma}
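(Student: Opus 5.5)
To prove that torsion elements form a submodule, we write $T(A) \subseteq V(A)$ for the set of torsion elements and check three things: $T(A)$ is a $k$-submodule, and $T$ is stable under every morphism of $\C$. The key observation is that the inclusion morphisms in $\C$ form a directed system. For finite subsets $B, B' \supseteq A$, the union $C = B \cup B'$ is again a finite subset of $\Omega$ containing both. Moreover, $\iota_{B,C}\iota_{A,B} = \iota_{A,C} = \iota_{B',C}\iota_{A,B'}$, since all of these are restrictions of $\id_\Omega$.

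\textbf{$k$-submodule.} Take $v, w \in T(A)$ with $\iota_{A,B}\cdot v = 0$ and $\iota_{A,B'}\cdot w = 0$, and let $C = B\cup B'$. Functoriality gives
\[
\iota_{A,C}\cdot v = \iota_{B,C}\cdot(\iota_{A,B}\cdot v) = 0,
\]
and likewise $\iota_{A,C}\cdot w = 0$. Hence $\iota_{A,C}\cdot(av+bw) = 0$ for all $a, b \in k$. Also $0 \in T(A)$, witnessed by $\iota_{A,A} = \id_A$. So $T(A)$ is a $k$-submodule.

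\textbf{Stability under morphisms.} Let $f\colon A \to D$ be a morphism of $\C$, and let $v \in T(A)$ with $\iota_{A,B}\cdot v = 0$. By definition of $\C$, there is $\tilde f \in \mathscr E$ with $\tilde f|_A = f$. Set $E = D \cup \tilde f(B)$, which is a finite subset of $\Omega$. The restriction $\tilde f|_B\colon B \to \tilde f(B)$ is a morphism of $\C$, and composing with the inclusion gives a morphism $g = \iota_{\tilde f(B),E}\circ \tilde f|_B\colon B \to E$. As maps of sets, both $\iota_{D,E}\circ f$ and $g\circ\iota_{A,B}$ are restrictions of $\tilde f$ to $A$ with codomain $E$. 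So they are the same morphism $A \to E$ in $\C$. Therefore
\[
\iota_{D,E}\cdot(f\cdot v) = g\cdot(\iota_{A,B}\cdot v) = 0,
\]
so $f\cdot v \in T(D)$.

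The only point that needs care is this second step. We must be sure that $\tilde f|_B$ composed with the inclusion is genuinely a morphism of $\C$ with the required source and target. This holds because morphisms of $\C$ are exactly the restrictions of elements of $\mathscr E$ to finite subsets, with any finite codomain containing the image. It also uses that we work with all finite subsets of $\Omega$, not a skeleton, so that inclusions $\iota_{A,B}$ are available. With both steps done, $T$ is a $\C$-submodule of $V$.
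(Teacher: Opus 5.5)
Your proposal is correct and follows the paper's proof essentially step for step. Closure under sums uses the union of the two witnessing supersets, and closure under $f\colon A \to D$ extends $f$ to $\tilde f \in \mathscr{E}$ and observes that $\iota_{D,E}\circ f$ and $\tilde f|_B \circ \iota_{A,B}$ are both the restriction of $\tilde f$ to $A$; the paper simply takes $\tilde f|_B\colon B \to E$ directly as a morphism of $\C$, so your extra composition with an inclusion is harmless but unnecessary.
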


\begin{proof}
Suppose that $v, w \in V(A)$ are torsion, then clearly so are their scalar multiples. It remains to show that $v + w$ and $f \cdot v$ are torsion for any $f: A \to C$ in $\C$.

Choose inclusions $\iota_{A, A'}: A \hookrightarrow A'$ and $\iota_{A, B'}: A \hookrightarrow B'$ such that
\[
\iota_{A, A'} \cdot v = 0 = \iota_{A, B'} \cdot w.
 \]
Set $D = A' \cup B'$. Then
\[
\iota_{A, D} \cdot v = 0 = \iota_{A, D} \cdot w,
\]
so $v + w$ is torsion as well.

Let $\tilde{f} \in \mathscr{E}$ be an extension of $f$, $D' = \tilde{f}(A') \cup C$, and $g: A' \to D'$ be the restriction $\tilde{f}|_{A'}$. We have $g \circ \iota_{A, A'} = \iota_{C, D'} \circ f$ since they both are the restriction of $\tilde{f}$ to $A$ . Therefore,
\[
\iota_{C, D'} \cdot (f \cdot v) = g \cdot (\iota_{A, A'} \cdot v) = g \cdot 0 = 0,
\]
so $f \cdot v$ is torsion in $V(C)$.
\end{proof}

By this lemma, torsion elements in $V$ form a submodule of $V$. It is routine to check that the full subcategory $\C \Mod^{\tor}$ of torsion modules is a localizing subcategory of $\C \Mod$.

\begin{proposition} \label{equivalence 1}
One has an equivalence of categories
\[
k\mathscr{E} \Mod^{\mathrm{uc}} \simeq \C \Mod / \C \Mod^{\tor}.
\]
\end{proposition}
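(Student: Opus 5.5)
We will use the standard recognition criterion for Gabriel quotients: if $\mathcal{R}: \C\Mod \to \mathcal{B}$ is an exact functor with a fully faithful right adjoint $\mathcal{L}$, then $\mathcal{B} \simeq \C\Mod/\ker \mathcal{R}$. By Proposition \ref{adjunction}, $\mathcal{R}$ is exact with right adjoint $\mathcal{L}$, and $\mathcal{R}\circ\mathcal{L} \cong \id$. Moreover, the isomorphism constructed there is, up to the identification in the adjunction, the counit $\mathcal{R}\mathcal{L}(M) \to M$, $[v]\mapsto v$. The counit being an isomorphism is equivalent to $\mathcal{L}$ being fully faithful. Hence $k\mathscr{E}\Mod^{\uc} \simeq \C\Mod/\ker\mathcal{R}$, and the proof reduces to two tasks: identify $\ker \mathcal{R}$ with $\C\Mod^{\tor}$, and verify that the counit in the abstract criterion really is the map $[v]\mapsto v$.

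The identification $\ker\mathcal{R} = \C\Mod^{\tor}$ is direct. Since the indexing system of finite subsets under inclusion is filtered, an element $v\in V(A)$ has $[v]=0$ in $\varinjlim V(A)$ if and only if $\iota_{A,B}\cdot v=0$ for some finite $B\supseteq A$, that is, if and only if $v$ is torsion in the sense of Definition \ref{def torsion}. Thus $\mathcal{R}(V)=0$ exactly when every element of $V$ is torsion. As a side remark, for any $V$ the kernel of the unit $V\to\mathcal{L}\mathcal{R}(V)$ is the torsion submodule of Lemma \ref{torsion}.

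For the abstract step, we can either cite Gabriel's theorem (\cite[Prop.~III.2.5]{} style), or argue directly. The direct argument runs as follows. Since $\mathcal{R}$ is exact and kills $\C\Mod^{\tor}$, it factors through an exact functor $\overline{\mathcal{R}}:\C\Mod/\C\Mod^{\tor}\to k\mathscr{E}\Mod^{\uc}$. The quotient functor $q$ composed with $\mathcal{L}$ gives an inverse candidate $q\mathcal{L}$, and we already have $\overline{\mathcal{R}}\,q\mathcal{L}=\mathcal{R}\mathcal{L}\cong\id$. For the other composite, the unit $\eta_V: V\to\mathcal{L}\mathcal{R}V$ satisfies: $\mathcal{R}(\eta_V)$ is an isomorphism, by the triangle identities together with the fact that the counit is an isomorphism. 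Exactness of $\mathcal{R}$ then shows that $\ker\eta_V$ and $\operatorname{coker}\eta_V$ lie in $\ker\mathcal{R}=\C\Mod^{\tor}$. Hence $q(\eta_V)$ is an isomorphism, giving $q\mathcal{L}\overline{\mathcal{R}}\,q\cong q$. Because $q$ is essentially surjective, this yields $q\mathcal{L}\overline{\mathcal{R}}\cong\id$.

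We expect the main obstacle to be bookkeeping, not substance. The delicate point is checking that the isomorphism $\mathcal{R}\mathcal{L}\cong\id$ from Proposition \ref{adjunction} is compatible with the adjunction bijection, so that the triangle identity argument applies. To settle this, we would take $\psi=\id_{\mathcal{L}(M)}$ in the explicit bijection and compute the corresponding map $\phi:\mathcal{R}\mathcal{L}(M)\to M$. This gives $[v]\mapsto v$, which agrees with the isomorphism constructed in the proof of Proposition \ref{adjunction}.
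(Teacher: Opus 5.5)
Your proposal is correct and follows the paper's route: apply Gabriel's localization criterion to the adjunction $\mathcal{R}\dashv\mathcal{L}$ of Proposition~\ref{adjunction}, then identify $\ker\mathcal{R}$ with $\C\Mod^{\tor}$ using the filtered colimit. Your extra checks (that the isomorphism $\mathcal{R}\mathcal{L}\cong\id$ is the counit, and the direct argument in place of the citation) are details the paper leaves implicit, but one step needs a better justification: passing from $q\mathcal{L}\overline{\mathcal{R}}\,q\cong q$ to $q\mathcal{L}\overline{\mathcal{R}}\cong\id$ requires that $q$ is a localization, i.e.\ every morphism in the quotient is built from $q$-images and inverses of $q$-images of maps with torsion kernel and cokernel, and essential surjectivity of $q$ alone would not suffice.
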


\begin{proof}
By the classical result of Gabriel on localization of abelian categories \cite[Proposition III.2.5]{Gabriel} and Proposition \ref{adjunction}, we have the following equivalence
\[
k\mathscr{E} \Mod^{\mathrm{uc}} \simeq \C \Mod / \ker \mathcal{R},
\]
where $\ker \mathcal{R}$ is the full subcategory consisting of $\C$-modules $V$ such that $\mathcal{R} (V) = 0$. But $\mathcal{R}(V) = 0$ if and only if for every $A \in \Ob(\C)$ and every $v \in V(A)$, one has $[v] = 0$. By the definition of the colimit equivalence relation, this occurs if and only if for every $A \in \Ob(\C)$ and every $v \in V(A)$, there is an inclusion $\iota_{A, B}: A \hookrightarrow B$ such that $\iota_{A, B} \cdot v = 0$, namely $V$ is a torsion module.
\end{proof}

To get a sheaf-theoretic interpretation of the above equivalence, for an inclusion $\iota_{A, B}: A \hookrightarrow B$ in $\C$, we define a left ideal
\[
S_{A, B} = \{ f \circ \iota_{A, B} \mid f \in \C(B, -) \},
\]
which is a principal sieve on the opposite category $\C^{\op}$. Define a rule $J$ assigning to each $A \in \Ob(\C)$ a set of sieves
\[
J(A) = \{ S \subseteq \C(A, -) \mid S \text{ is a sieve containing some } S_{A, B} \},
\]
in other words, $J(A)$ is the set of all sieves each of which contains an inclusion $\iota_{A, B}: A \hookrightarrow B$ for a certain finite subset $B$.

\begin{lemma} \label{Grothendieck topology}
The rule $J$ is a Grothendieck topology on $\C^{\op}$.
\end{lemma}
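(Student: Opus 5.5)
We must verify the axioms of a Grothendieck topology on $\C^{\op}$, phrased covariantly: sieves on $A$ in $\C^{\op}$ are sets of morphisms in $\C$ with source $A$, closed under postcomposition. We check the maximal sieve axiom, stability under pullback, and local character in turn, using only two facts. First, inclusions compose. Second, morphisms of $\C$ are restrictions of elements of $\mathscr{E}$. This second fact is what gives the "transport of inclusions" already used in the proof of Lemma \ref{torsion}.

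\textbf{Maximal sieve.} The maximal sieve $\C(A,-)$ contains $\iota_{A,A}=\id_A$. Hence it lies in $J(A)$.

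\textbf{Stability.} Let $S\in J(A)$ contain $\iota_{A,B}$, and let $f:A\to C$ be a morphism in $\C$. The pulled-back sieve is
\[
f^*S=\{g \in \C(C,-) \mid g\circ f\in S\}.
\]
Choose an extension $\tilde f\in\mathscr{E}$ of $f$ and put $D=\tilde f(B)\cup C$. Let $h:B\to D$ be the restriction $\tilde f|_B$. Then $\iota_{C,D}\circ f=h\circ\iota_{A,B}$, since both are restrictions of $\tilde f$ to $A$. The right-hand side lies in $S$ because $S$ is closed under postcomposition. Therefore $\iota_{C,D}\in f^*S$, and so $f^*S\in J(C)$.

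\textbf{Local character.} Let $S\in J(A)$, and let $R$ be a sieve on $A$ such that $g^*R\in J(\operatorname{cod} g)$ for every $g\in S$. Pick $\iota_{A,B}\in S$. Then $\iota_{A,B}^*R$ contains some inclusion $\iota_{B,B'}$, which means $\iota_{B,B'}\circ\iota_{A,B}=\iota_{A,B'}\in R$. Hence $R\in J(A)$. Only this single member of $S$ is needed.

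If one prefers the axiom list that includes upward closure ($R\supseteq S\in J(A)\Rightarrow R\in J(A)$), it is immediate from the definition of $J(A)$.

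The only step needing any care is stability. There one must produce an inclusion out of $C$, not merely some morphism, and this is exactly where extending $f$ to $\tilde f\in\mathscr{E}$ and enlarging the target to $\tilde f(B)\cup C$ is used. The remaining verifications are formal.
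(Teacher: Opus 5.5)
Your proof is correct and matches the paper's argument: maximality via $\id_A=\iota_{A,A}$, stability by transporting the inclusion along an extension $\tilde f\in\mathscr{E}$ as in Lemma~\ref{torsion}, and transitivity via $\iota_{B,B'}\circ\iota_{A,B}=\iota_{A,B'}$. Your target $D=\tilde f(B)\cup C$ is the right choice in the stability step. The paper's text writes $D=B\cup C$, which need not contain $\tilde f(B)$, but it defers to the argument of Lemma~\ref{torsion}, which uses $\tilde f(A')\cup C$, so your version is the intended one.
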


\begin{proof}
We verify the three standard axioms for Grothendieck topologies. For any $A \in \Ob(\C)$, the maximal sieve $S_{A,A}$ contains $\id_A$, and hence is a covering sieve. Thus the maximality axiom holds.

To check the stability axiom, take $S \in J(A)$ and an arbitrary morphism $f: A \to C$ in $\C$. We need to show that the pullback sieve
\[
f^*(S) = \{ h \in \C(C, -) \mid h \circ f \in S \}
 \]
is in $J(C)$, namely $f^*(S)$ contains an inclusion with source $C$. By the definition of $J(A)$, one can find an inclusion $\iota_{A, B} \in S$. Let $D$ be the finite subset $B \cup C$. Note that the morphism $f: A \to C$ extends to a map $\tilde{f} \in \mathscr{E}$, whose restriction gives a morphism $B \to D$. Using the same argument as in the proof of Lemma \ref{torsion}, one can show $\iota_{C, D} \in f^*(S)$.

Now we turn to the transitivity axiom. Suppose $S \in J(A)$ and $R$ is a sieve on $A$ such that for every $f: A \to \bullet$ in $S$, one has $f^*(R) \in J(\bullet)$. Since $S \in J(A)$, there is an inclusion $\iota_{A, B} \in S$. By hypothesis, $\iota_{A, B}^*(R) \in J(B)$, implying the existence of an inclusion $\iota_{B, D} \in \iota_{A, B}^*(R)$. It follows that
\[
\iota_{A, D}  = \iota_{B, D} \circ \iota_{A, B} \in R.
\]
Thus $R \in J(A)$ as desired.
\end{proof}

Let $(\C^{\op}, J)$ be the Grothendieck site, $\underline{k}$ the constant structure sheaf, and $\Sh(\C^{\op}, \, J, \, \underline{k})$ the category of sheaves of $k$-modules. We have:

\begin{theorem} \label{equivalence 2}
One has the following equivalences of categories
\[
k\mathscr{E} \Mod^{\mathrm{uc}} \simeq \C \Mod / \C \Mod^{\tor} \simeq \Sh(\C^{\op}, \, J, \, \underline{k}).
\]
\end{theorem}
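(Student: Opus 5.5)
The first equivalence is exactly Proposition \ref{equivalence 1}, so the work lies in the second: identifying $\C\Mod/\C\Mod^{\tor}$ with $\Sh(\C^{\op}, J, \underline{k})$. I will treat a $\C$-module $V$ as a presheaf of $k$-modules on $\C^{\op}$. Since $\underline{k}$ is constant, a sheaf of $\underline{k}$-modules is just a sheaf of $k$-modules. The plan is the standard Gabriel--Popescu/sheafification route. Sheafification $a: \C\Mod \to \Sh(\C^{\op},J,\underline{k})$ is an exact left adjoint to the fully faithful inclusion. Hence $\Sh$ is equivalent to the quotient of $\C\Mod$ by the localizing subcategory $\ker a$. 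It therefore suffices to show that $\ker a = \C\Mod^{\tor}$.

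The key steps, in order, are as follows.

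\textbf{Step 1: unwind the sheaf condition.} For a covering sieve $S \in J(A)$ and a presheaf $V$, a matching family on $S$ is a compatible assignment $f \mapsto x_f \in V(\mathrm{target}(f))$ for $f \in S$. Since every covering sieve contains some $S_{A,B}$, and these are cofinal, the sheafification is computed by the plus construction $V^+(A) = \varinjlim_{S \in J(A)} \mathrm{Match}(S, V)$, applied twice. I will compute with the principal sieves $S_{A,B}$. A matching family on $S_{A,B}$ is determined by $x = x_{\iota_{A,B}} \in V(B)$ subject to the compatibility condition $g\cdot x = g'\cdot x$ whenever $g\iota_{A,B} = g'\iota_{A,B}$.

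\textbf{Step 2: identify the separated kernel.} The kernel of $V \to V^+$ at $A$ consists of those $v$ that vanish on some covering sieve. This means exactly that $\iota_{A,B}\cdot v = 0$ for some $B$, that is, $v$ is torsion in the sense of Definition \ref{def torsion}. Lemma \ref{torsion} shows these elements form the torsion submodule. Consequently $aV = 0$ if and only if $V^+$ is already zero after one more application, and I expect this to reduce to the statement that every element of $V$ is torsion. One inclusion is easy: if $V$ is torsion, every section is locally zero, so $V^+ = 0$. For the converse, $aV = 0$ forces the image of each $v$ in $V^+$ to be locally zero. Since $V \to V^+$ has kernel equal to the torsion part, this forces $v$ itself to be torsion.

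\textbf{Step 3: conclude.} By Gabriel's theorem \cite[Proposition III.2.5]{Gabriel}, the exact left adjoint $a$ with fully faithful right adjoint induces $\C\Mod/\ker a \simeq \Sh(\C^{\op},J,\underline{k})$. Combining this with $\ker a = \C\Mod^{\tor}$ and Proposition \ref{equivalence 1} gives the theorem. The stability and transitivity axioms are already established in Lemma \ref{Grothendieck topology}, and the ``inclusion-enlargement'' trick from the proof of Lemma \ref{torsion} handles refining sieves.

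The main obstacle is Step 2. One must check that, for general sieves in $J(A)$ rather than principal ones, the covering data reduce to the inclusions $\iota_{A,B}$. One must also check that the compatibility conditions defining matching families do not matter for the kernel computation. I will handle this by noting that $\{S_{A,B}\}_B$ is cofinal (downward directed, since $S_{A,B\cup B'} \subseteq S_{A,B}\cap S_{A,B'}$), so all colimits can be taken over inclusions.
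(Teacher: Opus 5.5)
Your argument is correct and follows the paper's route. The paper cites \cite[Corollary 1.2]{DLL} for the fact that $\Sh(\C^{\op}, J, \underline{k})$ is the Gabriel quotient of $\C\Mod$ by the $J$-torsion modules, which is exactly what your sheafification and plus-construction computation of $\ker a$ reproves; it then checks, as in the first part of your Step 2, that $v$ vanishes on some covering sieve if and only if $\iota_{A,B}\cdot v=0$ for some inclusion, because every covering sieve contains some $\iota_{A,B}$ and $S_{A,B}$ is itself covering. Your version is more self-contained, and the points you flag are precisely what the cited result hides: a matching family on $S_{A,B}$ becomes zero on $S_{A,C}$ once $\iota_{B,C}$ kills its value, and torsion elements form a submodule with torsion-free quotient because $\iota_{A,C}=\iota_{B,C}\iota_{A,B}$.
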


\begin{proof}
The first equivalence is proved in Proposition \ref{equivalence 1}, so we consider the second one. By \cite[Corollary 1.2]{DLL}, it suffices to show that $J$-torsion elements defined in that paper are the same as torsion elements in Definition \ref{def torsion}. To see this, given a $\C$-module $V$ and an object $A$ in $\C$, by \cite[Definition 3.1]{DLL}, an element $v \in V(A)$ is $J$-torsion if and only if there is a covering sieve $S \in J(A)$ such that $f \cdot v = 0$ for every $f \in S$. But $S$ contains an inclusion $\iota_{A, B}$, so $v$ is torsion in the sense of Definition \ref{def torsion}. Conversely, if $v$ is torsion, then one can find an inclusion $\iota_{A, B}$ such that $\iota_{A, B} \cdot v = 0$. Consequently, every morphism in the covering sieve $S_{A, B} \in J(A)$ sends $v$ to 0, so $v$ is $J$-torsion.
\end{proof}

\subsection{Application I} \label{Artin's theorem}

We consider several applications of this theorem.

In the first application, we take $\mathscr{E} = G \leqslant \Sym(\Omega)$ to be a permutation group on $\Omega$. The associated category $\C$ has finite subsets of $\Omega$ as objects, and morphisms given by restrictions of elements of $G$. Equivalently, $\C$ can be identified with the category of finite substructures and embeddings of $(\Omega, \mathcal{R}_G)$, where $\mathcal{R}_G$ is the orbit relational structure induced by the action of $G$ on $\Omega$; see \cite{Ca90} for details. Every morphism in $\C$ is injective, as it arises from a restriction of a permutation in $G$. Conversely, since $(\Omega, \mathcal{R}_G)$ is homogeneous, every embedding of finite substructures extends to an element of $G$.

Moreover, the Grothendieck topology $J$ on $\C^{\op}$ is the atomic topology, i.e. every nonempty sieve is covering. To see this, take $A \in \Ob(\C)$ and let $S \subseteq \C(A,-)$ be a nonempty sieve. Choose an embedding $f: A \hookrightarrow B$ in $S$. By homogeneity, there exists an extension $\tilde{f} \in G$ of $f$. Let $g: B \to \tilde{f}^{-1}(B)$ be the restriction of $\tilde{f}^{-1}$ to $B$. Then $g \circ f \in S$ is the inclusion of $A$ into $\tilde{f}^{-1}(B)$, showing that every nonempty sieve contains an inclusion morphism and hence is covering.

We now explain how Artin’s reconstruction theorem for Hausdorff topological groups is recovered as a special case of the present equivalence theorem. Although our framework is formulated for permutation groups equipped with the permutation topology, every Hausdorff topological group admits such a realization. Indeed, let $\Omega$ be the set of the coset spaces $G/H$ over all open subgroups $H \leqslant G$. The natural left action of $G$ on $\Omega$ induces a topological embedding of groups
\[
G \hookrightarrow \Sym(\Omega),
\]
where $\Sym(\Omega)$ is endowed with the permutation topology. Thus, without loss of generality, one may assume that $G$ is a permutation group equipped with the permutation topology.

If the action of $G$ on $\Omega$ satisfies the strong amalgamation property—namely, if the pointwise stabilizer of every finite subset $A$ has no fixed point outside $A$—then $\C^{\op}$ is isomorphic to the orbit category $\mathscr{G}$ of $G$ with respect to the cofinal system of stabilizer subgroups of finite subsets; see \cite[Theorem~1.4]{Li25}. In this case, Theorem~\ref{equivalence 2} yields an equivalence
\[
kG \Mod^{\mathrm{uc}} \simeq \Sh(\mathscr{G}, \, J_{\mathrm{at}}, \, \underline{k}),
\]
which is precisely the $k$-linear form of Artin’s theorem.

If, on the other hand, $\C$ fails to satisfy the strong amalgamation property, then there exist non-isomorphic finite subsets $A$ and $B$ with identical pointwise stabilizer subgroups, $G_A = G_B$. In this situation, the opposite category $\C^{\op}$ is no longer isomorphic to the orbit category $\mathscr{G}$: the former distinguishes $A$ and $B$ as separate objects, while the latter identifies them. Nevertheless, this distinction disappears upon sheafification. Although $\C^{\op}$ and $\mathscr{G}$ may differ as sites, they give rise to equivalent sheaf categories. Conceptually, the atomic topology forces finite subsets with the same stabilizer subgroup to become locally indistinguishable, so that the resulting topos depends only on the stabilizer data rather than on the individual finite subsets. More formally, one may apply the Comparison Lemma to obtain an equivalence
\[
\Sh(\C^{\op}, \, J_{\mathrm{at}}, \, \underline{k}) \simeq \Sh(\mathscr{G}, \, J_{\mathrm{at}}, \, \underline{k}),
\]
and combining this with Theorem~\ref{equivalence 2} again recovers Artin’s result, since uniformly continuous modules coincide with continuous modules for topological groups.

\subsection{Application II} \label{Application II}
As the second application, we have the following result.

\begin{corollary} \label{equivalence 4}
Let $\mathscr{E} \leqslant \Omega^{\Omega}$ and $\mathscr{F}$ a dense submonoid of $\mathscr{E}$. Then
\[
k\mathscr{F} \Mod^{\uc} \simeq k\mathscr{E} \Mod^{\uc}.
\]
\end{corollary}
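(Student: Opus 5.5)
The plan is to show that $\mathscr{E}$ and $\mathscr{F}$ determine the same category of finite subsets, and then invoke Proposition~\ref{equivalence 1} (or Theorem~\ref{equivalence 2}) for each monoid. Let $\C_{\mathscr{E}}$ and $\C_{\mathscr{F}}$ be the associated categories: objects are finite subsets of $\Omega$, and morphisms $A \to B$ are restrictions to $A$ of elements of the monoid whose image lies in $B$. Since $\mathscr{F} \subseteq \mathscr{E}$, we have $\C_{\mathscr{F}}(A,B) \subseteq \C_{\mathscr{E}}(A,B)$. For the reverse inclusion, take $f = \tilde{f}|_A$ with $\tilde{f} \in \mathscr{E}$. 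The basic open set $\mathscr{N}_{\tilde{f}, A}$ is a nonempty open subset of $\mathscr{E}$. Because $\mathscr{F}$ is dense in the permutation topology, it meets this set, so there is $\tilde{g} \in \mathscr{F}$ with $\tilde{g}|_A = \tilde{f}|_A = f$. Hence $\C_{\mathscr{F}} = \C_{\mathscr{E}}$ as categories; composition and identities agree, since both are computed as composition of maps of finite sets.

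Denote this common category by $\C$. The inclusion morphisms $\iota_{A,B}$, and therefore the torsion subcategory $\C\Mod^{\tor}$ of Definition~\ref{def torsion}, are defined purely in terms of $\C$. They are consequently the same for both monoids. One point needs care: Lemma~\ref{torsion} uses extensions $\tilde f$ chosen in the monoid. This causes no difficulty, because every morphism of $\C$ extends to an element of $\mathscr{F}$ by the density argument above. Applying Proposition~\ref{equivalence 1} twice then gives
\[
k\mathscr{F}\Mod^{\uc} \simeq \C\Mod/\C\Mod^{\tor} \simeq k\mathscr{E}\Mod^{\uc}.
\]

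To make the equivalence explicit, I would also note that it is essentially restriction along $\mathscr{F} \hookrightarrow \mathscr{E}$. A uniformly continuous $k\mathscr{E}$-module restricts to a uniformly continuous $k\mathscr{F}$-module with the same supports $A_v$. Conversely, a uniformly continuous $k\mathscr{F}$-module $M$ extends uniquely: for $h \in \mathscr{E}$, set $h\cdot v = g\cdot v$ for any $g \in \mathscr{F}$ with $g|_{A_v} = h|_{A_v}$. Such a $g$ exists by density, and the value is independent of the choice by uniform continuity.

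The only genuinely nontrivial step is checking that this extended action is associative, that is, $(hh')\cdot v = h\cdot(h'\cdot v)$. I expect this to be the main obstacle if one argues directly. The fix is to choose $g' \in \mathscr{F}$ agreeing with $h'$ on $A_v$, and then $g \in \mathscr{F}$ agreeing with $h$ on the finite set $A_v \cup h'(A_v) \cup A_{g'\cdot v}$. The categorical route via $\C$ avoids this bookkeeping altogether, so I would present that route as the proof and mention the explicit description afterwards as a remark.
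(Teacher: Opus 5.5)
Your proposal is correct and follows the paper's proof: density gives $\C_{\mathscr{F}} = \C_{\mathscr{E}}$, and the equivalence then follows from Theorem~\ref{equivalence 2} (equivalently Proposition~\ref{equivalence 1}), because the middle term $\C\Mod/\C\Mod^{\tor}$ depends only on $\C$. Your further description of the equivalence as restriction along $\mathscr{F} \hookrightarrow \mathscr{E}$, including the associativity check, is also correct, though the paper does not give it.
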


\begin{proof}
Recall that a neighborhood basis of the permutation topology on $\mathscr{E}$ is given by the sets
\[
\mathscr{N}_{f,A} = \{ g \in \mathscr{E} \mid g|_A = f|_A \},
\]
where $f \in \mathscr{E}$ and $A \subseteq \Omega$ is finite. Density of $\mathscr{F}$ in $\mathscr{E}$ is therefore equivalent to the condition that
\[
\mathscr{F} \cap \mathscr{N}_{f,A} \neq \varnothing
\quad
\text{for all } f \in \mathscr{E},\ A \subseteq \Omega \text{ finite}.
\]

We show that $\mathscr{E}$ and $\mathscr{F}$ have the same associated category $\C$ of finite subsets. For any finite subset $A \subseteq \Omega$, we must verify that
\[
\{ f|_A \mid f \in \mathscr{F} \}
=
\{ g|_A \mid g \in \mathscr{E} \}.
\]
The inclusion is trivial. Conversely, given $f \in \mathscr{E}$, density of $\mathscr{F}$ ensures the existence of $g \in \mathscr{F} \cap \mathscr{N}_{f,A}$, so that $g|_A = f|_A$. Hence the two sets coincide.

The conclusion now follows from Theorem \ref{equivalence 2}.
\end{proof}

\begin{remark}
By the proof, we see that a submonoid $\mathscr{F} \subseteq \mathscr{E}$ induces the same associated category $\C$ of finite subsets as $\mathscr{E}$ if and only if $\mathscr{F}$ is dense in $\mathscr{E}$ with respect to the pointwise convergence topology, equivalently, if every finite restriction of an element of $\mathscr{E}$ is realized by an element of $\mathscr{F}$.
\end{remark}

For topological groups, continuous representations are automatically uniformly continuous. We can use the following result to find more monoids having this property.

\begin{lemma}\label{continuity and uniform continuity}
Let $\mathscr{E} \leqslant \Omega^{\Omega}$ and $\mathscr{F}$ a dense submonoid of $\mathscr{E}$. Then a continuous representation $V$ of $\mathscr{E}$ is uniformly continuous if and only if its restriction to $\mathscr{F}$ is uniformly continuous.
\end{lemma}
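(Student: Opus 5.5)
The forward direction is immediate: if $V$ is uniformly continuous as an $\mathscr{E}$-module, then for $v \in V$ the finite support $A_v$ works for all $h, h' \in \mathscr{E}$, hence a fortiori for all $h, h' \in \mathscr{F}$. So the restriction of $V$ to $\mathscr{F}$ is uniformly continuous with the same supports. The content lies in the converse, and the plan is to use continuity of the $\mathscr{E}$-action together with density of $\mathscr{F}$ to transfer the support from $\mathscr{F}$ to $\mathscr{E}$.

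Assume the restriction to $\mathscr{F}$ is uniformly continuous. Fix $v \in V$ and let $A_v$ be a finite set with $h|_{A_v} = h'|_{A_v} \Rightarrow h\cdot v = h' \cdot v$ for all $h,h' \in \mathscr{F}$. Take $g, g' \in \mathscr{E}$ with $g|_{A_v} = g'|_{A_v}$; we must show $g \cdot v = g' \cdot v$.

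\textbf{Step 1: local constancy near $g$ and $g'$.} By Lemma~\ref{finite support}, applied to the continuous $\mathscr{E}$-module $V$, there are finite sets $A_{g,v}$ and $A_{g',v}$ such that the orbit map $h \mapsto h\cdot v$ is constant on $\mathscr{N}_{g, A_{g,v}}$ and on $\mathscr{N}_{g', A_{g',v}}$.

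\textbf{Step 2: approximation by $\mathscr{F}$.} Put $B = A_v \cup A_{g,v}$ and $B' = A_v \cup A_{g',v}$. By density (as in the proof of Corollary~\ref{equivalence 4}) choose $f \in \mathscr{F} \cap \mathscr{N}_{g,B}$ and $f' \in \mathscr{F} \cap \mathscr{N}_{g',B'}$. Then $f\cdot v = g\cdot v$ and $f'\cdot v = g' \cdot v$ by Step 1. Moreover, on $A_v$ we have $f|_{A_v} = g|_{A_v} = g'|_{A_v} = f'|_{A_v}$. Uniform continuity over $\mathscr{F}$ then gives $f\cdot v = f'\cdot v$, hence $g\cdot v = g' \cdot v$.

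The only delicate point is that uniform continuity cannot be transferred by density alone. Continuity over $\mathscr{E}$ is exactly what is needed to replace $g$ and $g'$ by elements of $\mathscr{F}$ without changing their action on $v$, which is why the choices of $B$ and $B'$ must enlarge $A_v$ by the continuity supports. Once that is arranged, the argument is straightforward.
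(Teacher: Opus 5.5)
Your proof is correct and follows the same route as the paper. Both arguments replace $g, g'$ by elements of $\mathscr{F}$ that agree with them on $A_v$, and use continuity of the $\mathscr{E}$-action to see that this replacement does not change the action on $v$. The only difference is presentational: the paper phrases the approximation with nets $f_i \to f$, whereas you pick a single element of $\mathscr{F}$ in the explicit basic neighborhood $\mathscr{N}_{g, A_v \cup A_{g,v}}$ supplied by Lemma~\ref{finite support}, which makes the limit step fully explicit.
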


\begin{proof}
The ``only if'' direction is immediate: restriction of a uniformly continuous map remains uniformly continuous. For the ``if'' direction, assume that the restriction of $V$ to $\mathscr{F}$ is uniformly continuous. Then for each $v \in V$, there exists a finite set $A_v \subseteq \Omega$ (the \emph{support} of $v$) such that
\[
g|_{A_v} = h|_{A_v} \implies g \cdot v = h \cdot v \quad \text{for all } g, h \in \mathscr{F}.
\]

Let $f, g \in \mathscr{E}$ satisfy $f|_{A_v} = g|_{A_v}$. Since $\mathscr{F}$ is dense in $\mathscr{E}$, there are nets $(f_i)_i$, $(g_i)_i$ in $\mathscr{F}$ such that
\[
f_i \to f, \qquad g_i \to g
\]
and eventually satisfy
\[
f_i|_{A_v} = f|_{A_v} = g|_{A_v} = g_i|_{A_v}.
\]
Uniform continuity then gives $f_i \cdot v = g_i \cdot v$ eventually. Since the action of $\mathscr{E}$ on $V$ is continuous, and $V$ is a discrete space, we conclude
\[
f \cdot v = \lim_i f_i \cdot v = \lim_i g_i \cdot v = g \cdot v.
\]
Thus, for every $v \in V$ there exists a finite set $A_v \subseteq \Omega$ such that
\[
f|_{A_v} = g|_{A_v} \implies f \cdot v = g \cdot v \qquad \forall \, f,g \in \mathscr{E},
\]
so $V$ is a uniformly continuous representation of $\mathscr{E}$.
\end{proof}

We give some counterintuitive examples to illustrate the application of this result.

\begin{example} \label{surprising examples}
Let $\mathscr{E} = \Omega^{\Omega}$ be the full transformation monoid, and let $\mathscr{E}_i$ be the submonoid of injective maps. Note that finite restrictions of elements of $\mathscr{E}_i$ are injective, and every injective map between finite subsets extends to a permutation in $G = \Sym(\Omega)$. It follows that $G$ is dense in $\mathscr{E}_i$ with respect to the permutation topology, so $\mathscr{E}_i$ and $G$ have the same uniformly continuous representation theory. Moreover, since continuous representations of $G$ are automatically uniformly continuous, Lemma~\ref{continuity and uniform continuity} implies that every continuous representation of $\mathscr{E}_i$ is uniformly continuous.

Similarly, let $\mathscr{E} = \End(\mathbb{Q}, \leqslant)$ be the monoid of order-preserving maps. Then the submonoid $\mathscr{E}_i$ of injective order-preserving maps has the same uniformly continuous representation theory as $\Aut(\mathbb{Q},\leqslant)$, which has been studied in \cite{DLLX}. The same conclusion holds for other highly homogeneous relations on $\mathbb{Q}$, namely the dense cyclic order, dense linear betweenness, and dense separation relations.

On the other hand, the submonoid $\mathscr{E}_s$ of surjective maps is also dense in $\mathscr{E} = \Omega^{\Omega}$. Indeed, given any map $f : A \to B$ between finite subsets, there exists a surjective map $\tilde{f} \in \mathscr{E}_s$ such that $\tilde{f}|_A = f$. Analogous density statements hold for the corresponding monoids of maps preserving a dense linear order, cyclic order, betweenness relation, or separation relation on $\Omega = \mathbb{Q}$.
\end{example}

To consider uniform continuity of continuous representations of $\mathscr{E}$, we introduce the following concept.

\begin{definition}
We say that $\mathscr{E} \leqslant \Omega^{\Omega}$ satisfies the \emph{finite retraction property} if for every finite subset $A \subseteq \Omega$, there exists an element $h \in \mathscr{E}$ such that
\[
h|_A = \id_A \quad \text{and} \quad \mathrm{im}(h) \subseteq A.
\]
\end{definition}

For instance, $\mathscr{E} = \Omega^{\Omega}$ clearly satisfies the finite retraction property.

\begin{proposition}
If $\mathscr{E} \leqslant \Omega^{\Omega}$ satisfies the finite retraction property, then every continuous representation of $\mathscr{E}$ is uniformly continuous.
\end{proposition}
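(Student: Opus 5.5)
Let $V$ be a continuous $k\mathscr{E}$-module and fix $v \in V$. We have to produce one finite set $A_v$ that works for all pairs $h, h' \in \mathscr{E}$. The plan is to use Lemma~\ref{finite support} exactly once, at the identity, and then let the finite retraction property carry the local information to every $h$.

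First, apply Lemma~\ref{finite support} with $h = \id_\Omega$. This gives a finite subset $A = A_{\id, v}$ such that, for every $g \in \mathscr{E}$,
\[
g|_A = \id_A \implies g \cdot v = v .
\]
Next, the finite retraction property supplies $r \in \mathscr{E}$ with $r|_A = \id_A$ and $\mathrm{im}(r) \subseteq A$. Hence $r \cdot v = v$.

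Now take $h, h' \in \mathscr{E}$ with $h|_A = h'|_A$. Since $\mathrm{im}(r) \subseteq A$, the composites agree everywhere on $\Omega$:
\[
h \circ r = h' \circ r
\]
as maps $\Omega \to \Omega$, and hence as elements of $\mathscr{E}$. Therefore
\[
h \cdot v = h \cdot (r \cdot v) = (h r) \cdot v = (h' r) \cdot v = h' \cdot (r \cdot v) = h' \cdot v .
\]
So $A_v = A$ is a uniform support for $v$, and $V$ is uniformly continuous by definition.

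No step here is a real obstacle. The only point needing care is that continuity is applied only at the identity, where the lemma gives a support depending on $v$ alone. The retraction $r$ then turns agreement on $A$ into equality of the whole maps $hr$ and $h'r$, which removes any dependence on $h$.
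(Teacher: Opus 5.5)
Your proof is correct and is essentially the same argument as the paper's. Both apply continuity at the identity to obtain a support $A$, take a retraction $r$ with $r|_A=\id_A$ and $\mathrm{im}(r)\subseteq A$, and use $h\circ r=h'\circ r$ together with $r\cdot v=v$ to conclude $h\cdot v=h'\cdot v$.
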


\begin{proof}
Fix $v \in V$. By continuity of the action map $f \mapsto f \cdot v$ at the identity, there exists a finite subset $A_v \subseteq \Omega$ such that
\[
f|_{A_v} = \id|_{A_v} \;\Longrightarrow\; f \cdot v = v.
\]
Let $f,g \in \mathscr{E}$ satisfy $f|_{A_v} = g|_{A_v}$. By the finite retraction property, there exists $h \in \mathscr{E}$ such that $h|_{A_v} = \id_{A_v}$ and $\mathrm{im}(h) \subseteq A_v$. Thus $f \circ h = g \circ h$ and $h \cdot v = v$ by the choice of $A_v$. Hence
\[
f \cdot v = f \cdot (h \cdot v) = (f \circ h)\cdot v = (g \circ h)\cdot v = g \cdot (h \cdot v) = g \cdot v,
\]
as desired.
\end{proof}

By this result, we know that every continuous representation of $\mathscr{E} = \Omega^{\Omega}$ is uniformly continuous. However, this is not true for $\mathscr{E}_s$, as shown by the following example.

\begin{example}
Let $\Omega = \mathbb{N}$ and let $\mathscr{E}_s$ denote the monoid of all surjective self-maps of $\Omega$. Let $k$ be a field and let
\[
V = \bigoplus_{n \in \mathbb{N}} k e_n
\]
be the direct sum, endowed with the discrete topology. Define an action of $\mathscr{E}_s$ on $V$ by
\[
f \cdot e_n = e_{\,f(n + f(0))}, \qquad f \in \mathscr{E}_s.
\]

We show that $V$ is a continuous representation of $\mathscr{E}_s$. Fix $f \in \mathscr{E}_s$, $n \in \mathbb{N}$, and set
\[
F_{f,n} = \{0,\, n + f(0)\}.
\]
If $g \in \mathscr{E}_s$ satisfies $g|_{F_{f,n}} = f|_{F_{f,n}}$, then $g(0) = f(0)$ and hence
\[
g(n + g(0)) = g(n + f(0)) = f(n + f(0)).
\]
Thus $g \cdot e_n = f \cdot e_n$. For a finite linear combination
\[
v = \sum_{i=1}^s a_i e_{n_i} \in V,
\]
the union of $F_{f,n_i}$ provides a finite support of $v$, so the action is continuous on all of $V$.

However, $V$ is not a uniformly continuous representation of $\mathscr{E}_s$. To see this, for any finite subset $A \subseteq \Omega$, choose $f \in \mathscr{E}_s$ such that $x = n + f(0) \notin A$. Since $A \cup \{x\}$ is finite and $\Omega$ is infinite, we can find $g \in \mathscr{E}_s$ such that $g|_A = f|_A$ and $g(x) \neq f(x)$. Therefore,
\[
f \cdot e_n = e_{f(x)} \neq e_{g(x)} = g \cdot e_n,
\]
showing that no finite set $A$ can serve as a uniform support of $e_n$. Hence the action is not uniformly continuous.
\end{example}

\begin{remark}
The order-preserving transformation monoid $\mathscr{E} = \End(\mathbb{Q}, \leqslant)$ does not satisfy the finite retraction property. Indeed, for any finite subset $A \subseteq \mathbb{Q}$ with at least two points, there is no endomorphism $h \in \mathscr{E} = \End(\mathbb{Q}, \leqslant)$ such that $h|_A = \id_A$ and $\mathrm{im}(h) \subseteq A$, because any order-preserving map from a dense linear order into a finite set must be locally constant, and thus cannot fix a set with more than one point. Consequently, $\mathscr{E}$ admits continuous representations that are not uniformly continuous.

Let $V = \bigoplus_{q \in \mathbb{Q}} k e_q$ be the discrete direct sum. One can define a continuous action of $\mathscr{E}$ on $V$ by $f \cdot e_n = e_{f(n + f(0))}$. The action is continuous because the value of $f \cdot e_n$ is determined by the values of $f$ on the finite set $\{0, n+f(0)\}$. However, it is not uniformly continuous: for any finite $A \subseteq \mathbb{Q}$, we can choose $f \in \mathscr{E}$ such that $n+f(0)$ lies outside the convex hull of $A$. By perturbing $f$ at this point while preserving the order, we can change the result of the action without changing the restriction $f|_A$.
\end{remark}

\subsection{Application III}

For the third application, we let $\mathscr{E} \leqslant \Omega^{\Omega}$ and assume that the associated category $\C$ of finite subsets satisfies the following condition
\begin{enumerate}
\item[($\sharp$)] every inclusion $\iota_{A, B}: A \hookrightarrow B$ admits a retraction given by a surjective map $p: B \twoheadrightarrow A$.
\end{enumerate}
It is easy to see that the five categories $\OA, \BA, \CA, \SA, \FA$ all satisfy this condition.

\begin{corollary} \label{equivalence 3}
Let $\mathscr{E} \leqslant \Omega^{\Omega}$, and suppose that the associated category $\C$ of finite subsets satisfies condition $(\sharp)$. Then the realization functor $\mathcal{R}$ gives an equivalence
\[
\C \Mod \simeq k\mathscr{E} \Mod^{\mathrm{uc}}
\]
with $\mathcal{L}$ as its inverse.
\end{corollary}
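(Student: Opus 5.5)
By Proposition \ref{equivalence 1} it suffices to show that under condition $(\sharp)$ the only torsion $\C$-module is zero. Then $\ker\mathcal{R} = \C\Mod^{\tor} = 0$, and the Gabriel quotient $\C\Mod/\C\Mod^{\tor}$ is just $\C\Mod$ itself. Concretely, the plan is to show that the unit $V \to \mathcal{L}\mathcal{R}(V)$ is an isomorphism for every $\C$-module $V$. Combined with the isomorphism $\mathcal{R}\circ\mathcal{L}\cong\id$ from Proposition \ref{adjunction}, this exhibits $\mathcal{R}$ and $\mathcal{L}$ as mutually inverse equivalences.

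\textbf{Step 1: there are no nonzero torsion elements.} Let $v \in V(A)$ be torsion, say $\iota_{A,B}\cdot v = 0$. By $(\sharp)$ choose a surjection $p: B \twoheadrightarrow A$ in $\C$ with $p\circ\iota_{A,B} = \id_A$. Then
\[
v = \id_A\cdot v = p\cdot(\iota_{A,B}\cdot v) = 0 .
\]
Consequently every inclusion acts injectively on $V$, the canonical maps $V(A)\to\mathcal{R}(V)$ are injective, and $\C\Mod^{\tor}=0$.

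\textbf{Step 2: the unit is an isomorphism.} The unit sends $v\in V(A)$ to $[v]\in\mathcal{L}\mathcal{R}(V)(A)$, and by Step 1 it is injective. For surjectivity, take $[w]\in\mathcal{R}(V)$ lying in $\mathcal{L}\mathcal{R}(V)(A)$, with representative $w\in V(B)$; after enlarging, we may assume $A\subseteq B$. The main obstacle is to show that $[w]$ actually comes from $V(A)$. The natural candidate is $v = p\cdot w \in V(A)$, where $p: B\twoheadrightarrow A$ is a retraction of $\iota_{A,B}$.

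Let $\tilde p\in\mathscr E$ extend $p$ (so that $\tilde p|_B = p$), and compare $\tilde p$ with $\id_\Omega$. Both restrict to the identity on $A$. Since $[w]\in\mathcal{L}\mathcal{R}(V)(A)$, acting by these two elements of $\mathscr{E}$ gives the same result:
\[
[w] = \id\cdot[w] = \tilde p\cdot[w] = [\tilde p|_B\cdot w] = [p\cdot w] = [v].
\]
Hence $[w]$ is the image of $v \in V(A)$, and the unit is surjective. Naturality of the unit is part of the adjunction of Proposition \ref{adjunction}.

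\textbf{Conclusion.} With the unit and the counit both isomorphisms, $\mathcal{R}$ is an equivalence $\C\Mod\simeq k\mathscr E\Mod^{\uc}$ with inverse $\mathcal{L}$. The one point to check carefully is the sheaf side of Theorem \ref{equivalence 2}. By Step 1 every covering sieve contains a split monomorphism $\iota_{A,B}$. So the covering sieves are exactly the sieves containing a split mono. In particular, any sieve containing $\iota_{A,B}$ also contains $p\circ\iota_{A,B} = \id_A$, hence is the maximal sieve. Therefore the only covering sieves are the maximal ones, $J$ is the trivial topology, and $\Sh(\C^{\op},J,\underline{k})=\C\Mod$, consistent with the statement.
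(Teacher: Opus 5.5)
Your proof is correct, but its main line differs from the paper's. The paper argues entirely on the sheaf side. A covering sieve $S\in J(A)$ contains some $\iota_{A,B}$, so by $(\sharp)$ it also contains $p\circ\iota_{A,B}=\id_A$. Hence $J$ is the trivial topology, $\Sh(\C^{\op},J,\underline{k})=\C\Mod$, and the corollary is read off from Theorem~\ref{equivalence 2}. That theorem itself rests on Gabriel localization and an external comparison result for $J$-torsion. This is exactly the check in your final paragraph, so the paper's proof appears in your proposal as a side remark.

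Your main argument instead works directly with the adjunction of Proposition~\ref{adjunction}:
\begin{itemize}
\item Condition $(\sharp)$ kills torsion elements, which gives injectivity of the unit $V\to\mathcal{L}\mathcal{R}(V)$.
\item For surjectivity you compare $\tilde p$ with $\id_\Omega$. They agree on $A$, so any $[w]\in\mathcal{L}\mathcal{R}(V)(A)$ equals $[p\cdot w]$ with $p\cdot w\in V(A)$.
\end{itemize}
Together with the counit isomorphism $\mathcal{R}\circ\mathcal{L}\cong\id$, this argument is self-contained and elementary. It also genuinely establishes the clause ``with $\mathcal{L}$ as its inverse,'' which the paper's chain of abstract equivalences leaves implicit. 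The price is a slightly longer element-level computation; the paper's route is a two-line deduction once the localization and sheaf machinery is in place.

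Two small remarks. Once your Step 2 is done, the opening reduction via Proposition~\ref{equivalence 1} is unnecessary. In your last paragraph, the fact that every covering sieve contains a split monomorphism comes from the definition of $J$ together with $(\sharp)$, not from Step 1.
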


\begin{proof}
Let $A$ be a finite subset and let $S \in J(A)$ be a covering sieve on $\C^{\op}$, which is the same as a left ideal on $\C$. By the description of the Grothendieck topology $J$, the sieve $S$ contains an inclusion $\iota_{A,B} : A \hookrightarrow B$. By condition $(\sharp)$, there is a surjective map $p : B \twoheadrightarrow A$ such that
\[
p \circ \iota_{A,B} = \id_A.
\]
It follows that $\id_A \in S$, hence $S$ is the maximal sieve. Therefore $J$ is the trivial Grothendieck topology, and
\[
\Sh(\C^{\op}, \, J, \, \underline{k}) = \C \Mod.
\]
The conclusion now follows from Theorem~\ref{equivalence 2}.
\end{proof}

Let $(\Omega, \mathcal{R}_G)$ be a highly homogeneous relational structure. Without loss of generality, we may assume $\Omega = \mathbb{Q}$, on which the five highly homogeneous relations have been explicitly described in \cite[Section 13]{BMMN}. Let $\mathscr{E}$ be the monoid of all self-maps of $\mathbb{Q}$ that preserve the highly homogeneous relation on $\Omega$. Then the equivalence described in Corollary~\ref{equivalence 3} provides a convenient framework to study uniformly continuous representations of $\mathscr{E}$.

For a fixed $n \in \mathbb{Z}_+$, we define $\mathscr{M}_n$ to be the set of all relation-preserving maps from $[n]$ to $\mathbb{Q}$, and $\mathscr{N}_n$ the set of all relation-preserving non-injective maps from $[n]$ to $\mathbb{Q}$. The monoid $\mathscr{E}$ has a natural action on them. In addition, since every $f$ in $\mathscr{M}_n$ has a finite image, the same argument used for $\mathscr{E}_0$ in Remark \ref{remind} shows that $k\mathscr{M}_n$ is a uniformly continuous $k\mathscr{E}$-module, so is $k\mathscr{N}_n$ as a submodule.

\begin{proposition}
Let $k$ be a commutative ring and fix $n \in \mathbb{Z}_+$. Let $G_n$ be the automorphism group of the finite subset $[n]$. Then under the equivalence in Corollary \ref{equivalence 3}, we have the following correspondences:
\[
k\mathscr{M}_n \, \longleftrightarrow \, k\C(n, -), \quad k\mathscr{M}_n / k\mathscr{N}_n \, \longleftrightarrow \, \Delta_n.
\]
If furthermore $k$ is a field and $\lambda \in \Irr(G_n)$, then
\[
(k\mathscr{M}_n / k\mathscr{N}_n) \otimes_{kG_n} kG_n e_{\lambda} \, \longleftrightarrow \, \Delta_{n, \lambda}, \quad (k\mathscr{M}_n / k\mathscr{N}_n) \otimes_{kG_n} \lambda \, \longleftrightarrow \, \overline{\Delta}_{n, \lambda}.
\]
\end{proposition}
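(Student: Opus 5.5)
Since $\mathcal{R}$ is an equivalence with inverse $\mathcal{L}$ (Corollary \ref{equivalence 3}), it suffices to show $\mathcal{R}(k\C(n,-)) \cong k\mathscr{M}_n$ as $k\mathscr{E}$-modules. The remaining correspondences then follow by exactness and naturality. I first identify $\C$ concretely: take the associated category of finite subsets of $\mathbb{Q}$, which is equivalent to the skeletal category with objects $[n]$. Under this equivalence, the representable $k\C(n,-)$ corresponds to $A \mapsto k\C([n]',A)$, where $[n]'\subseteq\mathbb{Q}$ is a fixed $n$-element subset realizing $[n]$. I identify $[n]$ with $[n]'$ via a fixed relation isomorphism, so that a morphism $[n]\to A$ becomes a relation-preserving map $[n]\to A\subseteq \mathbb{Q}$. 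Here I use high homogeneity together with the fact that $\mathscr{E}$ consists of all relation-preserving maps: every relation-preserving map $[n]'\to A$ is the restriction of some element of $\mathscr{E}$. Extensibility of partial homomorphisms for these five structures should be checked; for non-injective maps, factor as a surjection followed by an embedding and extend each piece.

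\textbf{Key computation.} For $A\subseteq B$, the transition map $k\C([n],A)\to k\C([n],B)$ is composition with $\iota_{A,B}$. This is just the inclusion of those maps with image in $A$ into those with image in $B$, so it is injective. The colimit over the filtered poset of finite subsets is therefore the union $\bigcup_A k\C([n],A)$. Since every $f\in\mathscr{M}_n$ has finite image, this union is exactly $k\mathscr{M}_n$. The $\mathscr{E}$-action defined by $h\cdot[f]=[h|_A\circ f]$ agrees with postcomposition $h\circ f$, so $\mathcal{R}(k\C(n,-))\cong k\mathscr{M}_n$ as $k\mathscr{E}$-modules. The right $G_n$-actions by precomposition also match, so this is an isomorphism of $(k\mathscr{E},kG_n)$-bimodules.

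\textbf{Remaining correspondences.} Under the same identification, $\mathfrak{I}_n$ is spanned by non-injective maps, so $\mathcal{R}(\mathfrak{I}_n)=k\mathscr{N}_n$ inside $k\mathscr{M}_n$. Since $\mathcal{R}$ is exact (filtered colimits), we get $\mathcal{R}(\Delta_n)\cong k\mathscr{M}_n/k\mathscr{N}_n$, again compatibly with the right $G_n$-action. For the last two statements, $\Delta_{n,\lambda}=\Delta_n\otimes_{kG_n}kG_ne_\lambda$ and $\overline{\Delta}_{n,\lambda}=\Delta_n\otimes_{kG_n}\lambda$. Since $\mathcal{R}$ is a colimit, it commutes with $-\otimes_{kG_n}M$ for any $kG_n$-module $M$: tensor products commute with colimits, and the tensor is computed objectwise. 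Applying this with $M=kG_ne_\lambda$ and $M=\lambda$ gives the claims.

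\textbf{Main obstacle.} The only delicate step is the extension property in the first paragraph, i.e.\ that morphisms of the abstract category $\C$ are exactly restrictions of elements of $\mathscr{E}$, including non-injective ones. I would handle it case by case: first extend an embedding via an automorphism, using homogeneity; then extend a surjection $[n]\to[l]$ onto a subset of $\mathbb{Q}$ to a relation-preserving self-map of $\mathbb{Q}$ by collapsing suitable intervals, or cyclic arcs for $\CA$ and $\SA$.
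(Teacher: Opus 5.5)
Your proposal is correct and follows the route the paper intends. The paper's proof only points to the explicit construction of $\mathcal{R}$ and $\mathcal{L}$ in Proposition~\ref{adjunction}; you supply those details by computing $\mathcal{R}(k\C(n,-))$ as a directed union equal to $k\mathscr{M}_n$, then using exactness of $\mathcal{R}$ to get $\Delta_n$, and then commuting the colimit with $-\otimes_{kG_n} M$. You also flag the extension property, namely that every relation-preserving map $[n]\to\mathbb{Q}$ is the restriction of an element of $\mathscr{E}$, so that the associated category really is the skeletal category of Sections 2--6. The paper leaves this implicit, and you are right to check it.
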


\begin{proof}
These results follow from the explicit construction in the proof of Proposition \ref{adjunction}.
\end{proof}

One can then use the above result to obtain an explicit construction of irreducible uniformly continuous $k\mathscr{E}$-modules when $k$ is a field.

\begin{corollary}
Let $k$ be a field. The isomorphism classes of irreducible uniformly continuous $k\mathscr{E}$-modules are parameterized by the set of pairs
\[
\{ (n, \lambda) \mid n \in \mathbb{Z}_+, \, \lambda \in \Irr(G_n) \}.
\]
Specifically, let $L_{n, \lambda} = (k\mathscr{M}_n / k\mathscr{N}_n) \otimes_{kG_n} \lambda$. Then irreducible $k\mathscr{E}$-modules are described as follows:
\begin{enumerate}
\item If $\lambda$ is regular, the corresponding irreducible $k\mathscr{E}$-module is isomorphic to $L_{n, \lambda}$.

\item If $\lambda$ is singular, the corresponding irreducible $k\mathscr{E}$-module is isomorphic to the top of $L_{n, \lambda}$.
\end{enumerate}
\end{corollary}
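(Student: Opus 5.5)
The corollary should follow by transporting Theorem~\ref{classification of irreducibles} across the equivalence of Corollary~\ref{equivalence 3}, using the dictionary in the preceding proposition. An equivalence of abelian categories preserves irreducible objects, isomorphism classes, and tops. So the irreducible uniformly continuous $k\mathscr{E}$-modules are, up to isomorphism, exactly the images $\mathcal{R}(L)$ of the irreducible $\C$-modules $L$. By Theorem~\ref{classification of irreducibles}, the latter are parameterized by pairs $(n,\lambda)$ with $n \in \mathbb{Z}_+$ and $\lambda \in \Irr(G_n)$. This gives the parameterization.

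For the explicit description, the preceding proposition identifies $\mathcal{R}(\overline{\Delta}_{n,\lambda})$ with $(k\mathscr{M}_n/k\mathscr{N}_n)\otimes_{kG_n}\lambda = L_{n,\lambda}$. I would double-check this identification directly. Since $\mathcal{R}$ is a filtered colimit, it commutes with the right exact functor $-\otimes_{kG_n}\lambda$. Moreover, $\mathcal{R}(k\C(n,-)) = \varinjlim_A k\C([n],A)$ is the span of relation-preserving maps $[n]\to\mathbb{Q}$ that are restrictions of elements of $\mathscr{E}$. For highly homogeneous structures every such map extends, so this span is $k\mathscr{M}_n$; likewise the non-injective morphisms assemble to $k\mathscr{N}_n$. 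Exactness of $\mathcal{R}$ then gives $\mathcal{R}(\Delta_n)\cong k\mathscr{M}_n/k\mathscr{N}_n$, compatibly with the right $G_n$-action by precomposition.

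With this in hand, the two cases are immediate. If $\lambda$ is regular, $\overline{\Delta}_{n,\lambda}$ is irreducible by Proposition~\ref{regular standard is simple}, so $L_{n,\lambda}=\mathcal{R}(\overline{\Delta}_{n,\lambda})$ is irreducible. If $\lambda$ is singular, $\overline{\Delta}_{n,\lambda}$ has length $2$ by Proposition~\ref{length 2}, and its top is the irreducible attached to $(n,\lambda)$. Since $\mathcal{R}$ is an equivalence, it sends the unique maximal submodule to the unique maximal submodule. Hence $L_{n,\lambda}$ has length $2$, and its top is $\mathcal{R}$ of that irreducible.

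The main obstacle is the identification step, specifically the extension property that every relation-preserving map from a finite subset into $\mathbb{Q}$ is a restriction of some element of $\mathscr{E}$. This is needed so that $\mathscr{M}_n$ matches the morphisms of $\C$. I would verify it for each of the five relations by extending piecewise, for example linearly between image points for the orders, which is standard.
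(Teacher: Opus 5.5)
Your proposal is correct and follows the paper's own one-line argument: transport Theorem~\ref{classification of irreducibles} across the equivalence $\mathcal{R}$ of Corollary~\ref{equivalence 3}, using the dictionary $\mathcal{R}(\overline{\Delta}_{n,\lambda}) \cong L_{n,\lambda}$ from the preceding proposition. Your filtered-colimit check of that dictionary, and your note that it needs every relation-preserving map $[n]\to\mathbb{Q}$ to extend to an element of $\mathscr{E}$ (which the paper uses implicitly when it identifies the Section~7 category with the Cameron categories), are sound supplements to the same route rather than a different one.
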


\begin{proof}
The conclusion follows directly from the above proposition and Theorem \ref{classification of irreducibles}.
\end{proof}

\subsection{Application IV}

We end this paper by giving another application beyond the discrete setting. Let $\mathbb{F}_q$ be a finite field with $q$ elements and let
\[
\Omega = \varinjlim_n \mathbb{F}_q^n
\]
be the countably infinite dimensional vector space over $\mathbb{F}_q$. Let $G = \mathrm{GL}(\Omega)$ be the group of all $\mathbb{F}_q$-linear automorphisms of $\Omega$, and let
\[
\mathscr{E} = \End_{\mathbb{F}_q}(\Omega)
\]
be the monoid of all $\mathbb{F}_q$-linear endomorphisms of $\Omega$. Finally, let $\mathscr{C} = \VA_q$ denote the category whose objects are finite dimensional $\mathbb{F}_q$-vector spaces and whose morphisms are $\mathbb{F}_q$-linear maps.

\begin{corollary}
Let $k$ be a commutative ring, and $\mathscr{E}_s$ (resp. $\mathscr{E}_i$) be the submonoid of surjective (resp., injective) elements of $\mathscr{E}$. There are equivalences of categories
\begin{align*}
\VA_q \Mod & \; \simeq \; k\mathscr{E} \Mod^{\mathrm{uc}} \;\simeq\; k\mathscr{E}_s \Mod^{\mathrm{uc}},\\
k\mathscr{E}_i \Mod^{\mathrm{uc}} & \;\simeq\; kG \Mod^{\mathrm{uc}} \;\simeq\; \Sh(\VI_q, \, J_{at}, \, \underline{k}),
\end{align*}
where $\VI_q$ is the wide subcategory of $\VA_q$ with injective morphisms.
\end{corollary}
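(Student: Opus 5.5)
The plan is to reduce every equivalence to Theorem \ref{equivalence 2}, Corollary \ref{equivalence 3} and Corollary \ref{equivalence 4}, after identifying the associated categories of finite subsets. Here the objects would be finite subsets of $\Omega$, not subspaces, so I would first pass to a more convenient category. For $\mathscr{E}$, let $\C_{\mathscr{E}}$ be the associated category of finite subsets. A morphism $A \to B$ in $\C_{\mathscr{E}}$ is the restriction to $A$ of a linear map $\Omega \to \Omega$ with $\tilde{f}(A) \subseteq B$. Equivalently, it is a map $A \to B$ that extends to a linear map $\langle A \rangle \to \langle B \rangle$. Every linear map out of the finite-dimensional subspace $\langle A\rangle$ extends to $\Omega$, since a basis of $\langle A\rangle$ extends to a basis of $\Omega$.

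The key step is to show $\C_{\mathscr{E}}\Mod \simeq \VA_q\Mod$. I would use the full subcategory $\D \subseteq \C_{\mathscr{E}}$ of finite subspaces. Its morphisms $U \to W$ are exactly the linear maps, and its skeleton is $\VA_q$. Every object $A$ is a retract of $\langle A\rangle$ via $\iota_{A,\langle A\rangle}$. The retraction is the restriction to $\langle A\rangle$ of any linear map $\Omega\to\Omega$ that restricts to the identity on $\langle A\rangle$. On the level of $A$, the composite $A \to \langle A\rangle \to A$ must be the identity on $A$, but a linear map $\langle A\rangle \to \Omega$ with image in $A$ need not exist. So I would instead observe that $A \cong$ (in the Cauchy completion sense) a retract only when such a map exists. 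That is the main obstacle.

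To handle it, I would use the idempotent completion of the $k$-linearization. The composite $\langle A\rangle \hookleftarrow A$ and the inclusion $\iota: A \to \langle A\rangle$ induce $P_{\langle A\rangle} \to P_A$ by precomposition. Morphisms out of $A$ correspond bijectively to linear maps out of $\langle A\rangle$, because a linear map is determined by its values on a spanning set. Hence $k\C(A,-) \cong k\C(\langle A\rangle,-)$ as functors via $\iota^*$. So every representable is isomorphic to one based at a subspace, the subspaces give a family of projective generators, and Morita theory yields $\C_{\mathscr{E}}\Mod \simeq \D\Mod \simeq \VA_q\Mod$. Next I would check condition $(\sharp)$, or directly that $J$ is trivial. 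A covering sieve at $A$ contains $\iota_{A,B}$. Choosing a linear retraction $\langle B\rangle \to \langle A\rangle$ whose restriction to $A$ is the identity, the composite of $\iota_{A,B}$ with it lands in $\langle A\rangle$, not in $A$. So I would argue with sheaves instead: $J$-torsion of $v \in V(A)$ is detected after $\iota_{A,B}$, and via the isomorphism of representables, $\iota_{A,B}$ is a split mono in $k\C$ up to this identification. Hence no nonzero element is torsion, so $\C\Mod^{\tor}=0$ and Theorem \ref{equivalence 2} gives $k\mathscr{E}\Mod^{\uc}\simeq \VA_q\Mod$.

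For $\mathscr{E}_s$ and $\mathscr{E}_i$, I would invoke Corollary \ref{equivalence 4}. Any linear map $\langle A\rangle \to \Omega$ extends to a surjective endomorphism, by sending the remaining infinitely many basis vectors onto a basis of a complement. So $\mathscr{E}_s$ is dense in $\mathscr{E}$. Any injective linear map $\langle A\rangle\to\Omega$ extends to an automorphism by extending bases of a complement. So $G$ is dense in $\mathscr{E}_i$, giving $k\mathscr{E}_i\Mod^{\uc}\simeq kG\Mod^{\uc}$. Finally, for $G$, I would apply Theorem \ref{equivalence 2} with the atomic topology as in Subsection \ref{Artin's theorem}. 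I would then replace finite subsets by subspaces using the Comparison Lemma: subspaces form a dense subsite, since each $A$ maps to $\langle A\rangle$ with equal stabilizers. This yields $\Sh(\VI_q,J_{at},\underline{k})$.
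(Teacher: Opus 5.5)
\textbf{There is a genuine gap in the first equivalence} $\VA_q\Mod\simeq k\mathscr{E}\Mod^{\uc}$. You correctly flagged the obstacle, but your workaround depends on the claimed isomorphism $\iota^*\colon k\C(\langle A\rangle,-)\to k\C(A,-)$, and that map is injective but not surjective. A morphism $A\to B$ in $\C_{\mathscr E}$ only requires $\tilde f(A)\subseteq B$, whereas a morphism $\langle A\rangle\to B$ requires $\tilde f(\langle A\rangle)\subseteq B$. For instance, take $A=\{a\}$ with $a\neq 0$ and $b\neq 0$. Then $\C(A,\{b\})\neq\varnothing$, but $\C(\langle A\rangle,\{b\})=\varnothing$, since linear maps send $0$ to $0$.

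Both conclusions you draw from this isomorphism are false. To see this, define $V(A)=k$ if $0\notin A$ and $V(A)=0$ if $0\in A$, with $V(f)=\id_k$ whenever the source and target both avoid $0$. This is a $\C_{\mathscr E}$-module because every morphism sends $0$ to $0$.
\begin{itemize}
\item Every subspace $U$ contains $0$, so $\Hom(P_U,V)=V(U)=0$, yet $V\neq 0$. Hence the subspace representables do not generate $\C_{\mathscr E}\Mod$, and your Morita argument breaks down.
\item Every $v\in V(A)$ is killed by $\iota_{A,A\cup\{0\}}$, so $\C_{\mathscr E}\Mod^{\tor}\neq 0$.
\end{itemize}
Condition $(\sharp)$ genuinely fails for $\C_{\mathscr E}$. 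Theorem~\ref{equivalence 2} therefore only gives $k\mathscr{E}\Mod^{\uc}\simeq\C_{\mathscr E}\Mod/\C_{\mathscr E}\Mod^{\tor}$, with a nontrivial torsion part.

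\textbf{The repair uses a tool you already invoke for $G$:} apply the Comparison Lemma to $(\C_{\mathscr E}^{\op},J)$ itself. Let $\D$ be the full subcategory of finite-dimensional subspaces; these are finite sets because $\mathbb{F}_q$ is finite.
\begin{itemize}
\item $\D$ is dense, since $\iota_{A,\langle A\rangle}$ generates a covering sieve on $A$.
\item The induced topology on $\D^{\op}$ is trivial. Suppose $S$ is a $\D$-sieve on a subspace $U$ and $\iota_{U,B}=g\circ s$ with $s\in S$. Let $p$ be the restriction to $B$ of a linear projection of $\Omega$ onto $U$. Then $pg$ is a morphism of $\D$, and $\id_U=(pg)\circ s\in S$.
\end{itemize}
Hence $k\mathscr{E}\Mod^{\uc}\simeq\Sh(\C_{\mathscr E}^{\op},J,\underline{k})\simeq\D\Mod\simeq\VA_q\Mod$.

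The paper reaches the same point more directly. It reruns Proposition~\ref{adjunction} over the directed system of finite-dimensional subspaces. These are cofinal among finite subsets, and a support $A_v$ can be replaced by $\langle A_v\rangle$, because linear maps agreeing on $A_v$ agree on its span. Over subspaces $(\sharp)$ holds, so no torsion arises.

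The rest of your proposal is sound. The density arguments for $\mathscr{E}_s\subseteq\mathscr{E}$ and $G\subseteq\mathscr{E}_i$ match the paper's. Your Comparison Lemma derivation of $kG\Mod^{\uc}\simeq\Sh(\VI_q,J_{at},\underline{k})$ is a valid, self-contained substitute for the external reference the paper cites for that step.
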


\begin{proof}
The proofs of Proposition \ref{adjunction}, Theorem \ref{equivalence 2}, and Corollary \ref{equivalence 3} apply in this setting with suitable modifications. The orbit relations on $\Omega$ induced by the action of $G = \mathrm{GL}(\Omega)$ encode linear dependence of tuples. Consequently, the relevant finite subsets are precisely the finite dimensional $\mathbb{F}_q$-subspaces of $\Omega$. It follows that the category constructed from restrictions of elements of $\mathscr{E}$ is precisely $\VA_q$. Moreover, if $f,g \in \mathscr{E}$ coincide on a finite subset $A \subseteq \Omega$, then they coincide on the $\mathbb{F}_q$-linear span $\langle A \rangle$, since $f$ and $g$ are linear maps. Thus uniformly continuous $k\mathscr{E}$-modules can be defined via finite dimensional subspaces. Putting these observations together, we obtain the first equivalence in the first line.

To obtain the second equivalence of the first line, we note that $\mathscr{E}_s$ is dense in $\mathscr{E}$. Indeed, given a linear map $f: A \to B$ between finite dimensional subspaces of $\Omega$, we can extend $f$ to a surjective linear endomorphism of $\Omega$ by choosing complements and extending bases. Hence $\mathscr{E}$ and $\mathscr{E}_s$ have the same uniformly continuous representation theory.

As we explained before, $G$ is a dense submonoid of $\mathscr{E}_i$. The first equivalence of the second line then follows. The second one is proved in \cite{DLLX}.
\end{proof}

When $k$ is a field of characteristic 0, it has been shown in \cite{Kuhn} that
\[
\VA_q \Mod \simeq \prod_{n \geqslant 0} k\mathrm{GL}_n(\mathbb{F}_q) \Mod,
\]
and hence is semisimple.

These equivalences yield explicit classifications of irreducible uniformly continuous representations of these monoids.

\begin{corollary}
Let $k$ be a field of characteristic $0$. Then irreducible uniformly continuous representations of $k\mathscr{E}$, $k\mathscr{E}_s$, $k\mathscr{E}_i$, and $kG$ are classified by irreducible representations of the finite groups $\mathrm{GL}_n(\mathbb{F}_q)$ $(n \geqslant 0)$. More precisely,
\[
\Irr (k \mathscr{E} \Mod^{\mathrm{uc}}) \; \cong \; \Irr (k \mathscr{E}_s \Mod^{\mathrm{uc}}) \; \cong \; \Irr (k \mathscr{E}_i \Mod^{\mathrm{uc}}) \; \cong \; \Irr (kG \Mod^{\mathrm{uc}}) \, \cong \, \coprod_{n \geqslant 0} \Irr_k (\mathrm{GL}_n (\mathbb{F}_q)).
\]
\end{corollary}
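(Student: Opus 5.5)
The plan is to chain the equivalences already available and reduce everything to counting irreducibles of $\VA_q\Mod$. By the preceding corollary,
\[
k\mathscr{E}\Mod^{\mathrm{uc}} \simeq k\mathscr{E}_s\Mod^{\mathrm{uc}} \simeq \VA_q\Mod,
\]
and an equivalence of abelian categories preserves simple objects. So the first two sets are in bijection with $\Irr(\VA_q\Mod)$. Over a field of characteristic $0$, Kuhn's theorem gives
\[
\VA_q\Mod \simeq \prod_{n\geqslant 0} k\mathrm{GL}_n(\mathbb{F}_q)\Mod .
\]
The simple objects of a product of module categories are exactly the simples supported in a single factor. Hence $\Irr(\VA_q\Mod) \cong \coprod_{n \geqslant 0}\Irr_k(\mathrm{GL}_n(\mathbb{F}_q))$, which settles the $\mathscr{E}$ and $\mathscr{E}_s$ cases.

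For $\mathscr{E}_i$ and $G$, the same corollary gives
\[
k\mathscr{E}_i\Mod^{\mathrm{uc}} \simeq kG\Mod^{\mathrm{uc}} \simeq \Sh(\VI_q, J_{at}, \underline{k}).
\]
So I must classify the simple objects of the sheaf category. Since $J_{at}$ is the atomic topology, Theorem~\ref{equivalence 2} identifies this sheaf category with the quotient $\VI_q\Mod/\VI_q\Mod^{\tor}$. Its simple objects are the images of simple torsion-free $\VI_q$-modules, or equivalently the socles of saturated modules. To handle this I will invoke the known description of $\VI_q$-modules in characteristic $0$ (as in \cite{DLLX} and the $\FI$/$\VI$ literature). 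By Nagpal-type shift theorems, the quotient category is semisimple. Its simple objects are the sheafifications of the induced modules $k\VI_q(n,-)\otimes_{k\mathrm{GL}_n}\lambda$ with $\lambda\in\Irr_k(\mathrm{GL}_n(\mathbb{F}_q))$, and distinct pairs $(n,\lambda)$ give non-isomorphic sheaves.

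The main obstacle is this second half. I must verify that each such sheafification is simple, and that every simple sheaf arises this way. My first attempt is as follows. Any nonzero sheaf $F$ has a section over some minimal-dimension subspace $[n]$. That section generates a $\mathrm{GL}_n$-submodule, which contains some irreducible $\lambda$ because $k\mathrm{GL}_n$ is semisimple. This yields a nonzero map from the sheafified induced module into $F$, which gives exhaustion.

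For simplicity, I will compute Hom-spaces in the quotient as colimits of Hom-spaces between shifts. Here I use that the induced modules are torsion-free, and that their quotients by proper submodules are torsion except at the generating degree. If this direct argument becomes too heavy, I will instead cite \cite{DLLX} for the classification of irreducible continuous $kG$-modules as $\coprod_n\Irr_k(\mathrm{GL}_n(\mathbb{F}_q))$.

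Combining the two halves gives the four bijections in the statement.
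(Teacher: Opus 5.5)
Your reduction and your treatment of $\mathscr{E}$ and $\mathscr{E}_s$ are exactly the paper's argument: pass to $\VA_q\Mod$, apply Kuhn's decomposition $\prod_{n}k\mathrm{GL}_n(\mathbb{F}_q)\Mod$, and read the simples off factorwise. For $\mathscr{E}_i$ and $G$ the paper only reduces to $kG\Mod^{\mathrm{uc}}$ and cites \cite{DLLX}, which is your fallback. With that citation your proof is complete.

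The direct argument you offer as the main route for this half is not merely heavy, though. It is false and should be discarded. The quotient $\VI_q\Mod/\VI_q\Mod^{\tor}\simeq kG\Mod^{\mathrm{uc}}$ is not semisimple. The shift theorem says large shifts of finitely generated modules are semi-induced; it gives no splittings. Nor are the sheafified induced modules simple. Take $(n,\lambda)=(1,\mathrm{triv})$. The realization of $k\VI_q(1,-)\otimes_{k\mathrm{GL}_1(\mathbb{F}_q)}k$ is the permutation module $k[\mathbb{P}(\Omega)]$ on the lines of $\Omega$. Its augmentation onto $k$ is a $G$-equivariant surjection that cannot split, because $G$ is transitive on the infinite set $\mathbb{P}(\Omega)$ and so fixes no nonzero finite sum. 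On the $\VI_q$ side, the augmentation kernel is a nonzero proper submodule whose cokernel is the constant module $k$ in all positive degrees. That cokernel is torsion-free, which refutes your claim that proper quotients of induced modules are torsion away from the generating degree.

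Consequently $(n,\lambda)\mapsto$ (sheafified induced module) does not land in simple objects. Your exhaustion step only shows that every simple sheaf is a quotient of some induced one, and that gives no bijection: the trivial module is a quotient of both the $(0,\mathrm{triv})$ and the $(1,\mathrm{triv})$ induced modules. Labelling a simple $F$ by its minimal nonzero degree $n$ and a constituent of $F(n)$ also fails. The augmentation kernel $L\subset k[\mathbb{P}(\Omega)]$ is simple, by $2$-transitivity of $\mathrm{GL}_N(\mathbb{F}_q)$ on $\mathbb{P}^{N-1}(\mathbb{F}_q)$ at each finite level. It vanishes on subspaces of dimension $\leqslant 1$ and first appears in degree $2$ as the Steinberg representation of $\mathrm{GL}_2(\mathbb{F}_q)$. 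Meanwhile no simple object has minimal degree $1$ with a trivial $\mathrm{GL}_1$-constituent, since such an object would be a simple quotient of $k[\mathbb{P}(\Omega)]$, hence trivial. The correct parametrization is subtler, as in the analogous picture for $\FI$-modules and $\Sym(\Omega)$: one expects $(n,\lambda)$ to correspond to the socle of the sheafified induced module, not the module itself. Proving that these objects are simple, pairwise distinct and exhaustive is genuine work, and that is what you must take from \cite{DLLX}.
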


\begin{proof}
By the previous two corollaries, it suffices to show
\[
\Irr(\VA_q \Mod) \, \cong \, \coprod_{n \geqslant 0} \Irr_k (\mathrm{GL}_n (\mathbb{F}_q)) \, \cong \, \Irr (kG \Mod^{\mathrm{uc}}).
\]
For the first isomorphism, we note that irreducible $\VA_q$-modules over a field of characteristic $0$ are classified and correspond to irreducible representations of the groups $\mathrm{GL}_n(\mathbb{F}_q)$ for $n \geqslant 0$; see \cite{Kuhn}. The second isomorphism is proved in \cite{DLLX}.
\end{proof}

\end{document}